\numberwithin{equation}{section}
\newtheorem{thm}{Theorem}[section]
\newtheorem{prop}[thm]{Proposition}
\newtheorem{lemma}[thm]{Lemma}
\newtheorem{cor}[thm]{Corollary}
\newtheorem*{thm*}{Theorem}
\newtheorem*{prop*}{Proposition}
\newtheorem*{cor*}{Corollary}
\newtheorem*{conj*}{Conjecture}
\theoremstyle{definition}
\newtheorem{definition}[thm]{Definition}
\theoremstyle{remark}
\newtheorem{rmk}[thm]{Remark}
\newcommand{\mc}{\mathcal}
\newcommand{\cC}{\mc C}
\newcommand{\cH}{\mc H}
\newcommand{\cL}{\mc L}
\newcommand{\cM}{\mc M}
\newcommand{\cO}{\mc O}
\newcommand{\cP}{\mc P}
\newcommand{\cQ}{\mc Q}
\newcommand{\cU}{\mc U}
\newcommand{\cV}{\mc V}
\newcommand{\cX}{\mc X}
\newcommand{\cCH}{\cC\cH}
\newcommand{\ms}{\mathscr}
\newcommand{\sD}{\ms D}
\newcommand{\C}{\mathbb{C}}
\newcommand{\R}{\mathbb{R}}
\newcommand{\Z}{\mathbb{Z}}
\newcommand{\Sph}{\mathbb{S}}
\newcommand{\sfs}{\mathsf{s}}
\newcommand{\sfw}{\mathsf{w}}
\newcommand{\ran}{\operatorname{ran}}
\renewcommand{\Re}{\operatorname{Re}}
\renewcommand{\Im}{\operatorname{Im}}
\newcommand{\id}{\operatorname{id}}
\newcommand{\supp}{\operatorname{supp}}
\newcommand{\sgn}{\operatorname{sgn}}
\newcommand{\la}{\langle}
\newcommand{\ra}{\rangle}
\newcommand{\pa}{\partial}
\newcommand{\tn}{\textnormal}
\newcommand{\eps}{\epsilon}
\newcommand{\wt}{\widetilde}
\newcommand{\wh}{\widehat}
\newcommand{\ol}{\overline}
\newcommand{\tot}{\mathrm{tot}}
\newcommand{\bop}{{\mathrm{b}}}
\newcommand{\cp}{{\mathrm{c}}}
\newcommand{\cl}{{\mathrm{cl}}}
\newcommand{\semi}{\hbar}
\newcommand{\psdo}{ps.d.o.}
\newcommand{\Diff}{\mathrm{Diff}}
\newcommand{\Vf}{\mathcal V}
\newcommand{\Vb}{\Vf_\bop}
\newcommand{\Diffb}{\Diff_\bop}
\newcommand{\Psib}{\Psi_\bop}
\newcommand{\Psih}{\Psi_\semi}
\newcommand{\Diffh}{\Diff_\semi}
\newcommand{\WF}{\mathrm{WF}}
\newcommand{\Ell}{\mathrm{Ell}}
\newcommand{\rcT}{\overline{T}}
\newcommand{\WFb}{\WF_{\bop}}
\newcommand{\Tb}{{}^{\bop}T}
\newcommand{\rcTb}{{}^{\bop}\overline{T}}
\newcommand{\Sb}{{}^{\bop}S}
\newcommand{\Nb}{{}^{\bop}N}
\newcommand{\WFh}{\mathrm{WF}_{\semi}}
\newcommand{\Ellh}{\mathrm{Ell}_{\semi}}
\newcommand{\ham}{H}
\newcommand{\rham}{{\mathsf{H}}}
\newcommand{\numin}{\nu_{\mathrm{min}}}
\newcommand{\bhm}{M_\bullet}
\newcommand{\loc}{{\mathrm{loc}}}
\newcommand{\CI}{\cC^\infty}
\newcommand{\CIc}{\cC^\infty_\cp}
\newcommand{\Hloc}{H_{\loc}}
\newcommand{\Hb}{H_{\bop}}
\newcommand{\dS}{{\mathrm{dS}}}
\newcommand{\Rnhalfc}{{\overline{\R^n_+}}}
\newcommand{\fw}{{\mathrm{fw}}}
\newcommand{\bw}{{\mathrm{bw}}}
\newcommand{\Hbfw}{H_{\bop,\fw}}
\newcommand{\Hbbw}{H_{\bop,\bw}}
\newcommand{\Hfw}{H_\fw}
\newcommand{\Hbw}{H_\bw}
\newcommand{\openbigpmatrix}[1]{\addtolength{\arraycolsep}{-#1}\begin{pmatrix}}
\newcommand{\closebigpmatrix}[1]{\end{pmatrix}\addtolength{\arraycolsep}{#1}}
\newcommand{\itref}[1]{(\ref{#1})}
\newlength{\enummargin}
\begin{document}

\title[Linear waves near the Cauchy horizon]{Analysis of linear waves near the Cauchy horizon of cosmological black holes}

\author{Peter Hintz}
\address{Department of Mathematics, University of California, Berkeley, CA 94720-3840, USA}
\email{phintz@berkeley.edu}

\author{Andr{\'a}s Vasy}
\address{Department of Mathematics, Stanford University, CA 94305-2125, USA}
\email{andras@math.stanford.edu}

\subjclass[2010]{Primary 58J47, Secondary 35L05, 35P25, 83C57}

\date{December 25, 2015. Final revision: July 12, 2017.}

\begin{abstract}
  We show that linear scalar waves are bounded and continuous up to the Cauchy horizon of Reissner--Nordstr\"om--de Sitter and Kerr--de Sitter spacetimes, and in fact decay exponentially fast to a constant along the Cauchy horizon. We obtain our results by modifying the spacetime beyond the Cauchy horizon in a suitable manner, which puts the wave equation into a framework in which a number of standard as well as more recent microlocal regularity and scattering theory results apply. In particular, the conormal regularity of waves at the Cauchy horizon --- which yields the boundedness statement --- is a consequence of radial point estimates, which are microlocal manifestations of the blue-shift and red-shift effects.
\end{abstract}

\maketitle

\section{Introduction}
\label{SecIntro}

We present a detailed analysis of the regularity and decay properties of linear scalar waves near the Cauchy horizon of cosmological black hole spacetimes. Concretely, we study charged and non-rotating (Reissner--Nordstr\"om--de Sitter) as well as uncharged and rotating (Kerr--de Sitter) black hole spacetimes for which the cosmological constant $\Lambda$ is positive. See Figure~\ref{FigIntroPenrose} for their Penrose diagrams. These spacetimes, in the region of interest for us, have the topology $\R_{t_0}\times I_r\times\Sph^2_\omega$, where $I\subset\R$ is an interval, and are equipped with a Lorentzian metric $g$ of signature $(1,3)$. The spacetimes have three horizons located at different values of the radial coordinate $r$, namely the \emph{Cauchy horizon} at $r=r_1$, the \emph{event horizon} at $r=r_2$ and the \emph{cosmological horizon} at $r=r_3$, with $r_1<r_2<r_3$. In order to measure decay, we use a time function $t_0$, which is equivalent to the Boyer--Lindquist coordinate $t$ away from the cosmological, event and Cauchy horizons, i.e.\ $t_0$ differs from $t$ by a smooth function of the radial coordinate $r$; and $t_0$ is equivalent to the Eddington--Finkelstein coordinate $u$ near the Cauchy and cosmological horizons, and to the Eddington--Finkelstein coordinate $v$ near the event horizon. We consider the Cauchy problem for the linear wave equation with Cauchy data posed on a surface $H_I$ as indicated in Figure~\ref{FigIntroPenrose}.

\begin{figure}[!ht]
  \centering
  \includegraphics{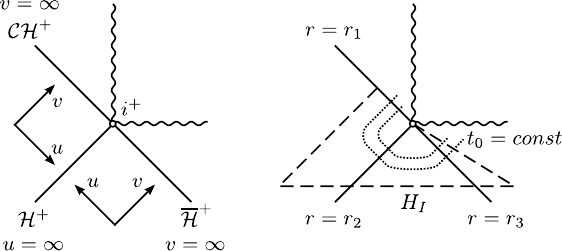}
  \caption{\textit{Left:} Penrose diagram of the Reissner--Nordstr\"om--de Sitter spacetime, and of an $\omega=const$ slice of the Kerr--de Sitter spacetime with angular momentum $a\neq 0$. Indicated are the Cauchy horizon $\cCH^+$, the event horizon $\cH^+$ and the cosmological horizon $\ol\cH^+$, as well as future timelike infinity $i^+$. The coordinates $u,v$ are Eddington--Finkelstein coordinates. \textit{Right:} The same Penrose diagram. The region enclosed by the dashed lines is the domain of dependence of the Cauchy surface $H_I$. The dotted lines are two level sets of the function $t_0$; the smaller one of these corresponds to a larger value of $t_0$.}
\label{FigIntroPenrose}
\end{figure}

The study of asymptotics and decay for linear scalar (and non-scalar) wave equations in a neighborhood of the exterior region $r_2<r<r_3$ of such spacetimes has a long history. Methods of scattering theory have proven very useful in this context, see \cite{SaBarretoZworskiResonances,BonyHaefnerDecay,DyatlovQNM,DyatlovQNMExtended,WunschZworskiNormHypResolvent,VasyMicroKerrdS,MelroseSaBarretoVasySdS,HintzVasyKdsFormResonances} and references therein (we point out that near the black hole exterior, Reissner--Nordstr\"om--de Sitter space can be studied using exactly the same methods as Schwarzschild--de Sitter space); see \cite{DafermosRodnianskiSdS} for a different approach using vector field commutators. There is also a substantial amount of literature on the case $\Lambda=0$ of the asymptotically flat Reissner--Nordstr\"om and Kerr spacetimes; we refer the reader to \cite{KayWaldSchwarzschild,BachelotSchwarzschildScattering,DafermosEinsteinMaxwellScalarStability,DafermosRodnianskiPrice,DafermosRodnianskiLectureNotes,MarzuolaMetcalfeTataruTohaneanuStrichartz,DonningerSchlagSofferPrice,TohaneanuKerrStrichartz,TataruDecayAsympFlat,SterbenzTataruMaxwellSchwarzschild,AnderssonBlueMaxwellKerr,DafermosRodnianskiShlapentokhRothmanDecay} and references therein.

The purpose of the present work is to show how a uniform analysis of linear waves up to the Cauchy horizon can be accomplished using methods from scattering theory and microlocal analysis. Our main result is:

\begin{thm}
\label{ThmIntroMain}
  Let $g$ be a non-degenerate Reissner--Nordstr\"om--de Sitter metric with non-zero charge $Q$, or a non-degenerate Kerr--de Sitter metric with small non-zero angular momentum $a$, with spacetime dimension $\geq 4$. Then there exists $\alpha>0$, only depending on the parameters of the spacetime, such that the following holds: if $u$ is the solution of the Cauchy problem $\Box_g u=0$ with smooth initial data, then there exists $C>0$ such that $u$ has a partial asymptotic expansion
  \begin{equation}
  \label{EqIntroMainExp}
    u=u_0+u',
  \end{equation}
  where $u_0\in\C$, and
  \[
    |u'| \leq C e^{-\alpha t_0}
  \]
  uniformly in $r>r_1$. The same bound, with a different constant $C$, holds for derivatives of $u'$ along any finite number of stationary vector fields which are tangent to the Cauchy horizon. Moreover, $u$ is continuous up to the Cauchy horizon.
  
  More precisely, $u'$ as well as all such derivatives of $u'$ lie in the weighted spacetime Sobolev space $e^{-\alpha t_0}H^{1/2+\alpha/\kappa_1-0}$ in $t_0>0$, where $\kappa_1$ is the surface gravity of the Cauchy horizon.

  For the massive Klein--Gordon equation $(\Box_g-m^2)u=0$, $m>0$ small, the same result holds true without the constant term $u_0$.
\end{thm}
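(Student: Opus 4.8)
The plan is to recast the Cauchy problem as a forward problem for a non-degenerate (real-principal-type with radial points) operator on an extended, time-compactified spacetime, and then to read off the expansion \eqref{EqIntroMainExp} from a contour deformation of the Mellin transform in $e^{-t_0}$. First I would extend the spacetime slightly beyond the Cauchy horizon, so that $\{r=r_1\}$ becomes an interior hypersurface, and modify $\Box_g$ in $\{r<r_1\}$ --- for instance by adjoining a complex absorbing potential supported there, or by gluing in a convenient model region --- so that (i) by finite speed of propagation and the domain-of-dependence property the modification is invisible in $\{r>r_1\}$, and (ii) the resulting operator $L$ is amenable to the microlocal machinery below. Then I would compactify in time by setting $\tau=e^{-t_0}$, turning $t_0\to\infty$ into a boundary hypersurface $\{\tau=0\}$ and placing $L$ in Melrose's b-calculus; multiplying the solution of the Cauchy problem by a cutoff in $t_0$ converts the initial data into a compactly supported forcing. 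The b-normal operator of $L$ at $\{\tau=0\}$ is the stationary model $\hat L(\sigma)$, a holomorphic family of operators on the spatial slice $\ol I_r\times\Sph^2$ (with $I$ extended past $r_1$).

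The analytic core is to show that $\hat L(\sigma)$ is invertible for $\Im\sigma$ large, that $\hat L(\sigma)^{-1}$ continues meromorphically to $\Im\sigma\geq-\alpha$, and that it obeys the high-energy bounds needed for contour shifting. I would do this by a phase-space partition on which one invokes, in turn: \textbf{(a)} elliptic estimates off the characteristic set; \textbf{(b)} real-principal-type propagation along the null-bicharacteristic flow; \textbf{(c)} radial point estimates at the conormal bundles of the event and cosmological horizons, where the red-shift renders these effectively sinks, so regularity propagates out of them above any fixed order; \textbf{(d)} the key radial point estimate at the conormal bundle of the Cauchy horizon, whose blue-shift character forces a threshold --- one can propagate $H^s$ regularity across it only for $s<\half+\alpha/\kappa_1$, which is exactly what the weight $e^{-\alpha t_0}$ buys and what produces the Sobolev index $\half+\alpha/\kappa_1-0$; \textbf{(e)} a high-energy estimate at the normally hyperbolic trapped set, in the spirit of Wunsch--Zworski, Nonnenmacher--Zworski and Dyatlov, which ultimately limits the size of the resonance-free strip $\alpha$ (for Kerr--de Sitter the trapped set is no longer the Schwarzschild--de Sitter photon sphere, and the smallness of $a$ is used to treat it perturbatively); and \textbf{(f)} an energy estimate near the artificial boundary in $\{r<r_1\}$, where the absorbing potential removes outgoing energy. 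Variable-order Sobolev spaces are needed to reconcile the conflicting thresholds at the radial sets. Analytic Fredholm theory then gives the meromorphy of $\hat L(\sigma)^{-1}$, and with it a forward solution operator for $L$.

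Next I would locate the resonances --- poles of $\hat L(\sigma)^{-1}$ --- in $\Im\sigma\geq-\alpha$, shrinking $\alpha>0$ if necessary so that the horizon and trapping thresholds are respected. For $\Box_g$ one expects the unique resonance in this strip to be $\sigma=0$; a boundary-pairing (divergence) argument together with unique continuation from the horizons should identify the resonant states at $\sigma=0$ as precisely the constants and show the resonance is simple and rank one. (For $\Box_g-m^2$, $m>0$ small, the constants are no longer annihilated, the resonance at $0$ moves into $\Im\sigma<0$, and one gets pure decay with no leading constant.) Finally, writing $u$ as the inverse Mellin transform of $\hat L(\sigma)^{-1}$ applied to the forcing from the initial data and deforming the contour from $\Im\sigma=C$ to $\Im\sigma=-\alpha$, the residue at $\sigma=0$ contributes the constant $u_0\in\C$, and the remaining integral is bounded by $Ce^{-\alpha t_0}$ in the weighted b-Sobolev space $e^{-\alpha t_0}\Hb^{\half+\alpha/\kappa_1-0}$ near the Cauchy horizon. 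Since the stationary vector fields tangent to $\cCH^+$ are b-vector fields commuting with $L$ to leading order, the same bounds persist after applying any finite number of them; as $\half+\alpha/\kappa_1>\half$, the Sobolev embedding for distributions conormal to $\cCH^+$ then yields uniform boundedness of $u'=u-u_0$ and continuity of $u$ up to $r=r_1$.

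The step I expect to be the main obstacle is \textbf{(d)} together with the global bookkeeping that makes it usable: one must arrange the extension past $\cCH^+$ and choose the variable Sobolev order so that regularity genuinely propagates \emph{into} $\{r\leq r_1\}$ rather than being blocked by the blue-shift, and one must verify that the threshold $\half+\alpha/\kappa_1$ is compatible --- for a single $\alpha>0$ --- with the resonance-free strip dictated by the trapping estimate \textbf{(e)} and the red-shift estimates \textbf{(c)}. Producing one $\alpha$ that meets all these constraints simultaneously, and, for Kerr--de Sitter, controlling the trapped set and the perturbed zero resonance for small $a$, is the delicate part; the remainder is an assembly of by-now-standard microlocal estimates and b-calculus contour arguments.
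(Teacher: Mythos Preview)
Your proposal is correct and follows essentially the same approach as the paper: extend past $\cCH^+$ (the paper glues in an artificial exterior region and hides its trapping and horizon with complex absorption), compactify via $\tau=e^{-t_*}$ into the b-setting, combine elliptic/real-principal-type/radial-point/normally-hyperbolic-trapping estimates on variable-order b-Sobolev spaces to get Fredholm properties and high-energy bounds for $\wh\cP(\sigma)$, and then contour-shift to obtain the expansion, with the blue-shift radial point threshold at $L_1$ producing the exponent $1/2+\alpha/\kappa_1-0$ and module/conormal regularity plus Sobolev embedding giving the pointwise decay and continuity. The only minor deviation is that the paper does not redo the mode-stability argument at $\sigma=0$ but instead invokes known results for the exterior problem and shows (Lemma~\ref{LemmaRNdSResonanceSupp}) that any additional resonances of the extended operator are supported in the artificial region and hence invisible after restriction to $r>r_1$.
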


Here, the spacetime Sobolev space $H^s$, for $s\in\Z_{\geq 0}$, consists of functions which remain in $L^2$ under the application of up to $s$ stationary vector fields; for general $s\in\R$, $H^s$ is defined using duality and interpolation. The final part of Theorem~\ref{ThmIntroMain} in particular implies that $u'$ lies in $H^{1/2+\alpha/\kappa_1-0}$ near the Cauchy horizon on any surface of fixed $t_0$. After introducing the Reissner--Nordstr\"om--de Sitter and Kerr--de Sitter metrics at the beginning of \S\S\ref{SecRNdS} and \ref{SecKdS}, we will prove Theorem~\ref{ThmIntroMain} in \S\S\ref{SubsecRNdSConormal} and \ref{SubsecKdSRes}, see Theorems~\ref{ThmRNdSPartialAsympConormal} and \ref{ThmKdSPartialAsympConormal}. Our analysis carries over directly to non-scalar wave equations as well, as we discuss for differential forms in \S\ref{SubsecRNdSBundles}; however, we do not obtain uniform boundedness near the Cauchy horizon in this case. Furthermore, a substantial number of ideas in the present paper can be adapted to the study of asymptotically flat ($\Lambda=0$) spacetimes; corresponding boundedness, regularity and (polynomial) decay results on Reissner--Nordstr\"om and Kerr spacetimes are discussed in \cite{HintzCauchyKerr}.

Let us also mention that a minor extension of our arguments yields analogous boundedness, decay and regularity results for the Cauchy problem with a `two-ended' Cauchy surface $H_I$ up to the bifurcation sphere $B$, see Figure~\ref{FigIntroBifurcation}.

\begin{figure}[!ht]
  \centering
  \includegraphics{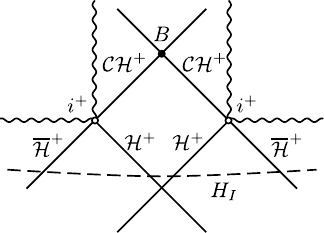}
  \caption{A piece of the maximal analytic extension of the Reissner--Nordstr\"om--de Sitter spacetime, with two exterior regions, each bounded to the future by an event and a cosmological horizon. The two parts of the Cauchy horizon intersect in the bifurcation sphere $B\cong\Sph^2$. For solutions of the Cauchy problem with initial data posed on $H_I$, our methods imply boundedness and precise regularity results, as well as asymptotics and decay towards $i^+$, in the causal past of $B$.}
\label{FigIntroBifurcation}
\end{figure}

Theorem~\ref{ThmIntroMain} is the first result known to the authors establishing asymptotics and regularity near the Cauchy horizon of rotating black holes. (However, we point out that Dafermos and Luk have recently announced the $C^0$ stability of the Cauchy horizon of the Kerr spacetime for Einstein's vacuum equations \cite{DafermosICM2014}.) In the case of $\Lambda=0$ and in spherical symmetry (Reissner--Nordstr\"om), Franzen \cite{FranzenRNBoundedness} proved the uniform boundedness of waves in the black hole interior and $\cC^0$ regularity up to $\cCH^+$, while Luk and Oh \cite{LukOhReissnerNordstrom} showed that linear waves generically do not lie in $\Hloc^1$ at $\cCH^+$. (We also mention the more recent \cite{LukSbierskiKerr}.) There is also ongoing work by Franzen on the analogue of her result for Kerr spacetimes \cite{FranzenKerrBoundedness}. Gajic \cite{GajicExtremalRN}, based on previous work by Aretakis \cite{AretakisExtremalRN1,AretakisExtremalRN2}, showed that for \emph{extremal} Reissner--Nordstr\"om spacetimes, waves \emph{do} lie in $\Hloc^1$. We do not present a microlocal study of the event horizon of extremal black holes here, however we remark that our analysis reveals certain high regularity phenomena at the Cauchy horizon of \emph{near-extremal} black holes, which we will discuss below. Closely related to this, the study of Costa, Gir\~ao, Nat\'ario and Silva \cite{CostaGiraoNatarioSilvaCauchy1,CostaGiraoNatarioSilvaCauchy2,CostaGiraoNatarioSilvaCauchy3} of the nonlinear Einstein--Maxwell--scalar field system in spherical symmetry shows that, close to extremality, rather weak assumptions on initial data on a null hypersurface transversal to the event horizon guarantee $\Hloc^1$ regularity of the metric at $\cCH^+$; however, they assume exact Reissner--Nordstr\"om--de Sitter data on the event horizon, while in the present work, we link non-trivial decay rates of waves along the event horizon to the regularity of waves at $\cCH^+$. Compare this also with the discussions in \S\ref{SubsecRNdSHighReg} and Remark~\ref{RmkRNdSHighReg}.

One could combine the treatment of Reissner--Nordstr\"om--de Sitter and Kerr--de Sitter spacetimes by studying the more general Kerr--Newman--de Sitter family of charged and rotating black hole spacetimes, discovered by Carter \cite{CarterHamiltonJacobiEinstein}, which can be analyzed in a way that is entirely analogous to the Kerr--de Sitter case. However, in order to prevent cumbersome algebraic manipulations from obstructing the flow of our analysis, we give all details for Reissner--Nordstr\"om--de Sitter black holes, where the algebra is straightforward and where moreover mode stability can easily be shown to hold for subextremal spacetimes; we then indicate rather briefly the (mostly algebraic) changes for Kerr--de Sitter black holes, and leave the similar, general case of Kerr--Newman--de Sitter black holes to the reader. In fact, our analysis is stable under suitable perturbations, and one can thus obtain results entirely analogous to Theorem~\ref{ThmIntroMain} for Kerr--Newman--de Sitter metrics with small non-zero angular momentum $a$ and small charge $Q$ (depending on $a$), or for small charge $Q$ and small non-zero angular momentum $a$ (depending on $Q$), by perturbative arguments: indeed, in these two cases, the Kerr--Newman--de Sitter metric is a small stationary perturbation of the Kerr--de Sitter, resp.\ Reissner--Nordstr\"om--de Sitter metric, with the same structure at $\cCH^+$.

In the statement of Theorem~\ref{ThmIntroMain}, we point out that the amount of regularity of the remainder term $u'$ at the Cauchy horizon is directly linked to the amount $\alpha$ of exponential decay of $u'$: the more decay, the higher the regularity. This can intuitively be understood in terms of the \emph{blue-shift effect} \cite{SimpsonPenroseBlueShift}: the more a priori decay $u'$ has along the Cauchy horizon (approaching $i^+$), the less energy can accumulate at the horizon. The precise microlocal statement capturing this is a \emph{radial point estimate} at the intersection of $\cCH^+$ with the boundary at infinity of a compactification of the spacetime at $t_0=\infty$, which we will discuss in \S\ref{SubsecIntroGeometry}.

Now, $\alpha$ can be any real number less than the \emph{spectral gap} $\alpha_0$ of the operator $\Box_g$, which is the infimum of $-\Im\sigma$ over all non-zero \emph{resonances} (or quasi-normal modes) $\sigma\in\C$; the resonance at $\sigma=0$ gives rise to the constant $u_0$ term. (We refer to \cite{SaBarretoZworskiResonances,BonyHaefnerDecay,DyatlovQNM} and \cite{VasyMicroKerrdS} for the discussion of resonances for black hole spacetimes.) Due to the presence of a trapped region in the black hole spacetimes considered here, $\alpha_0$ is bounded from above by a quantity $\gamma_0>0$ associated with the null-geodesic dynamics near the trapped set, as proved by Dyatlov \cite{DyatlovResonanceProjectors,DyatlovSpectralGaps} in the present context following breakthrough work by Wunsch and Zworski \cite{WunschZworskiNormHypResolvent}, and by Nonnenmacher and Zworski \cite{NonnenmacherZworskiDecay}: below (resp.\ above) any line $\Im\sigma=-\gamma_0+\eps$, $\eps>0$, there are infinitely (resp.\ finitely) many resonances. In principle however, one expects that there indeed exists a non-zero number of resonances above this line, and correspondingly the expansion \eqref{EqIntroMainExp} can be refined to take these into account. (In fact, one can obtain a full resonance expansion due to the complete integrability of the null-geodesic flow near the trapped set, see \cite{BonyHaefnerDecay,DyatlovAsymptoticDistribution}.) Since for the mode solution corresponding to a resonance at $\sigma$, $\Im\sigma<0$, we obtain the regularity $H^{1/2-\Im\sigma/\kappa_1-0}$ at $\cCH^+$, shallow resonances, i.e.\ those with small $\Im\sigma$, give the dominant contribution to the solution $u$ both in terms of decay and regularity at $\cCH^+$. The authors are not aware of any rigorous results on shallow resonances, so we shall only discuss this briefly in Remark~\ref{RmkRNdSHighReg}, taking into account insights from numerical results: these suggest the existence of resonant states with imaginary parts roughly equal to $-\kappa_2$ and $-\kappa_3$, and hence the relative sizes of the surface gravities play a crucial role in determining the regularity at $\cCH^+$.

Whether resonant states are in fact no better than $H^{1/2-\Im\sigma/\kappa_1}$, and the existence of shallow resonances, which, if true, would yield a linear instability result for cosmological black hole spacetimes with Cauchy horizons analogous to \cite{LukOhReissnerNordstrom}, will be studied in future work. Once these questions have been addressed, one can conclude that the lack of, say, $H^1$ regularity at $\cCH^+$ is caused precisely by shallow quasinormal modes. Thus, somewhat surprisingly, the mechanism for the linear instability of the Cauchy horizon of \emph{cosmological} spacetimes is more subtle than for asymptotically flat spacetimes in that the presence of a cosmological horizon, which ultimately allows for a resonance expansion of linear waves $u$, leads to a much more precise structure of $u$ at $\cCH^+$, with the regularity of $u$ directly tied to quasinormal modes of the black hole exterior.

The interest in understanding the behavior of waves near the Cauchy horizon has its roots in Penrose's Strong Cosmic Censorship conjecture, which asserts that maximally globally hyperbolic developments for the Einstein--Maxwell or Einstein vacuum equations (depending on whether one considers charged or uncharged solutions) with \emph{generic} initial data (and a complete initial surface, and/or under further conditions) are inextendible as suitably regular Lorentzian manifolds. In particular, the smooth, even analytic, extendability of the Reissner--Nordstr\"om(--de Sitter) and Kerr(--de Sitter) solutions past their Cauchy horizons is conjectured to be an unstable phenomenon. It turns out that the question what should be meant by `suitable regularity' is very subtle; we refer to works by Christodoulou \cite{ChristodoulouInstabililtyOfNakedSing}, Dafermos \cite{DafermosInterior,DafermosBlackHoleNoSingularities}, and Costa, Gir\~ao, Nat\'ario and Silva \cite{CostaGiraoNatarioSilvaCauchy1,CostaGiraoNatarioSilvaCauchy2,CostaGiraoNatarioSilvaCauchy3} in the spherically symmetric setting for positive and negative results for various notions of regularity. There is also work in progress by Dafermos and Luk on the $C^0$ stability of the Kerr Cauchy horizon, assuming a quantitative version of the non-linear stability of the exterior region. We refer to these works, as well as to the excellent introductions of \cite{DafermosEinsteinMaxwellScalarStability,DafermosBlackHoleNoSingularities,LukOhReissnerNordstrom}, for a discussion of heuristic arguments and numerical experiments which sparked this line of investigation.

Here, however, we only consider linear equations, motivated by similar studies in the asymptotically flat case by Dafermos \cite{DafermosEinsteinMaxwellScalarStability} (see \cite[Footnote~11]{LukOhReissnerNordstrom}), Franzen \cite{FranzenRNBoundedness}, Sbierski \cite{SbierskiThesis}, and Luk and Oh \cite{LukOhReissnerNordstrom}. The main insight of the present paper is that a uniform analysis up to $\cCH^+$ can be achieved using by now standard methods of scattering theory and geometric microlocal analysis, in the spirit of recent works by Vasy \cite{VasyMicroKerrdS}, Baskin, Vasy and Wunsch \cite{BaskinVasyWunschRadMink} and \cite{HintzVasySemilinear}: the core of the precise estimates of Theorem~\ref{ThmIntroMain} are microlocal propagation results at (generalized) radial sets, as we will discuss in \S\ref{SubsecIntroStrategy}. From this geometric microlocal perspective however, i.e.\ taking into account merely the phase space properties of the operator $\Box_g$, it is both unnatural and technically inconvenient to view the Cauchy horizon as a boundary; after all, the metric $g$ is a non-degenerate Lorentzian metric up to $\cCH^+$ and beyond. Thus, the most subtle step in our analysis is the formulation of a suitable extended problem (in a neighborhood of $r_1\leq r\leq r_3$) which reduces to the equation of interest, namely the wave equation, in $r>r_1$.

\subsection{Geometric setup}
\label{SubsecIntroGeometry}

The Penrose diagram is rather singular at future timelike infinity $i^+$, yet all relevant phenomena, in particular trapping and red-/blue-shift effects, should be thought of as taking place there, as we will see shortly; therefore, we work instead with a compactification of the region of interest, the domain of dependence of $H_I$ in Figure~\ref{FigIntroPenrose}, in which the horizons as well as the trapped region remain separated, and the metric remains smooth, as $t_0\to\infty$. Concretely, using the coordinate $t_0$ employed in Theorem~\ref{ThmIntroMain}, the radial variable $r$ and the spherical variable $\omega\in\Sph^2$, we consider a region
\[
  M = [0,\infty)_\tau \times X,\quad X=(0,r_3+2\delta)_r \times \Sph^2_\omega,\quad \tau=e^{-t_0},
\]
i.e.\ we add the ideal boundary at future infinity, $\tau=0$, to the spacetime, and equip $M$ with the natural smooth structure in which $\tau$ vanishes simply and non-degenerately at $\pa M$. (It is tempting, and useful for purposes of intuition, to think of $M$ as being a submanifold of the blow-up of the compactification suggested by the Penrose diagram --- adding an `ideal sphere at infinity' at $i^+$ --- at $i^+$. However, the details are somewhat subtle; see \cite{MelroseSaBarretoVasySdS}.)

Due to the stationary nature of the metric $g$, the (null-)geodesic flow should be studied in a version of phase space which has a built-in uniformity as $t_0\to\infty$. A clean way of describing this uses the language of b-geometry (and b-analysis); we refer the reader to Melrose \cite{MelroseAPS} for a detailed introduction, and \cite[\S3]{VasyMicroKerrdS} and \cite[\S2]{HintzVasySemilinear} for brief overviews. We recall the most important features here: on $M$, the metric $g$ is a non-degenerate Lorentzian \emph{b-metric}, i.e.\ a linear combination with smooth (on $M$) coefficients of
\[
  \frac{d\tau^2}{\tau^2},\quad \frac{d\tau}{\tau}\otimes dx_i+dx_i\otimes\frac{d\tau}{\tau},\quad dx_i\otimes dx_j+dx_j\otimes dx_i,
\]
where $(x_1,x_2,x_3)$ are coordinates in $X$; in fact the coefficients are independent of $\tau$. Then, $g$ is a section of the symmetric second tensor power of a natural vector bundle on $M$, the \emph{b-cotangent bundle} $\Tb^*M$, which is spanned by the sections $\frac{d\tau}{\tau},dx_i$. We stress that $\frac{d\tau}{\tau}=-dt_0$ is a smooth, non-degenerate section of $\Tb^*M$ \emph{up to and including} the boundary $\tau=0$. Likewise, the dual metric $G$ is a section of the second symmetric tensor power of the \emph{b-tangent bundle} $\Tb M$, which is the dual bundle of $\Tb^*M$ and thus spanned by $\tau\pa_\tau,\pa_{x_i}$. The dual metric function, which we also denote by $G\in\CI(\Tb^*M)$ by a slight abuse of notation, associates to $\zeta\in\Tb^*M$ the squared length $G(\zeta,\zeta)$.

Over the interior of $M$, denoted $M^\circ$, the b-cotangent bundle is naturally isomorphic to the standard cotangent bundle. The geodesic flow, lifted to the cotangent bundle, is generated by the Hamilton vector field $\ham_G\in\cV(T^*M^\circ)$, which extends to a smooth vector field $\ham_G\in\cV(\Tb^*M)$ tangent to $\Tb^*_X M$. Now, $\ham_G$ is homogeneous of degree $1$ with respect to dilations in the fiber, and it is often convenient to rescale it by multiplication with a homogeneous degree $-1$ function $\wh\rho$, obtaining the homogeneous degree $0$ vector field $\rham_G=\wh\rho\ham_G$. As such, it extends smoothly to a vector field on the \emph{radial (or projective) compactification} $\rcTb^*M$ of $\Tb^*M$, which is a ball bundle over $M$, with fiber over $z\in M$ given by the union of $\Tb_z^*M$ with the `sphere at fiber infinity' $\wh\rho=0$. The b-cosphere bundle $\Sb^*M=(\Tb^*M\setminus o)/\R_+$ is then conveniently viewed as the boundary $\Sb^*M=\pa\rcTb^*M$ of the compactified b-cotangent bundle at fiber infinity.

The projection to the base $M$ of integral curves of $\ham_G$ or $\rham_G$ with null initial direction, i.e.\ starting at a point in $\Sigma=G^{-1}(0)\setminus o\subset\Tb^*M$, yields (reparameterizations of) null-geodesics on $(M,g)$; this is clear in the interior of $M$, and the important observation is that this gives a well-defined notion of null-geodesics, or null-bicharacteristics, at the boundary at infinity, $X$. We remark that the characteristic set $\Sigma$ has two components, the union of the future null cones $\Sigma_-$ and of the past null cones $\Sigma_+$.

The red-shift or blue-shift effect manifests itself in a special structure of the $\rham_G$ flow near the b-conormal bundles $L_j=\Nb^*\{\tau=0,\ r=r_j\}\setminus o\subset\Sigma$ of the horizons $r=r_j$, $j=1,2,3$. (Here, $\Nb_x^*Z$ for a boundary submanifold $Z\subset X$ and $x\in Z$ is the annihilator of the space of all vectors in $\Tb_x Z$ tangent to $Z$; $\Nb^*Z$ is naturally isomorphic to the conormal bundle of $Z$ in $X$.) Indeed, in the case of the Reissner--Nordstr\"om--de Sitter metric, $L_j$, more precisely its boundary at fiber infinity $\pa L_j\subset\Sb^*M\subset\rcTb^*M$, is a \emph{saddle point} for the $\rham_G$ flow, with stable (or unstable, depending on which of the two components $L_{j,\pm}:=L_j\cap\Sigma_\pm$ one is working on) manifold contained in $\Sigma\cap\rcTb^*_X M$, and an unstable (or stable) manifold transversal to $\rcTb^*_X M$. In the Kerr--de Sitter case, $\rham_G$ does not vanish everywhere on $\pa L_j$, but rather is non-zero and tangent to it, so there are non-trivial dynamics within $\pa L_j$, but the dynamics in the directions normal to $\pa L_j$ still has the same saddle point structure. See Figure~\ref{FigIntroRadial}.

\begin{figure}[!ht]
  \centering
  \includegraphics{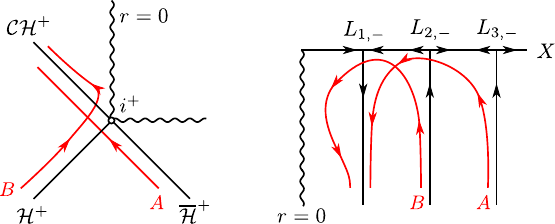}
  \caption{\textit{Left:} Two future-directed null-geodesics; $A$ is a radial null-geodesic, and $B$ is the projection of a non-radial geodesic. \textit{Right:} The compactification of the spacetime at future infinity, together with the same two null-geodesics. The null-geodesic flow, extended to the (b-cotangent bundle over the) boundary, has saddle points at the (b-conormal bundles of the) intersection of the horizons with the boundary at infinity $X$.}
\label{FigIntroRadial}
\end{figure}

\subsection{Strategy of the proof}
\label{SubsecIntroStrategy}

In order to take full advantage of the saddle point structure of the null-geodesic flow near the Cauchy horizon, one would like to set up an initial value problem, or equivalently a forced forward problem $\Box_g u=f$, with vanishing initial data but non-trivial right hand side $f$, on a domain which extends a bit past $\cCH^+$. Because of the finite speed of propagation for the wave equation, one is free to modify the problem beyond $\cCH^+$ in whichever way is technically most convenient; waves in the region of interest $r_1<r<r_3$ are unaffected by the choice of extension.

A natural idea then is to simply add a boundary $\wt H_T:=\{r=r_-\}$, $r_-\in(0,r_1)$, which one could use to cap the problem off beyond $\cCH^+$; now $\wt H_T$ is \emph{timelike}, hence, to obtain a well-posed problem, one needs to impose boundary conditions there. While perfectly feasible, the resulting analysis is technically rather involved as it necessitates studying the reflection of singularities at $\wt H_T$ quantitatively in a uniform manner as $\tau\to 0$. (Near $\wt H_T$, one does not need the precise, microlocal, control as in \cite{TaylorGrazing,MelroseSjostrandSingBVPI,MelroseSjostrandSingBVPII} however.)

A technically much easier modification involves the use of a complex absorbing `potential' $\cQ$ in the spirit of \cite{NonnenmacherZworskiDecay,WunschZworskiNormHypResolvent}; here $\cQ$ is a second order b-pseudodifferential operator on $M$ which is elliptic in a large subset of $r<r_1$ near $\tau=0$. (Without b-language, one can take $\cQ$ for large $t_0$ to be a time translation-invariant, properly supported \psdo{}\ on $M$.) One then considers the operator
\[
  \cP=\Box_g-i\cQ.
\]
The point is that a suitable choice of the sign of $\cQ$ on the two components $\Sigma_\pm$ of the characteristic set leads to an absorption of high frequencies along the future-directed null-geodesic flow over the support of $\cQ$, which allows one to control a solution $u$ of $\cP u=f$ in terms of the right hand side $f$ there. However, since we are forced to work on a domain with boundary in order to study the forward problem, the \emph{pseudodifferential} complex absorption does not make sense near the relevant boundary component, which is the extension of the left boundary in Figure~\ref{FigIntroPenrose} past $r=r_1$.

A doubling construction as in \cite[\S4]{VasyMicroKerrdS} on the other hand, doubling the spacetime across the timelike surface $\wt H_T$, say, amounts to gluing an `artificial exterior region' to our spacetime, with one of the horizons identified with the original Cauchy horizon; this in particular creates another trapped region, which we can however easily hide using a complex absorbing potential! We then cap off the thus extended spacetime beyond the cosmological horizon of the artificial exterior region, located at $r=r_0$, by a \emph{spacelike} hypersurface $H_{I,0}$ at $r=r_0-\delta$, $\delta>0$, at which the analysis is straightforward \cite[\S2]{HintzVasySemilinear}. See Figure~\ref{FigIntroExtended}. (In the spherically symmetric setting, one could also replace the region $r<r_1$ beyond the Cauchy horizon by a static de Sitter type space, thus not generating any further trapping or horizons and obviating the need for complex absorption; but for Kerr--de Sitter, this gluing procedure is less straightforward to implement, hence we use the above doubling-type procedure for Reissner--Nordstr\"om already.) The construction of the extension is detailed in \S\ref{SubsecRNdSMfd}.

\begin{figure}[!ht]
  \centering
  \includegraphics{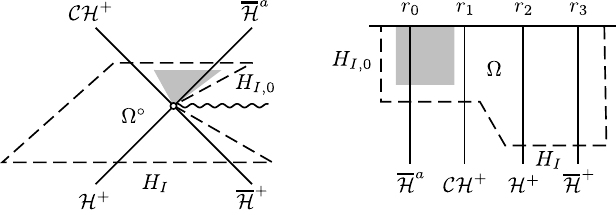}
  \caption{The extended spacetime: we glue an artificial exterior region beyond $\cCH^+$, creating an artificial horizon $\ol\cH^a$, and cap off beyond $\ol\cH^a$ using a \emph{spacelike} hypersurface $H_{I,0}$. Complicated dynamics in the extended region are hidden by a complex absorbing potential $\cQ$ supported in the shaded region.}
\label{FigIntroExtended}
\end{figure}

We thus study the forcing problem
\begin{equation}
\label{EqIntroForcing}
  \cP u=f\quad\tn{in }\Omega,
\end{equation}
with $f$ and $u$ supported in the future of the `Cauchy' surface $H_I$ in $r\geq r_1$, and in the future of $H_{I,0}$ in $r\leq r_1$. The natural function spaces are \emph{weighted b-Sobolev spaces}
\[
  \Hb^{s,\alpha}(M):=\tau^\alpha\Hb^s(M)=e^{-\alpha t_0}H^s(M),
\]
where the spacetime Sobolev space $H^s(M)$ measures regularity relative to $L^2$ with respect to stationary vector fields, as defined after the statement of Theorem~\ref{ThmIntroMain}. More invariantly, $\Hb^s(M)$, for integer $s$, consists of $L^2$ functions which remain in $L^2$ upon applying up to $s$ b-vector fields; the space $\Vb(M)$ of b-vector fields consists of all smooth vector fields on $M$ which are tangent to $\pa M$, and is equal to the space of smooth sections of the b-tangent bundle $\Tb M$.

Now, \eqref{EqIntroForcing} is an equation on a compact space $\Omega$ which degenerates at the boundary: the operator $\Box_g\in\Diffb^2(M)$ is a b-differential operator, i.e.\ a sum of products of b-vector fields, and $\cQ\in\Psib^2(M)$ is a b-\psdo{}. (Note that this point of view is much more precise than merely stating that \eqref{EqIntroForcing} is an equation on a noncompact space $\Omega^\circ$!) Thus, the analysis of the operator $\cP$ consists of two parts: \textit{firstly,} the regularity analysis, in which one obtains precise regularity estimates for $u$ using microlocal elliptic regularity, propagation of singularities and radial point results, see \S\ref{SubsecRNdSRegularity}, which relies on the precise global structure of the null-geodesic flow discussed in \S\ref{SubsecRNdSFlow}; and \textit{secondly,} the asymptotic analysis of \S\S\ref{SubsecRNdSFredholm} and \ref{SubsecRNdSAsymp}, which relies on the analysis of the Mellin transformed in $\tau$ (equivalently: Fourier transformed in $-t_0$) operator family $\wh\cP(\sigma)$, its high energy estimates as $|\Re\sigma|\to\infty$, and the structure of poles of $\wh\cP(\sigma)^{-1}$, which are known as \emph{resonances} or \emph{quasi-normal modes}; this last part, in which we use the shallow resonances to deduce asymptotic expansions of waves, is the only low frequency part of the analysis.

The regularity one obtains for $u$ solving \eqref{EqIntroForcing} with, say, smooth compactly supported (in $\Omega^\circ$) forcing $f$, is determined by the behavior of the null-geodesic flow near the trapping and near the horizons $L_j$, $j=1,2,3$. Near the trapping, we use the aforementioned results \cite{WunschZworskiNormHypResolvent,NonnenmacherZworskiDecay,DyatlovSpectralGaps,HintzVasyNormHyp}, while near $L_j$, we use radial point estimates, originating in work by Melrose \cite{MelroseEuclideanSpectralTheory}, and proved in the context relevant for us in \cite[\S2]{HintzVasySemilinear}; we recall these in \S\ref{SubsecRNdSRegularity}, and refer the reader to \cite{ZworskiRevisitVasy} and \cite{VasyMinicourse} for further details. Concretely, equation \eqref{EqIntroForcing} combines a forward problem for the wave equation near the black hole exterior region $r\in(r_2,r_3)$ with a backward problem near the artificial exterior region $r\in(r_0,r_1)$, with hyperbolic propagation in the region between these two (called `no-shift region' in \cite{FranzenRNBoundedness}). Near $r=r_2$ and $r=r_3$ then, and by propagation estimates in any region $r\geq r_1+\eps$, $\eps>0$, the radial point estimate, encapsulating the red-shift effect, yields smoothness of $u$ relative to a b-Sobolev space with weight $\alpha<0$, i.e.\ allowing for exponential growth (in which case trapping is not an issue), while near $r=r_1$, one is solving the equation \emph{away} from the boundary $X$ at infinity, and hence the radial point estimate, encapsulating the blue-shift effect, there, yields an amount of regularity which is bounded from above by $1/2+\alpha/\kappa_1-0$, where $\kappa_1$ is the surface gravity of $\cCH^+$. In the extended region $r<r_1$, the regularity analysis is very simple, since the complex absorption $\cQ$ makes the problem elliptic at the trapping there and at $\ol\cH^a$, and one then only needs to use real principal type propagation together with standard energy estimates.

Combined with the analysis of $\wh\cP(\sigma)$, which relies on the same dynamical and geometric properties of the extended problem as the b-analysis, we deduce in \S\ref{SubsecRNdSFredholm} that $\cP$ is Fredholm on suitable weighted b-Sobolev spaces (and in fact solvable for any right hand side $f$ if one modifies $f$ in the unphysical region $r<r_1$). In order to capture the high, resp.\ low, regularity near $[r_2,r_3]$, resp.\ $r_1$, these spaces have \emph{variable orders} of differentiability depending on the location in $M$. (Such spaces were used already by Unterberger \cite{UnterbergerVariable}, and in a context closely related to the present paper in \cite{BaskinVasyWunschRadMink}. We present results adapted to our needs in Appendix~\ref{SecVariable}.)

In \S\ref{SubsecRNdSAsymp} then, we show how the properties of the meromorphic family $\wh\cP(\sigma)^{-1}$ yield a partial asymptotic expansion of $u$ as in \eqref{EqIntroMainExp}. Using more refined regularity statements at $L_1$, we show in \S\ref{SubsecRNdSConormal} that the terms in this expansion are in fact \emph{conormal} to $r=r_1$, i.e.\ they do not become more singular upon applying vector fields tangent to the Cauchy horizon.

We stress that the analysis is conceptually very simple, and close to the analysis in \cite{VasyMicroKerrdS,BaskinVasyWunschRadMink,HintzVasySemilinear,GellRedmanHaberVasyFeynman}, in that it relies on tools in microlocal analysis and scattering theory which have been frequently used in recent years.

As a side note, we point out that one could have analyzed $\wh\Box_g(\sigma)$ in $r>r_1$ only by proving very precise estimates for the operator $\wh\Box_g(\sigma)$, which is a hyperbolic (wave-type) operator in $r>r_1$, near $r=r_1$; while this would have removed the necessity to construct and analyze an extended problem, the mechanism underlying our regularity and decay estimates, namely the radial point estimate at the Cauchy horizon, would not have been apparent from this. Moreover, the radial point estimate is very robust; it works for Kerr--de Sitter spaces just as it does for the spherically symmetric Reissner--Nordstr\"om--de Sitter solutions.

A more interesting modification of our argument relies on the observation that it is not necessary for us to incorporate the exterior region in our global analysis, since this has already been studied in detail before; instead, one could start \emph{assuming} asymptotics for a wave $u$ in the exterior region, and then relate $u$ to a solution of a global, extended problem, for which one has good regularity results, and deduce them for $u$ by restriction. Such a strategy, implemented in \cite{HintzCauchyKerr}, is in particular appealing in the study of spacetimes with vanishing cosmological constant using the analytic framework of the present paper, since the precise structure of the `resolvent' $\wh\Box_g(\sigma)^{-1}$ has not been analyzed so far, whereas boundedness and decay for scalar waves on the exterior regions of Reissner--Nordstr\"om and Kerr spacetimes are known by other methods; see the references at the beginning of \S\ref{SecIntro}.

In the remaining parts of \S\ref{SecRNdS}, we analyze the essential spectral gap for near-extremal black holes in \S\ref{SubsecRNdSHighReg}; we find that for \emph{any} desired level of regularity, one can choose near-extremal parameters of the black hole such that solutions $u$ to \eqref{EqIntroForcing} with $f$ in a finite-codimensional space achieve this level of regularity at $\cCH^+$. However, as explained in the discussion of Theorem~\ref{ThmIntroMain}, it is very likely that shallow resonances cause the codimension to increase as the desired regularity increases. Lastly, in \S\ref{SubsecRNdSBundles}, we indicate the simple changes to our analysis needed to accommodate wave equations on natural tensor bundles.

In \S\ref{SecKdS} then, we show how Kerr--de Sitter spacetimes fit directly into our framework: we analyze the flow on a suitable compactification and extension, constructed in \S\ref{SubsecKdSMfd}, in \S\ref{SubsecKdSFlow}, and deduce results completely analogous to the Reissner--Nordstr\"om--de Sitter case in \S\ref{SubsecKdSRes}.

\section{Reissner--Nordstr\"om--de Sitter space}
\label{SecRNdS}

We focus on the case of $4$ spacetime dimensions; the analysis in more than $4$ dimensions is completely analogous. In the domain of outer communications of the 4-dimensional Reissner--Nordstr\"om--de Sitter black hole, given by $\R_t\times(r_2,r_3)_r\times\Sph^2_\omega$, with $0<r_2<r_3$ described below, the metric takes the form
\begin{equation}
\label{EqRNdSMetric}
  g = \mu\,dt^2 - \mu^{-1}\,dr^2 - r^2\,d\omega^2,\quad \mu = 1-\frac{2\bhm}{r}+\frac{Q^2}{r^2} - \lambda r^2,
\end{equation}
Here $\bhm>0$ and $Q>0$ are the mass and the charge of the black hole, and $\lambda=\Lambda/3$, with $\Lambda>0$ the cosmological constant. Setting $Q=0$, this reduces to the Schwarzschild--de Sitter metric. We assume that the spacetime is non-degenerate:

\begin{definition}
\label{DefRNdSNonDegenerate}
  We say that the Reissner--Nordstr\"om--de Sitter spacetime with parameters $\Lambda>0,\bhm>0,Q>0$ is \emph{non-degenerate} if $\mu$ has $3$ simple positive roots $0<r_1<r_2<r_3$.
\end{definition}

Since $\mu\to-\infty$ when $r\to\infty$, we see that
\[
  \mu>0\tn{ in }(0,r_1)\cup(r_2,r_3), \quad \mu<0\tn{ in }(r_1,r_2)\cup(r_3,\infty).
\]
The roots of $\mu$ are called \emph{Cauchy horizon} ($r_1$), \emph{event horizon} ($r_2$) and \emph{cosmological horizon} ($r_3$), with the Cauchy horizon being a feature of charged (or rotating, see \S\ref{SecKdS}) solutions of Einstein's field equations.

To give a concrete example of a non-degenerate spacetime, let us check the non-degeneracy condition for black holes with small charge, and compute the location of the Cauchy horizon: for fixed $\bhm,\lambda>0$, let
\[
  \Delta_r(r,q)=r^2-2\bhm r+q-\lambda r^4,
\]
so $\Delta_r(r,Q^2)=r^2\mu$. For $q=0$, the function $\Delta_r(r,0)$ has a root at $r_{1,0}:=0$. Since $\wt\Delta_r(r):=r^{-1}\Delta_r(r,0)=r-2\bhm-\lambda r^3$ is negative for $r=0$ and for large $r>0$ but positive for large $r<0$, the function $\wt\Delta_r(r)$ has two simple positive roots if and only if $\wt\Delta_r(r_c)>0$, where $r_c=(3\lambda)^{-1/2}$ is the unique positive critical point of $\wt\Delta_r'(r)$; but $\wt\Delta_r(r_c)=2((27\lambda)^{-1/2}-\bhm)>0$ if and only if
\begin{equation}
\label{EqRNdSSdSNonDegenerate}
  9\Lambda \bhm^2<1.
\end{equation}
Then:
\begin{lemma}
\label{LemmaRNdSNonDegenerate}
  Suppose $\Lambda,\bhm>0$ satisfy the non-degeneracy condition \eqref{EqRNdSSdSNonDegenerate}, and denote the three non-negative roots of $\Delta_r(r,0)$ by $r_{1,0}=0<r_{2,0}<r_{3,0}$. Then for small $Q>0$, the function $\mu$ has three positive roots $r_j(Q)$, $j=1,2,3$, with $r_j(0)=r_{j,0}$, depending smoothly on $Q$, and $r_1(Q)=\frac{Q^2}{2\bhm}+\cO(Q^4)$.
\end{lemma}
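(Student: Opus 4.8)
\emph{Proof plan.} The statement is a direct application of the implicit function theorem to the polynomial $\Delta_r(r,q)=r^2-2\bhm r+q-\lambda r^4$, exploiting that at $q=0$ all of its real roots are simple, together with the elementary observation that the positive roots of $\mu$ are precisely the positive roots of $\Delta_r(\cdot,Q^2)$, since $\Delta_r(r,Q^2)=r^2\mu$.

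First I would record the structure of $\Delta_r(\cdot,0)$. Since $\Delta_r(r,0)=r\wt\Delta_r(r)$ with $\wt\Delta_r(r)=r-2\bhm-\lambda r^3$, and since \eqref{EqRNdSSdSNonDegenerate} guarantees---as computed just above the lemma---that $\wt\Delta_r$ has exactly two simple positive roots $r_{2,0}<r_{3,0}$ (and hence, being a cubic with negative leading coefficient, one further simple negative root), the quartic $\Delta_r(\cdot,0)$ has four simple real roots: a negative one, $r_{1,0}=0$, $r_{2,0}$, and $r_{3,0}$. In particular $\pa_r\Delta_r(r_{j,0},0)\neq 0$ for $j=1,2,3$; concretely $\pa_r\Delta_r(0,0)=-2\bhm\neq 0$.

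Second, applying the implicit function theorem at each of the points $(r_{j,0},0)$, $j=1,2,3$, yields real-analytic (in particular smooth) functions $q\mapsto r_j(q)$, defined for $|q|$ small, with $r_j(0)=r_{j,0}$ and $\Delta_r(r_j(q),q)\equiv 0$; substituting $q=Q^2$, a smooth function of $Q$, gives the asserted smooth dependence on $Q$. Because all four roots of $\Delta_r(\cdot,0)$ are simple, for $Q$ small they remain real and simple and close to their values at $Q=0$, so the negative root stays negative while $r_1(Q^2)<r_2(Q^2)<r_3(Q^2)$; since $\mu=r^{-2}\Delta_r(r,Q^2)$ on $(0,\infty)$, these three are exactly the positive roots of $\mu$, provided $r_1(Q^2)>0$.

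Finally, implicit differentiation of $r_1^2-2\bhm r_1+q-\lambda r_1^4=0$ at $q=0$, where $r_1=0$, gives $-2\bhm\,r_1'(0)+1=0$, i.e.\ $r_1'(0)=\tfrac{1}{2\bhm}>0$; hence $r_1(q)=\tfrac{q}{2\bhm}+\cO(q^2)$ and $r_1(Q^2)=\tfrac{Q^2}{2\bhm}+\cO(Q^4)$, which is positive for small $Q>0$ and in particular confirms non-degeneracy in the sense of Definition~\ref{DefRNdSNonDegenerate}. I expect no real obstacle here; the only point needing a moment's care is that the root at the origin moves to the \emph{right} as $q$ increases, which is precisely the sign statement $r_1'(0)>0$, everything else being bookkeeping with the implicit function theorem and a root count for a quartic.
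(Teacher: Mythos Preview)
Your proposal is correct and follows essentially the same approach as the paper: apply the implicit function theorem to $\Delta_r(r,q)$ at each simple root, then substitute $q=Q^2$ and differentiate implicitly at $q=0$ to obtain $r_1'(0)=\tfrac{1}{2\bhm}$. You are slightly more thorough than the paper in explicitly verifying that the perturbed root $r_1$ is positive and that the roots remain ordered, but the argument is the same.
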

\begin{proof}
  The existence of the functions $r_j(Q)$ follows from the implicit function theorem, taking into account the simplicity of the roots $r_{j,0}$ of $\Delta_r(r,0)$. Let us write $\wt r_j(q)=r_j(\sqrt{q})$; these are smooth functions of $q$. Differentiating $0=\Delta_r(\wt r_1(q),q)$ with respect to $q$ gives $0=-2\bhm\wt r_1'(0)+1$, hence $\wt r_1(q)=\frac{q}{2\bhm}+\cO(q^2)$, which yields the analogous expansion for $r_1(Q)$.
\end{proof}

\subsection{Construction of the compactified spacetime}
\label{SubsecRNdSMfd}

We now discuss the extension of the metric \eqref{EqRNdSMetric} beyond the event and cosmological horizon, as well as beyond the Cauchy horizon; the purpose of the present section is to define the manifold on which our analysis of linear waves will take place. See Proposition~\ref{PropRNdSMfd} for the final result. We begin by describing the extension of the metric \eqref{EqRNdSMetric} beyond the event and the cosmological horizon, thereby repeating the arguments of \cite[\S6]{VasyMicroKerrdS}; see Figure~\ref{FigRNdSExt23}.

\begin{figure}[!ht]
  \centering
  \includegraphics{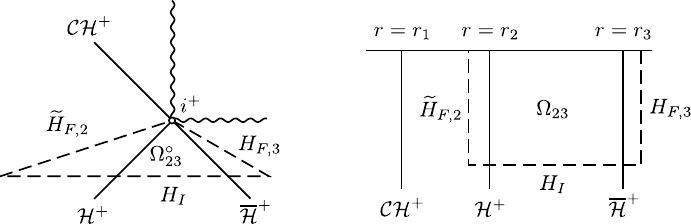}
  \caption{\textit{Left:} Part of the Penrose diagram of the maximally extended Reissner--Nordstr\"om--de Sitter solution, with the cosmological horizon $\ol\cH^+$, the event horizon $\cH^+$ and the Cauchy horizon $\cCH^+$. We first study a region $\Omega_{23}^\circ$ bounded by an initial Cauchy hypersurface $H_I$ and two final Cauchy hypersurfaces $\wt H_{F,2}$ and $H_{F,3}$. \textit{Right:} The same region, compactified at infinity ($i_+$ in the Penrose diagram), with the artificial hypersurfaces put in.}
\label{FigRNdSExt23}
\end{figure}

Write $s_j=-\sgn\mu'(r_j)$, so
\begin{equation}
\label{EqRNdSSigns}
  s_1=1, \quad s_2=-1, \quad s_3=1.
\end{equation}
We denote by $F_{23}(r)\in\CI((r_2,r_3))$ a smooth function such that
\begin{equation}
\label{EqRNdSF23}
  F_{23}'(r) = s_j(\mu^{-1}+c_j)\quad\tn{ for }r\in(r_2,r_3), \ |r-r_j|<2\delta,
\end{equation}
$\delta>0$ small, with $c_j$, smooth near $r=r_j$, to be specified momentarily. (Thus, $F_{23}(r)\to+\infty$ as $r\to r_2+$ and $r\to r_3-$.) We then put
\begin{equation}
\label{EqRNdST23}
  t_{23} := t - F_{23}
\end{equation}
and compute
\begin{equation}
\label{EqRNdSg23}
  g = \mu\,dt_{23}^2 + 2s_j(1+\mu c_j)\,dt_{23}\,dr + (2c_j+\mu c_j^2)\,dr^2 - r^2\,d\omega^2,
\end{equation}
which is a non-degenerate Lorentzian metric up to $r=r_2,r_3$, with dual metric
\[
  G = -(2c_j+\mu c_j^2)\pa_{t_{23}}^2 + 2s_j(1+\mu c_j)\pa_{t_{23}}\pa_r - \mu\pa_r^2 - r^{-2}\pa_\omega^2.
\]
We can choose $c_j$ so as to make $dt_{23}$ timelike, i.e.\ $\la dt_{23},dt_{23}\ra_G=-(2c_j+\mu c_j^2)>0$: indeed, choosing $c_j=-\mu^{-1}$ (which undoes the coordinate change \eqref{EqRNdST23}, up to an additive constant) accomplishes this trivially in $[r_2,r_3]$ away from $\mu=0$; however, we need $c_j$ to be smooth at $\mu=0$ as well. Now, $dt_{23}$ is timelike in $\mu>0$ if and only if $c_j(c_j+2\mu^{-1})<0$, which holds for any $c_j\in(-2\mu^{-1},0)$. Therefore, we can choose $c_2$ smooth near $r_2$, with $c_2=-\mu^{-1}$ for $r>r_2+\delta$, and $c_3$ smooth near $r_3$, with $c_3=-\mu^{-1}$ for $r<r_3-\delta$, and thus a function $F_{23}\in\CI((r_2,r_3))$, such that in the new coordinate system $(t_{23},r,\omega)$, the metric $g$ extends smoothly to $r=r_2,r_3$, and $dt_{23}$ is timelike for $r\in[r_2,r_3]$; and furthermore we can arrange that $t_{23}=t$ in $[r_2+\delta,r_3-\delta]$ by possibly changing $F_{23}$ by an additive constant.

Extending $c_j$ smoothly beyond $r_j$ in an arbitrary manner, the expression \eqref{EqRNdSg23} makes sense for $r\geq r_3$ as well as for $r\in(r_1,r_2]$. We first notice that we can choose the extension $c_j$ such that $dt_{23}$ is timelike also for $r\in(r_1,r_2)\cup(r_3,\infty)$: indeed, for such $r$, we have $\mu<0$, and the timelike condition becomes $c_j(c_j+2\mu^{-1})>0$, which is satisfied as long as $c_j\in(-\infty,0)$ there. In particular, we can take $c_2=\mu^{-1}$ for $r\in(r_1,r_2-\delta]$ and $c_3=\mu^{-1}$ for $r\geq r_3+\delta$, in which case we get
\begin{equation}
\label{EqRNdSg23NearR2}
  g = \mu\,dt_{23}^2 + 4s_j\,dt_{23}\,dr + 3\mu^{-1}\,dr^2 - r^2\,d\omega^2
\end{equation}
for $r\in(r_1,r_2-\delta]$ with $j=2$, and for $r\in[r_3+\delta,\infty)$ with $j=3$. We define $F_{23}'$ beyond $r_2$ and $r_3$ by the same formula \eqref{EqRNdSF23}, using the extensions of $c_2$ and $c_3$ just described; in particular $F_{23}'=-2\mu^{-1}$ in $r\leq r_2-\delta$. We define a time orientation in $r\geq r_2-2\delta$ by declaring $dt_{23}$ to be future timelike.

We introduce spacelike hypersurfaces in the thus extended spacetime as indicated in Figure~\ref{FigRNdSExt23}, namely
\begin{equation}
\label{EqRNdSHypersurfacesIF3}
\begin{gathered}
  H_I = \{ t_{23}=0, \ r_2-2\delta\leq r\leq r_3+2\delta \}, \\
  H_{F,3} = \{ t_{23}\geq 0, \ r=r_3+2\delta \},
\end{gathered}
\end{equation}
and
\begin{equation}
\label{EqRNdSHypersurfacesTildeF2}
  \wt H_{F,2} = \{ t_{23}\geq 0, \ r=r_2-2\delta \}.
\end{equation}

\begin{rmk}
\label{RmkRNdSHypersurfaceNotation}
  Here and below, the subscript `I' (initial), resp.\ `F' (final), indicates that outward pointing timelike vectors are past, resp.\ future, oriented. The number in the subscript denotes the horizon near which the surface is located.
\end{rmk}

Notice here that indeed $G(dr,dr)=-\mu>0$ and $G(dt_{23},dr)=2$ at $H_{F,3}$, so $dt_{23}$ and $dr$ have opposite timelike character there, while likewise $G(-dr,-dr)=-\mu>0$ and $G(dt_{23},-dr)=2$ at $\wt H_{F,2}$. The tilde indicates that $\wt H_{F,2}$ will eventually be disposed of; we only define it here to make the construction of the extended spacetime clearer. The region $\Omega_{23}^\circ$ is now defined as
\begin{equation}
\label{EqRNdSOmega23}
  \Omega_{23}^\circ = \{ t_{23}\geq 0, r_2-2\delta\leq r\leq r_3+2\delta \}.
\end{equation}

\begin{figure}[!ht]
  \centering
  \includegraphics{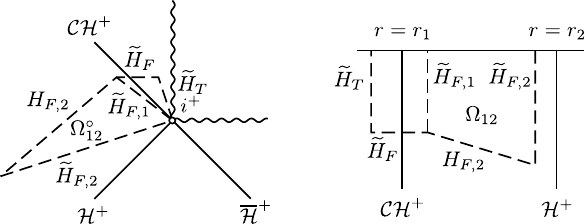}
  \caption{\textit{Left:} We describe a region $\Omega_{12}^\circ$ bounded by three final Cauchy hypersurfaces $H_{F,2}$, $\wt H_{F,1}$ and $\wt H_{F,2}$. A partial extension beyond the Cauchy horizon is bounded by the final hypersurface $\wt H_F$ and a timelike hypersurfaces $\wt H_T$. \textit{Right:} The same region, compactified at infinity, with the artificial hypersurfaces put in.}
\label{FigRNdSExt12}
\end{figure}

Next, we further extend the metric beyond the coordinate singularity of $g$ at $r=r_1$ when written in the coordinates \eqref{EqRNdSg23NearR2}, at $r=r_1$; see Figure~\ref{FigRNdSExt12}: let
\[
  t_{12} = t_{23} - F_{12},
\]
where now $F_{12}' = 3\mu^{-1}+c_1$, with $c_1=-3\mu^{-1}$ for $r\in[r_2-2\delta,r_2)$, and $c_1$ smooth down to $r=r_1$. Thus, by adjusting $F_{12}$ by an additive constant, we may arrange $t_{12}=t_{23}$ for $r\in[r_2-2\delta,r_2-\delta]$. Notice that (formally) $t_{12} = t - F_{23} - F_{12}$, and $F_{23}'+F_{12}'=s_1(\mu^{-1}+c_1)$ in $(r_1,r_2-\delta]$. Thus,
\begin{equation}
\label{EqRNdSg12}
  g = \mu\,dt_{12}^2 + 2(1+\mu c_1)\,dt_{12}\,dr + (2c_1+\mu c_1^2)\,dr^2 - r^2\,d\omega^2, \quad r\in[r_1-2\delta,r_2-\delta],
\end{equation}
after extending $c_1$ smoothly into $r\geq r_1-2\delta$. This expression is of the form \eqref{EqRNdSg23}, with $t_{23}$, $s_j$ and $c_j$ replaced by $t_{12}$, $s_1=1$ and $c_1$, respectively. In particular, by the same calculation as above, $dt_{12}$ is timelike provided $c_1<0$ or $c_1>-2\mu^{-1}$ in $\mu<0$, while in $\mu>0$, any $c_1\in(-2\mu^{-1},0)$ works. However, since we need $c_1=-3\mu^{-1}$ for $r$ near $r_2$ (where $\mu<0$), requiring $dt_{12}$ to be timelike would force $c_1>-2\mu^{-1}\to\infty$ as $r\to r_1+$, which is incompatible with $c_1$ being smooth down to $r=r_1$. In view of the Penrose diagram of the spacetime in Figure~\ref{FigRNdSExt12}, it is clear that this must happen, since we cannot make the level sets of $t_{12}$ (which coincide with the level sets of $t_{23}$, i.e.\ with parts of $H_I$, near $r=r_2$) both remain spacelike and cross the Cauchy horizon in the indicated manner. Thus, we merely require $c_1<0$ for $r\in[r_1-2\delta,r_1+2\delta]$, making $dt_{12}$ timelike there, but losing the timelike character of $dt_{12}$ in a subset of the transition region $(r_1+2\delta,r_2-2\delta)$. Moreover, similarly to the choices of $c_2$ and $c_3$ above, we take $c_1=\mu^{-1}$ in $[r_1+\delta,r_1+2\delta]$ and $c_1=-\mu^{-1}$ in $[r_1-2\delta,r_1-\delta]$.

Using the coordinates $t_{12},r,\omega$, we thus have $\wt H_{F,2}=\{r=r_2-2\delta, \ t_{12}\geq 0\}$; we further define
\begin{equation}
\label{EqRNdSHypersurfaceF2}
  H_{F,2} = \{ \eps t_{12}+r=r_2-2\delta, \ r_1+2\delta\leq r\leq r_2-2\delta \};
\end{equation}
thus, $H_{F,2}$ intersects $H_I$ at $t_{12}=0$, $r=r_2-2\delta$. We choose $\eps$ as follows: we calculate the squared norm of the conormal of $H_{F,2}$ using \eqref{EqRNdSg12} as
\begin{align*}
  G(\eps\,dt_{12}+dr,\eps\,dt_{12}+dr) &= -(2c_1+\mu c_1^2)\eps^2 + 2(1+\mu c_1)\eps - \mu \\
    &= (1-\eps c_1)(2\eps - \mu(1-\eps c_1)),
\end{align*}
which is positive in $[r_1+2\delta,r_2-2\delta]$ provided $\eps>0$, $1-\eps c_1>0$, since $\mu<0$ in this region. Therefore, choosing $\eps$ so that it verifies these inequalities, $H_{F,2}$ is spacelike. Put $t_{12,0}:=\eps^{-1}((r_2-2\delta)-(r_1+2\delta))$, so $t_{12}=t_{12,0}$ at $\{r=r_1+2\delta\}\cap H_{F,2}$, and define
\begin{equation}
\label{EqRNdSHypersurfacesTildeF1FT}
\begin{gathered}
  \wt H_{F,1} = \{ t_{12} \geq t_{12,0}, \ r=r_1+2\delta \}, \\
  \wt H_F = \{ t_{12} = t_{12,0}, \ r_1-2\delta\leq r\leq r_1+2\delta \}, \\
  \wt H_T = \{ t_{12} \geq t_{12,0}, \ r=r_1-2\delta \}.
\end{gathered}
\end{equation}
We note that $\wt H_{F,1}$ is indeed spacelike, as $G(dr,dr)=-\mu>0$ there, and $\wt H_F$ is spacelike by construction of $t_{12}$. The surface $\wt H_T$ is timelike (hence the subscript). Putting
\begin{equation}
\label{EqRNdSOmega12}
  \Omega_{12}^\circ = \{ \eps t_{12}+r \geq r_2-2\delta, \ r_1+2\delta\leq r\leq r_2-2\delta \}
\end{equation}
finishes the definition of all objects in Figure~\ref{FigRNdSExt12}.

In order to justify the subscripts `F', we compute a smooth choice of time orientation: first of all, $dt_{12}$ is future timelike (by choice) in $r\geq r_2-2\delta$; furthermore, in $(r_1,r_2)$, we have $G(dr,dr)=-\mu>0$, so $dr$ is timelike in $(r_1,r_2)$. We then calculate
\[
  \la -dr,dt_{12}\ra_G = 2 > 0
\]
in $[r_2-2\delta,r_2-\delta]$, so $-dr$ and $dt_{12}$ are in the same causal cone there, in particular $-dr$ is future timelike in $(r_1,r_2-\delta]$, which justifies the notation $\wt H_{F,1}$; furthermore $dt_{12}$ is timelike for $r\leq r_1+2\delta$, with
\[
  \la -dr,-dt_{12}\ra_G = 2 > 0
\]
in $[r_1+\delta,r_1+2\delta]$ (using the form \eqref{EqRNdSg12} of the metric with $c_1=\mu^{-1}$ there), hence $-dr$ and $-dt_{12}$ are in the same causal cone here. Thus, $dt_{12}$ is \emph{past} timelike in $r\leq r_1+2\delta$, justifying the notation $\wt H_F$. See also Figure~\ref{FigRNdSRadial} below. Lastly, for $H_{F,2}$, we compute
\[
  G(\eps\,dt_{12}+dr,-dr) = -\eps(1+\mu c_1)+\mu = \mu(1-\eps c_1)-\eps < 0
\]
by our choice of $\eps$, hence the future timelike 1-form $-dr$ is indeed outward pointing at $H_{F,2}$. We remark that from the perspective of $\Omega_{12}^\circ$, the surface $\wt H_{F,2}$ is initial, but we keep the subscript `F' for consistency with the notation used in the discussion of $\Omega_{01}^\circ$.

\begin{figure}[!ht]
  \centering
  \includegraphics{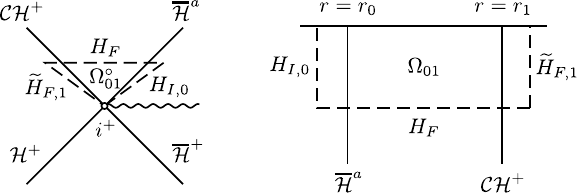}
  \caption{\textit{Left:} Penrose diagram of the region $\Omega_{01}^\circ$, bounded by the final Cauchy hypersurface $H_F$ and two initial hypersurfaces $H_{I,0}$ and $\wt H_{F,1}$. The artificial extension in the region behind the Cauchy horizon removes the curvature singularity and generates an artificial horizon $\ol\cH^a$. \textit{Right:} The same region, compactified at infinity, with the artificial hypersurfaces put in.}
\label{FigRNdSExt01}
\end{figure}

One can now analyze linear waves on the spacetime $\Omega_{12}^\circ\cup\Omega_{23}^\circ$ if one uses the reflection of singularities at $\wt H_T$. (We will describe the null-geodesic flow in \S\ref{SubsecRNdSFlow}.) However, we proceed as explained in \S\ref{SecIntro} and add an artificial exterior region to the region $r\leq r_1-2\delta$; see Figure~\ref{FigRNdSExt01}. We first note that the form of the metric in $r\leq r_1-\delta$ is
\[
  g = \mu\,dt_{12}^2 - \mu^{-1}\,dr^2 - r^2\,d\omega^2,
\]
thus of the same form as \eqref{EqRNdSMetric}. Define a function $\mu_*\in\CI((0,\infty))$ such that
\begin{equation}
\label{EqRNdSMuStar}
\begin{split}
  \mu_* & \equiv\mu\tn{ in }[r_1-2\delta,\infty), \\
  \mu_* & \ \tn{has a single simple zero at }r_0\in(0,r_1), \tn{and} \\
  r^{-2}\mu_* & \ \tn{has a unique non-degenerate critical point }r_{P,*}\in(r_0,r_1),
\end{split}
\end{equation}
so $\mu_*'>0$ on $[r_0,r_{P,*})$ and $\mu_*'<0$ on $(r_{P,*},r_1]$, see Figure~\ref{FigRNdSMuStar}. One can in fact drop the last assumption on $\mu_*$, as we will do in the Kerr--de Sitter discussion for simplicity, but in the present situation, this assumption allows for the nice interpretation of the appended region as a `past' or `backwards' version of the exterior region of a black hole.

\begin{figure}[!ht]
  \centering
  \includegraphics{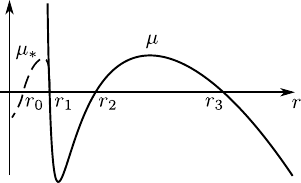}
  \caption{Illustration of the modification of $\mu$ (solid) in the region $r<r_1$ beyond the Cauchy horizon to a smooth function $\mu_*$ (dashed where different from $\mu$). Notice that the $\mu_*$ has the same qualitative properties near $[r_0,r_1]$ as near $[r_2,r_3]$.}
\label{FigRNdSMuStar}
\end{figure}

We extend the metric to $(r_0,r_1)$ by defining $g:=\mu_*\,dt_*^2 - \mu_*^{-1}\,dr^2 - r^2\,d\omega^2$. We then extend $g$ beyond $r=r_0$ as in \eqref{EqRNdSg23}: put
\[
  t_{01} = t_{12} - F_{01},
\]
with $F_{01}\in\CI((r_0,r_1))$, $F_{01}'=s_j(\mu_*^{-1}+c_0)$ when $|r-r_j|<2\delta$, $j=0,1$, where we set $s_0=-\sgn\mu_*'(r_0)=-1$; further let $c_0=-\mu_*^{-1}$ for $|r-r_j|\geq\delta$, so $t_{01}=t_{12}$ in $(r_1-2\delta,r_1-\delta)$ (up to redefining $F_{01}$ by an additive constant). Then, in $(t_{01},r,\omega)$-coordinates, the metric $g$ takes the form \eqref{EqRNdSg23} near $r_0$, with $t_{23}$ replaced by $t_{01}$ and $s_j=s_0=-1$; hence $g$ extends across $r=r_0$ as a non-degenerate stationary Lorentzian metric, and we can choose $c_0$ to be smooth across $r=r_0$ so that $dt_{01}$ is timelike in $[r_0-2\delta,r_1)$, and such that moreover $c_0=\mu_*^{-1}$ in $r<r_0-\delta$, thus ensuring the form \eqref{EqRNdSg23NearR2} of the metric (replacing $t_{23}$ and $s_j$ by $t_{01}$ and $-1$, respectively).

We can glue the functions $t_{01}$ and $t_{12}$ together by defining the smooth function $t_*$ in $[r_0-2\delta,r_2)$ to be equal to $t_{01}$ in $[r_0-2\delta,r_1)$ and equal to $t_{12}$ in $[r_1-\delta,r_2)$. Define
\begin{equation}
\label{EqRNdSHypersurfacesFI0}
\begin{gathered}
  H_F = \{ t_*\geq t_{12,0}, \ r_0-2\delta\leq r\leq r_1+2\delta \},
  H_{I,0} = \{ t_*\geq t_{12,0}, \ r=r_0-2\delta \};
\end{gathered}
\end{equation}
note here that $dt_{01}$ is \emph{past} timelike in $[r_0-2\delta,r_1)$. Lastly, we put
\begin{equation}
\label{EqRNdSOmega01}
  \Omega_{01}^\circ = \{ t_*\geq t_{12,0}, \ r_0-2\delta\leq r\leq r_1+2\delta \}.
\end{equation}
Note that in the region $\Omega_{01}^\circ$, we have produced an artificial horizon $\ol\cH^a$ at $r=r_0$. Again, the notation $\wt H_{F,1}$ is incorrect from the perspective of $\Omega_{01}^\circ$, but is consistent with the notation used in the discussion of $\Omega_{12}^\circ$.

Let us summarize our construction:

\begin{prop}
\label{PropRNdSMfd}
  Fix parameters $\Lambda>0,\bhm>0,Q>0$ of a Reissner--Nordstr\"om--de Sitter spacetime which is non-degenerate in the sense of Definition~\ref{DefRNdSNonDegenerate}. Let $\mu_*$ be a smooth function on $(0,\infty)$ satisfying \eqref{EqRNdSMuStar}, where $\mu$ is given by \eqref{EqRNdSMetric}. For $\delta>0$ small, define the manifold $M^\circ=\R_{t_*}\times (r_0-4\delta,r_3+4\delta)_r \times\Sph^2_\omega$ and equip $M^\circ$ with a smooth, stationary, non-degenerate Lorentzian metric $g$, which has the form
  \begin{align}
    \label{EqRNdSMetricOuterComm} g &= \mu_*\,dt_*^2 - \mu_*^{-1}\,dr^2 - r^2\,d\omega^2, \quad r\in[r_0+\delta,r_1-\delta]\cup[r_2+\delta,r_3-\delta], \\
    \begin{split}
    \label{EqRNdSMetricTransition} g &= \mu_*\,dt_*^2 + 2s_j(1+\mu_* c_j)\,dt_*\,dr+(2c_j+\mu_* c_j^2)\,dr^2-r^2\,d\omega^2, \\
        &\qquad\qquad |r-r_j|\leq 2\delta, \tn{ or }r\in[r_1+2\delta,r_2-2\delta],\ j=1,
    \end{split} \\
    \begin{split}
    \label{EqRNdSMetricBeyond} g &= \mu_*\,dt_*^2 + 4s_j\,dt_*\,dr + 3\mu_*^{-1}\,dr^2-r^2\,d\omega^2, \\
        &\qquad\qquad r\in[r_j-2\delta,r_j-\delta],\ j=0,2,\tn{ or }r\in[r_j+\delta,r_j+2\delta],\ j=1,3,
     \end{split}
  \end{align}
  in $[r_0-2\delta,r_3+2\delta]$, where $s_j=-\sgn\mu_*'(r_j)$. Then the region $r_1-2\delta\leq r\leq r_3+2\delta$ is isometric to a region in the Reissner--Nordstr\"om--de Sitter spacetime with parameters $\Lambda,\bhm,Q$, with $r_2<r<r_3$ isometric to the exterior domain (bounded by the event horizon $\cH^+$ at $r=r_2$ and the cosmological horizon $\ol\cH^+$ at $r=r_3$), $r_1<r<r_2$ isometric to the black hole region (bounded by the future Cauchy horizon $\cCH^+$ at $r=r_1$ and the event horizon), and $r_1-2\delta<r<r_1$ isometric to a region beyond the future Cauchy horizon. (See Figure~\ref{FigRNdSExtFull}.) Furthermore, $M^\circ$ is time-orientable.

  One can choose the smooth functions $c_j=c_j(r)$ such that $c_j(r_j)<0$ and
  \[
  \begin{split}
    dt_*\tn{ is } & \tn{past timelike in }r\leq r_1+2\delta,\tn{ and} \\
        & \tn{future timelike in }r\geq r_2-2\delta;
  \end{split}
  \]
  The hypersurfaces
  \begin{equation}
  \label{EqRNdSMfdHyper}
  \begin{split}
    H_{I,0} &= \{ t_*\geq t_{*,0},\ r=r_0-2\delta \}, \\
    H_F &= \{ t_*=t_{*,0},\ r_0-2\delta\leq r\leq r_1+2\delta \}, \\
    H_{F,2} &= \{ \eps t_*+r=r_2-2\delta,\ r_1+2\delta\leq r\leq r_2-2\delta \}, \\
    H_I &= \{ t_*=0,\ r_2-2\delta\leq r\leq r_3+2\delta \}, \\
    H_{F,3} &= \{ t_*\geq 0,\ r_3=2\delta \}
  \end{split}
  \end{equation}
  are spacelike provided $\eps>0$ is sufficiently small; here $t_{*,0}=\eps^{-1}(r_2-r_1-4\delta)$. They bound a domain $\Omega^\circ$, which is a submanifold of $M^\circ$ with corners. (Recall Remark~\ref{RmkRNdSHypersurfaceNotation} for our conventions in naming the hypersurfaces.)

  $M^\circ$ and $\Omega^\circ$ possess natural partial compactifications $M$ and $\Omega$, respectively, obtained by introducing $\tau=e^{-t_*}$ and adding to them their ideal boundary at infinity, $\tau=0$; the metric $g$ is a non-degenerate Lorentzian b-metric on $M$ and $\Omega$.
\end{prop}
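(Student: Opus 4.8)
The plan is to assemble the proposition from the constructions carried out above: essentially every clause has already been verified on one coordinate patch or another, and what remains is to organize the pieces into a single global statement and check the consistency of the patching.

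First I would pin down the global time function. On $M^\circ$ one glues together $t_{01}$ (defined near $r=r_0$, valid for $r\le r_1-\delta$), $t_{12}$ (defined near $r=r_1$ and on the transition region, valid for $r_1-\delta\le r\le r_2-\delta$) and $t_{23}$ (defined near $r=r_2$ and $r=r_3$, valid for $r\ge r_2-\delta$); by construction these coordinate functions were arranged to agree on the overlaps $(r_1-2\delta,r_1-\delta)$ and $[r_2-2\delta,r_2-\delta]$ after absorbing additive constants, so they glue to a single smooth function $t_*$ on $M^\circ$, with $t_*$-translation invariance of $g$ preserved throughout. In the coordinates $(t_*,r,\omega)$ the metric is, on each indicated $r$-range, exactly one of the local expressions \eqref{EqRNdSMetric}, \eqref{EqRNdSg23}, \eqref{EqRNdSg23NearR2}, \eqref{EqRNdSg12}, with $\mu$ replaced by $\mu_*$ where $r<r_1-2\delta$; each of these is manifestly a smooth, stationary, non-degenerate Lorentzian metric, which yields the forms \eqref{EqRNdSMetricOuterComm}--\eqref{EqRNdSMetricBeyond} and the smoothness and non-degeneracy of $g$ on all of $M^\circ$. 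The isometry claim is then immediate: for $r_1-2\delta\le r\le r_3+2\delta$ one has $\mu_*\equiv\mu$, so $g$ is the Reissner--Nordstr\"om--de Sitter metric \eqref{EqRNdSMetric} rewritten in the coordinates $t_{23},t_{12}$ and smoothly extended across $r=r_1,r_2,r_3$; the identification of the sub-ranges $(r_2,r_3)$, $(r_1,r_2)$, $(r_1-2\delta,r_1)$ with the exterior region, the black hole region, and a region beyond the Cauchy horizon follows from the sign of $\mu$ on each ($\mu>0$, resp.\ $\mu<0$, resp.\ $\mu>0$) together with the standard structure of the horizons.

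Next I would record the time orientation. By the choices of $c_0,c_1$ (resp.\ $c_2,c_3$) the $1$-form $dt_*$ is past timelike for $r\le r_1+2\delta$ (resp.\ future timelike for $r\ge r_2-2\delta$). On the transition region $r_1+2\delta<r<r_2-2\delta$ one has $\mu<0$, hence $G(dr,dr)=-\mu>0$, so $dr$ is timelike there; the computations $\la -dr,dt_*\ra_G=2>0$ near $r=r_2-2\delta$ and $\la -dr,-dt_*\ra_G=2>0$ near $r=r_1+2\delta$ show that declaring $-dr$ to be future timelike on the transition region is compatible with both outer choices, so these local choices patch (interpolating inside a causal cone by a partition of unity) to a global smooth time orientation, and $M^\circ$ is time-orientable. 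The hypersurfaces \eqref{EqRNdSMfdHyper} are spacelike: $H_{F,3}$ and $H_{I,0}$ lie in $\{\mu_*<0\}$, where $G(dr,dr)=-\mu_*>0$; $H_I$ and $H_F$ are level sets of the timelike function $t_*$; and for $H_{F,2}$ one computes, as before,
\[
  G(\eps\,dt_*+dr,\eps\,dt_*+dr)=(1-\eps c_1)\bigl(2\eps-\mu(1-\eps c_1)\bigr),
\]
which is positive on $r_1+2\delta\le r\le r_2-2\delta$ once $\eps>0$ is small enough that $1-\eps c_1>0$, using $\mu<0$; this also fixes $t_{*,0}$. The initial/final ("I"/"F") labels are justified by pairing the outward conormals with the future timelike forms just constructed ($-dr$ on the transition region, $-dr$ near $r=r_0-2\delta$, $dt_*$ near $r=r_3+2\delta$, and so on), exactly as in the local computations. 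Since consecutive bounding hypersurfaces have linearly independent conormals along their intersections, the enclosed region $\Omega^\circ$ is a submanifold with corners of $M^\circ$. Finally, for the compactification I would set $\tau=e^{-t_*}\in(0,\infty)$ and adjoin $\{\tau=0\}$ with $\tau$ a boundary defining function; since the coefficients of $g$ in $(t_*,r,\omega)$ are $t_*$-independent, in $(\tau,r,\omega)$ they are $\tau$-independent functions of $(r,\omega)$ smooth up to $\tau=0$, and $dt_*=-d\tau/\tau$, so $g$ is a linear combination with such coefficients of $d\tau^2/\tau^2$, $(d\tau/\tau)\otimes dr+dr\otimes(d\tau/\tau)$, $dr\otimes dr$ and $d\omega^2$, i.e.\ a smooth section of $\mathrm{Sym}^2\,\Tb^*M$, non-degenerate and Lorentzian by restriction from $M^\circ$; restricting to $\Omega$ and adjoining the corner along $\{\tau=0\}$ gives the b-metric on $\Omega$.

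The only step requiring genuine care --- and it is bookkeeping rather than a conceptual difficulty --- is this global consistency: that the three coordinate families glue to a single $t_*$ across the horizons, and that the time orientation extends continuously through the transition region $r_1+2\delta<r<r_2-2\delta$, where $dt_*$ is no longer timelike and one must switch to $\pm dr$. Everything else is read directly off the explicit metric forms \eqref{EqRNdSg23}--\eqref{EqRNdSMuStar} already derived.
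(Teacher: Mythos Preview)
Your proposal is correct and follows the same approach as the paper: the proposition is a summary of the step-by-step construction preceding it, and the proof consists of observing that the local time functions $t_{01},t_{12},t_{23}$ glue to a single $t_*$ on the overlaps, with all metric, causal, and hypersurface properties already verified in the individual patches. The paper's own proof is a two-sentence version of exactly this assembly; you have simply written out the bookkeeping in more detail.
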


Adding $\tau=0$ to $M^\circ$ means defining
\[
  M = \bigl(M^\circ \sqcup [0,1)_\tau\times(r_0-4\delta,r_3+4\delta)\times\Sph^2_\omega\bigr)/\sim,
\]
where $(\tau,r,\omega)\in(0,1)\times(r_0-4\delta,r_3+4\delta)\times\Sph^2$ is identified with the point $(t_*=-\log\tau,r,\omega)\in M^\circ$, and we define the smooth structure on $M$ by declaring $\tau$ to be a smooth boundary defining function.

\begin{proof}[Proof of Proposition~\ref{PropRNdSMfd}.]
  The extensions described above amount to a direct construction of a manifold $\R_{t_*}\times[r_0-2\delta,r_3+2\delta]_r\times\Sph^2_\omega$, where we obtained the function $t_*$ by gluing $t_{01}$ and $t_{12}$ in $[r_1-2\delta,r_1-\delta]$, and similarly $t_{12}$ and $t_{23}$ in $[r_2-2\delta,r_2-\delta]$; we then extend the metric $g$ non-degenerately to a stationary metric in $r>r_0-4\delta$ and $r<r_3+4\delta$, thus obtaining a metric $g$ on $M^\circ$ with the listed properties.
\end{proof}

\begin{figure}[!ht]
  \centering
  \includegraphics{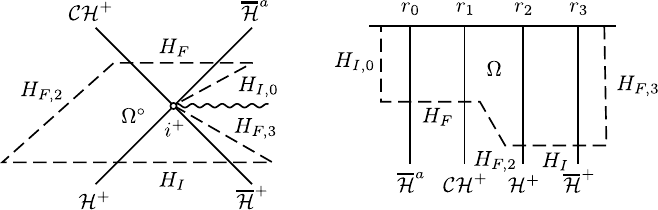}
  \caption{\emph{Left:} The Penrose diagram for $\Omega^\circ$, which is the diagram of Reissner--Nordstr\"om--de Sitter in a neighborhood of the exterior domain and of the black hole region as well as near the Cauchy horizon; further beyond the Cauchy horizon, we glue in an artificial exterior region, eliminating the singularity at $r=0$. \emph{Right:} The compactification of $\Omega^\circ$ to a manifold with corners $\Omega$; the smooth structure of $\Omega$ is the one induced by the embedding of $\Omega$ into the plane (cross $\Sph^2$) as displayed here.}
\label{FigRNdSExtFull}
\end{figure}

We define the regions $\Omega_{01}^\circ,\Omega_{12}^\circ$ and $\Omega_{23}^\circ$ as in \eqref{EqRNdSOmega01}, \eqref{EqRNdSOmega12} and \eqref{EqRNdSOmega23}, respectively, as submanifolds of $\Omega^\circ$ with corners; their boundary hypersurfaces are hypersurfaces within $\Omega^\circ$. We denote the closures of these domains and hypersurfaces in $\Omega$ by the same names, but dropping the superscript `$\circ$'. Furthermore, we write
\begin{equation}
\label{EqRNdSBoundaries}
  X = \pa M, \quad Y=\Omega\cap\pa M
\end{equation}
for the ideal boundaries at infinity.

\subsection{Global behavior of the null-geodesic flow}
\label{SubsecRNdSFlow}

One reason for constructing the compactification $\Omega$ step by step is that the null-geodesic dynamics almost decouple in the subdomains $\Omega_{01}$, $\Omega_{12}$ and $\Omega_{23}$, see Figures~\ref{FigRNdSExt01}, \ref{FigRNdSExt12} and \ref{FigRNdSExt23}.

We denote by $G$ the dual metric of $g$. We recall that we can glue $\frac{d\tau}{\tau}=-dt_*$ in $\Omega_{01}$, $-dr$ in $[r_1+\delta,r_2-\delta]$ and $-\frac{d\tau}{\tau}=dt_*$ in $\Omega_{23}$ together using a non-negative partition of unity and obtain a 1-form
\[
  \varpi\in\CI(\Omega,\Tb^*_\Omega M)
\]
which is everywhere future timelike in $\Omega$. Thus, the characteristic set of $\Box_g$,
\[
  \Sigma = G^{-1}(0) \subset \Tb^*_\Omega M\setminus o,
\]
with $G(\zeta)=\la\zeta,\zeta\ra_G$ the dual metric function, globally splits into two connected components
\begin{equation}
\label{EqRNdSFlowCharComp}
  \Sigma=\Sigma_+\cup\Sigma_-, \quad \Sigma_\pm = \{ \zeta\in\Sigma \colon \mp\la\zeta,\varpi\ra_G>0 \}.
\end{equation}
(Indeed, if $\la\zeta,\varpi\ra_G=0$, then $\zeta\in\la\varpi\ra^\perp$, which is spacelike, so $G(\zeta)=\la\zeta,\zeta\ra_G=0$ shows that $\zeta=0$.) Thus, $\Sigma_+$, resp.\ $\Sigma_-$, is the union of the past, resp.\ future, causal cones. We note that $\Sigma$ and $\Sigma_\pm$ are smooth codimension 1 submanifolds of $\Tb^*_\Omega M\setminus o$ in view of the Lorentzian nature of the dual metric $G$. Moreover, $\Sigma_\pm$ is transversal to $\Tb^*_Y M$, in fact the differentials $d G$ and $d\tau$ ($\tau$ lifted to a function on $\Tb^*M$) are linearly independent everywhere in $\Tb^*_\Omega M\setminus o$.

We begin by analyzing the null-geodesic flow (in the b-cotangent bundle) near the horizons: we will see that the Hamilton vector field $\ham_G$ has critical points where the horizons intersect the ideal boundary $Y$ of $\Omega$; more precisely, $\ham_G$ is radial there. In order to simplify the calculations of the behavior of $\ham_G$ nearby, we observe that the smooth structure of the compactification $\Omega$, which is determined by the function $\tau=e^{-t_*}$, is unaffected by the choice of the functions $c_j$ in Proposition~\ref{PropRNdSMfd}, since changing $c_j$ merely multiplies $\tau$ by a positive function that only depends on $r$, hence is smooth on our initial compactification $\Omega$. Now, the intersections $Y\cap\{r=r_j\}$ are smooth boundary submanifolds of $M$, and we define
\[
  L_j := \Nb^*(Y\cap\{r=r_j\}),
\]
which is well-defined given merely the smooth structure on $\Omega$. The point of our observation then is that we can study the Hamilton flow near $L_j$ using any choice of $c_j$. Thus, introducing $t_0=t-F(r)$, with $F'=s_j\mu^{-1}$ near $r_j$, we find from \eqref{EqRNdSMetricTransition} that
\[
  g = \mu_*\,dt_0^2 + 2s_j\,dt_0\,dr - r^2\,d\omega^2, \quad G = 2s_j\pa_{t_0}\pa_r - \mu_*\pa_r^2 - r^{-2}\pa_\omega^2.
\]
Let $\tau_0:=e^{-t_0}$. Then, with $\pa_{t_0}=-\tau_0\pa_{\tau_0}$, and writing b-covectors as
\[
  \sigma\,\frac{d\tau_0}{\tau_0} + \xi\,dr + \eta\,d\omega,
\]
the dual metric function $G\in\CI(\Tb^*_\Omega M)$ near $L_j$ is then given by
\begin{equation}
\label{EqRNdSMetricWithTau0}
  G = -2s_j\sigma\xi - \mu_*\xi^2 - r^{-2}|\eta|^2.
\end{equation}
Correspondingly, the Hamilton vector field is
\[
  \ham_G = -2s_j\xi\tau_0\pa_{\tau_0} - 2(s_j\sigma+\mu_*\xi)\pa_r - r^{-2}\ham_{|\eta|^2} + (\mu_*'\xi^2-2r^{-3}|\eta|^2)\pa_\xi.
\]
To study the $\ham_G$-flow in the radially compactified b-cotangent bundle near $\pa L_j$, we introduce rescaled coordinates
\begin{equation}
\label{EqRNdSFlowRescaledCoord}
  \wh\rho = \frac{1}{|\xi|}, \quad \wh\eta=\frac{\eta}{|\xi|}, \quad \wh\sigma=\frac{\sigma}{|\xi|}.
\end{equation}
We then compute the rescaled Hamilton vector field in $\pm\xi>0$ to be
\begin{align*}
  \rham_G = \wh\rho\ham_G &= \mp 2s_j\tau_0\pa_{\tau_0} - 2(s_j\wh\sigma\pm\mu_*)\pa_r - \wh\rho r^{-2}\ham_{|\eta|^2} \\
    &\qquad \mp (\mu_*'-2r^{-3}|\wh\eta|^2)(\wh\rho\pa_{\wh\rho}+\wh\eta\pa_{\wh\eta}+\wh\sigma\pa_{\wh\sigma});
\end{align*}
writing $|\eta|^2=k^{ij}(y)\eta_i\eta_j$ in a local coordinate chart on $\Sph^2$, we have $\wh\rho\ham_{|\eta|^2}=2k^{ij}\wh\eta_i\pa_{y_j}-\pa_{y^k}g^{ij}(y)\wh\eta_i\wh\eta_j\pa_{\wh\eta_k}$. Thus, $\rham_G = \mp 2s_j\tau_0\pa_{\tau_0} \mp \mu_*'\wh\rho\pa_{\wh\rho}$ at $L_j\cap\{\pm\xi>0\}$. In particular,
\begin{equation}
\label{EqRNdSRadialPointHamDer}
  \tau_0^{-1}\rham_G\tau_0 = \mp 2s_j, \quad \wh\rho^{-1}\rham_G\wh\rho=\mp\mu_*'
\end{equation}
have opposite signs (by definition of $s_j$), and the quantity which will control regularity and decay thresholds at the radial set $L_j$ is the quotient
\begin{equation}
\label{EqRNdSRadialPointBeta}
  \beta_j := -\frac{\tau_0^{-1}\rham_G\tau_0}{\wh\rho^{-1}\rham_G\wh\rho} = \frac{2}{|\mu_*'(r_j)|};
\end{equation}
see Definition~\ref{DefRNdSOrderFunctions} and the proof of Proposition~\ref{PropRNdSGlobalReg} for their role. We remark that the reciprocal
\begin{equation}
\label{EqRNdSSurfGrav}
  \kappa_j:=\beta_j^{-1}
\end{equation}
is equal to the \emph{surface gravity} of the horizon at $r=r_j$, see e.g.\ \cite{DafermosRodnianskiLectureNotes}.

We proceed to verify that $\pa L_j\subset\rcTb^*_X M$ is a source/sink for the $\rham_G$-flow within $\rcTb^*_X M$ by constructing a quadratic defining function $\rho_0$ of $\pa L_j$ within $\Sigma\cap\Sb^*_X M$ for which
\begin{equation}
\label{EqRNdSQuadraticDef}
  \pm s_j\rham_G\rho_0 \geq \beta_q\rho_0,\quad \beta_q>0,
\end{equation}
modulo terms which vanish cubically at $L_j$; note that $\pm s_j\rham_G\wh\rho=|\mu_*'|\wh\rho$ has the same relative sign. Now, $\pa L_j$ is defined within $\tau=0,\wh\rho=0$ by the vanishing of $\wh\eta$ and $\wh\sigma$, and we have $\pm s_j\rham_G|\wh\eta|^2=2|\mu_*'| |\wh\eta|^2$, likewise for $\wh\sigma$; therefore
\[
  \rho_0 := |\wh\eta|^2 + |\wh\sigma|^2
\]
satisfies \eqref{EqRNdSQuadraticDef}. (One can in fact easily diagonalize the linearization of $\rham_G$ at its critical set $\pa L_j$ by observing that
\[
  \rham_G\wh\sigma=\mp\mu'\wh\sigma,\quad \rham_G\wh\eta=\mp\mu'\wh\eta, \quad \rham_G\bigl((r-r_j)\mp\beta_j\wh\sigma\bigr)=\mp 2\mu'\bigl((r-r_j)\mp\beta_j\wh\sigma\bigr)
\]
modulo quadratically vanishing terms.)

Further studying the flow at $r=r_j$, we note that $dr$ is null there, and writing
\begin{equation}
\label{EqRNdSTbCoords}
  \zeta=\sigma\,\frac{d\tau}{\tau}+\xi\,dr+\eta\,d\omega,
\end{equation}
a covector $\zeta\in\Sigma\cap\{r=r_j\}$ is in the orthocomplement of $dr$ if and only if $0=\la dr,\zeta\ra_G=-s_j\sigma$ (using the form \eqref{EqRNdSMetricTransition} of the metric), which then implies $\eta=0$ in view of $\zeta\in\Sigma$. Since $\ham_G r=2\la dr,\zeta\ra_G$, we deduce that $\ham_G r\neq 0$ at $\Sigma\cap\{r=r_j\}\setminus\cL_j$, where we let
\[
  \cL_j=\Nb^*\{r=r_j\} = \{ r=r_j,\ \sigma=0,\ \eta=0 \};
\]
we note that this set is invariant under the Hamilton flow. More precisely, we have $\la dr,\frac{d\tau}{\tau}\ra_G=-s_j$, so for $j=3$, i.e.\ at $r=r_3$, $dr$ is in the same causal cone as $-d\tau/\tau$, hence in the future null cone; thus, letting $\cL_{j,\pm}=\cL_j\cap\Sigma_\pm$ and taking $\zeta\in\Sigma_-\cap\{r=r_3\}\setminus\cL_{3,-}$, we find that $\zeta$ lies in the same causal cone as $dr$, but $\zeta$ is not orthogonal to $dr$, hence we obtain $\ham_G r>0$; more generally,
\begin{equation}
\label{EqRNdSCrossingHorizon}
  \pm\ham_G r < 0\quad \tn{ at }\Sigma_\pm\cap\{r=r_3\}\setminus\cL_{3,\pm}.
\end{equation}
It follows that forward null-bicharacteristics in $\Sigma_+$ can only cross $r=r_3$ in the inward direction ($r$ decreasing), while those in $\Sigma_-$ can only cross in the outward direction ($r$ increasing). At $r=r_0$, there is a sign switch both in the definition of $\Sigma_\pm$ (because there $-d\tau/\tau$ is \emph{past} timelike) and in $s_0=-1$, so the same statement holds there. At $r=r_2$, there is a single sign switch in the calculation because of $s_2=-1$, and at $r=r_1$ there is a single sign switch because of the definition of $\Sigma_\pm$ there, so forward null-bicharacteristics in $\Sigma_+$ can only cross $r=r_1$ or $r=r_2$ in the inward direction ($r$ decreasing), and forward bicharacteristics in $\Sigma_-$ only in the outward direction ($r$ increasing).

Next, we locate the radial sets $L_j$ within the two components of the characteristic set, i.e.\ determining the components
\[
  L_{j,\pm} := L_j \cap \Sigma_\pm
\]
of the radial sets. The calculations verifying the initial/final character of the artificial hypersurfaces appearing in the arguments of the previous section show that $\la dr,d\tau/\tau\ra<0$ at $r_1$ and $r_3$, while $\la dr,d\tau/\tau\ra>0$ at $r_0$ and $r_2$, so since $\Sigma_-$, resp.\ $\Sigma_+$, is the union of the future, resp.\ past, null cones, we have
\begin{gather*}
  L_{j,\pm}=\{\mp\xi>0\}\cap L_j,\quad j=0,3, \\
  L_{j,\pm}=\{\pm\xi>0\}\cap L_j,\quad j=1,2.
\end{gather*}
In view of \eqref{EqRNdSRadialPointHamDer} and taking into account that $\tau_0$ differs from $\tau$ by an $r$-dependent factor, while $\ham_G r=0$ at $L_j$, we thus have
\begin{equation}
\label{EqRNdSRadialPointHamTau}
\begin{split}
  \mp\tau^{-1}\rham_G\tau &= 2\quad\tn{at }L_{j,\pm},\ j=0,1, \\
  \pm\tau^{-1}\rham_G\tau &= 2\quad\tn{at }L_{j,\pm},\ j=2,3, \\
\end{split}
\end{equation}
We connect this with Figure~\ref{FigRNdSExtFull}. Namely, if we let $\cL_{j,\pm}=\cL_j\cap\Sigma_\pm$, then $\cL_{j,-}$ is the unstable manifold at $L_{j,-}$ for $j=0,1$ and the stable manifold at $L_{j,-}$ for $j=2,3$, and the other way around for $\cL_{j,+}$. In view of \eqref{EqRNdSRadialPointHamDer}, $L_{j,-}$ is a sink for the $\rham_G$ flow within $\Sb^*_X M$ for $j=0,1$, while it is a source for $j=2,3$, with sink/source switched for the `$+$' sign. See Figure~\ref{FigRNdSRadial}.

\begin{figure}[!ht]
  \centering
  \includegraphics{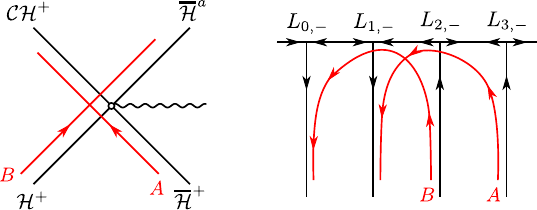}
  \caption{Saddle point structure of the null-geodesic flow within the component $\Sigma_-$ of the characteristic set, and the behavior of two null-geodesics. The arrows on the horizons are future timelike. In $\Sigma_+$, all arrows are reversed.}
\label{FigRNdSRadial}
\end{figure}

We next shift our attention to the two domains of outer communications, $r_0<r<r_1$ in $\Omega_{01}$ and $r_2<r<r_3$ in $\Omega_{23}$, where we study the behavior of the radius function along the flow using the form \eqref{EqRNdSMetricOuterComm} of the metric: thus, at a point $\zeta=\sigma\,\frac{d\tau}{\tau}+\xi\,dr+\eta\,d\omega\in\Sigma$, we have $\ham_G r=-2\mu_*\xi$, so $\ham_G r=0$ necessitates $\xi=0$, hence $r^{-2}|\eta|^2=\mu_*^{-1}\sigma^2$, and thus we get
\begin{equation}
\label{EqRNdSRadiusCritical}
  \ham_G^2 r=-2r^2\mu_*^{-1}\sigma^2(r^{-2}\mu_*)'.
\end{equation}
Now for $r\in(r_2,r_3)$,
\begin{equation}
\label{EqRNdSFlowMuPrime}
  (r^{-2}\mu_*)' = -2r^{-5}(r^2-3Mr+2Q^2)
\end{equation}
vanishes at the radius $r_P=\frac{3M}{2}+\sqrt{\frac{9M^2-8Q^2}{4}}$ of the \emph{photon sphere}, and $(r-r_P)(r^{-2}\mu_*)'<0$ for $r\neq r_P$; likewise, for $r\in(r_0,r_1)$, by construction \eqref{EqRNdSMuStar} we have $(r^{-2}\mu_*)'=0$ only at $r=r_{P,*}$, and $(r-r_{P,*})(r^{-2}\mu_*)'<0$ for $r\neq r_{P,*}$. Therefore, if $\ham_G r=0$, then $\ham_G^2 r>0$ unless $r=r_{P(,*)}$, in which case $\zeta$ lies in the \emph{trapped set}
\[
  \wt\Gamma_{(*)} = \{ (\tau,r,\omega;\sigma,\xi,\eta) \in \Sigma \colon r=r_{P(,*)}, \xi=0 \}.
\]
Restricting to bicharacteristics within $X=\{\tau=0\}$ (which is invariant under the $\ham_G$-flow since $\ham_G\tau=0$ there) and defining
\[
  \Gamma_{(*)} = \wt\Gamma_{(*)} \cap \{\tau=0\},
\]
we can conclude that all critical points of $F_{(*)}(r):=(r-r_{P(,*)})^2$ along null-geodesics in $(r_2,r_3)$ (or $(r_0,r_1)$) are strict local minima: indeed, if $\ham_G F_{(*)}=2(r-r_{P(,*)})\ham_G r=0$ at $\zeta$, then \emph{either} $r=r_{P(,*)}$, in which case $\ham_G^2 F_{(*)}=2(\ham_G r)^2>0$ unless $\ham_G r=0$, hence $\zeta\in\Gamma_{(*)}$, \emph{or} $\ham_G r=0$, in which case $\ham_G^2 F_{(*)}=2(r-r_{P(,*)})\ham_G^2 r>0$ unless $r=r_{P(,*)}$, hence again $\zeta\in\Gamma_{(*)}$. As in \cite[\S6.4]{VasyMicroKerrdS}, this implies that within $X$, forward null-bicharacteristics in $(r_2,r_3)$ (resp.\ $(r_0,r_1)$) either tend to $\Gamma\cup L_{2,+}\cup L_{3,+}$ (resp.\ $\Gamma_*\cup L_{0,-}\cup L_{1,-}$), or they reach $r=r_2$ or $r=r_3$ (resp.\ $r=r_0$ or $r=r_1$) in finite time, while backward null-bicharacteristics either tend to $\Gamma\cup L_{2,-}\cup L_{3,-}$ (resp.\ $\Gamma_*\cup L_{0,+}\cup L_{1,+}$), or they reach $r=r_2$ or $r=r_3$ (resp.\ $r=r_0$ or $r=r_1$) in finite time. (For this argument, we make use of the source/sink dynamics at $L_{j,\pm}$.) Further, they cannot tend to $\Gamma$, resp.\ $\Gamma_*$, in both the forward and backward direction \emph{while remaining in $(r_0,r_1)$, resp.\ $(r_2,r_3)$,} unless they are trapped, i.e.\ contained in $\Gamma$, resp.\ $\Gamma_*$, since otherwise $F_{(*)}$ would attain a local maximum along them. Lastly, bicharacteristics reaching a horizon $r=r_j$ in finite time in fact cross the horizon by our earlier observation. The trapping at $\Gamma_{(*)}$ is in fact \emph{$r$-normally hyperbolic for every $r$} \cite{WunschZworskiNormHypResolvent}.

Next, in $\mu_*<0$, we recall that $dr$ is future, resp.\ past, timelike in $r<r_0$ and $r>r_3$, resp.\ $r\in(r_1,r_2)$; therefore, if $\zeta\in\Sigma$ lies in one of these three regions, $\ham_G r=2\la dr,\zeta\ra_G$ implies
\begin{equation}
\label{EqRNdSHamRHypRegion}
\begin{split}
  \mp\ham_G r>0 &\tn{ in }\Sigma_\pm\cap\bigl(\{r<r_0\}\cup\{r>r_3\}\bigr), \\
  \pm\ham_G r>0 &\tn{ in }\Sigma_\pm\cap\{r_1<r<r_2\}.
\end{split}
\end{equation}
(This is consistent with \eqref{EqRNdSCrossingHorizon} and the paragraph following it.)

In order to describe the global structure of the null-bicharacteristic flow, we define the connected components of the trapped set in the exterior domain of the spacetime,
\[
  \Gamma = \Gamma^+ \cup \Gamma^-, \quad \Gamma^\pm = \Gamma\cap\Sigma_\pm;
\]
then $\Gamma^\pm$ have stable/unstable manifolds $\Gamma^\pm_\pm$, with the convention that $\Gamma^\pm_+\subset\Sb^*_X M$, while $\Gamma^\pm_-\subset\Sb^* M$ is transversal to $\Sb^*_X M$. Concretely, $\Gamma^-_-$ is the union of forward trapped bicharacteristics, i.e.\ bicharacteristics which tend to $\Gamma^-$ in the forward direction, while $\Gamma^-_+$ is the union of backward trapped bicharacteristics, tending to $\Gamma^-$ in the backward direction; further $\Gamma^+_-$ is the union of backward trapped bicharacteristics, and $\Gamma^+_+$ the union of forward trapped bicharacteristics, tending to $\Gamma^+$. See Figure~\ref{FigRNdSFlow}.

\begin{figure}[!ht]
  \centering
  \includegraphics{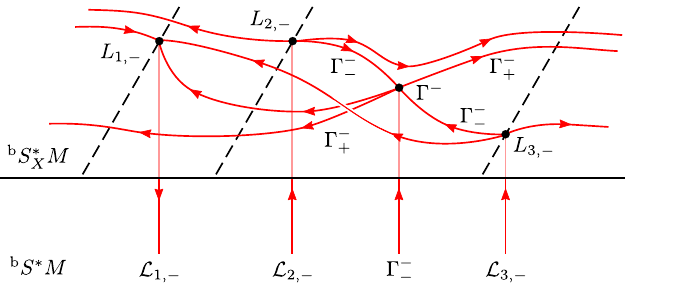}
  \caption{Global structure of the null-bicharacteristic flow in the component $\Sigma_-$ of the characteristic set and in the region $r>r_1-2\delta$ of the Reissner--Nordstr\"om--de Sitter spacetime. The picture for $\Sigma_+$ is analogous, with the direction of the arrows reversed, and $L_{j,-},\cL_{j,-},\Gamma^-_{(\pm)}$ replaced by $L_{j,+},\cL_{j,+},\Gamma^+_{(\pm)}$.}
\label{FigRNdSFlow}
\end{figure}

The structure of the flow in the neighborhood $\Omega_{01}$ of the artificial exterior region is the same as that in the neighborhood $\Omega_{23}$ of the exterior domain, except the time orientation and thus the two components of the characteristic set are reversed. Write $\Gamma^\pm_*=\Gamma_*\cap\Sigma_\pm$ a denote by $\Gamma^\pm_{*,\pm}$ the forward and backward trapped sets, with the same sign convention as for $\Gamma^\pm_\pm$ above. We note that backward, resp.\ forward, trapped null-bicharacteristics in $\Gamma^-_+\cup\Gamma^+_-$, resp.\ $\Gamma^-_-\cup\Gamma^+_+$, may be forward, resp.\ backward, trapped in the artificial exterior region, i.e.\ they may lie in $\Gamma^-_{*,-}\cup\Gamma^+_{*,+}$, resp.\ $\Gamma^-_{*,+}\cup\Gamma^+_{*,-}$, but this is the only additional trapping present in our setup. To state this succinctly, we write
\[
  L_{\tot,\pm} = \bigcup_{j=0}^3 L_{j,\pm}, \quad \Gamma_\tot^\pm = \Gamma^\pm \cup \Gamma_*^\pm.
\]
Then:

\begin{prop}
\label{PropRNdSFlow}
  The null-bicharacteristic flow in $\Sb^*_\Omega M$ has the following properties:
  \begin{enumerate}
    \item \label{ItRNdSFlowBdy} Let $\gamma$ be a null-bicharacteristic at infinity, $\gamma\subset\Sigma_-\cap\Sb^*_Y M\setminus(L_{\tot,-}\cup\Gamma_\tot^-)$, where $Y=\Omega\cap\pa M$. Then in the backward direction, $\gamma$ either crosses $H_{I,0}$ in finite time or tends to $L_{2,-}\cup L_{3,-}\cup\Gamma^-\cup\Gamma_*^-$, while in the forward direction, $\gamma$ either crosses $H_{F,3}$ in finite time or tends to $L_{0,-}\cup L_{1,-}\cup\Gamma^-\cup\Gamma_*^-$. The curve $\gamma$ can tend to $\Gamma^-$ in at most one direction, and likewise for $\Gamma_*^-$.
    \item \label{ItRNdSFlowInt} Let $\gamma$ be a null-bicharacteristic in $\Sigma_-\cap\Sb^*_{\Omega\setminus Y}M$. Then in the backward direction, $\gamma$ either crosses $H_{I,0}\cup H_I$ in finite time or tends to $L_{0,-}\cup L_{1,-}\cup\Gamma_*^-$, while in the forward direction, $\gamma$ either crosses $H_F\cup H_{F,2}\cup H_{F,3}$ in finite time or tends to $L_{2,-}\cup L_{3,-}\cup\Gamma^-$.
    \item \label{ItRNdSFlowHyp} In both cases, in the region where $r\in(r_1,r_2)$, $r\circ\gamma$ is strictly decreasing, resp.\ increasing, in the forward, resp.\ backward, direction in $\Sigma_-$, while in the regions where $r<r_0$ or $r>r_3$, $r\circ\gamma$ is strictly increasing, resp.\ decreasing, in the forward, resp.\ backward, direction in $\Sigma_-$.
    \item \label{ItRNdSFlowRadialTrapped} $L_{j,\pm}$, $j=0,\ldots,3$ as well as $\Gamma^\pm$ and $\Gamma_*^\pm$ are invariant under the flow.
  \end{enumerate}
  For null-bicharacteristics in $\Sigma_+$, the analogous statements hold with `backward' and `forward' reversed and `$+$' and `$-$' switched.
\end{prop}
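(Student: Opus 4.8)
The plan is to assemble Proposition~\ref{PropRNdSFlow} out of the local dynamical facts already established in \S\ref{SubsecRNdSFlow}, treating the three subdomains $\Omega_{01}$, $\Omega_{12}$ and $\Omega_{23}$ more or less separately and then gluing the trajectories across the overlap regions. The main ingredients are: (a) the source/sink structure of $L_{j,\pm}$ within $\Sb^*_X M$, from \eqref{EqRNdSRadialPointHamDer}--\eqref{EqRNdSQuadraticDef}; (b) the fact that $\ham_G\tau=0$ on $X$, so $X$ is flow-invariant and the flow on $Y$ can be analyzed separately; (c) the monotonicity of $r$ along bicharacteristics in the hyperbolic regions, \eqref{EqRNdSHamRHypRegion}, together with the crossing statements \eqref{EqRNdSCrossingHorizon} and the paragraph after it; and (d) the analysis of $r$-critical points in the two domains of outer communications, \eqref{EqRNdSRadiusCritical}--\eqref{EqRNdSFlowMuPrime}, which shows that every forward (resp.\ backward) bicharacteristic in $(r_2,r_3)$ or $(r_0,r_1)$ either escapes to a horizon in finite time or limits onto the trapped set $\Gamma_{(*)}$ together with the relevant $L_{j,\pm}$. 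Item~\itref{ItRNdSFlowRadialTrapped} is immediate: $L_{j,\pm}=\Nb^*(Y\cap\{r=r_j\})\cap\Sigma_\pm$ is cut out by $\tau=0$, $r=r_j$, $\eta=0$, $\sigma=0$ (in the coordinates of \eqref{EqRNdSMetricWithTau0}), and each of these defining functions was checked to be preserved by $\ham_G$ at the set itself; invariance of $\Gamma^\pm$ and $\Gamma_*^\pm$ is the defining property of a trapped set and follows from \eqref{EqRNdSRadiusCritical}. Similarly, item~\itref{ItRNdSFlowHyp} is just a restatement of \eqref{EqRNdSHamRHypRegion}, recalling that in $\Sigma_-$ the relevant sign in $\Sigma_\pm$ is the minus sign.

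For item~\itref{ItRNdSFlowInt}, I would take a null-bicharacteristic $\gamma$ in $\Sigma_-$ over $\Omega\setminus Y$, i.e.\ over $\{\tau>0\}$, and trace it forward. Since $\tau$ is monotonic (indeed $\rham_G$ has a definite sign on $\tau$ near each $L_j$, and away from the $L_j$ and the trapping one has uniform escape), $\gamma$ stays at positive $\tau$; in any region $r\geq r_1+\eps$ one uses the finite-time escape toward the spacelike boundary faces or toward $L_{2,-}\cup L_{3,-}\cup\Gamma^-$, exactly as in \cite[\S6.4]{VasyMicroKerrdS}, combining \eqref{EqRNdSHamRHypRegion} in the hyperbolic shells, the strict-local-minimum property of $F_{(*)}$ in $(r_2,r_3)$, and the fact that once $r$ reaches a horizon value the bicharacteristic crosses it (so it cannot stall there unless it is on $\cL_{j,\pm}$, which is handled by the source/sink dichotomy). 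In the backward direction the same reasoning with orientations reversed gives escape through $H_{I,0}\cup H_I$ or limiting onto $L_{0,-}\cup L_{1,-}\cup\Gamma_*^-$. The key point making the statement clean is that the complex absorption is irrelevant to the \emph{flow} (it only enters the estimates), and that, by construction, the only trapping in $r>r_1-2\delta$ is $\Gamma$ and $\Gamma_*$, with the two joined only through the shared stable/unstable legs noted in the paragraph before the proposition; I would spell out that a bicharacteristic cannot be trapped at $\Gamma^-$ in both time directions while staying in $(r_2,r_3)$ (else $F$ would have an interior max), which is already argued in the text, and the analogous statement for $\Gamma_*^-$.

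Item~\itref{ItRNdSFlowBdy} is the restriction of the above to $Y=\{\tau=0\}$, which is legitimate since $X$ is flow-invariant; here the bicharacteristic lives in $\Sb^*_Y M$ and the ``spacelike boundary'' faces it can hit are the traces of $H_{I,0}$ and $H_{F,3}$ on $Y$ (the other faces $H_F,H_{F,2},H_I$ are at $\tau>0$ so are irrelevant at infinity). The finite-time-escape-or-limit dichotomy is the same, with the admissible limit sets now $L_{2,-}\cup L_{3,-}\cup\Gamma^-\cup\Gamma_*^-$ backward and $L_{0,-}\cup L_{1,-}\cup\Gamma^-\cup\Gamma_*^-$ forward; the ``at most one direction'' clauses for $\Gamma^-$ and $\Gamma_*^-$ are the no-interior-max observations again. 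Finally the $\Sigma_+$ version follows by the discrete time-reversal symmetry: $\ham_G$ is odd under $\zeta\mapsto-\zeta$, which interchanges $\Sigma_+$ and $\Sigma_-$, reverses the flow direction, and swaps $L_{j,+}\leftrightarrow L_{j,-}$, $\Gamma^\pm\leftrightarrow\Gamma^\mp$, etc., so no separate argument is needed.

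I expect the only genuinely delicate point to be the uniformity of the ``escapes in finite time'' alternative near the radial sets and the trapped set: one has to rule out a bicharacteristic that oscillates forever in a compact region of $\Sb^*_\Omega M$ without converging to $L_{\tot,-}\cup\Gamma_\tot^-$. This is handled exactly as in \cite[\S6.4]{VasyMicroKerrdS} and \cite{WunschZworskiNormHypResolvent}: the function $F_{(*)}=(r-r_{P(,*)})^2$ is a strict Lyapunov-type function on the domains of outer communications away from $\Gamma_{(*)}$, the $L_{j,\pm}$ are hyperbolic (saddle) equilibria with the explicit linearization from the displayed formulas after \eqref{EqRNdSQuadraticDef} — so a bicharacteristic entering a small neighborhood either converges to $L_{j,\pm}$ along the stable manifold $\cL_{j,\pm}$ or leaves in bounded time along the unstable direction (crossing the horizon by \eqref{EqRNdSCrossingHorizon}) — and $\Gamma_{(*)}$ is $r$-normally hyperbolic \cite{WunschZworskiNormHypResolvent}, so its stable/unstable manifolds $\Gamma^\pm_\pm$, $\Gamma^\pm_{*,\pm}$ are smooth and a trajectory not on them escapes a neighborhood of $\Gamma_{(*)}$ in finite time. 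Assembling these into a single global statement is then bookkeeping: one partitions $\Sb^*_\Omega M$ into the radial/trapped neighborhoods and the complementary ``transport'' region where $r$ or $\tau$ is strictly monotone at a uniform rate, and concatenates.
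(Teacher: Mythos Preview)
Your overall strategy matches the paper's: items~\itref{ItRNdSFlowHyp} and~\itref{ItRNdSFlowRadialTrapped} are immediate from the facts already established, and items~\itref{ItRNdSFlowBdy} and~\itref{ItRNdSFlowInt} are assembled region by region from the local dynamics, with the $\Sigma_+$ case handled by the fiberwise antipodal symmetry $\zeta\mapsto-\zeta$. The paper's proof is essentially the bookkeeping you describe.

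There is, however, a genuine gap in your treatment of item~\itref{ItRNdSFlowInt}. In the intermediate region $r_1<r<r_2$, the domain $\Omega$ is bounded not only by portions of $\{\tau=0\}$ but also by the spacelike hypersurface $H_{F,2}=\{\eps t_*+r=r_2-2\delta\}$. A backward bicharacteristic in $\Sigma_-$ that enters this region has $r$ increasing (by~\itref{ItRNdSFlowHyp}), but a priori it could leave $\Omega$ through $H_{F,2}$ before reaching $r=r_2-2\delta$ at a sufficiently large value of $t_*$ --- in which case neither stated alternative (crossing $H_{I,0}\cup H_I$ or tending to $L_{0,-}\cup L_{1,-}\cup\Gamma_*^-$) would hold. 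The paper rules this out by computing the sign of $\ham_G$ on the defining function $f=\eps t_*+r$: since $df$ was arranged to be past timelike, one has $\ham_G f=2\la df,\zeta\ra_G<0$ for $\zeta\in\Sigma_-$, so $f$ is strictly increasing in the backward direction and $\gamma$ cannot cross $H_{F,2}$ that way. Your phrase ``the same reasoning with orientations reversed'' does not supply this; the point is precisely that $H_{F,2}$ is a \emph{final} hypersurface, so bicharacteristics in $\Sigma_-$ can only cross it forward, not backward. An analogous check is needed for $H_I$ in the forward direction.

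A smaller imprecision: your claim ``$\tau$ is monotonic'' is not correct globally. Since $dt_*$ is future timelike in $r\geq r_2-2\delta$ but past timelike in $r\leq r_1+2\delta$, in $\Sigma_-$ the function $\tau$ \emph{decreases} forward in the first region and \emph{increases} forward in the second. The paper uses this directional monotonicity in each region separately --- $\tau$ decreasing backward near $r_1$ forces exit through $H_{I,0}$ or convergence to $L_{0,-}\cup L_{1,-}\cup\Gamma_*^-$, while $\tau$ increasing backward near $r_2$ forces exit through $H_I$ --- and it is the $H_{F,2}$ argument above that lets one pass cleanly between the two regimes.
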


Here $H_{I,0}$ etc.\ is a shorthand notation for $\Sb^*_{H_{I,0}}M$.

\begin{proof}
  Statement \itref{ItRNdSFlowHyp} follows from \eqref{EqRNdSHamRHypRegion}, and \itref{ItRNdSFlowRadialTrapped} holds by the definition of the radial and trapped sets. To prove the `backward' part of \itref{ItRNdSFlowBdy}, note that if $r<r_0$ on $\gamma$, then $\gamma$ crosses $H_{I,0}$ by \eqref{EqRNdSHamRHypRegion}; if $r=r_0$ on $\gamma$, then $\gamma$ crosses into $r<r_0$ since $\gamma\cap L_{0,-}=\emptyset$. If $\gamma$ remains in $r>r_0$ in the backward direction, it either tends to $\Gamma_*^-$, or it crosses $r=r_1$ since it cannot tend to $L_{0,-}\cup L_{1,-}$ because of the sink nature of this set. Once $\gamma$ crosses into $r>r_1$, it must tend to $r=r_2$ by \itref{ItRNdSFlowHyp} and hence either tend to the source $L_{2,-}$ or cross into $r>r_2$. In $r>r_2$, $\gamma$ must tend to $L_{2,-}\cup L_{3,-}\cup\Gamma^-$, as it cannot cross $r=r_2$ or $r=r_3$ into $r<r_2$ or $r>r_3$ in the backward direction. The analogous statement for $\Sigma_+$, now in the forward direction, is immediate, since reflecting $\gamma$ pointwise across the origin in the b-cotangent bundle but keeping the affine parameter the same gives a bijection between backward bicharacteristics in $\Sigma_-$ and forward bicharacteristics in $\Sigma_+$. The `forward' part of \itref{ItRNdSFlowBdy} is completely analogous.

  It remains to prove \itref{ItRNdSFlowInt}. Note that $\tau^{-1}\ham_G\tau=2\la\frac{d\tau}{\tau},\zeta\ra$ at $\zeta\in\Tb^*_\Omega M$; thus in $r\leq r_1+2\delta$, where $d\tau/\tau$ is future timelike, $\tau$ is strictly decreasing in the backward direction along bicharacteristics $\gamma\subset\Sigma_-$, hence the arguments for part~\itref{ItRNdSFlowBdy} show that $\gamma$ crosses $H_{I,0}$, or tends to $L_{0,-}\cup L_{1,-}\cup\Gamma_*^-$ if it lies in $\cL_{0,-}\cup\cL_{1,-}\cup\Gamma_{*,-}^-$; otherwise it crosses into $r>r_1$ in the backward direction. In the latter case, recall that in $r_1<r<r_2$, $r\circ\gamma$ is monotonically increasing in the backward direction; we claim that $\gamma$ cannot cross $H_{F,2}$: with the defining function $f:=\eps t_*+r$ of $H_{F,2}$, we arranged for $df$ to be past timelike, so $\ham_G f=2\la df,\zeta\ra<0$ for $\zeta\in\Sigma_-\cap\Tb^*_{H_{F,2}}M$, i.e.\ $f$ is increasing in the backward direction along the $\ham_G$-integral curve $\gamma$ near $H_{F,2}$, which proves our claim. This now implies that $\gamma$ enters $r\geq r_2-2\delta$ in the backward direction, from which point on $\tau$ is strictly increasing, hence $\gamma$ either crosses $H_I$ in $r\leq r_2$, or it crosses into $r>r_2$. In the latter case, it in fact crosses $H_I$ by the arguments proving \itref{ItRNdSFlowBdy}. The `forward' part is proved in a similar fashion.
\end{proof}

\subsection{Global regularity analysis}
\label{SubsecRNdSRegularity}

Forward solutions to the wave equation $\Box_g u=f$ in the domain of dependence of $H_I$, i.e.\ in $\Omega\cap\{r>r_1\}$, are not affected by any modifications of the operator $\Box_g$ outside, i.e.\ in $r\leq r_1$. As indicated in \S\ref{SecIntro}, we are therefore free to place complex absorbing operators at $\Gamma_*$ and $L_0$ which obviate the need for delicate estimates at normally hyperbolic trapping (see the proof of Proposition~\ref{PropRNdSGlobalReg}) and for a treatment of regularity issues at the artificial horizon (related to $\beta_j$ in \eqref{EqRNdSRadialPointBeta}, see also Definition~\ref{DefRNdSOrderFunctions}).

Concretely, let $\cU$ be a small neighborhood of $\pi L_0\cup\pi\Gamma_*$, with $\pi\colon\Tb^*M\to M$ the projection, so that
\begin{equation}
\label{EqRNdSComplexAbsorptionSupp}
  \cU\subset\{r_1-\delta<r<r_2-\delta,\ \tau\leq e^{-(t_{*,0}+2)}\}
\end{equation}
in the notation of Proposition~\ref{PropRNdSMfd}; thus, $\cU$ stays away from $H_{I,0}\cup H_F$. Choose $\cQ\in\Psib^2(M)$ with Schwartz kernel supported in $\cU\times\cU$ and real principal principal symbol satisfying
\[
  \mp\sigma(\cQ)(\zeta)\geq 0,\quad \zeta\in\Sigma_\pm,
\]
with the inequality strict at $L_{0,\pm}\cup\Gamma_*^\pm$, thus $\cQ$ is elliptic at $L_0\cup\Gamma_*$. We then study the operator
\begin{equation}
\label{EqRNdSOperator}
  \cP=\Box_g-i\cQ;
\end{equation}
the convention for the sign of $\Box_g$ is such that $\sigma_2(\Box_g)=G$. We will use weighted, variable order b-Sobolev spaces, with weight $\alpha\in\R$ and the order given by a function $\sfs\in\CI(\Sb^*M)$; in fact, the regularity will vary only in the base, not in the fibers of the b-cotangent bundle. We refer the reader to \cite[Appendix~A]{BaskinVasyWunschRadMink} and Appendix~\ref{SecVariable} for details on variable order spaces. We define the function space
\[
  \Hbfw^{\sfs,\alpha}(\Omega)
\]
as the space of restrictions to $\Omega$ of elements of $\Hb^{\sfs,\alpha}(M)=\tau^\alpha\Hb^s(M)$ which are supported in the causal future of $H_I\cup H_{I,0}$; thus, distributions in $\Hbfw^{\sfs,\alpha}$ are supported distributions at $H_I\cup H_{I,0}$ and extendible distributions at $H_F\cup H_{F,2}\cup H_{F,3}$ (and at $\pa M$), see \cite[Appendix~B]{HormanderAnalysisPDE3}; in fact, on manifolds with corners, there are some subtleties concerning such mixed supported/extendible spaces and their duals, which we discuss in Appendix~\ref{SecSuppExt}. The supported character at the initial surfaces, encoding vanishing Cauchy data, is the reason for the subscript `fw' (`forward'). The norm on $\Hbfw^{\sfs,\alpha}$ is the quotient norm induced by the restriction map, which takes elements of $\Hb^{\sfs,\alpha}(M)$ with the stated support property to their restriction to $\Omega$. Dually, we also consider the space
\[
  \Hbbw^{\sfs,\alpha}(\Omega),
\]
consisting of restrictions to $\Omega$ of distributions in $\Hb^{\sfs,\alpha}(M)$ which are supported in the causal past of $H_F\cup H_{F,2}\cup H_{F,3}$.

Concretely, for the analysis of $\cP$, we will work on slightly growing function spaces, i.e.\ allowing exponential growth of solutions in $t_*$; we will obtain precise asymptotics (in particular, boundedness) in the next section. In the present section, the stationary nature of the metric $g$ and of $\cP$ near $X$ is irrelevant; only the dynamical structure of the null-geodesic flow and the spacelike nature of the artificial boundaries are used.

Fix a weight
\[
  \alpha<0.
\]
The Sobolev regularity is dictated by the radial sets $L_1,L_2$ and $L_3$, as captured by the following definition:
\begin{definition}
\label{DefRNdSOrderFunctions}
  Let $\alpha\in\R$. Then a smooth function $\sfs=\sfs(r)$ is called a \emph{forward order function for the weight $\alpha$} if
  \begin{equation}
  \label{EqRNdSOrderFunctionFw}
  \begin{split}
    \sfs(r)&\tn{ is constant for }r<r_1+\delta'\tn{ and }r>r_1+2\delta', \\
    \sfs(r)&<1/2+\beta_1\alpha,\quad r<r_1+\delta', \\
    \sfs(r)&>1/2+\max(\beta_2\alpha,\beta_3\alpha),\quad r>r_1+2\delta', \\
    \sfs'(r)&\geq 0,
  \end{split}
  \end{equation}
  with $\beta_j$ defined in \eqref{EqRNdSRadialPointBeta}; here $\delta'\in(0,\delta)$ is any small number. The function $\sfs$ is called a \emph{backward order function for the weight $\alpha$} if
  \begin{equation}
  \label{EqRNdSOrderFunctionBw}
  \begin{split}
    \sfs(r)&\tn{ is constant for }r<r_1+\delta'\tn{ and }r>r_1+2\delta', \\
    \sfs(r)&>1/2+\beta_1\alpha,\quad r<r_1+\delta', \\
    \sfs(r)&<1/2+\max(\beta_2\alpha,\beta_3\alpha),\quad r>r_1+2\delta', \\
    \sfs'(r)&\leq 0;
  \end{split}
  \end{equation}
\end{definition}

Backward order functions will be used for the analysis of the dual problem.

\begin{rmk}
\label{RmkRNdSBeta1Computation}
  If $\beta_1<\max(\beta_2,\beta_3)$ (and $\alpha<0$ still), a forward order function $\sfs$ can be taken constant, and thus one can work on fixed order Sobolev spaces in Proposition~\ref{PropRNdSGlobalReg} below. This is the case for small charges $Q>0$: indeed, a straightforward computation in the variable $q=Q^2$ using Lemma~\ref{LemmaRNdSNonDegenerate} shows that
  \[
    \beta_1 = \frac{Q^4}{4\bhm^3} + \cO(Q^6).
  \]
\end{rmk}

Note that $\sfs$ is a forward order function for the weight $\alpha$ if and only if $1-\sfs$ is a backward order function for the weight $-\alpha$. The lower, resp.\ upper, bounds on the order functions at the radial sets are forced by the propagation estimate \cite[Proposition~2.1]{HintzVasySemilinear} which will we use at the radial sets: one can propagate high regularity from $\tau>0$ into the radial set and into the boundary (`red-shift effect'), while there is an upper limit on the regularity one can propagate out of the radial set and the boundary into the interior $\tau>0$ of the spacetime (`blue-shift effect'); the definition of order functions here reflects the precise relationship of the a priori decay or growth rate $\alpha$ and the regularity $\sfs$ (i.e.\ the `strength' of the red- or blue-shift effect depending on a priori decay or growth along the horizon). We recall the radial point propagation result in a qualitative form (the quantitative version of this, yielding estimates, follows from the proof of this result, or can be recovered from the qualitative statement using the closed graph theorem):

\begin{prop}
\label{PropRNdSRadialRecall}
  \cite[Proposition~2.1]{HintzVasySemilinear}. Suppose $\cP$ is as above, and let $\alpha\in\R$. Let $j=1,2,3$.

  If $s\geq s'$, $s'>1/2+\beta_j\alpha$, and if $u\in\Hb^{-\infty,\alpha}(M)$ then $L_{j,\pm}$ (and thus a neighborhood of $L_{j,\pm}$) is disjoint from $\WFb^{s,\alpha}(u)$ provided $L_{j,\pm}\cap\WFb^{s-1,\alpha}(\cP u)=\emptyset$, $L_{j,\pm}\cap\WFb^{s',\alpha}(u)=\emptyset$, and in a neighborhood of $L_{j,\pm}$, $\cL_{j,\pm}\cap\{\tau>0\}$ is disjoint from $\WFb^{s,\alpha}(u)$.

  On the other hand, if $s<1/2+\beta_j\alpha$, and if $u\in\Hb^{-\infty,\alpha}(M)$ then $L_{j,\pm}$ (and thus a neighborhood of $L_{j,\pm}$) is disjoint from $\WFb^{s,\alpha}(u)$ provided $L_{j,\pm}\cap\WFb^{s-1,\alpha}(\cP u)=\emptyset$ and a punctured neighborhood of $L_{j,\pm}$, with $L_{j,\pm}$ removed, in $\Sigma\cap\Sb^*_X M$ is disjoint from $\WFb^{s,\alpha}(u)$.
\end{prop}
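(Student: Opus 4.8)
The plan is to prove Proposition~\ref{PropRNdSRadialRecall} by a microlocal positive commutator (energy) argument localized near $L_{j,\pm}$, in the spirit of Melrose's radial point estimates \cite{MelroseEuclideanSpectralTheory} and their refinements in \cite{VasyMicroKerrdS,BaskinVasyWunschRadMink,HintzVasySemilinear}. Since the complex absorption $\cQ$ is supported away from the radial sets by \eqref{EqRNdSComplexAbsorptionSupp}, in a neighborhood of $L_{j,\pm}$ one may work with $\Box_g$ in place of $\cP$, using the explicit expressions \eqref{EqRNdSMetricWithTau0} for $G$ and for the rescaled field $\rham_G=\wh\rho\ham_G$, together with the quadratic defining function $\rho_0=|\wh\eta|^2+|\wh\sigma|^2$ of $\pa L_{j,\pm}$ in $\Sigma\cap\Sb^*_X M$ satisfying \eqref{EqRNdSQuadraticDef}, and the flow-diagonalizing combination $(r-r_j)\mp\beta_j\wh\sigma$ noted after \eqref{EqRNdSQuadraticDef}.

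\emph{Construction of the commutant.} Fix the target order $s$ and weight $\alpha$ and take a weighted b-pseudodifferential operator $A$ with principal symbol $a=\tau^{-\alpha}\wh\rho^{-(s-1/2)}\chi$, where $\chi$ is supported in a small conic neighborhood of $L_{j,\pm}$, is identically $1$ closer in, and is of product form cutting off separately in $\rho_0$, in $(r-r_j)\mp\beta_j\wh\sigma$, in $\tau$, and near fiber infinity $\wh\rho=0$; the half-order drop to $s-\tfrac12$ is the usual radial-point normalization. The heart of the matter is the principal symbol of the symmetrized commutator $\tfrac{i}{2}(\Box_g^\ast A^\ast A-A^\ast A\,\Box_g)$, which equals $\tfrac12\ham_G(a^2)=a^2\,\wh\rho^{-1}\rham_G(\log a)$. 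Evaluating at $L_{j,\pm}$ by \eqref{EqRNdSRadialPointHamDer}: the $\tau^{-\alpha}$-factor contributes a term proportional to $s_j\alpha$ through $\tau^{-1}\rham_G\tau$, while the $\wh\rho^{-(s-1/2)}$-factor contributes a term proportional to $s_j\kappa_j(s-\tfrac12)$ through $\wh\rho^{-1}\rham_G\wh\rho=\mp\mu_*'(r_j)$ and \eqref{EqRNdSRadialPointBeta}; up to a positive factor and terms vanishing at $L_{j,\pm}$, the total is a multiple of $\bigl(s-\tfrac12-\beta_j\alpha\bigr)\,a^2\wh\rho^{-1}$, of a definite sign precisely according to which side of the threshold $\tfrac12+\beta_j\alpha$ the order $s$ lies. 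This is the quantitative red-/blue-shift dichotomy.

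\emph{The two regimes.} One pairs $2\Im\la A^\ast A u,\Box_g u\ra$ against this commutator and against the contributions of $\ham_G\chi$, which are supported where $\chi$ is non-constant. In the sub-threshold case $s<\tfrac12+\beta_j\alpha$ the cutoffs in $\chi$ are chosen so that the $\rho_0$- and fiber-infinity-derivatives reinforce the main term, and the only term of the wrong sign is supported in a punctured neighborhood of $L_{j,\pm}$ within $\Sigma\cap\Sb^*_X M$ --- exactly the hypothesized a priori region. In the super-threshold case $s\geq s'>\tfrac12+\beta_j\alpha$ the main term has the opposite sign, so one instead imports the a priori control from $\cL_{j,\pm}\cap\{\tau>0\}$, the boundary-transversal unstable leg of the saddle recorded in \eqref{EqRNdSRadialPointHamTau}, together with the $\WFb^{s',\alpha}$ hypothesis, which bound the errors from the $\tau$- and $(r-r_j)\mp\beta_j\wh\sigma$-cutoffs. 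In either case one obtains $\|Bu\|_{\Hb^{0,0}}^2\lesssim\|\text{(a priori terms)}\|^2+\|\text{(microlocalized }\Box_g u)\|_{\Hb^{s-2,\alpha}}^2$ with $B$ elliptic of order $s$ and weight $\alpha$ near $L_{j,\pm}$, which is the asserted wavefront statement. Since $u$ is only assumed to lie in $\Hb^{-\infty,\alpha}(M)$, the pairing is first justified by inserting a regularizer $\Lambda_\varepsilon$, uniformly bounded in $\Psib^{0}(M)$ and converging to the identity, applying the argument to $A\Lambda_\varepsilon$, and passing to $\varepsilon\to 0$ using the uniform bounds.

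The main obstacle, beyond the bookkeeping of the weight that produces precisely the threshold $\tfrac12+\beta_j\alpha$, is the super-threshold case: there the flow within $\Sb^*_X M$ is attracting, so nothing can be propagated into $L_{j,\pm}$ from the boundary, and the commutant must be microlocalized so as to draw regularity in along the transversal unstable manifold $\cL_{j,\pm}\cap\{\tau>0\}$ while keeping all resulting error terms genuinely of lower order --- this is where the hypothesis on $\cL_{j,\pm}\cap\{\tau>0\}$ and the strict inequality $s'>\tfrac12+\beta_j\alpha$ are used, and where the regularization step is most delicate. In the present paper this proposition is invoked as a black box, following \cite[Proposition~2.1]{HintzVasySemilinear}.
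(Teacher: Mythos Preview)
Your sketch is correct and follows exactly the positive commutator argument of \cite[Proposition~2.1]{HintzVasySemilinear} that the paper cites; note that the paper itself gives no proof of this proposition, merely recalling it as a black box, which you also acknowledge. One small slip: the forcing term should be measured in $\Hb^{s-1,\alpha}$, not $\Hb^{s-2,\alpha}$, consistent with the hypothesis $L_{j,\pm}\cap\WFb^{s-1,\alpha}(\cP u)=\emptyset$.
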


We then have:

\begin{prop}
\label{PropRNdSGlobalReg}
  Suppose $\alpha<0$ and $\sfs$ is a forward order function for the weight $\alpha$; let $\sfs_0=\sfs_0(r)$ be a forward order function for the weight $\alpha$ with $\sfs_0<\sfs$. Then
  \begin{equation}
  \label{EqRNdSGlobalRegFw}
    \|u\|_{\Hbfw^{\sfs,\alpha}(\Omega)} \leq C(\|\cP u\|_{\Hbfw^{\sfs-1,\alpha}(\Omega)} + \|u\|_{\Hbfw^{\sfs_0,\alpha}(\Omega)}),
  \end{equation}

  We also have the dual estimate
  \begin{equation}
  \label{EqRNdSGlobalRegBw}
    \|u\|_{\Hbbw^{\sfs',-\alpha}(\Omega)} \leq C(\|\cP^* u\|_{\Hbbw^{\sfs'-1,-\alpha}(\Omega)} + \|u\|_{\Hbbw^{\sfs'_0,-\alpha}(\Omega)})
  \end{equation}
  for backward order functions $\sfs'$ and $\sfs'_0$ for the weight $-\alpha$ with $\sfs'_0<\sfs'$.

  Both estimates hold in the sense that if the quantities on the right hand side are finite, then so is the left hand side, and the inequality is valid.
\end{prop}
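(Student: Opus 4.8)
The plan is to establish \eqref{EqRNdSGlobalRegFw} by a global microlocal positive-commutator argument on the b-cosphere bundle $\Sb^*_\Omega M$, combining four by now standard ingredients, and to deduce the dual estimate \eqref{EqRNdSGlobalRegBw} by running the same argument for the formal adjoint $\cP^*$ with the null-bicharacteristic flow and the sign of the complex absorption reversed. Both estimates will first be proved under the a priori hypothesis that $u$ already lies in the target space, so that all the b-wavefront sets appearing below are a priori finite and can be controlled, and the hypothesis will then be removed by a routine regularization argument (inserting smoothing operators commuting with $\cP$ to leading order and passing to a limit) --- this is exactly what the clause ``if the right-hand side is finite, so is the left'' encodes. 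The four ingredients are: microlocal elliptic regularity for $\cP$ off its characteristic set $\Sigma$, which in particular covers a neighborhood of $L_0\cup\Gamma_*$, where $\cQ$ is elliptic by construction, thereby disposing of the artificial radial set and the artificial trapped set at no cost; real principal type propagation of b-regularity along $\Sigma$ in the \emph{variable-order} form of Appendix~\ref{SecVariable}, valid wherever $\ham_G$ is not radial and the order function $\sfs$ is monotone along the flow with the favorable sign; the radial point estimates of Proposition~\ref{PropRNdSRadialRecall} at $L_1,L_2,L_3$; and the $r$-normally hyperbolic trapping estimate of \cite{WunschZworskiNormHypResolvent,DyatlovSpectralGaps,HintzVasyNormHyp} at the physical trapped set $\Gamma$, which holds without loss for the weight $\alpha<0$ (exponential growth) under consideration. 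At finite $\tau$, near the spacelike boundary hypersurfaces of \eqref{EqRNdSMfdHyper}, one also invokes standard energy estimates for the hyperbolic operator $\cP$, respecting the mixed supported/extendible structure of $\Hbfw$ (Appendix~\ref{SecSuppExt}); the sign of $\cQ$ is chosen precisely so that the complex-absorption term does not spoil the energy inequality.

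The order in which I would carry out the microlocal estimate is dictated by the global flow picture of Proposition~\ref{PropRNdSFlow}. Starting from the initial surfaces $H_{I,0}\cup H_I$, the energy estimate gives a base bound there, and real principal type propagation in $\tau>0$ --- using strict monotonicity of $r$ along the flow in $r<r_0$, in $r_1<r<r_2$ and in $r>r_3$ (Proposition~\ref{PropRNdSFlow}\itref{ItRNdSFlowHyp}), together with the fact that every interior null-bicharacteristic meets a boundary hypersurface in finite time or limits to a radial or trapped set (Proposition~\ref{PropRNdSFlow}\itref{ItRNdSFlowInt}) --- controls $u$ at order $\sfs$ on all of $\Sigma\cap\Sb^*_{\Omega\setminus Y}M$ away from $L_1\cup L_2\cup L_3\cup\Gamma$. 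In particular this controls the incoming radial directions $\cL_{2,\pm}\cap\{\tau>0\}$ and $\cL_{3,\pm}\cap\{\tau>0\}$ near the radial sets, so the \emph{red-shift} (first) case of Proposition~\ref{PropRNdSRadialRecall}, fed with the order-$\sfs_0$ a priori regularity from the error space, upgrades this to order-$\sfs$ regularity at $L_{2,\pm}$ and $L_{3,\pm}$ --- legitimate precisely because $\sfs>1/2+\max(\beta_2\alpha,\beta_3\alpha)$ for $r>r_1+2\delta'$. Propagating out from these radial sets along $\Sigma\cap\Sb^*_Y M$ (Proposition~\ref{PropRNdSFlow}\itref{ItRNdSFlowBdy}) and crossing the trapped set via the normally hyperbolic trapping estimate at $\Gamma$, one controls a full neighborhood of $\Gamma\cup L_2\cup L_3$; then, propagating forward in $\Sigma_-$ (equivalently backward in $\Sigma_+$) through the no-shift region $r_1<r<r_2$ --- where $\sfs(r)$ \emph{decreases} in the propagation direction, so the variable-order contribution has the favorable sign and requires no a priori input --- one controls a punctured neighborhood of $L_1$ inside $\Sigma\cap\Sb^*_X M$, its portion in $r<r_1$ coming from the already-treated elliptic set $L_0\cup\Gamma_*$. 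Finally the \emph{blue-shift} (second) case of Proposition~\ref{PropRNdSRadialRecall} applies at $L_{1,\pm}$, giving order-$\sfs$ regularity there, which is consistent only because $\sfs<1/2+\beta_1\alpha$ for $r<r_1+\delta'$. Summing the finitely many microlocal estimates over a partition of unity subordinate to this covering, and absorbing all overlap and a priori terms into one $\|u\|_{\Hbfw^{\sfs_0,\alpha}}$ with $\sfs_0<\sfs$ still a forward order function, yields \eqref{EqRNdSGlobalRegFw}.

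For \eqref{EqRNdSGlobalRegBw}, the point is that $\cP^*$ differs from $\Box_g$ by a complex absorption of the \emph{opposite} sign (modulo lower-order terms), so it is still elliptic at $L_0\cup\Gamma_*$ but absorbs along the \emph{backward} flow, consistently with the supported character of $\Hbbw$ at $H_F\cup H_{F,2}\cup H_{F,3}$; moreover, under reflection across the zero section of $\Tb^*M$ the components $\Sigma_+$ and $\Sigma_-$ are interchanged, and with them the source/sink structure at the $L_{j,\pm}$. The conditions defining a backward order function --- $\sfs'\le 0$ together with the reversed inequalities at the radial sets --- are precisely those under which the scheme above runs with ``forward'' and ``backward'' interchanged; hence \eqref{EqRNdSGlobalRegBw} follows. (This dovetails with the fact that $\sfs$ is a forward order function for $\alpha$ if and only if $1-\sfs$ is a backward order function for $-\alpha$, reflecting the duality between the forward and backward problems.)

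The main obstacle --- and the only place where the specific geometry of this paper enters in an essential way --- is the treatment of the Cauchy horizon radial set $L_1$. The blue-shift estimate there only ever delivers regularity strictly below the threshold $1/2+\beta_1\alpha$, while the red-shift mechanism feeds in high regularity propagated from the Cauchy data through the physical exterior region; reconciling the two forces the order function to jump up across $r=r_1$, and the monotonicity $\sfs'\ge0$ is exactly what makes the real principal type propagation through the intervening no-shift region have the correct commutator sign. (When $\beta_1<\max(\beta_2,\beta_3)$, e.g.\ for small $Q$ by Remark~\ref{RmkRNdSBeta1Computation}, no jump is needed and $\sfs$ may be taken constant.) One must also check that the covering of $\Sb^*_\Omega M$ furnished by Proposition~\ref{PropRNdSFlow} has no gaps --- in particular that the punctured neighborhood of $L_1$ in $\Sigma\cap\Sb^*_X M$ needed for Proposition~\ref{PropRNdSRadialRecall} is genuinely reached from the controlled region --- and handle with some care the mixed supported/extendible function spaces at the corners of $\Omega$ in the energy estimates. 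The normally hyperbolic trapping estimate at $\Gamma$, although indispensable, is imported wholesale and, since $\alpha<0$, contributes no loss here.
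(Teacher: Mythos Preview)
Your proposal is correct and follows essentially the same route as the paper's proof: energy estimates at the initial surfaces, elliptic regularity on $\WF'(\cQ)$ to dispose of $L_0\cup\Gamma_*$, the high-regularity radial point estimate at $L_{2},L_3$, normally hyperbolic trapping at $\Gamma$ (lossless since $\alpha<0$), variable-order real principal type propagation through the no-shift region with the monotonicity of $\sfs$ providing the favorable sign, and finally the low-regularity radial point estimate at $L_1$; the dual estimate is obtained by reversing the direction of propagation. The one place where the paper is more explicit than you is at the \emph{final} hypersurfaces $H_F\cup H_{F,2}$ and $H_{F,3}$: microlocal propagation only gives local control in the interior, so to obtain \emph{uniform} regularity up to these surfaces the paper uses a cutoff $\chi$ supported just short of them, rewrites $\Box_g(\chi u)=\chi f+[\Box_g,\chi]u$ (the commutator term being already controlled), extends the right-hand side past $H_F\cup H_{F,2}$, solves the forward wave equation on the extended domain, and restricts back --- this is the content of your phrase ``standard energy estimates near the spacelike boundary hypersurfaces,'' but it is worth spelling out since it is the mechanism by which the supported/extendible structure is respected at the future boundary.
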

\begin{proof}
  The arguments are very similar to the ones used in \cite[\S2.1]{HintzVasySemilinear}. The proof relies on standard energy estimates near the artificial hypersurfaces, various microlocal propagation estimates, and crucially relies on the description of the null-bicharacteristic flow given in Proposition~\ref{PropRNdSFlow}.
  
  Let $u\in\Hbfw^{\sfs_0,\alpha}(\Omega)$ be such that $f=\cP u\in\Hbfw^{\sfs-1,\alpha}(\Omega)$. First of all, we can extend $f$ to $\wt f\in\Hb^{\sfs-1,\alpha}(M)$, with $\wt f$ supported in $r\geq r_0-2\delta$, $t_*\geq 0$ still, and $\|f\|_{\Hbfw^{\sfs-1,\alpha}(\Omega)}=\|\wt f\|_{\Hb^{\sfs-1,\alpha}(M)}$. Near $H_I$, we can then use the unique solvability of the forward problem for the wave equation $\Box\wt u=\wt f$ to obtain an estimate for $u$ there: indeed, using an approximation argument, approximating $\wt f$ by smooth functions $\wt f_\eps$, and using the propagation of singularities, propagating $H^\sfs$-regularity from $t_*<0$ (where the forward solution $\wt u_\eps$ of $\Box\wt u_\eps=\wt f_\eps$ vanishes), which can be done on this regularity scale uniformly in $\eps$, we obtain an estimate
  \[
    \|u\|_{H^\sfs(\Omega\cap\{0\leq t_*\leq 1\})} \leq C\|f\|_{H^{\sfs-1}(\Omega\cap\{0\leq t_*\leq 2\})},
  \]
  since $u$ agrees with $\wt u$ in the domain of dependence of $H_I$. The same argument shows that we can control the $H^{\sfs,\alpha}$-norm of $u$ in a neighborhood of $H_{I,0}$, say in $r<r_1-\delta$, in terms of $\|f\|_{\Hbfw^{\sfs-1,\alpha}(\Omega\cap\{r<r_1-\delta/2\})}$.

  Then, in $r>r_2-2\delta$, we use the propagation of singularities (forwards in $\Sigma_-$, backwards in $\Sigma_+$) to obtain local $H^\sfs$-regularity away from the boundary at infinity, $\tau=0$. At the radial sets $L_2$ and $L_3$, the radial point estimate, Proposition~\ref{PropRNdSRadialRecall}, allows us, using the a priori $\Hb^{\sfs_0,\alpha}$-regularity of $u$, to propagate $\Hb^{\sfs,\alpha}$-regularity into $L_2\cup L_3$; propagation within $\Sb^*_Y M$ then shows that we have $\Hb^{\sfs,\alpha}$-control on $u$ on $(\Gamma^-_-\cup\Gamma^+_-)\setminus\Gamma$. Since $\alpha<0$, we can then use \cite[Theorem~3.2]{HintzVasyNormHyp} to control $u$ in $\Hb^{\sfs,\alpha}$ microlocally at $\Gamma$ and propagate this control along $\Gamma^-_+\cup\Gamma^+_+$. Near $H_{F,3}$, the microlocal propagation of singularities only gives local control away from $H_{F,3}$, but we can get uniform regularity up to $H_{F,3}$ by standard energy estimates, using a cutoff near $H_{F,3}$ and the propagation of singularities for an extended problem (solving the forward wave equation with forcing $\wt f$, cut off near $H_{F,3}$, plus an error term coming from the cutoff), see \cite[Proposition~2.13]{HintzVasySemilinear} and the similar discussion around \eqref{EqRNdSWaveCutoff} below in the present proof. We thus obtain an estimate for the $\Hb^{\sfs,\alpha}$-norm of $u$ in $r\geq r_2-2\delta$.

  Next, we propagate regularity in $r_1<r<r_2$, using part \itref{ItRNdSFlowHyp} of Proposition~\ref{PropRNdSFlow} and our assumption $\sfs'\leq 0$; the only technical issue is now at $H_{F,2}$, where the microlocal propagation only gives local regularity away from $H_{F,2}$; this will be resolved shortly.

  Focusing on the remaining region $r_0-2\delta\leq r\leq r_1+2\delta$, we start with the control on $u$ near $H_{I,0}$, which we propagate forwards in $\Sigma_-$ and backwards in $\Sigma_+$, either up to $H_F$ or into the complex absorption hiding $L_0\cup\Gamma_*$; see \cite[\S2]{VasyMicroKerrdS} for the propagation of singularities with complex absorption. (This is a purely symbolic argument, hence the present b-setting is handled in exactly the same way as the standard \psdo{}\ setting discussed in the reference.) Moreover, at the elliptic set of the complex absorbing operator $\cQ$, we get $\Hb^{\sfs+1,\alpha}$-control on $u$, and we can propagate $\Hb^{\sfs,\alpha}$-estimates from there. The result is that we get $\Hb^{\sfs,\alpha}$-estimates of $u$ in a punctured neighborhood of $L_1$ within $\Sb^*_Y M$; thus, the low regularity part of Proposition~\ref{PropRNdSRadialRecall} applies. We can then propagate regularity from a neighborhood $L_1$ along $\cL_1$. This gives us local regularity away from $H_F\cup H_{F,2}$, where the microlocal propagation results do not directly give uniform estimates.
  
  In order to obtain uniform regularity up to $H_F\cup H_{F,2}$, we use the aforementioned cutoff argument for an extended problem near $H_F\cup H_{F,2}$: choose $\chi\in\CI(\Omega)$ such that $\chi\equiv 1$ for $r_0-\delta/2<r<r_2+\delta/2$, $t_*<t_{*,0}+1/2$, and such that $\chi\equiv 0$ if $r<r_1-\delta$ or $r>r_2+\delta$ or $t_*>t_{*,0}+1$; see Figure~\ref{FigRNdSCutoff} for an illustration. In particular, $[\chi,\cQ]=0$ by the support properties of $\cQ$. Therefore, we have
  \begin{equation}
  \label{EqRNdSWaveCutoff}
    \Box_g u'=f':=\chi f+[\Box_g,\chi]u,\quad u':=\chi u;
  \end{equation}
  note that we have (uniform) $\Hb^{\sfs-1,\alpha}$-control on $[\Box_g,\chi]u$ by the support properties of $d\chi$. Extend $f'$ beyond $H_F\cup H_{F,2}$ to $\wt f'\in\Hb^{\sfs-1,\alpha}$ with support in $r_1-\delta<r<r_2+\delta$ so that the global norm of $\wt f'$ is bounded by a fixed constant times the quotient norm of $f'$. The solution of the equation $\Box_g\wt u'=\wt f'$ with support of $\wt u'$ in $t_*<t_{*,0}+2$ is unique (it is simply the forward solution, taking into account the time orientation in the artificial exterior region); but then the local regularity estimates for $\wt u'$ for the extended problem, which follow from the propagation of singularities (using the approximation argument sketched above), give by restriction uniform regularity of $u$ up to $H_F\cup H_{F,2}$.

  \begin{figure}[!ht]
    \centering
    \includegraphics{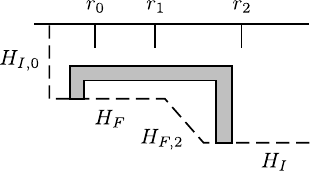}
    \caption{Illustration of the argument giving uniform regularity up to $H_F\cup H_{F,2}$: the cutoff $\chi$ is supported in and below the shaded region; the shaded region itself, containing $\supp d\chi$, is where we have already established $H^{\sfs}$-bounds for $u$.}
  \label{FigRNdSCutoff}
  \end{figure}
  
  Putting all these estimates together, we obtain an estimate for $u\in\Hbfw^{\sfs,\alpha}(\Omega)$ in terms of $f\in\Hbfw^{\sfs-1,\alpha}(\Omega)$.
  
  The proof of the dual estimate is completely analogous: we now obtain initial regularity (that we can then propagate as above) by solving the backward problem for $\Box_g$ near $H_F\cup H_{F,2}$ and $H_{F,3}$.
\end{proof}

\subsection{Fredholm analysis and solvability}
\label{SubsecRNdSFredholm}

The estimates in Proposition~\ref{PropRNdSGlobalReg} do not yet yield the Fredholm property of $\cP$. As explained in \cite[\S2]{HintzVasySemilinear}, we therefore study the \emph{Mellin-transformed normal operator family} $\wh\cP(\sigma)$, see \cite[\S5.2]{MelroseAPS}, which in the present (dilation-invariant in $\tau$, or translation-invariant in $t_*$) setting is simply obtained by conjugating $\cP$ by the Mellin transform in $\tau$, or equivalently the Fourier transform in $-t_*$, i.e.\ $\wh\cP(\sigma)=e^{i\sigma t_*}\cP e^{-i\sigma t_*}$, acting on functions on the boundary at infinity $\{\tau=0\}$. Concretely, we need to show that $\wh\cP(\sigma)$ is invertible between suitable function spaces on $\Im\sigma=-\alpha$ for a weight $\alpha<0$, since this will allow us to improve the $\Hbfw^{\sfs_0,\alpha}$ error term in \eqref{EqRNdSGlobalRegFw} by a space with an improved weight, so $\Hbfw^{\sfs,\alpha}$ injects compactly into it; an analogous procedure for the dual problem gives the full Fredholm property for $\cP$; see \cite{HintzVasySemilinear} and below for details. As in the previous section, only dynamical and geometric properties of the metric $g$ and the operator $\cP$ are used here; in fact, only their properties \emph{at infinity} matter for the analysis of $\wh\cP(\sigma)$, which is in general defined by conjugating the normal operator $N(\cP)$, obtained by freezing coefficients of $\cP$ at the boundary $X$ (i.e.\ $N(\cP)$ can be thought of as the stationary part of $\cP$), by the Mellin transform in $\tau$.

For any finite value of $\sigma$, we can analyze the operator $\wh\cP(\sigma)\in\Diff^2(X)$, $X=\pa M$, using standard microlocal analysis (and energy estimates near $H_{I,0}\cap X$ and $H_{F,3}\cap X$). The natural function spaces are variable order Sobolev spaces
\begin{equation}
\label{EqRNdSBdySpace}
  \Hfw^\sfs(Y),
\end{equation}
which we define to be the restrictions to $Y=\Omega\cap\pa M$ of elements of $H^\sfs(X)$ with support in $r\geq r_0-2\delta$, and dually on $\Hbw^\sfs(Y)$, the restrictions to $Y$ of elements of $H^\sfs(X)$ with support in $r\leq r_3+2\delta$, obtaining Fredholm mapping properties between suitable function spaces. However, in order to obtain useful estimates for our global b-problem, we need uniform estimates for $\wh\cP(\sigma)$ as $|\Re\sigma|\to\infty$ in strips of bounded $\Im\sigma$, on function spaces which are related to the variable order b-Sobolev spaces on which we analyze $\cP$.

Thus, let $h=\la\sigma\ra^{-1}$, $z=h\sigma$, and consider the semiclassical rescaling \cite[\S2]{VasyMicroKerrdS}
\begin{equation}
\label{EqRNdSSemiclassicalRescaling}
  \cP_{h,z} := h^2\wh\cP(h^{-1}z) \in \Diffh^2(X).
\end{equation}
We refer to \cite[\S4]{HintzVasyQuasilinearKdS} for details on the relationship between the b-operator $\cP$ and its semiclassical rescaling; in particular, we recall that the Hamilton vector field of the semiclassical principal symbol of $\cP_{h,z}$ for $z=\pm 1+\cO(h)$ is naturally identified with the Hamilton vector field of the b-principal symbol of $\cP$ restricted to $\{\sigma=\pm 1\}\subset\Tb^*_X M$, where we use the coordinates \eqref{EqRNdSTbCoords} in the b-cotangent bundle. For any Sobolev order function $\sfs\in\CI(X)$ and a weight $\alpha\in\R$, the Mellin transform in $\tau$ gives an isomorphism
\begin{equation}
\label{EqRNdSBSobolevMellin}
  \Hb^{\sfs,\alpha}([0,\infty)_\tau\times X)  \cong  L^2(\{\Im\sigma=-\alpha\}; H_{\la\sigma\ra^{-1}}^{\sfs,-\sfs}(X)),
\end{equation}
where $H_h^{\sfs,\sfw}(X)=h^{-\sfw}H_h^\sfs(X)$ (for $\sfw\in\CI(X)$) is a semiclassical variable order Sobolev space with a non-constant weighting in $h$; see Appendix~\ref{SecVariable} for definitions and properties of such spaces.

The analysis of $\cP_{h,z}$, $\Im z=\cO(h)$, acting on $H_h^{\sfs,-\sfs}(X)$-type spaces is now straightforward, given the properties of the Hamilton flow of $\cP$. Indeed, in view of the supported/extendible nature of the b-spaces $\Hbfw$ and $\Hbbw$ into account, we are led to define the corresponding semiclassical space
\[
  H_{h,\fw}^\sfs(Y),\tn{ resp. }H_{h,\bw}^\sfs(Y),
\]
to be the space of restrictions to $Y$ of elements of $H_h^{\sfs,-\sfs}(X)$ with support in $r\geq r_0-2\delta$, resp.\ $r\leq r_3-2\delta$. Then, in the region where $\sfs$ is not constant (recall that this is a subset of $\{r_1<r<r_2\}$), $\cP_{h,z}$ is a (semiclassical) real principal type operator, as follows from \eqref{EqRNdSHamRHypRegion}, and hence the only microlocal estimates we need there are elliptic regularity and the real principal type propagation for variable order semiclassical Sobolev spaces; these estimates are proved in Propositions~\ref{PropVariableSclElliptic} and \ref{PropVariableSclPropagation}. The more delicate estimates take place in standard semiclassical function spaces; these are the radial point estimates near $r=r_j$, in the present context proved in \cite[\S2]{VasyMicroKerrdS}, and the semiclassical estimates of Wunsch--Zworski \cite{WunschZworskiNormHypResolvent} and Dyatlov \cite{DyatlovSpectralGaps} (microlocalized in \cite[\S4]{HintzVasyQuasilinearKdS}) at the normally hyperbolic trapping. Near the artificial hypersurfaces $H_{I,0}\cup H_{F,3}$, intersected with $\pa M$, the operator $\cP_{h,z}$ is a (semiclassical) wave operator, and we use standard energy estimates there similar to the proof of Proposition~\ref{PropRNdSGlobalReg}, but keeping track of powers of $h$; see \cite[\S3]{VasyMicroKerrdS} for details.

We thus obtain:
\begin{prop}
\label{PropRNdSSemiclassical}
  Let $0<c_1<c_2$. Then for $h>0$ and $h^{-1}\Im z\in[c_1,c_2]$, we have the estimate
  \begin{equation}
  \label{EqRNdSSemiclassicalFw}
    \|v\|_{H_{h,\fw}^\sfs(Y)} \leq C(\|h^{-2}\cP_{h,z}v\|_{H_{h,\fw}^{\sfs-1}(Y)} + \|v\|_{H_{h,\fw}^{\sfs_0}(Y)})
  \end{equation}
  with a uniform constant $C$; here $\sfs$ and $\sfs_0<\sfs$ are forward order functions for all weights in $[-c_2,-c_1]$, see Definition~\ref{DefRNdSOrderFunctions}. For the dual problem, we similarly have
  \begin{equation}
  \label{EqRNdSSemiclassicalBw}
    \|v\|_{H_{h,\bw}^\sfs(Y)} \leq C(\|h^{-2}\cP_{h,z}^*v\|_{H_{h,\bw}^{\sfs-1}(Y)} + \|v\|_{H_{h,\bw}^{\sfs_0}(Y)}),
  \end{equation}
  where $\sfs$ and $\sfs_0<\sfs$ are backward order functions for all weights in $[c_1,c_2]$.
\end{prop}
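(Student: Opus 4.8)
The plan is to establish \eqref{EqRNdSSemiclassicalFw} and \eqref{EqRNdSSemiclassicalBw} as the semiclassical, $h$-uniform counterparts of Proposition~\ref{PropRNdSGlobalReg}, following the template of \cite[\S\S2--3]{VasyMicroKerrdS}. The starting observation I would use is that for $z=\pm1+\cO(h)$ the Hamilton vector field of the real semiclassical principal symbol of $\cP_{h,z}$ is naturally identified with $\ham_G$ restricted to the slice $\{\sigma=\pm1\}\subset\Tb^*_X M$ in the coordinates \eqref{EqRNdSTbCoords}; hence the entire dynamical picture of Proposition~\ref{PropRNdSFlow} --- the source/sink structure at $L_{j,\pm}$ for $j=1,2,3$, the saddle behavior, the $r$-normally hyperbolic trapping at $\Gamma$, the hidden radial set $L_0$ and trapping $\Gamma_*$, and the one-sided crossing behavior at the horizons --- transfers verbatim to the semiclassical bicharacteristic flow of $\cP_{h,z}$ in $T^*X$. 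I would also record at the outset that, via the Mellin isomorphism \eqref{EqRNdSBSobolevMellin}, the hypothesis $h^{-1}\Im z\in[c_1,c_2]$ corresponds to b-weights $\alpha=-\Im\sigma\in[-c_2,-c_1]\subset(-\infty,0)$, so that a forward order function for all weights in $[-c_2,-c_1]$ satisfies precisely the threshold inequalities at $L_1,L_2,L_3$ needed below, uniformly in this range.

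With this in hand, I would assemble the estimate region by region, carrying powers of $h$ throughout. First, near $H_{I,0}\cap X$ and $H_{F,3}\cap X$ the operator $\cP_{h,z}$ is a semiclassical wave operator; since elements of $H_{h,\fw}^\sfs(Y)$ vanish at $H_{I,0}$, semiclassical energy estimates as in \cite[\S3]{VasyMicroKerrdS} give a priori $H_h^\sfs$-control near $H_{I,0}\cap X$, and a cutoff-and-extension argument (solving an auxiliary forward problem with forcing cut off near $H_{F,3}$, exactly as in the proof of Proposition~\ref{PropRNdSGlobalReg}) gives uniform control up to $H_{F,3}\cap X$. Propagating away from $H_{I,0}\cap X$ by the semiclassical elliptic and real principal type estimates, I reach $L_{2,\pm}$ and $L_{3,\pm}$, where I would invoke the semiclassical radial point estimates of \cite[\S2]{VasyMicroKerrdS} (above-threshold case, using $\sfs>1/2+\max(\beta_2\alpha,\beta_3\alpha)$) to propagate $H_h^\sfs$-regularity into $L_2\cup L_3$ modulo an $H_h^{\sfs_0}$ error; propagation within the characteristic set then controls $(\Gamma^-_-\cup\Gamma^+_-)\setminus\Gamma$, and the normally hyperbolic trapping estimate of Wunsch--Zworski \cite{WunschZworskiNormHypResolvent} and Dyatlov \cite{DyatlovSpectralGaps}, microlocalized in \cite[\S4]{HintzVasyQuasilinearKdS} and applicable since $h^{-1}\Im z\geq c_1>0$, pushes the control through $\Gamma$ and along $\Gamma^-_+\cup\Gamma^+_+$. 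In $r_1<r<r_2$, where $\sfs$ is non-constant, $\cP_{h,z}$ is of semiclassical real principal type by \eqref{EqRNdSHamRHypRegion}, so there I only need the variable order semiclassical elliptic and propagation estimates of Propositions~\ref{PropVariableSclElliptic} and \ref{PropVariableSclPropagation}, with $\sfs'\leq 0$ making the propagation direction admissible; uniform control up to $H_{F,2}\cap X$ comes again from the cutoff device. Finally, in $r_0-2\delta\leq r\leq r_1+2\delta$ I propagate from $H_{I,0}\cap X$ forwards in $\Sigma_-$ and backwards in $\Sigma_+$, using the purely symbolic propagation-with-complex-absorption argument of \cite[\S2]{VasyMicroKerrdS}, gaining a derivative on the elliptic set of $\cQ$; this yields $H_h^\sfs$-control in a punctured neighborhood of $L_{1,\pm}$ in the characteristic set over $X$, so the below-threshold radial point estimate applies at $L_1$ (using $\sfs<1/2+\beta_1\alpha$ for $r<r_1+\delta'$), and I propagate along $\cL_1$, with uniform control up to $H_F\cap X$ via the cutoff argument once more. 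Patching these estimates and absorbing the relative-bound errors into $\|v\|_{H_{h,\fw}^{\sfs_0}(Y)}$ gives \eqref{EqRNdSSemiclassicalFw}.

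The bulk of the work is bookkeeping rather than new input, since every microlocal and energy ingredient is available off the shelf once the flow is understood; the step I expect to require the most care is obtaining the uniform-in-$h$ estimates up to the spacelike artificial hypersurfaces $H_{F,2}\cap X$, $H_{F,3}\cap X$ and $H_F\cap X$, where microlocal propagation alone is insufficient and one must combine semiclassical energy estimates with the cutoff/extension trick while matching Sobolev orders across the transition region $r_1<r<r_2$ in which $\sfs$ varies, and one must check that the standard-weight estimates (radial points, trapping) and the variable-order estimates overlap on open sets so that they glue. The dual estimate \eqref{EqRNdSSemiclassicalBw} I would prove by running the same scheme for $\cP_{h,z}^*$, whose semiclassical bicharacteristic flow is that of $\cP_{h,z}$ with time reversed; consequently backward order functions and the backward-supported spaces $H_{h,\bw}^\sfs(Y)$ are the natural setting, one obtains initial control near $H_F\cup H_{F,2}$ and $H_{F,3}$ by solving backward problems, and the roles of sources and sinks at the $L_{j,\pm}$ are interchanged.
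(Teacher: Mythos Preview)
Your approach is essentially the same as the paper's: the paragraph preceding the proposition in \S\ref{SubsecRNdSFredholm} outlines exactly the ingredients you list---identification of the semiclassical flow with $\ham_G|_{\sigma=\pm1}$, variable order real principal type propagation (Propositions~\ref{PropVariableSclElliptic}--\ref{PropVariableSclPropagation}) in the region $r_1<r<r_2$ where $\sfs$ varies, semiclassical radial point estimates from \cite[\S2]{VasyMicroKerrdS} at $L_1,L_2,L_3$, the trapping estimates of \cite{WunschZworskiNormHypResolvent,DyatlovSpectralGaps} at $\Gamma$, complex absorption at $L_0\cup\Gamma_*$, and semiclassical energy estimates near the artificial hypersurfaces as in \cite[\S3]{VasyMicroKerrdS}.

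One small confusion worth correcting: the hypersurfaces $H_F$ and $H_{F,2}$ lie at finite $t_*$ and hence do \emph{not} intersect $X=\pa M$; the boundary $Y=\Omega\cap\pa M$ is simply $[r_0-2\delta,r_3+2\delta]_r\times\Sph^2$, with the only artificial boundaries being $H_{I,0}\cap X=\{r=r_0-2\delta\}$ and $H_{F,3}\cap X=\{r=r_3+2\delta\}$. So the cutoff/extension step you flag as ``requiring the most care'' near $H_{F,2}\cap X$ and $H_F\cap X$ is vacuous---the semiclassical problem on $Y$ is strictly simpler than the spacetime problem of Proposition~\ref{PropRNdSGlobalReg} in this respect, which is why the paper only mentions energy estimates near $H_{I,0}\cap X$ and $H_{F,3}\cap X$.
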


Notice here that if $\sfs$ were constant, the estimate \eqref{EqRNdSSemiclassicalFw} would read $\|v\|_{H_h^\sfs}\leq h^{-1}\|\cP_{h,z}v\|_{H_h^{\sfs-1}}+h^{\sfs-\sfs_0}\|v\|_{H_h^{\sfs_0}}$, which is the usual hyperbolic loss of one derivative and one power of $h$. The estimate \eqref{EqRNdSSemiclassicalFw} is conceptually the same, but in addition takes care of the variable orders. Trapping causes no additional losses here, since $h^{-1}\Im z>0$.

\begin{rmk}
\label{RmkRNdSSemiclassicalDual}
  We have $h^{-2}\cP_{h,z}=\wh\cP(\sigma)$, and $h^{-2}\cP_{h,z}^*=\wh{\cP^*}(\ol\sigma)$; the change of sign in $\Im\sigma=h^{-1}\Im z$ when going from \eqref{EqRNdSSemiclassicalFw} to the dual estimate \eqref{EqRNdSSemiclassicalBw} is analogous to the change of sign in the weight $\alpha$ in Proposition~\ref{PropRNdSGlobalReg}.
\end{rmk}

For future reference, we note that we still have high energy estimates for $h^{-1}z$ in strips including and extending \emph{below} the real line: the only delicate part is the estimate at the normally hyperbolic trapping, more precisely at the semiclassical trapped set $\Gamma_{h,z}$, which can be naturally identified with the intersection of the trapped set $\Gamma$ with $\{\sigma=\pm 1\}$ for $z=\pm 1+\cO(h)$. Thus, let $\numin$ be the minimal expansion rate at the semiclassical trapped set in the normal direction as in \cite{DyatlovSpectralGaps} or \cite[\S5]{DyatlovResonanceProjectors}; let us then write
\[
  \sigma_h\Bigl(\frac{1}{2ih}(\cP_{h,z}-\cP_{h,z}^*)\Bigr)=\pm\frac{\numin}{2\gamma_0} h^{-1}\Im z,\quad z=\pm 1+\cO(h),
\]
for some real number $\gamma_0>0$ (in the Kerr--de Sitter case discussed later, $\gamma_0$ is a smooth function on $\Gamma_{h,z}$); see \S\ref{SubsecRNdSHighReg}, in particular \eqref{EqRNdSHighRegNumin} and \eqref{EqRNdSHighRegSubpr}, for the ingredients for the calculation of $\gamma_0$ in a limiting case. Therefore, if $h^{-1}\Im z>-\gamma_0$, then $\pm\sigma_h((2ih)^{-1}(\cP_{h,z}-\cP_{h,z}^*))>-\numin/2$. The reason for the `$\pm$' appearing here is the following: for the `$-$' case, note that for $z=-1+\cO(h)$, corresponding to semiclassical analysis in $\{\sigma=-1\}$, which near the trapped set $\Gamma$ intersects the \emph{forward} light cone $\Sigma_-$ non-trivially, we propagate regularity \emph{forwards along the Hamilton flow}, while in the `$+$' case, corresponding to propagation in the \emph{backward} light cone $\Sigma_+$, we propagate \emph{backwards along the flow}. Using \cite{DyatlovSpectralGaps}, see also the discussion in \cite[\S4.4]{HintzVasyQuasilinearKdS}, we conclude:

\begin{prop}
\label{PropRNdSSemiclassical2}
  Using the above notation, the (uniform) estimates \eqref{EqRNdSSemiclassicalFw} and \eqref{EqRNdSSemiclassicalBw} hold with $\sfs-1$ replaced by $\sfs$ on the right hand sides, provided $h^{-1}\Im z\in[-\gamma,c_2]$, where $-\gamma_0<-\gamma<c_2$.
\end{prop}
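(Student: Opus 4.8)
The plan is to run the proof of Proposition~\ref{PropRNdSSemiclassical} essentially verbatim, the only genuine change being the estimate at the normally hyperbolic trapped set. Before anything else I would fix $\gamma$ with $-\gamma_0<-\gamma<c_2$ and note that the order function conditions enter only through the thresholds $1/2+\beta_j\alpha$ at the radial sets $L_j$, with $\alpha=-h^{-1}\Im z$; since $h^{-1}\Im z$ now ranges over $[-\gamma,c_2]$, the relevant weights $\alpha$ range over $[-c_2,\gamma]$, and one must take $\sfs,\sfs_0$ to be forward (resp.\ backward) order functions for \emph{all} weights in this range. This is possible after shrinking $\gamma$ if necessary: the inequalities in Definition~\ref{DefRNdSOrderFunctions} move continuously with $\alpha$, and one uses the most negative weight to constrain $\sfs$ near $r_1$ and the largest weight to constrain it near $[r_2,r_3]$. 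This elementary check is what pins down the admissible range of $\gamma$.

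Away from the semiclassical trapped set $\Gamma_{h,z}$, every ingredient of the proof of Proposition~\ref{PropRNdSSemiclassical} goes through unchanged and uniformly for $h^{-1}\Im z\in[-\gamma,c_2]$: semiclassical elliptic regularity on the elliptic set of $\cQ$ (which hides $L_0$ and $\Gamma_*$), the variable order semiclassical real principal type propagation of Propositions~\ref{PropVariableSclElliptic} and~\ref{PropVariableSclPropagation} in $r_1<r<r_2$, the semiclassical radial point estimates at $L_1,L_2,L_3$ from \cite[\S2]{VasyMicroKerrdS} --- whose hypotheses are exactly the order function inequalities and are therefore insensitive to the sign of $\Im z$ within the allowed range --- and semiclassical energy estimates near $H_{I,0}\cap X$ and $H_{F,3}\cap X$ (now keeping track of powers of $h$), all pieced together along the Hamilton flow of $\sigma_h(\cP_{h,z})$, which for $z=\pm 1+\cO(h)$ is identified with the b-Hamilton field of $\cP$ restricted to $\{\sigma=\pm 1\}$ and whose global structure is Proposition~\ref{PropRNdSFlow}. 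None of these produces a loss worse than the one already present in \eqref{EqRNdSSemiclassicalFw}.

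The one place where the sign of $\Im z$ matters is the trapped set, and this is the main obstacle. Here I would use that $\Gamma_{h,z}$ is identified with $\Gamma\cap\{\sigma=\pm 1\}$, that it lies in the region $r=r_P\in(r_2,r_3)$ where $\sfs,\sfs_0$ are \emph{constant} (so that near $\Gamma_{h,z}$ one works in the standard, not variable order, semiclassical calculus), and that the trapping is $r$-normally hyperbolic for every $r$. By the computation recorded just before Proposition~\ref{PropRNdSSemiclassical2}, the normalized antisymmetric part satisfies
\[
  \pm\sigma_h\!\Bigl(\frac{1}{2ih}(\cP_{h,z}-\cP_{h,z}^*)\Bigr)=\frac{\numin}{2\gamma_0}\,h^{-1}\Im z\;\geq\;-\frac{\numin}{2\gamma_0}\,\gamma\;>\;-\frac{\numin}{2},
\]
the last inequality being strict with margin $\tfrac{\numin}{2\gamma_0}(\gamma_0-\gamma)>0$; thus the hypothesis of Dyatlov's semiclassical estimate at normally hyperbolic trapping \cite{DyatlovSpectralGaps} (following \cite{WunschZworskiNormHypResolvent}), in the microlocalized form of \cite[\S4.4]{HintzVasyQuasilinearKdS}, holds. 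In contrast to the case $\Im z>0$, where trapping is free, the resulting microlocal estimate at $\Gamma_{h,z}$ costs one semiclassical derivative --- one controls $h^{-2}\cP_{h,z}v$ in $H^\sfs_h$ rather than $H^{\sfs-1}_h$ near $\Gamma_{h,z}$ --- but does \emph{not} cost an additional power of $h$ beyond the one already lost in \eqref{EqRNdSSemiclassicalFw}. Feeding this into the flow argument in place of the (now unavailable) free propagation through the trapping forces the Sobolev order on the forcing to be taken uniform across the whole estimate, whence $\sfs-1$ is replaced by $\sfs$ on the right hand side of \eqref{EqRNdSSemiclassicalFw}.

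The dual estimate \eqref{EqRNdSSemiclassicalBw} is obtained the same way, with the roles of forward and backward data interchanged and $h^{-2}\cP_{h,z}^*=\wh{\cP^*}(\ol\sigma)$; the sign of $\Im z$ flips as in Remark~\ref{RmkRNdSSemiclassicalDual}, so that backward order functions for weights in $[-\gamma,c_2]$ are now required, and the trapped set is the same and, by symmetry of the normal expansion rates, is governed by the same $\gamma_0$ (in the Kerr--de Sitter setting of \S\ref{SecKdS}, $\gamma_0$ becomes a smooth function on $\Gamma_{h,z}$ and $\numin/(2\gamma_0)$ is replaced accordingly). I expect the trapped set estimate below the real axis to be the only substantive point: one must invoke the Wunsch--Zworski/Dyatlov analysis, verify that the threshold it imposes is precisely $h^{-1}\Im z>-\gamma_0$, and track that the resulting loss is exactly one derivative with no extra power of $h$; everything else is a routine adaptation of Proposition~\ref{PropRNdSSemiclassical}, together with the bookkeeping of variable orders, which is harmless because $\sfs$ is constant near all of the trapping.
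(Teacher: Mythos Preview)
Your proposal is correct and takes the same approach as the paper: the only genuine change from Proposition~\ref{PropRNdSSemiclassical} is the microlocal estimate at the normally hyperbolic trapping, for which one invokes Dyatlov's result under the subprincipal condition $\pm\sigma_h\bigl((2ih)^{-1}(\cP_{h,z}-\cP_{h,z}^*)\bigr)>-\numin/2$, i.e.\ $h^{-1}\Im z>-\gamma_0$. Your care with the order functions (requiring them to be forward/backward order functions for all weights in the extended range) is a point the paper leaves implicit.

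One small inaccuracy worth correcting: you write that the trapping estimate ``costs one semiclassical derivative \ldots\ but does not cost an additional power of $h$.'' In the $H_{h,\fw}^{\sfs}=H_h^{\sfs,-\sfs}$ normalization these are coupled, and the paper's own commentary immediately after the proposition says the opposite: the additional $h^{-1}$ is the essential (and, by Bony--Burq--Ramond, unavoidable) loss from trapping, whereas the strengthening of the regularity from $\sfs-1$ to $\sfs$ is \emph{unnecessary} and merely convenient. This does not affect the validity of your argument, but you should not attribute the modification of the estimate to a derivative loss that the trapping forces; it is the $h$-loss that is forced.
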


The effect of replacing $\sfs-1$ by $\sfs$ is that this adds an additional $h^{-1}$ to the right hand side, i.e.\ we get a weaker estimate (which in the presence of trapping cannot be avoided by \cite{BonyBurqRamondTrapping}); the strengthening of the norm in the regularity sense is unnecessary, but does not affect our arguments later.

We return to the case $h^{-1}\Im z>0$. If we define the space $\cX_h^\sfs=\{u\in H_{h,\fw}^{\sfs}(Y)\colon h^{-2}\cP_{h,z}\in H_{h,\fw}^{\sfs-1}(Y)\}$, then the estimates in Proposition~\ref{PropRNdSSemiclassical} imply that the map
\begin{equation}
\label{EqRNdSNormalOpMap}
  \wh\cP(\sigma) \colon \cX^\sfs_{\la\sigma\ra^{-1}} \to H_{\la\sigma\ra^{-1},\fw}^{\sfs-1}(Y)
\end{equation}
is Fredholm for $\Im\sigma\in[c_1,c_2]$, with high energy estimates as $|\Re\sigma|\to\infty$. Moreover, for small $h>0$, the error terms on the right hand sides of \eqref{EqRNdSSemiclassicalFw} and \eqref{EqRNdSSemiclassicalBw} can be absorbed into the left hand sides, hence in this case we obtain the invertibility of the map \eqref{EqRNdSNormalOpMap}. This implies that $\wh\cP(\sigma)$ is invertible for $\Im\sigma\in[c_1,c_2]$, $|\Re\sigma|\gg 0$. Since therefore there are only finitely many resonances (poles of $\wh\cP(\sigma)^{-1}$) in $c_1\leq\Im\sigma\leq c_2$ for any $0<c_1<c_2$, we may therefore pick a weight $\alpha<0$ such that there are no resonances on the line $\Im\sigma=-\alpha$, which in view of \eqref{EqRNdSBSobolevMellin} implies the estimate
\begin{equation}
\label{EqRNdSNormalOpInv}
  \|u\|_{\Hbfw^{\sfs,\alpha}(\Omega_I)} \leq C\|N(\cP)u\|_{\Hbfw^{\sfs-1,\alpha}(\Omega_I)},
\end{equation}
where $\Omega_I=[0,\infty)_\tau\times Y$ is the manifold on which the dilation-invariant operator $N(\cP)$ naturally lives; here $\sfs$ is a forward order function for the weight $\alpha$, and the subscript `fw' on the b-Sobolev spaces denotes distributions with supported character at $[0,\infty)_\tau\times(H_{I,0}\cap Y)$ and extendible at $[0,\infty)_\tau\times(H_{F,3}\cap Y)$. We point out that the choice $\tau$ of boundary defining function and the choice of $\tau$-dilation orbits fixes an isomorphism of a collar neighborhood of $Y$ in $\Omega$ with a neighborhood of $\{0\}\times Y$ in $\Omega_I$, and the two $\Hb^{\sfs,\alpha}$-norms on functions supported in this neighborhood, given by the restriction of the $\Hbfw^{\sfs,\alpha}(\Omega)$-norm and the restriction of the $\Hbfw^{\sfs,\alpha}(\Omega_I)$-norm, respectively, are equivalent.

Equipped with \eqref{EqRNdSNormalOpInv}, we can now improve Proposition~\ref{PropRNdSGlobalReg} to obtain the Fredholm property of $\cP$: first, we let $\sfs$ be a forward order function for the weight $\alpha$, but with the more stringent requirement
\begin{equation}
\label{EqRNdSFredholmFwOrder}
  \sfs(r)>3/2+\max(\beta_2\alpha,\beta_3\alpha),\quad r>r_1+2\delta',
\end{equation}
and we require that the forward order function $\sfs_0$ satisfies $\sfs_0<\sfs-1$. Using \eqref{EqRNdSNormalOpInv} with $\sfs$ replaced by $\sfs_0$, and a cutoff $\chi\in\CI(\Omega)$, identically $1$ near $Y$ and supported in a small collar neighborhood of $Y$, the estimate \eqref{EqRNdSGlobalRegFw} then implies (as in \cite[\S2]{HintzVasySemilinear})
\begin{align*}
  \|u\|_{\Hbfw^{\sfs,\alpha}} &\lesssim \|\cP u\|_{\Hbfw^{\sfs-1,\alpha}} + \|(1-\chi)u\|_{\Hbfw^{\sfs_0,\alpha}} + \|N(\cP)\chi u\|_{\Hbfw^{\sfs_0-1,\alpha}} \\
    &\lesssim \|\cP u\|_{\Hbfw^{\sfs-1,\alpha}} + \|u\|_{\Hbfw^{\sfs_0,\alpha+1}} + \|\chi\cP u\|_{\Hbfw^{\sfs_0-1,\alpha}} + \|[N(\cP),\chi]u\|_{\Hbfw^{\sfs_0-1,\alpha}} \\
    &\qquad\qquad + \|\chi(\cP-N(\cP)) u\|_{\Hbfw^{\sfs_0-1,\alpha}}.
\end{align*}
Noting that $[N(\cP),\chi]\in\tau\Diffb^1$, the second to last term can be estimated by $\|u\|_{\Hbfw^{\sfs_0,\alpha+1}}$, while the last term can be estimated by $\|u\|_{\Hbfw^{\sfs_0+1,\alpha+1}}$; thus, we obtain
\begin{equation}
\label{EqRNdSFredholmFw}
  \|u\|_{\Hbfw^{\sfs,\alpha}} \lesssim \|\cP u\|_{\Hbfw^{\sfs-1,\alpha}} + \|u\|_{\Hbfw^{\sfs_0+1,\alpha+1}},
\end{equation}
where the inclusion $\Hbfw^{\sfs,\alpha}\hookrightarrow\Hbfw^{\sfs_0+1,\alpha+1}$ is now compact. This estimate implies that $\ker\cP$ is finite-dimensional and $\ran\cP$ is closed. The dual estimate is
\begin{equation}
\label{EqRNdSFredholmBw}
  \|u\|_{\Hbbw^{\sfs',-\alpha}} \lesssim \|\cP^* u\|_{\Hbbw^{\sfs'-1,-\alpha}} + \|u\|_{\Hbbw^{\sfs_0'+1,-\alpha+1}},
\end{equation}
where now $\sfs_0'<\sfs'-1$ is a backward order function for the weight $-\alpha$, and the backward order function $\sfs'$ satisfies the more stringent bound
\[
  \sfs'(r)>3/2-\beta_1\alpha,\quad r<r_1+\delta'.
\]
Note that $\wh\cP(\sigma)^{-1}$ not having a pole on the line $\Im\sigma=-\alpha$ is equivalent to $\wh{\cP^*}(\sigma)^{-1}$ not having a pole on the line $\Im\sigma=\alpha$, since $\wh{\cP^*}(\sigma)=\wh\cP(\ol\sigma)^*$. We wish to take $\sfs'=1-\sfs$ with $\sfs$ as in the estimate \eqref{EqRNdSFredholmFw}; so if we require in addition to \eqref{EqRNdSFredholmFwOrder} that
\begin{equation}
\label{EqRNdSFredholmFwOrder2}
  \sfs(r)<-1/2+\beta_1\alpha, \quad r<r_1+\delta',
\end{equation}
the estimates \eqref{EqRNdSFredholmFw} and \eqref{EqRNdSFredholmBw} for $\sfs'=1-\sfs$ imply by a standard functional analytic argument, see e.g.\ \cite[Proof of Theorem~26.1.7]{HormanderAnalysisPDE4}, that
\begin{equation}
\label{EqRNdSFredholmMap}
  \cP \colon \cX^{\sfs,\alpha} \to \Hbfw^{\sfs-1,\alpha}(\Omega)
\end{equation}
is Fredholm, where
\begin{equation}
\label{EqRNdSFredholmFwSpace}
  \cX^{\sfs,\alpha} = \{ u\in\Hbfw^{\sfs,\alpha}(\Omega)\colon \cP u\in\Hbfw^{\sfs-1,\alpha}(\Omega)\},
\end{equation}
and the range of $\cP$ is the annihilator of the kernel of $\cP^*$ acting on $\Hbbw^{1-\sfs,-\alpha}(\Omega)$. We can strengthen the regularity at the Cauchy horizon by dropping \eqref{EqRNdSFredholmFwOrder2}, cf.\ \cite[\S5]{HintzVasySemilinear}:
\begin{thm}
\label{ThmRNdSFredholm}
  Suppose $\alpha<0$ is such that $\cP$ has no resonances on the line $\Im\sigma=-\alpha$. Let $\sfs$ be a forward order function for the weight $\alpha$, and assume \eqref{EqRNdSFredholmFwOrder} holds. Then the map $\cP$, defined in \eqref{EqRNdSOperator}, is Fredholm as a map \eqref{EqRNdSFredholmMap}, with range equal to the annihilator of $\ker_{\Hbbw^{1-\sfs,-\alpha}(\Omega)}\cP^*$.
\end{thm}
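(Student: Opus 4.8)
The plan is to reduce to the version of the statement already established in the discussion culminating in \eqref{EqRNdSFredholmMap} --- namely the one where the forward order function also satisfies the more stringent bound \eqref{EqRNdSFredholmFwOrder2} near $r=r_1$ --- and then to upgrade the regularity at the Cauchy horizon by a bootstrap argument, cf.\ \cite[\S5]{HintzVasySemilinear}. First I would fix a forward order function $\sfs_\flat=\sfs_\flat(r)$ for the weight $\alpha$ with $\sfs_\flat<\sfs$ which, besides \eqref{EqRNdSFredholmFwOrder}, also satisfies \eqref{EqRNdSFredholmFwOrder2}; such $\sfs_\flat$ is easily constructed, e.g.\ $\sfs_\flat=\sfs-\phi$ with $\phi>0$ smooth and non-increasing, large near $r=r_1$ and small for $r>r_1+2\delta'$ (so that $\sfs_\flat$ stays non-decreasing and below $\sfs$ everywhere). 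For this $\sfs_\flat$ the argument leading to \eqref{EqRNdSFredholmMap} applies verbatim: $\cP\colon\cX^{\sfs_\flat,\alpha}\to\Hbfw^{\sfs_\flat-1,\alpha}(\Omega)$ is Fredholm, with finite-dimensional kernel and range equal to the annihilator of $K:=\ker_{\Hbbw^{1-\sfs_\flat,-\alpha}(\Omega)}\cP^*$, the latter also being finite-dimensional.

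Next I would record two regularity upgrades, both immediate consequences of Proposition~\ref{PropRNdSGlobalReg}. Applying \eqref{EqRNdSGlobalRegFw} with order function $\sfs$ and error term of order $\sfs_\flat<\sfs$ shows that any $u\in\Hbfw^{\sfs_\flat,\alpha}(\Omega)$ with $\cP u\in\Hbfw^{\sfs-1,\alpha}(\Omega)$ in fact lies in $\Hbfw^{\sfs,\alpha}(\Omega)$; thus $\cX^{\sfs,\alpha}$ is exactly the set of $u\in\cX^{\sfs_\flat,\alpha}$ with $\cP u\in\Hbfw^{\sfs-1,\alpha}(\Omega)$. Dually, since $1-\sfs$ and $1-\sfs_\flat$ are backward order functions for the weight $-\alpha$ with $1-\sfs<1-\sfs_\flat$, applying \eqref{EqRNdSGlobalRegBw} with order function $1-\sfs_\flat$ and error term of order $1-\sfs$ shows that any $v\in\Hbbw^{1-\sfs,-\alpha}(\Omega)$ with $\cP^*v=0$ lies in $\Hbbw^{1-\sfs_\flat,-\alpha}(\Omega)$; hence $\ker_{\Hbbw^{1-\sfs,-\alpha}(\Omega)}\cP^*=K$.

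Then I would assemble these facts. Since $\sfs_\flat<\sfs$ we have $\cX^{\sfs,\alpha}\subseteq\cX^{\sfs_\flat,\alpha}$ and $\Hbfw^{\sfs-1,\alpha}(\Omega)\subseteq\Hbfw^{\sfs_\flat-1,\alpha}(\Omega)$, so by the first upgrade $\ker_{\cX^{\sfs,\alpha}}\cP=\ker_{\cX^{\sfs_\flat,\alpha}}\cP$ is finite-dimensional. Let $A\subseteq\Hbfw^{\sfs-1,\alpha}(\Omega)$ be the annihilator of $K$ (with respect to the $L^2$ pairing, which restricts consistently from the duality of $\Hbfw^{\sfs_\flat-1,\alpha}$ with $\Hbbw^{1-\sfs_\flat,-\alpha}$). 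If $f=\cP u$ with $u\in\cX^{\sfs,\alpha}\subseteq\cX^{\sfs_\flat,\alpha}$, then $f$ annihilates $K$ and $f\in\Hbfw^{\sfs-1,\alpha}(\Omega)$, so $f\in A$; conversely, if $f\in A$, then $f$ lies in the range of $\cP$ on $\cX^{\sfs_\flat,\alpha}$, so $\cP u=f$ for some $u\in\cX^{\sfs_\flat,\alpha}$, and since $f\in\Hbfw^{\sfs-1,\alpha}(\Omega)$ the first upgrade gives $u\in\cX^{\sfs,\alpha}$. Thus the range of $\cP$ on $\cX^{\sfs,\alpha}$ equals $A$, which is closed (being an annihilator) and of codimension at most $\dim K<\infty$ in $\Hbfw^{\sfs-1,\alpha}(\Omega)$; together with the finite-dimensional kernel this shows that \eqref{EqRNdSFredholmMap} is Fredholm, with range $A$, i.e.\ the annihilator of $\ker_{\Hbbw^{1-\sfs,-\alpha}(\Omega)}\cP^*$.

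I do not expect a real obstacle beyond what is already encoded in Proposition~\ref{PropRNdSGlobalReg}: the analytic content --- in particular the low-regularity (`blue-shift') radial point estimate at $L_1$, which propagates any Sobolev order below the threshold $1/2+\beta_1\alpha$ into $L_1$ once a punctured neighborhood of $L_1$ is controlled, that control being produced as in the proof of Proposition~\ref{PropRNdSGlobalReg} via propagation from $H_{I,0}$ through the complex absorption near $L_0\cup\Gamma_*$ and via the red-shift estimates at $L_2,L_3$ --- is already in hand, so the remaining work is bookkeeping of function spaces, duality and annihilators. The one point requiring a little care is that enlarging the target space from $\Hbfw^{\sfs_\flat-1,\alpha}$ to $\Hbfw^{\sfs-1,\alpha}$ could a priori enlarge $\ker\cP^*$; this is precisely why the dual regularity upgrade in the second step is needed. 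Constructing $\sfs_\flat$ compatibly with $\sfs$ is routine.
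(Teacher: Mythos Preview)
Your proof is correct and follows essentially the same route as the paper: pick an auxiliary forward order function $\sfs_\flat\leq\sfs$ satisfying both \eqref{EqRNdSFredholmFwOrder} and \eqref{EqRNdSFredholmFwOrder2}, use the already-established Fredholm property at order $\sfs_\flat$, and upgrade regularity via Proposition~\ref{PropRNdSGlobalReg}.

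One small remark: your dual regularity upgrade (showing $\ker_{\Hbbw^{1-\sfs,-\alpha}}\cP^*=\ker_{\Hbbw^{1-\sfs_\flat,-\alpha}}\cP^*$) is correct but not actually needed. The paper gets by with only the trivial inclusion $\ker_{\Hbbw^{1-\sfs_\flat,-\alpha}}\cP^*\subseteq\ker_{\Hbbw^{1-\sfs,-\alpha}}\cP^*$: if $v\in\Hbfw^{\sfs-1,\alpha}$ annihilates the larger kernel it annihilates the smaller one, hence lies in the range at order $\sfs_\flat$, and then the forward propagation upgrades the preimage. The reverse inclusion of the range in the annihilator is automatic by duality, so equality follows without ever invoking \eqref{EqRNdSGlobalRegBw}. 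Your extra step does buy the (true and pleasant) fact that the adjoint kernels coincide, but it is not required for the theorem as stated. Also, a minor wording slip: passing from target $\Hbfw^{\sfs_\flat-1,\alpha}$ to $\Hbfw^{\sfs-1,\alpha}$ \emph{shrinks} the target space (since $\sfs>\sfs_\flat$); what enlarges is the dual space, which is indeed the point you are making.
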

\begin{proof}
  Let $\wt\sfs\leq\sfs$ be an order function satisfying both \eqref{EqRNdSFredholmFwOrder} and \eqref{EqRNdSFredholmFwOrder2}, so by the above discussion, $\cP\colon\cX^{\wt\sfs,\alpha}\to\Hbfw^{\wt\sfs-1,\alpha}(\Omega)$ is Fredholm. Since $\cX^{\sfs,\alpha}\subset\cX^{\wt\sfs,\alpha}$, we a forteriori get the finite-dimensionality of $\dim\ker_{\cX^{\sfs,\alpha}}\cP$. On the other hand, if $v\in\Hbfw^{\sfs-1,\alpha}(\Omega)$ annihilates $\ker_{\Hbbw^{1-\sfs,-\alpha}(\Omega)}\cP^*$, it also annihilates $\ker_{\Hbbw^{1-\wt\sfs,-\alpha}(\Omega)}\cP^*$, hence we can find $u\in\Hbfw^{\wt\sfs,\alpha}(\Omega)$ solving $\cP u=v$. The propagation of singularities, Proposition~\ref{PropRNdSGlobalReg}, implies $u\in\Hbfw^{\sfs,\alpha}(\Omega)$, and the proof is complete.
\end{proof}

To obtain a better result, we need to study the structure of resonances. Notice that for the purpose of dealing with a single resonance, one can simplify notation by working with the space $\Hfw^\sfs$, see \eqref{EqRNdSBdySpace}, rather than $H_{h,\fw}^{\sfs,-\sfs}$, since the semiclassical (high energy) parameter is irrelevant then.

\begin{lemma}
\label{LemmaRNdSResonanceSupp}
\begin{enumerate}
  \item \label{ItRNdSResonanceSupp} Every resonant state $u\in\ker_{\Hfw^\sfs}\wh\cP(\sigma)$ corresponding to a resonance $\sigma$ with $\Im\sigma>0$ is supported in the artificial exterior region $\{r_0\leq r\leq r_1\}$; more precisely, every element in the range of the singular part of the Laurent series expansion of $\wh\cP(\sigma)^{-1}$ at such a resonance $\sigma$ is supported in $\{r_0\leq r\leq r_1\}$. In fact, this holds more generally for any $\sigma\in\C$ which is not a resonance of the forward problem for the wave equation in a neighborhood $\Omega_{23}$ of the black hole exterior.
  \item \label{ItRNdSResonanceSupp0Res} If  $R$ denotes the restriction of distributions on $Y$ to $r>r_1$, then the only pole of $R\circ\wh\cP(\sigma)^{-1}$ with $\Im\sigma\geq 0$ is at $\sigma=0$, has rank $1$, and the space of resonant states consists of constant functions.
\end{enumerate}
\end{lemma}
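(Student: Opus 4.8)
The plan exploits that $\cP=\Box_g$ in $r>r_1$, the complex absorption $\cQ$ being supported away from the physical region (cf.\ \eqref{EqRNdSComplexAbsorptionSupp}), together with the known structure of the forward problem for $\Box_g$ on a neighborhood $\Omega_{23}$ of the black hole exterior: that problem is Fredholm with a meromorphic inverse whose only pole with $\Im\sigma\geq 0$ is a simple one at $\sigma=0$, with rank-one residue onto the constants (mode stability for subextremal Reissner--Nordstr\"om--de Sitter, cf.\ the references in \S\ref{SecIntro}).

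For \itref{ItRNdSResonanceSupp}, fix $\sigma$ which is not a resonance of the $\Omega_{23}$ forward problem, and let $A_1,\dots,A_m$ be the coefficients of the singular part of the Laurent expansion of $\wh\cP(\sigma)^{-1}$ at $\sigma$ (if $\sigma$ is a resonance of the global operator at all; the case of a genuine resonant state $v\in\ker\wh\cP(\sigma)$, where $\wh\cP(\sigma)v=0$, runs along the same lines). I show by downward induction on $k$ that each $v=A_k f$ is supported in $\{r_0\leq r\leq r_1\}$. From $\wh\cP(\sigma)\wh\cP(\sigma)^{-1}=\mathrm{Id}$ one sees that $\wh\cP(\sigma)v$ is a combination of the lower coefficients $A_{k'}f$, $k'>k$, acted on by $\cP$ and its $\sigma$-derivatives; since differential operators preserve, and $\cQ$ shrinks, supports, the inductive hypothesis gives $\mathrm{supp}\,\wh\cP(\sigma)v\subset\{r_0\leq r\leq r_1\}$. \emph{Step 1}: restrict to $\Omega_{23}$. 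There $\cP=\Box_g$ and $v|_{\Omega_{23}}$ lies in the forward space of the $\Omega_{23}$ problem, so, $\sigma$ not being a resonance of that problem, the restriction of $\wh\cP(\sigma)^{-1}$ to $\Omega_{23}$ is holomorphic near $\sigma$; hence $v$ vanishes in $r\geq r_2-2\delta$. \emph{Step 2}: propagate. In $r_1<r<r_2-2\delta$ one has $\wh{\Box_g}(\sigma)v=0$, the radius $r$ is monotone along the flow by part~\itref{ItRNdSFlowHyp} of Proposition~\ref{PropRNdSFlow}, and there is no trapping, so forward uniqueness for the wave equation with zero data on the spacelike hypersurface $\{r=r_2-2\delta\}$ forces $v=0$ for all $r>r_1$ (propagating up to, but not across, the radial set $L_1$). \emph{Step 3}: $v$ vanishes in $r<r_0-2\delta$ by definition of the forward space, and $\wh\cP(\sigma)v=0$ there, so the same argument, now propagating towards increasing $r$ and stopping short of $L_0$, gives $v=0$ in $r<r_0$. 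Thus $\mathrm{supp}\,v\subset\{r_0\leq r\leq r_1\}$. When $\Im\sigma>0$, mode stability ensures $\sigma$ is not a resonance of the $\Omega_{23}$ problem, which gives the stated special case.

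For \itref{ItRNdSResonanceSupp0Res}, \itref{ItRNdSResonanceSupp} shows that for $\Im\sigma>0$, and for real $\sigma\neq 0$ — which by mode stability is also not a resonance of the $\Omega_{23}$ problem — the full singular part of $\wh\cP(\sigma)^{-1}$ at $\sigma$ is supported in $\{r_0\leq r\leq r_1\}$ and hence annihilated by $R$; so $R\circ\wh\cP(\sigma)^{-1}$ has no pole with $\Im\sigma\geq 0$ other than possibly at $\sigma=0$. At $\sigma=0$, restriction to $\Omega_{23}$ reproduces the meromorphic inverse of the $\Omega_{23}$ problem, which has a simple pole with rank-one residue onto the constants. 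Any second- or higher-order Laurent coefficient of $\wh\cP(\sigma)^{-1}$ at $0$ vanishes on $\Omega_{23}$ and, being a generalized resonant state, vanishes for all $r>r_1$ by the propagation argument above, so $R\circ\wh\cP(\sigma)^{-1}$ has at most a simple pole at $0$; and its residue, agreeing on $\Omega_{23}$ with the $\Omega_{23}$ residue, is a constant there, which — since constants solve $\Box_g w=0$ with matching Cauchy data across $\{r=r_2-2\delta\}$ — extends to the same constant on all of $r>r_1$. This residue is nonzero (it is onto the constants) and rank one, so $\sigma=0$ is a genuine simple pole of $R\circ\wh\cP(\sigma)^{-1}$, of rank one, with resonant states the constant functions.

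The step I expect to demand the most care is the first one in \itref{ItRNdSResonanceSupp}: identifying the restriction to $\Omega_{23}$ of a (generalized) resonant state of the global operator with an element of precisely the supported/extendible space on which the $\Omega_{23}$ forward problem is invertible off its resonances, which requires matching conventions at the artificial interface $\{r=r_2-2\delta\}$ and invoking the Fredholm and meromorphic-resolvent set-up for $\Omega_{23}$. The remaining steps are routine forward-uniqueness energy estimates; the only subtlety there is to propagate vanishing only up to, not across, the radial sets $L_0,L_1$, where the flow ceases to be of real principal type.
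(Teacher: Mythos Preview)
Your proposal is correct and follows essentially the same approach as the paper. The only difference in presentation is that you invoke the $\Omega_{23}$ forward resolvent as a black box in Step~1 (which is why the function-space matching you flag requires care), whereas the paper recalls the argument directly --- integration by parts in $(r_2,r_3)$ to get vanishing there, above-threshold radial point regularity to get smoothness at $r_2,r_3$, and then the asymptotically de Sitter unique continuation separately in $r>r_3$ and $r_1<r<r_2$ --- which sidesteps that concern; your inductive treatment of the higher Laurent coefficients and the argument at $\sigma=0$ match the paper's (which cites \cite[Proof of Lemma~8.3]{BaskinVasyWunschRadMink} for the former).
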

\begin{proof}
  Since $u$ has supported character at $H_{I,0}\cap Y$, we obtain $u\equiv 0$ in $r<r_0$, since $u$ solves the \emph{wave} equation $\wh\cP(\sigma)u=0$ there. On the other hand, the forward problem for the wave equation in the neighborhood $\Omega_{23}$ of the black hole exterior does not have any resonances with positive imaginary part; this is well-known for Schwarzschild--de Sitter spacetimes \cite{SaBarretoZworskiResonances,BonyHaefnerDecay} and for slowly rotating Kerr--de Sitter spacetimes, either by direct computation \cite{DyatlovQNM} or by a perturbation argument \cite{HintzVasyKdsFormResonances,VasyMicroKerrdS}. For the convenience of the reader, we recall the argument for the Schwarzschild--de Sitter case, which applies without change in the present setting as well: a simple integration by parts argument, see e.g.\ \cite{DyatlovQNM} or \cite[\S2]{HintzVasyKdsFormResonances}, shows that $u$ must vanish in $r_2<r<r_3$. Now the propagation of singularities at radial points implies that $u$ is smooth at $r=r_2$ and $r=r_3$ (where the a priori regularity exceeds the threshold value), and hence in $r>r_3$, $u$ is a solution to the homogeneous wave equation on an asymptotically de Sitter space which decays rapidly at the conformal boundary (which is $r=r_3$), hence must vanish identically in $r>r_3$ (see \cite[Footnote~58]{VasyMicroKerrdS} for details); the same argument applies in $r_1<r<r_2$, yielding $u\equiv 0$ there. Therefore, $\supp u\subset\{r_0\leq r\leq r_1\}$, as claimed. An iterative argument, similar to \cite[Proof of Lemma~8.3]{BaskinVasyWunschRadMink}, yields the more precise result.

  The more general statement follows along the same lines (and is in fact much easier to prove, since it does not entail a mode stability statement): suppose $\sigma$ is not a resonance of the forward wave equation on $\Omega_{23}$, then a resonant state $u\in\ker\wh\cP(\sigma)$ must vanish in $\Omega_{23}$, and we obtain $\supp u\subset\{r_0\leq r\leq r_1\}$ as before; likewise for the more precise result. This proves \itref{ItRNdSResonanceSupp}.

  For the proof of \itref{ItRNdSResonanceSupp0Res}, it remains to study the resonance at $0$, since the only $\Omega_{23}$ resonance in the closed upper half plane is $0$. Note that an  element in the range of the most singular Laurent coefficient of $R\circ\wh\cP(\sigma)^{-1}$ at $\sigma=0$ lies in $\ker\wh\cP(0)$; but elements in $\ker\wh\cP(0)$ which vanish near $r=r_1$ vanish identically in $r>r_1$ and hence are annihilated by $R$, while elements which are not identically $0$ near $r=r_1$ are not identically $0$ in $r>r_2$ as well; but the only non-trivial elements of $\ker\wh\cP(0)$ (which are smooth at $r_2$ and $r_3$) are constant in $r_2<r<r_3$, and since $\wh\cP(0)1=0$ in $r>r_1$, we deduce (by unique continuation) that $R(\ker\wh\cP(0))$ indeed consists of constant functions. But then the order of the pole of $R\circ\wh\cP(\sigma)^{-1}$ at $\sigma=0$ equals the order of the $0$-resonance of the forward problem for $\Box_g$ in $\Omega_{23}$, which is known to be equal to $1$, see the references above. The $1$-dimensionality of $R(\ker\wh\cP(0))$ then implies that the rank of the pole of $r\circ\wh\cP(\sigma)^{-1}$ at $0$ indeed equals $1$.
\end{proof}

Since we are dealing with an extended global problem here, involving (pseudodifferential!) complex absorption, solvability is not automatic, but it holds in the region of interest $r>r_1$; to show this, we first need:
\begin{lemma}
\label{LemmaRNdSSolvability}
  Recall the definition of the set $\cU\subset\Omega$, where the complex absorption is placed, from \eqref{EqRNdSComplexAbsorptionSupp}. Under the assumptions of Theorem~\ref{ThmRNdSFredholm} (in particular, $\alpha<0$), there exists a linear map
  \[
    E_\cQ\colon\Hbfw^{\sfs-1,\alpha}(\Omega)\to\CIc(\cU^\circ)
  \]
  such that for all $f\in\Hbfw^{\sfs-1,\alpha}$, the function $f+E_\cQ f$ lies in the range of the map $\cP$ in \eqref{EqRNdSFredholmMap}.
\end{lemma}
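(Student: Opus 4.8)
By Theorem~\ref{ThmRNdSFredholm}, the map $\cP$ in \eqref{EqRNdSFredholmMap} has range equal to the annihilator of the finite-dimensional space $\cN:=\ker_{\Hbbw^{1-\sfs,-\alpha}(\Omega)}\cP^*$. The plan is to reduce the lemma to the claim that the restriction map $\cN\ni v\mapsto v|_{\cU^\circ}\in\cD'(\cU^\circ)$ is injective. Granting this, fix a basis $v_1,\dots,v_m$ of $\cN$; the functionals $\phi\mapsto\la\phi,v_j\ra$ on $\CIc(\cU^\circ)$ are then linearly independent, so the map $\CIc(\cU^\circ)\ni\phi\mapsto(\la\phi,v_1\ra,\dots,\la\phi,v_m\ra)\in\C^m$ is surjective and we may pick $\phi_1,\dots,\phi_m\in\CIc(\cU^\circ)$ with $\la\phi_j,v_k\ra=\delta_{jk}$. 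Then $E_\cQ f:=-\sum_{j=1}^m\la f,v_j\ra\,\phi_j$ defines a linear map $\Hbfw^{\sfs-1,\alpha}(\Omega)\to\CIc(\cU^\circ)$ with $\la f+E_\cQ f,v_k\ra=0$ for all $k$; thus $f+E_\cQ f$ annihilates $\cN$ and therefore lies in $\ran\cP$.

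To prove the injectivity, suppose $v\in\cN$ with $v|_{\cU^\circ}=0$. Since $\cU^\circ=\cU\cap M^\circ$ is dense in $\cU$ and elements of weighted b-Sobolev spaces are determined by their restriction to the interior $M^\circ$, we have $v|_\cU=0$; as $\cQ,\cQ^*\in\Psib^2(M)$ have Schwartz kernels supported in $\cU\times\cU$, this gives $\cQ v=\cQ^* v=0$, so $v$ solves the unperturbed equation $\Box_g v=\cP^* v=0$ on $\Omega$, still with its backward support character (supported at $H_F\cup H_{F,2}\cup H_{F,3}$, extendible at $H_{I,0}\cup H_I$ and at $\pa M$). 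It remains to conclude $v=0$; this is the crux.

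For this, I would argue as in Lemma~\ref{LemmaRNdSResonanceSupp}. Since $v$ has supported character at the final hypersurfaces and solves $\Box_g v=0$, it vanishes in a one-sided neighborhood of $H_F\cup H_{F,2}\cup H_{F,3}$; propagating this vanishing backward along the null-bicharacteristic flow of Proposition~\ref{PropRNdSFlow} (via energy estimates off the horizons and the propagation of singularities), and invoking mode stability for the wave equation in the exterior domain $r_2<r<r_3$ --- the integration-by-parts argument recalled in the proof of Lemma~\ref{LemmaRNdSResonanceSupp}, which uses only the local form \eqref{EqRNdSMetricOuterComm} of the metric and so applies verbatim in the backward direction --- together with the radial point estimates at $r=r_2,r_3$ and the uniqueness of rapidly decaying solutions on the asymptotically de Sitter end $r>r_3$, one gets $v\equiv0$ in all of $r>r_1$. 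On the remaining static-type slab $r_0<r<r_1$ of $\Omega_{01}$, $v$ vanishes in addition on $\cU$, hence near the artificial horizon $r=r_0$ and near the artificial trapped set $\Gamma_*$, and --- from the vanishing in $r>r_1$ and the fact that $\sfs$, hence $1-\sfs$, is constant near $r_1$ --- also near the Cauchy horizon $r=r_1$; thus over $\supp v$ the flow meets none of $L_0$, $\Gamma_*$, $L_1$, so the problem there is unobstructed, and an energy estimate of the type used in Proposition~\ref{PropRNdSGlobalReg}, propagated inward from where $v$ already vanishes and combined with the decay of $v$ as $\tau\to0$, forces $v\equiv0$ on $\Omega_{01}$. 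Hence $v=0$.

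The step I expect to be the main obstacle is the last one. Once $v|_\cU=0$ has switched off the complex absorption, the unperturbed backward problem on $\Omega_{01}$ is a priori obstructed both by the artificial trapping at $\Gamma_*$ and by the low regularity thresholds at the radial sets at $r=r_0$ and at the Cauchy horizon $r=r_1$; one has to exploit carefully that $v$ is already known to vanish near $\pi\Gamma_*\cup\pi L_0$ and throughout $r>r_1$ before a clean energy estimate, adapted to the null hypersurfaces $r=r_0$ and $r=r_1$, can be brought to bear.
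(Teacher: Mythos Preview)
Your reduction to the injectivity of $v\mapsto v|_{\cU^\circ}$ on $\cN=\ker_{\Hbbw^{1-\sfs,-\alpha}}\cP^*$, and the construction of $E_\cQ$ via a dual basis $\phi_1,\ldots,\phi_m\in\CIc(\cU^\circ)$, are exactly what the paper does. The difference, and the gap, lies in how you argue that $v|_{\cU^\circ}=0$ forces $v\equiv 0$.

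You split this into (a) showing $v=0$ in $r>r_1$ and (b) handling $\Omega_{01}$ separately, and you flag (b) as the main obstacle. The paper proceeds quite differently and more efficiently: once $\Box_g v=0$, the supported character of $v$ at $H_F\cup H_{F,2}$ and at $H_{F,3}$ already forces $v=0$ throughout $r<r_2$ and $r>r_3$ by ordinary hyperbolic uniqueness. Indeed, from any point in $\Omega^\circ$ with $r<r_2$, every future-directed null-bicharacteristic crosses $H_F\cup H_{F,2}$ in finite time: by Proposition~\ref{PropRNdSFlow}~\itref{ItRNdSFlowInt} the only alternative is tending to $L_{2,-}\cup L_{3,-}\cup\Gamma^-$, all of which sit at $r\geq r_2$, which is impossible from $r<r_2$ by part~\itref{ItRNdSFlowHyp}. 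So the domain-of-dependence argument applies and $\Omega_{01}$ is dispatched in one stroke; the artificial trapping and the radial sets at $r_0,r_1$ do not obstruct finite-time Cauchy uniqueness.

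The genuine work is then concentrated in $\{r_2\leq r\leq r_3\}$, and here your sketch has a real gap. You invoke ``mode stability,'' the ``integration-by-parts argument'' of Lemma~\ref{LemmaRNdSResonanceSupp}, and ``uniqueness of rapidly decaying solutions on the de~Sitter end,'' but these are statements about resonant states of $\wh\cP(\sigma)$, not about a spacetime solution $v$ of $\Box_g v=0$; propagation of singularities and radial point estimates give regularity, not vanishing. The paper supplies the missing bridge via a contour-shifting argument: since $\supp v\subset\{r_2\leq r\leq r_3\}$ and, by Lemma~\ref{LemmaRNdSResonanceSupp}~\itref{ItRNdSResonanceSupp}, all resonances of $\cP$ in $\Im\sigma>0$ are supported in $\{r_0\leq r\leq r_1\}$, shifting the contour in the inverse Mellin transform (using the high-energy estimates~\eqref{EqRNdSSemiclassicalBw}) picks up no residues, yielding $v\in\Hbbw^{1-\sfs,\infty}$, i.e.\ $v$ vanishes to infinite order at $\tau=0$. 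Then the radial point estimates at $L_2,L_3$ together with the easy (backward, decaying) direction of the normally hyperbolic trapping estimate upgrade this to $v\in\Hb^{\infty,\infty}$, after which the energy estimate of \cite[Lemma~2.15]{HintzVasySemilinear} finally gives $v=0$. This decay-improvement step is precisely what your outline is missing, and it is what actually converts the resonance information into vanishing of $v$.
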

\begin{proof}
  By Theorem~\ref{ThmRNdSFredholm}, the statement of the lemma is equivalent to
  \[
    \ran_{\Hbfw^{\sfs-1,\alpha}}(\id+E_\cQ) \perp \ker_{\Hbbw^{1-\sfs,-\alpha}}\cP^*.
  \]
  Let $v\in\Hbbw^{1-\sfs,-\alpha}$, $\cP^*v=0$. We claim that $v|_{\cU^\circ}=0$ implies $v\equiv 0$; in other words, elements of $\ker\cP^*$ are uniquely determined by their restriction to $\cU^\circ$. To see this, note that $v=0$ on $\cU^\circ$ implies that in fact $v$ solves the homogeneous wave equation $\Box_g v=0$. Thus, we conclude by the supported character of $v$ at $H_F\cup H_{F,2}$ and $H_{F,3}$ that $v$ in fact vanishes in $r<r_2$ and $r>r_3$, so $\supp v\subset\{r_2<r<r_3\}$. Using the high energy estimates \eqref{EqRNdSSemiclassicalBw}, a contour shifting argument, see \cite[Lemma~3.5]{VasyMicroKerrdS}, and the fact that resonances of $\cP$ with $\Im\sigma>0$ have support disjoint from $\{r_2\leq r\leq r_3\}$ by Lemma~\ref{LemmaRNdSResonanceSupp} \itref{ItRNdSResonanceSupp}, we conclude that in fact $v\in\Hbbw^{1-\sfs,\infty}$, i.e.\ $v$ vanishes to infinite order at future infinity; but then, radial point estimates and the simple version of propagation of singularities at the normally hyperbolic trapping (since we are considering the \emph{backwards} problem on \emph{decaying} spaces) --- see \cite[Theorem~3.2, estimate~(3.10)]{HintzVasyNormHyp} --- imply that in fact $v\in\Hb^{\infty,\infty}$. Now the energy estimate in \cite[Lemma~2.15]{HintzVasySemilinear} applies to $v$ and yields $\|v\|_{\Hb^{1,\wt r}}\lesssim \|\cP^* v\|_{\Hb^{0,\wt r}}=0$ for $\wt r\ll 0$, hence $v=0$ as claimed.

  Therefore, if $v_1,\ldots,v_N\in\Hbbw^{1-\sfs,-\alpha}(\Omega)$ forms a basis of $\ker\cP^*$, then the restrictions $v_1|_{\cU^\circ},\ldots,v_N|_{\cU^\circ}$ are linearly independent elements of $\sD'(\cU^\circ)$, and hence one can find $\phi_1,\ldots,\phi_N\in\CIc(\cU^\circ)$ with $\la v_i,\phi_j\ra=\la v_i|_{\cU^\circ},\phi_j\ra=\delta_{ij}$. The map
  \[
    E_\cQ f := -\sum_{i=1}^N \la f,v_i\ra\phi_i,\quad f\in\Hbfw^{\sfs_1,\alpha}(\Omega),
  \]
  then satisfies all requirements.
\end{proof}

We can then conclude:
\begin{cor}
  Under the assumptions of Theorem~\ref{ThmRNdSFredholm}, all elements in the kernel of $\cP$ in \eqref{EqRNdSFredholmMap} are supported in the artificial exterior domain $\{r_0\leq r\leq r_1\}$.
  
  Moreover, for all $f\in\Hbfw^{\sfs-1,\alpha}$ with support in $r>r_1$, there exists $u\in\Hbfw^{\sfs,\alpha}$ such that $\cP u=f$ in $r>r_1$.
\end{cor}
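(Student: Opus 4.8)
The proof will combine the mode‑stability information of Lemma~\ref{LemmaRNdSResonanceSupp} with the solvability statement of Lemma~\ref{LemmaRNdSSolvability}, and the two assertions are of quite different character: the support statement for $\ker\cP$ requires a mode‑stability argument, while the solvability statement in $r>r_1$ is essentially formal.

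\emph{The kernel is supported in $\{r_0\le r\le r_1\}$.} The plan is to mimic the proof of Lemma~\ref{LemmaRNdSResonanceSupp}\itref{ItRNdSResonanceSupp}, but now at the level of the b‑operator rather than its Mellin‑transformed normal operator. Let $u\in\ker_{\cX^{\sfs,\alpha}}\cP$, so $\cP u=0$ and $u$ has supported character at $H_{I,0}$ and $H_I$. First, in $r<r_0$ the complex absorption $\cQ$ vanishes, so $\cP u=\Box_g u=0$; since $u$ vanishes near the spacelike hypersurface $H_{I,0}$, a standard energy estimate on the sliver $\{r_0-2\delta\le r<r_0\}$ (as in \cite[\S2]{HintzVasySemilinear}) gives $u\equiv 0$ there. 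Next, in $r>r_1$ the complex absorption again vanishes, so $u|_{\{r>r_1\}}$ is a forward solution of $\Box_g u=0$ with vanishing Cauchy data on $H_I$; I would conclude $u\equiv 0$ in $r>r_1$ by the same contour‑deformation argument used in Lemma~\ref{LemmaRNdSResonanceSupp}: by Lemma~\ref{LemmaRNdSResonanceSupp}\itref{ItRNdSResonanceSupp0Res} the family $R\circ\wh\cP(\sigma)^{-1}$ (with $R$ the restriction to $r>r_1$) is holomorphic in $\Im\sigma\ge 0$ away from the simple pole at $\sigma=0$, hence in particular has no pole on the line $\Im\sigma=-\alpha$, which lies in the open upper half‑plane because $\alpha<0$; combining this with the high‑energy bounds of Propositions~\ref{PropRNdSSemiclassical}--\ref{PropRNdSSemiclassical2} and a contour shift (cf.\ \cite[Lemma~3.5]{VasyMicroKerrdS}) forces $Ru=0$. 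Together with the vanishing in $r<r_0$ this yields $\supp u\subset\{r_0\le r\le r_1\}$.

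\emph{Solvability in $r>r_1$.} Here the plan is short: it is an immediate consequence of Lemma~\ref{LemmaRNdSSolvability} and Theorem~\ref{ThmRNdSFredholm}. Given $f\in\Hbfw^{\sfs-1,\alpha}$ with $\supp f\subset\{r>r_1\}$, Lemma~\ref{LemmaRNdSSolvability} produces $E_\cQ f\in\CIc(\cU^\circ)$, supported in the artificial exterior region (hence in $\{r<r_1\}$, away from $\{r\ge r_1\}$), such that $f+E_\cQ f$ lies in the range of $\cP\colon\cX^{\sfs,\alpha}\to\Hbfw^{\sfs-1,\alpha}(\Omega)$. Choosing $u\in\cX^{\sfs,\alpha}\subset\Hbfw^{\sfs,\alpha}(\Omega)$ with $\cP u=f+E_\cQ f$, and noting that in $r>r_1$ both $\cQ$ and $E_\cQ f$ vanish, we get $\Box_g u=\cP u=f$ there, which is the assertion.

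\emph{Main obstacle.} The only non‑formal point is the step in the first part showing $u\equiv 0$ in $r>r_1$: this is precisely where the mode stability of the Reissner--Nordstr\"om--de Sitter exterior (extended across the Cauchy horizon, as packaged in Lemma~\ref{LemmaRNdSResonanceSupp}\itref{ItRNdSResonanceSupp0Res}) enters, and it has to be transferred from the stationary setting to the b‑setting by a Mellin transform together with a contour deformation whose legitimacy rests on the high‑energy estimates; everything else — the energy estimate in $r<r_0$ and the entire second statement — is routine given the results established above.
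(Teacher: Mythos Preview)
Your second part (solvability in $r>r_1$) is exactly the paper's argument.

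For the first part, you correctly observe that in $r>r_1$ the absorption vanishes, so $u|_{\{r>r_1\}}$ solves $\Box_g u=0$ with vanishing Cauchy data on $H_I$. From here the paper concludes immediately: by standard uniqueness for the Cauchy problem for the wave equation (classical hyperbolic theory), $u\equiv 0$ in the domain of dependence of $H_I$, which contains $\Omega\cap\{r>r_1\}$. The same elementary argument handles $r<r_0$. No Mellin transform, no contour shifting, no mode stability is needed --- so the step you flagged as the ``main obstacle'' is in fact the trivial one.

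Your detour through a contour shift is not only unnecessary but, as sketched, does not work. Lemma~\ref{LemmaRNdSResonanceSupp} concerns kernel elements of the reduced family $\wh\cP(\sigma)$ on $Y$, not of the b-operator $\cP$; to connect the two you would need precisely the Mellin/contour machinery you are invoking, so the argument is close to circular. More concretely, for the forward problem one starts on the line $\Im\sigma=-\alpha>0$ and would have to shift \emph{downward} (toward larger weight $\alpha'>\alpha$) to force decay; this contour crosses the real axis and picks up the residue at $\sigma=0$ (and any further resonances in $\Im\sigma<0$), so it cannot yield $Ru=0$. The contour shift used in the proof of Lemma~\ref{LemmaRNdSSolvability} goes in the opposite direction for the \emph{dual} problem, where the resonances encountered (those of $\cP$ in $\Im\sigma>0$) are exactly the ones Lemma~\ref{LemmaRNdSResonanceSupp} controls; the forward and backward situations are not symmetric here. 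Simply drop the detour and invoke hyperbolic uniqueness after your own observation.
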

\begin{proof}
  If $u\in\Hbfw^{\sfs,\alpha}(\Omega)$ lies in $\ker\cP$, then the supported character of $u$ at $H_I\cup H_{I,0}$ together with uniqueness for the wave equation in $r<r_0$ and $r>r_1$ implies that $u$ vanishes identically there, giving the first statement.

  For the second statement, we use Lemma~\ref{LemmaRNdSSolvability} and solve the equation $\cP u=f+E_\cQ f$, which gives the desired $u$.
\end{proof}

In particular, solutions of the equation $\cP u=f$ exist and are unique in $r>r_1$, which we of course already knew from standard hyperbolic theory in the region on `our' side $r>r_1$ of the Cauchy horizon; the point is that we now understand the regularity of $u$ \emph{up to} the Cauchy horizon. We can refine this result substantially for better-behaved forcing terms, e.g.\ for $f\in\CIc(\Omega^\circ)$ with support in $r>r_1$; we will discuss this in the next two sections.

\subsection{Partial asymptotics and decay}
\label{SubsecRNdSAsymp}

The only resonance of the forward problem in $\Omega_{23}$ in $\Im\sigma\geq 0$ is a simple resonance at $\sigma=0$, with resonant states equal to constants, see the references given in the proof of Lemma~\ref{LemmaRNdSResonanceSupp}, and there exists $\alpha>0$ such that $0$ is the only resonance in $\Im\sigma\geq-\alpha$. (This does \emph{not} mean that the global problem for $\cP$ does not have other resonances in this half space!) In the notation of Proposition~\ref{PropRNdSSemiclassical2}, we may assume $-\alpha>-\gamma_0$ so that we have high energy estimates in $\Im\sigma\geq-\alpha$.

\begin{prop}
\label{PropRNdSPartialAsymp}
  Let $\alpha>0$ be as above. Suppose $u$ is the forward solution of
  \begin{equation}
  \label{EqRNdSPartialAsympEqn}
    \Box_g u=f\in\CIc(\Omega^\circ), \quad r>r_1.
  \end{equation}
  Then $u$ has a partial asymptotic expansion
  \begin{equation}
  \label{EqRNdSPartialAsymp}
    u=u_0\chi(\tau)+u',
  \end{equation}
  with $u_0\in\C$ and $\chi\equiv 1$ near $\tau=0$, $\chi\equiv 0$ away from $\tau=0$, and $u'$ is smooth in $r>r_1$, while $u'\in\Hb^{1/2+\alpha\beta_1-0,\alpha}$ near $r=r_1$.
\end{prop}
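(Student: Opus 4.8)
The plan is to use the Fredholm setup of Theorem~\ref{ThmRNdSFredholm} together with the Mellin transform / contour-shifting technique to read off the asymptotic expansion of the global extended solution, and then restrict to $r>r_1$. First, extend $f$ — which by finite speed of propagation may be taken to vanish in $r\leq r_1$ and to be supported in $t_*\geq 0$ — to a global forcing on $\Omega$, solve $\cP\wt u=f+E_\cQ f$ using Lemma~\ref{LemmaRNdSSolvability} to obtain a solution $\wt u\in\Hbfw^{\sfs,\alpha}(\Omega)$ for a forward order function $\sfs$ and some small $\alpha>0$; then $\wt u$ agrees with the forward solution $u$ in $r>r_1$ by uniqueness for $\Box_g$ there. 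Next, apply the Mellin transform in $\tau$: since $\wh\cP(\sigma)$ is invertible on the line $\Im\sigma=-\alpha$ with high-energy bounds (Proposition~\ref{PropRNdSSemiclassical2}, choosing $-\alpha>-\gamma_0$) and its only pole in $\Im\sigma\geq-\alpha$, after restriction to $r>r_1$, is the simple rank-one pole at $\sigma=0$ with constant resonant state (Lemma~\ref{LemmaRNdSResonanceSupp}\itref{ItRNdSResonanceSupp0Res}), we shift the inverse Mellin transform contour from $\Im\sigma=-\alpha$ (for a weight that gives growth) down past $\sigma=0$ to a contour with $\Im\sigma=\alpha>0$ (the residue theorem picks up exactly the $\sigma=0$ pole), giving
\[
  u = u_0\chi(\tau) + u',\qquad u_0\in\C,
\]
where $u_0$ is the residue contribution and $u'$ has Mellin transform holomorphic in $\Im\sigma>-\alpha$, so $u'\in\Hbfw^{\sfs,\alpha}$ near $r=r_1$ with the new, decaying weight $\alpha>0$ (the high-energy estimates make the contour integral converge and control the remainder; this is exactly the argument of \cite[\S3]{VasyMicroKerrdS} and \cite[\S2]{HintzVasySemilinear}). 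Interior smoothness of $u'$ in $r>r_1$ is immediate from elliptic regularity / real principal type propagation away from the trapped set and from the smoothness of $u$ itself there, once the $u_0\chi$ term (which is smooth in $r>r_1$) is subtracted.

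The regularity statement $u'\in\Hb^{1/2+\alpha\beta_1-0,\alpha}$ near $r=r_1$ is where the radial point estimate enters and is the main point. Here one runs the global regularity argument of Proposition~\ref{PropRNdSGlobalReg} with the \emph{decaying} weight $\alpha>0$ rather than $\alpha<0$: the red-shift radial point estimate at $L_2\cup L_3$ still lets one propagate as much regularity as desired into those radial sets (one only needs $\sfs>1/2+\max(\beta_2\alpha,\beta_3\alpha)$ there, which is compatible with large $\sfs$), the trapping at $\Gamma$ causes no loss since $\alpha>0$ means we are on a decaying space (use \cite[Theorem~3.2]{HintzVasyNormHyp}), and the complex absorption kills $L_0\cup\Gamma_*$; then propagating into $r<r_1$ toward $L_1$, the \emph{blue-shift} part of Proposition~\ref{PropRNdSRadialRecall} — the low-regularity radial point estimate — applies and caps the regularity one can propagate out of $L_1$ at $s<1/2+\beta_1\alpha$. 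Thus $u'\in\Hb^{s,\alpha}$ near $r=r_1$ for every $s<1/2+\beta_1\alpha$, which is the claimed $\Hb^{1/2+\alpha\beta_1-0,\alpha}$ regularity.

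The main obstacle is organizing the contour shift correctly: one must know that the \emph{only} pole of $R\circ\wh\cP(\sigma)^{-1}$ in the closed strip $-\alpha\leq\Im\sigma\leq\alpha$ is the simple one at $\sigma=0$ (so that no other resonant terms contaminate the expansion), and that the high-energy estimates of Proposition~\ref{PropRNdSSemiclassical2} are uniform enough to justify moving the contour and estimating the resulting integral in the variable-order b-Sobolev norm. Both are in hand — the resonance-support statement is Lemma~\ref{LemmaRNdSResonanceSupp}, and the choice $-\alpha>-\gamma_0$ together with Proposition~\ref{PropRNdSSemiclassical2} supplies the estimates — so the argument is a direct adaptation of the by-now standard Mellin/contour-shifting scheme; the only genuinely new ingredient, the regularity threshold at $\cCH^+$, is precisely the content of the blue-shift radial point estimate quoted in Proposition~\ref{PropRNdSRadialRecall}.
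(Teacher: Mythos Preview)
Your strategy is essentially the paper's: modify $f$ via $E_\cQ$ to solve the global problem on a \emph{growing} space, contour-shift the Mellin transform to extract the $\sigma=0$ resonance (with Lemma~\ref{LemmaRNdSResonanceSupp} killing all other resonant contributions in $r>r_1$), and then upgrade the remainder's regularity near $r=r_1$ via the low-regularity radial point estimate at $L_1$.

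Two minor corrections. First, your contour-shift bookkeeping is garbled: the Fredholm solvability of Theorem~\ref{ThmRNdSFredholm} requires an initial weight $\wt\alpha<0$ (growing), so one starts on $\Im\sigma=-\wt\alpha>0$ and shifts \emph{down} to $\Im\sigma=-\alpha<0$; you cannot begin with $\wt u\in\Hbfw^{\sfs,\alpha}$ for $\alpha>0$. Second, the claim that ``trapping at $\Gamma$ causes no loss since $\alpha>0$'' is not how this step works for the \emph{forward} problem: the forward trapping estimate in \cite[Theorem~3.2]{HintzVasyNormHyp} used in Proposition~\ref{PropRNdSGlobalReg} needs $\alpha<0$. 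The point is rather that no new trapping estimate is needed at all---the order function $\sfs$ was already taken arbitrarily large in compact subsets of $r>r_1$ on the growing space \emph{before} the contour shift, and this high regularity persists for $u'\in\Hbfw^{\sfs,\alpha}$ after shifting; one then feeds this directly into the below-threshold radial point estimate at $L_1$. The paper also flags a technicality you pass over: because $u$ is extendible at $H_F\cup H_{F,2}$, the contour shift is applied to the cutoff equation $\cP(\chi u)=\chi f+[\Box_g,\chi]u$, and an enlarged-domain argument then recovers the conclusion on all of $\Omega$.
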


Our proof uses the stationarity of $g$ (and $\cP$) near $X$, which allows us to pass freely between $\cP$ and the Mellin-transformed normal operator family $\wh\cP(\sigma)$; see also Remark~\ref{RmkStat}.

\begin{proof}[Proof of Proposition~\ref{PropRNdSPartialAsymp}]
  Let $\wt\alpha<0$, and let $\sfs$ be a forward order function for the weight $\wt\alpha$. Using Lemma~\ref{LemmaRNdSSolvability}, we may assume that $\cP u_*=f$ is solvable with $u_*\in\Hbfw^{\sfs,\wt\alpha}(\Omega)$ by modifying $f$ in $r<r_1$ if necessary. In fact, by the propagation of singularities, Theorem~\ref{ThmRNdSFredholm}, we may take $\sfs$ to be arbitrarily large in compact subsets of $r>r_1$. Then, a standard contour shifting argument, using the high energy estimates for $\wh\cP(\sigma)$ in $\Im\sigma\geq-\alpha$, see \cite[Lemma~3.5]{VasyMicroKerrdS} or \cite[Theorem~2.21]{HintzVasySemilinear}, implies that $u_*$ has an asymptotic expansion as $\tau\to 0$
  \begin{equation}
  \label{EqRNdSPartialAsympProof}
    u_*(\tau,x) = \sum_j\sum_{\kappa=0}^{m_j} \tau^{i\sigma_j}|\log\tau|^\kappa a_{j\kappa}(x)\chi(\tau) + u',\quad (\tau,x)\in\Omega\subset[0,\infty)\times Y,
  \end{equation}
  where the $\sigma_j$ are the resonances of $\cP$ in $-\alpha<\Im\sigma<-\wt\alpha$, the $m_j$ are their multiplicities, and the $a_{j\kappa}\in\Hfw^\sfs(Y)$ are resonant states corresponding to the resonance $\sigma_j$; lastly, $u'\in\Hbfw^{\sfs,\alpha}(\Omega)$ is the remainder term of the expansion. Even though $\cP$ is dilation-invariant near $\tau=0$, this argument requires a bit of care due to the \emph{extendible} nature of $u$ at $H_{F,2}\cup H_F$: one needs to consider the cutoff equation $\cP(\chi u)=f_1:=\chi f+[\Box_g,\chi]u$; computing the inverse Mellin transform of $\wh\cP(\sigma)^{-1}\wh{f_1}(\sigma)$ generates the expansion \eqref{EqRNdSPartialAsympProof} by a contour shifting argument, see \cite[Lemma~3.1]{VasyMicroKerrdS}. Now $\cP$ annihilates the partial expansion, so $\cP u'=f_1$ on the set where $\chi\equiv 1$; by the propagation of singularities, Proposition~\ref{PropRNdSGlobalReg}, we can improve the regularity of $u'$ on this set to $u'\in\Hb^{1/2+\alpha\beta_1-0,\alpha}$.
  
  Thus, we have shown regularity in the region where $\chi\equiv 1$, i.e.\ where we did not cut off; however, considering \eqref{EqRNdSPartialAsympEqn} on an enlarged domain and running the argument there, with the cutoff $\chi$ supported in the enlarged domain and identically $1$ on $\Omega$, we obtain the full regularity result upon restricting to $\Omega$.
  
  Now, by Lemma~\ref{LemmaRNdSResonanceSupp} \itref{ItRNdSResonanceSupp}, all resonant states of $\cP$ which are not resonant states of the forward problem in $\Omega_{23}$ must in fact vanish in $r>r_1$, and by part \itref{ItRNdSResonanceSupp0Res} of Lemma~\ref{LemmaRNdSResonanceSupp}, the only term in \eqref{EqRNdSPartialAsympProof} that survives upon restriction to $r>r_1$ is the constant term.
\end{proof}

Thus, we obtain a partial expansion with a remainder which decays exponentially in $t_*$ in an $L^2$ sense; we will improve this in particular to $L^\infty$ decay in the next section.

\begin{rmk}
\label{RmkStat}
  If $g$ and $\cP$ were \emph{not} dilation-invariant, then in the partial expansion~\eqref{EqRNdSPartialAsympProof}, one would not be able to show improved regularity for $u'$ at $r=r_1$ in general because $r=r_1$ (or rather $\cL_1=\Nb^*\{r=r_1\}$) no longer has a geometric meaning as the stable/unstable manifold of the radial set $L_1$. (See also the setup leading to Proposition~\ref{PropBConormal} below.) Concretely, $\cP u'=f_1-(\cP-N(\cP))u_1$ has an error term that in general loses two derivatives, which cannot be recouped by Proposition~\ref{PropRNdSGlobalReg}. On the other hand, \emph{assuming} that $\cL_1$ is characteristic for $\cP$ (or choosing the dilation orbits of $\tau$ more carefully), and tracking the singular nature of the resonant states $a_{j\kappa}$ more precisely should allow for the above proposition to generalize to the non-dilation-invariant setting; however, we do not pursue this further here.
\end{rmk}

\subsection{Conormal regularity at the Cauchy horizon}
\label{SubsecRNdSConormal}

Suppose $u$ solves \eqref{EqRNdSPartialAsympEqn}, hence it has an expansion \eqref{EqRNdSPartialAsymp}. For any Killing vector field $V$, we then have $\Box_g(V u)=V f$; now if $\wt u$ solves the global problem $\cP\wt u=V f+E_\cQ V f$ (using the extension operator $E_\cQ$ from Lemma~\ref{LemmaRNdSSolvability}), then $\wt u=V u$ in $r>r_1$ by the uniqueness for the Cauchy problem in this region. But by Proposition~\ref{PropRNdSPartialAsymp}, $\wt u$ has an expansion like \eqref{EqRNdSPartialAsymp}, with constant term vanishing because $X$ annihilates the constant term in the expansion of $u$, and therefore $\wt u$ lies in space $\Hb^{1/2+\alpha\beta_1-0,\alpha}$ near the Cauchy horizon $\{r=r_1\}$ as well. More generally, we can take $V$ to be any (finite) product of Killing vector fields, and therefore obtain
\[
  u=u_0\chi(\tau)+u',\quad u_0\in\C, \quad V_1\cdots V_N u'\in\Hb^{1/2+\alpha\beta_1-0,\alpha},
\]
where $N=0,1,\ldots$ is arbitrary, and the vector fields $V_j$, $j=1,\ldots,N$, are equal to $\tau D_\tau$ or rotation vector fields on the $\Sph^2$-factor of the spacetime, independent of $\tau,r$. (This uses that $g$ is stationary!) These vector fields are all tangent to the Cauchy horizon. We obtain for any small open interval $I\subset\R$ containing $r_1$ that
\begin{equation}
\label{EqRNdSConormalKilling}
  u'|_I \in H^{1/2+\alpha\beta_1-0}\left(I; \tau^\alpha\Hb^N([0,\infty)_\tau\times\Sph^2)\right),\quad N=0,1,\ldots.
\end{equation}
A posteriori, by Sobolev embedding, this gives
\begin{cor}
\label{CorRNdSBoundedness}
  Using the notation of Proposition~\ref{PropRNdSPartialAsymp}, the solution $u$ of \eqref{EqRNdSPartialAsympEqn} has an asymptotic expansion $u=u_0\chi(\tau)+u'$ with $u_0\in\C$, and there exists a constant $C>0$ such that $|u'(\tau,x)|\leq C\tau^\alpha$. In particular, $u$ is uniformly bounded in $r>r_1$ and extends continuously to $\cCH^+$.
\end{cor}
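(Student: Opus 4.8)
The plan is to obtain the pointwise bound on $u'$ --- hence boundedness and continuity of $u$ --- directly from the conormal regularity \eqref{EqRNdSConormalKilling} by two Sobolev embeddings, one in the $(\tau,\omega)$ variables and one in the radial variable $r$. Since the leading term $u_0\chi(\tau)$ is smooth and equals $u_0$ near $\tau=0$, it suffices to treat $u'=u-u_0\chi(\tau)$; for it, \eqref{EqRNdSConormalKilling} gives, on a small interval $I\ni r_1$,
\[
  u'|_I\in H^{1/2+\alpha\beta_1-0}\bigl(I;\,\tau^\alpha\Hb^N([0,\infty)_\tau\times\Sph^2)\bigr),\quad N=0,1,2,\ldots.
\]

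First I would fix $N=2$ and use the b-Sobolev embedding $\Hb^N([0,\infty)_\tau\times\Sph^2)\hookrightarrow L^\infty$, valid since $N=2>\tfrac32$: passing to $t_*=-\log\tau$ identifies this b-Sobolev space with $H^N(\R_{t_*}\times\Sph^2)\hookrightarrow C^0_b$. The weight is unaffected, so $\tau^\alpha\Hb^N\hookrightarrow B:=\tau^\alpha L^\infty([0,\infty)_\tau\times\Sph^2)$, whence $u'|_I\in H^s(I;B)$ for any $s<\tfrac12+\alpha\beta_1$. Here the decisive point is that $\alpha>0$ and $\beta_1=2/|\mu_*'(r_1)|>0$, so $\alpha\beta_1>0$ and one may pick $s\in(\tfrac12,\tfrac12+\alpha\beta_1)$. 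The scalar Sobolev embedding $H^s(I')\hookrightarrow C^0(\ol{I'})$ for $s>\tfrac12$ on a closed subinterval $I'\subset I$ containing $r_1$ --- valid verbatim for $B$-valued functions, since the Fourier transform of an $H^s(\R;B)$ function is $B$-valued integrable by Cauchy--Schwarz --- then gives $u'\in C^0(\ol{I'};\tau^\alpha L^\infty)$, i.e.\ $|u'(\tau,x)|\le C\tau^\alpha$ uniformly for $r\in I'$. For $r$ bounded away from $r_1$, $u'$ has arbitrarily high b-regularity by Proposition~\ref{PropRNdSPartialAsymp} (and Theorem~\ref{ThmRNdSFredholm}), so the same bound holds there by the weighted b-Sobolev embedding. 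Hence $|u'|\le C\tau^\alpha$ on $\Omega\cap\{r\ge r_1\}$; as $\tau$ is bounded there and $\chi$ is bounded, $u$ is uniformly bounded, and since $\alpha>0$ we get $u'\to0$ and thus $u\to u_0$ as $\tau\to0$.

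For continuity up to $\cCH^+=\{r=r_1\}$ I would run the same argument retaining more regularity in the $(\tau,\omega)$ directions: for $N$ large and $K\subset(0,\infty)_\tau\times\Sph^2$ compact, $\Hb^N(K)=H^N(K)\hookrightarrow C^0(K)$, so $u'\in C^0\bigl(\ol{I'};C^0(K)\bigr)\subset C^0(\ol{I'}\times K)$; thus $u'$ is jointly continuous on a neighborhood of $\cCH^+$ intersected with $\{\tau>0\}$. At the corner where $\cCH^+$ meets the ideal boundary at infinity (near $\{r=r_1,\ \tau=0\}$), the bound $|u'|\le C\tau^\alpha$ shows $u'$ extends continuously by $0$ to $\tau=0$. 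Therefore $u'$, and hence $u=u_0\chi(\tau)+u'$, extends continuously to $\cCH^+$.

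The argument is routine; the single point that genuinely must be verified is that the radial Sobolev order $\tfrac12+\alpha\beta_1-0$ strictly exceeds the embedding threshold $\tfrac12$, i.e.\ $\alpha\beta_1>0$, which holds because the decay rate $\alpha$ is positive and the surface gravity $\kappa_1=\beta_1^{-1}$ of the Cauchy horizon is finite and nonzero. In other words, \emph{any} positive exponential decay rate forces $C^0$ boundedness at $\cCH^+$, consistent with the blue-shift heuristic; the iterated structure of \eqref{EqRNdSConormalKilling} is precisely what allows the two Sobolev embeddings to be applied in succession.
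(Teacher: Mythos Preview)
Your argument is correct and is precisely the approach the paper takes: the paper simply says ``by Sobolev embedding'' applied to the conormal regularity statement \eqref{EqRNdSConormalKilling}, and you have spelled out in detail the two-step embedding (first in $(\tau,\omega)$ via $\Hb^N\hookrightarrow L^\infty$ for $N>3/2$, then in $r$ via $H^s(I)\hookrightarrow C^0$ for $s>1/2$, using $\alpha\beta_1>0$). Nothing is missing or different.
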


Translated back to $t_*=-\log\tau$, the estimate on the remainder states that for scalar waves, one has exponentially fast pointwise decay to a constant. This recovers Franzen's boundedness result \cite{FranzenRNBoundedness} for linear scalar waves on the Reissner--Nordstr\"om spacetime near the Cauchy horizon in the cosmological setting.

The above argument is unsatisfactory in two ways: firstly, they are not robust and in particular do not quite apply in the Kerr--de Sitter setting discussed in \S\ref{SecKdS}; however, see Remark~\ref{RmkKdSResCarter}, which shows that using a `hidden symmetry' of Kerr--de Sitter space related to the completely integrable nature of the geodesic equation, one can still conclude boundedness in this case. Secondly, the regularity statement \eqref{EqRNdSConormalKilling} is somewhat unnatural from a PDE perspective; thus, we now give a more robust microlocal proof of the \emph{conormality} of $u'$, i.e.\ iterative regularity under application of vector fields tangent to $r=r_1$, which relies on the propagation of conormal regularity at the radial set $L_1$, see Proposition~\ref{PropBConormal}.

First however, we study conormal regularity properties of $\wh\cP(\sigma)$ for fixed $\sigma$, in particular giving results for individual resonant states. \emph{From now on, we work locally near $r=r_1$ and microlocally near $L_1=N^*\{r=r_1\}\subset T^*X$, and all pseudodifferential operators we consider implicitly have wavefront set localized near $N^*\{r=r_1\}$.} As in \S\ref{SubsecRNdSFlow}, we use the function $\tau_0=e^{-t_0}$ instead of $\tau$, where $t_0=t-F(r)$, $F'=s_1\mu^{-1}=\mu^{-1}$ near $r=r_1$, hence the dual metric function $G$ is given by \eqref{EqRNdSMetricWithTau0}. Since $\tau_0$ is a smooth non-zero multiple of $\tau$, this is inconsequential from the point of view of regularity, and it even is semiclassically harmless for $\Im z=\cO(h)$. Denote the conjugation of $\cP$ by the Mellin transform in $\tau_0$ by
\[
  \wt\cP(\sigma) = \tau_0^{-i\sigma}\cP\tau_0^{i\sigma}
\]
with $\sigma$ the Mellin-dual variable to $\tau$. We first study standard (non-semiclassical) conormality using techniques developed in \cite{HassellMelroseVasySymbolicOrderZero} and used in a context closely related to ours in \cite[\S4]{BaskinVasyWunschRadMink}. We note that the standard principal symbol of $\wt\cP(\sigma)$ is given by
\[
  p(r,\omega;\xi,\eta)=-\mu\xi^2-r^{-2}|\eta|^2.
\]
Then:
\begin{lemma}
\label{LemmaRNdSModule}
  The $\Psi^0(X)$-module
  \[
    \cM := \{ A\in\Psi^1(X) \colon \sigma_1(A)|_{L_1}=0 \}
  \]
  is closed under commutators. Moreover, we can choose finitely many generators of $\cM$ over $\Psi^0(X)$, denoted $A_0:=\id$, $A_1,\ldots,A_{N-1}$ and $A_N=\Lambda_{-1}\wt\cP(\sigma)$ with $\Lambda_{-1}\in\Psi^{-1}(X)$ elliptic, such that for all $1\leq j\leq N$, we have
  \begin{equation}
  \label{EqRNdSModule}
    [\wt\cP(\sigma),A_j] = \sum_{\ell=0}^N C_{j\ell}A_\ell,\quad C_{j\ell}\in\Psi^1(X),
  \end{equation}
  where $\sigma_1(C_{j\ell})|_{L_1}=0$ for $1\leq\ell\leq N-1$.
\end{lemma}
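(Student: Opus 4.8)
The plan is to work microlocally near $L_1=N^*\{r=r_1\}$ and exploit the fact that $L_1$ is conormal to the hypersurface $r=r_1$, so that the defining functions of $L_1$ inside $T^*X$ come in two flavours: the functions $r-r_1$ and (local coordinates) $\omega$ on the base, and the fibre function $\eta$ (the dual variable to $\omega$). Following the scheme of Hassell--Melrose--Vasy \cite{HassellMelroseVasySymbolicOrderZero} as implemented in \cite[\S4]{BaskinVasyWunschRadMink}, I would take for the generators of $\cM$ over $\Psi^0(X)$ the identity $A_0=\id$; operators $A_j$, $1\le j\le N-2$, whose principal symbols are $\eta_k$ (the momenta dual to the spherical variables) and $(r-r_1)\xi$ — in homogeneous degree one these are exactly the linear functions on $T^*X$ vanishing at $L_1$, since $L_1=\{r=r_1,\ \eta=0\}$ and $\xi$ is the only non-vanishing momentum there; and finally $A_N=\Lambda_{-1}\wt\cP(\sigma)$ with $\Lambda_{-1}\in\Psi^{-1}(X)$ elliptic, which is a natural generator because $\sigma_1(A_N)=\la\xi\ra^{-1}p$ vanishes on $\Sigma\supset L_1$. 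One should also check that these do generate $\cM$: any $A\in\Psi^1(X)$ with $\sigma_1(A)|_{L_1}=0$ has principal symbol vanishing on the linear subspace $L_1$ of the (conic) cotangent bundle, hence $\sigma_1(A)$ is a linear combination, with homogeneous degree-zero coefficients, of $\eta_k$, $(r-r_1)\la\xi\ra$, and $\la\xi\ra^{-1}p$ — modulo $S^0$ — which upon quantization gives $A\in\sum \Psi^0(X)A_j$.

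The first assertion, that $\cM$ is closed under commutators, is the symbolic statement that the ideal of functions vanishing on $L_1$ is closed under Poisson bracket; this holds because $L_1$ is a coisotropic (in fact Lagrangian/conic-normal) submanifold — concretely $H_{\eta_k}$ and $H_{(r-r_1)\xi}$ and $H_p$ are all tangent to $L_1$, the first two by inspection and $H_p$ because $L_1$ is a (piece of a) radial set, hence flow-invariant by the computations of \S\ref{SubsecRNdSFlow}. At the operator level, $[A_i,A_j]\in\Psi^1(X)$ and its principal symbol is $\{\sigma_1(A_i),\sigma_1(A_j)\}$, which vanishes on $L_1$, so $[A_i,A_j]\in\cM$; this is routine once the generating set is fixed.

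For the commutator relation \eqref{EqRNdSModule}, I would compute $[\wt\cP(\sigma),A_j]$ for each generator. For $A_N=\Lambda_{-1}\wt\cP(\sigma)$ one has $[\wt\cP(\sigma),A_N]=[\wt\cP(\sigma),\Lambda_{-1}]\wt\cP(\sigma)$, which is of the form $C_{jN}A_N$ with $C_{jN}=[\wt\cP(\sigma),\Lambda_{-1}]\in\Psi^1$, so no constraint on this coefficient is needed. For the remaining $A_j$, $1\le j\le N-2$: $[\wt\cP(\sigma),A_j]$ has principal symbol $\{p,\sigma_1(A_j)\}=H_p\,\sigma_1(A_j)$. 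Since $H_p$ is tangent to $L_1$ and $\sigma_1(A_j)$ vanishes on $L_1$, the function $H_p\sigma_1(A_j)$ also vanishes on $L_1$ — this is the linearization of the flow acting on the (conic-)conormal ideal of $L_1$, and it maps that ideal to itself. Hence $H_p\sigma_1(A_j)$ lies in the span of the $\sigma_1(A_\ell)$, $1\le\ell\le N$, over $S^0$; quantizing and correcting lower-order terms iteratively (here the closure under commutators and the fact that $\wt\cP(\sigma)A_\ell$, $\wt\cP(\sigma)=\Lambda_{-1}^{-1}A_N$ modulo $\Psi^{-\infty}$, is expressible via the $A_\ell$'s is used to absorb the error terms) gives \eqref{EqRNdSModule} with $C_{j\ell}\in\Psi^1(X)$ and, crucially, $\sigma_1(C_{j\ell})|_{L_1}=0$ for $1\le\ell\le N-1$ — the last point because the obstruction to reducing the order of the remainder at each step is itself a commutator term, which is symbolically $O$ of the conormal ideal at $L_1$ by the first part of the lemma. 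The coefficient $C_{j0}$ of $A_0=\id$ need not vanish at $L_1$: it captures the subprincipal/transport behaviour of $\wt\cP(\sigma)$ at the radial set, exactly the term that will produce the regularity threshold $1/2+\alpha\beta_1$ later.

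The main obstacle I anticipate is not any single computation but the bookkeeping of the iterative reduction of lower-order terms: one must arrange the finite generating set so that, after taking principal symbols, \emph{all} the error terms generated at each order can be rewritten in terms of $\Psi^0(X)\cdot\{A_\ell\}$ with the stated vanishing of symbols at $L_1$ for $1\le\ell\le N-1$, and this requires that $\cM$ together with the distinguished element $A_N$ forms a (filtered) Lie algebra under commutators in a sufficiently strong sense — i.e. one genuinely needs the first assertion of the lemma, plus the observation that multiplication by $\wt\cP(\sigma)$ preserves the module, to close the induction. Making the choice of generators explicit enough that these closure properties are manifest (in particular including $(r-r_1)\xi$ and not just $\eta_k$, so that the full conormal ideal of $L_1$ — not merely functions vanishing on the fibre $\{\eta=0\}$ — is captured) is the delicate part; the rest is the standard symbol calculus of \cite{HassellMelroseVasySymbolicOrderZero}.
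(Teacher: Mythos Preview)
Your overall structure is right --- the generators you name are essentially the paper's (the paper uses $a_1=\mu\xi$ rather than $(r-r_1)\xi$, but these differ by a smooth nonvanishing factor near $r_1$), and the closure-under-commutators via the Lagrangian/coisotropic property of $L_1$ is exactly the paper's first step.

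There is, however, a genuine gap in your argument for the crucial condition $\sigma_1(C_{j\ell})|_{L_1}=0$, $1\le\ell\le N-1$. You argue that $H_p a_j$ vanishes on $L_1$ (since $H_p$ is tangent to $L_1$), hence lies in the span of the $a_\ell$ over $S^0$, and then that the extra vanishing of the coefficients comes out of an ``iterative lower-order correction'' using closure under commutators. This is not correct. The identity \eqref{EqRNdSModule} is a \emph{principal symbol} statement: since $C_{j0}A_0=C_{j0}\in\Psi^1$ is freely specifiable, one only needs to match symbols at order $2$, i.e.\ find $c_{j\ell}\in S^1$ with $H_p a_j=\sum_{\ell\ge 1}c_{j\ell}a_\ell$; the entire order-$1$ remainder is then absorbed into $C_{j0}$ in one step, so there is no iteration to perform. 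The condition $c_{j\ell}|_{L_1}=0$ for $1\le\ell\le N-1$ therefore has to hold already at this top level, and it is equivalent to $H_p a_j$ vanishing \emph{quadratically} at $L_1$ modulo multiples of $p$. First-order vanishing (which is all the Lagrangian property gives you) is not enough: in Darboux coordinates with $L=\{q=0\}$, two functions $p=\sum q_iP_i$, $a=\sum q_jA_j$ have $\partial_{q_k}\{p,a\}|_{q=0}=\sum_i(P_i\partial_{p_i}A_k-A_i\partial_{p_i}P_k)$, which is generically nonzero.

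The paper closes this gap by a short explicit computation using the specific form $p=-\mu\xi^2-r^{-2}|\eta|^2$: for $a_1=\mu\xi$ one gets $H_p a_1=-\mu'\mu\xi^2=\mu'r^{-2}|\eta|^2+\mu'p$, with $|\eta|^2$ vanishing quadratically at $L_1$; for the spherical generators $a_j$ (linear in $\eta$, independent of $r,\xi$) one has $H_p a_j=-r^{-2}H_{|\eta|^2}a_j$, which is quadratic in $\eta$. So the required quadratic vanishing (mod $p$) is a feature of this particular $p$, not a formal consequence of $L_1$ being Lagrangian. Your proposal needs exactly this computation (or an equivalent structural argument) in place of the appeal to iteration.
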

\begin{proof}
  Since $L_1$ is Lagrangian and thus in particular coisotropic, the first statement follows from the symbol calculus.
  
  Further, \eqref{EqRNdSModule} is a symbolic statement as well (since $[\wt\cP(\sigma),A_j]\in\Psi^2(X)$, and the summand $C_{j0}A_0=C_{j0}$ is a freely specifiable first order term), so we merely need to find symbols $a_1,\ldots,a_{N-1},a_N=\rho p$, homogeneous of degree $1$, with $\rho:=\sigma_{-1}(\Lambda_{-1})$, such that $\ham_p a_j=\sum c_{j\ell}a_\ell$ with $c_{j\ell}|_{L_1}=0$ for $1\leq\ell\leq N-1$. Note that this is clear for $j=N$, since in this case $\ham_p a_N=(\rho^{-1}\ham_p\rho)a_N$. We then let $a_1=\mu\xi$, and we take $a_2,\ldots,a_{N-1}\in\CI(T^*\Sph^2)$ to be linear in the fibers and such that they span the linear functions in $\CI(T^*\Sph^2)$ over $\CI(\Sph^2)$. We extend $a_2,\ldots,a_{N-1}$ to linear functions on $T^*X$ by taking them to be constant in $r$ and $\xi$. (Thus, these $a_j$ are symbols of differential operators in the spherical variables.) We then compute
  \[
    \ham_p a_1=-\mu'\mu\xi^2=\mu' r^{-2}|\eta|^2 + \mu' p,
  \]
  which is of the desired form since $|\eta|^2$ vanishes quadratically at $L_1$; moreover, for $2\leq j\leq N-1$, one readily sees that $\ham_p a_j=-r^{-2}H_{|\eta|^2}a_j$ vanishes quadratically at $L_1$ as well, finishing the proof.
\end{proof}

In the Lagrangian setting, this is a general statement, as shown by Haber and Vasy, see \cite[Lemma~2.1, Equation~(6.1)]{HaberVasyPropagation}. The positive commutator argument yielding the low regularity estimate at (generalized) radial sets, see \cite[Proposition~2.4]{VasyMicroKerrdS}, can now be improved to yield iterative regularity under the module $\cM$: indeed, we can follow the proof of \cite[Proposition~4.4]{BaskinVasyWunschRadMink} (which is for a generalized radial source/sink in the b-setting, whereas we work on a manifold without boundary here, so the weights in the reference can be dropped) or \cite[\S6]{HaberVasyPropagation} very closely; we leave the details to the reader. In order to compress the notation for products of module derivatives, we denote
\[
  A=(A_0,\ldots,A_N)
\]
in the notation of the lemma, and then use multiindex notation $A^\alpha=\prod_{i=0}^N A_i^{\alpha_i}$. The final result, reverting back to $\wh\cP(\sigma)$, is the following; recall that $L_{1,+}$ is a source and $L_{1,-}$ is a sink for the Hamilton flow within $T^*X$:
\begin{lemma}
\label{LemmaRNdSModuleEstimate}
  Let $A$ be a vector of generators of the module $\cM$ as above. Suppose $s_0<s<1/2-\beta_1\Im\sigma$. Let $G,B_1,B_2\in\Psi^0(X)$ be such that $B_1$ and $G$ are elliptic at $L_{1,+}$, resp.\ $L_{1,-}$, and all forward, resp.\ backward, null-bicharacteristics from $\WF'(B_1)\setminus L_{1,+}$, resp.\ $\WF'(B_1)\setminus L_{1,-}$, reach $\Ell(B_2)$ while remaining in $\Ell(G)$. Then 
  \[
    \sum_{|\alpha|\leq N}\|B_1 A^\alpha v\|_{H^s} \lesssim \sum_{|\alpha|\leq N}\|G A^\alpha \wh\cP(\sigma) v\|_{H^{s-1}} + \|B_2 v\|_{H^{s+N}} + \|v\|_{H^{s_0}}
  \]
\end{lemma}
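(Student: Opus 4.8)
The plan is to prove the iterated module estimate by induction on the module order $N$, combining the basic low-regularity radial point estimate at $L_1$ with the commutation structure from Lemma~\ref{LemmaRNdSModule}. The base case $N=0$ is precisely the low-regularity radial point estimate \cite[Proposition~2.4]{VasyMicroKerrdS} (equivalently the second part of Proposition~\ref{PropRNdSRadialRecall}, transferred to the boundary operator $\wh\cP(\sigma)$ via $\wt\cP(\sigma)$), valid under the hypothesis $s<1/2-\beta_1\Im\sigma$, with the propagation hypotheses on $B_1,B_2,G$ matching the source/sink dynamics at $L_{1,\pm}$ within $T^*X$. For the inductive step, one assumes the estimate holds for all module multiindices of length $<N$ and seeks to control $B_1 A^\alpha v$ in $H^s$ for $|\alpha|=N$. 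Write $A^\alpha = A_j A^{\alpha'}$ with $|\alpha'|=N-1$, set $w = A^{\alpha'}v$, and apply $\wt\cP(\sigma)$; using \eqref{EqRNdSModule} one gets
\[
  \wt\cP(\sigma) A_j w = A_j \wt\cP(\sigma) w + [\wt\cP(\sigma),A_j] w = A_j \wt\cP(\sigma) w + \sum_{\ell=0}^N C_{j\ell} A_\ell w,
\]
and since $\wt\cP(\sigma) w = \wt\cP(\sigma) A^{\alpha'} v = A^{\alpha'}\wt\cP(\sigma) v + (\text{commutator terms of module order} \le N-1)$, every term on the right is either (i) $A^\beta \wt\cP(\sigma) v$ with $|\beta|\le N$, hence controlled by the right-hand side, or (ii) of the form $C_{j\ell} A_\ell w$ with $C_{j\ell}$ having symbol vanishing on $L_1$ for $1\le\ell\le N-1$, which is therefore again an element of $\cM$ times a module derivative of order $N-1$, i.e.\ of module order $\le N$ but — crucially — with the top-order piece lying in $\cM$, so that its contribution can be absorbed. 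The one dangerous term is $\ell=N$, where $A_N = \Lambda_{-1}\wt\cP(\sigma)$, but this just reproduces a $\wt\cP(\sigma) v$ term and is harmless; the $\ell=0$ term $C_{j0}A_0 = C_{j0}$ is first order and is handled by the inductive hypothesis together with elliptic regularity away from $L_1$.

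The actual estimate is then obtained by running the positive commutator argument for the radial point estimate applied to $w = A^{\alpha'}v$, exactly as in the proof of \cite[Proposition~4.4]{BaskinVasyWunschRadMink} or \cite[\S6]{HaberVasyPropagation}: one chooses a commutant quantizing a suitable power of a boundary-defining function of $L_1$ times a cutoff, computes the commutator with $\wt\cP(\sigma)$, and uses \eqref{EqRNdSQuadraticDef}-type positivity of $\rham_G$ transversally to $L_1$ together with the threshold condition $s<1/2-\beta_1\Im\sigma$ to dominate the error terms. The point of Lemma~\ref{LemmaRNdSModule} — that $\cM$ is a Lie algebra and that the structure functions $C_{j\ell}$ vanish on $L_1$ for the `interior' indices — is exactly what guarantees that commuting $A_j$ past $\wt\cP(\sigma)$ does not generate terms of module order $N$ whose top symbol fails to vanish on $L_1$: all such terms are again honest module derivatives of order $\le N$, and the ones of order exactly $N$ carry the coefficient $C_{j\ell}\in\cM$, so after summing over $|\alpha|\le N$ and using that $\sigma_1(C_{j\ell})|_{L_1}=0$ they can be absorbed into the left-hand side (for the top-order terms) or into the $\|v\|_{H^{s_0}}$ and $\|B_2 v\|_{H^{s+N}}$ error terms (for the lower-order and elliptic-region contributions), possibly after a further microlocal partition subordinate to $\Ell(G)$ and real principal type propagation between $\WF'(B_1)$ and $\Ell(B_2)$ as in the hypotheses.

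The main obstacle is bookkeeping the error terms so that nothing of module order $N$ with non-vanishing symbol on $L_1$ survives: one must check that all commutators $[\wt\cP(\sigma),A_j]$ and the nested commutators arising from $\wt\cP(\sigma)A^{\alpha'} = A^{\alpha'}\wt\cP(\sigma) + \cdots$ reorganize into $\sum_{|\beta|\le N} \text{(bounded op)}\,A^\beta \wt\cP(\sigma) v$ plus genuinely lower-order-or-absorbable remainders — this is where Lemma~\ref{LemmaRNdSModule}, specifically the condition $\sigma_1(C_{j\ell})|_{L_1}=0$, is essential, and where one simply invokes that the argument is formally identical to \cite[Proposition~4.4]{BaskinVasyWunschRadMink} and \cite[\S6]{HaberVasyPropagation} after dropping the b-weights (since here $X$ is a closed manifold). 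A secondary technical point is the interplay of the variable Sobolev order: since $\sfs$ is constant near $r=r_1$ on the relevant microlocal region, the orders $s,s_0,s+N$ in the statement are all constants and the standard (non-variable-order) module regularity machinery applies verbatim, so this causes no additional difficulty. The conclusion, stated for $\wh\cP(\sigma)$ rather than $\wt\cP(\sigma)$, follows since $\tau_0$ and $\tau$ differ by a smooth positive factor depending only on $r$, which is a conormal (indeed smooth) multiplier and hence preserves all the module regularity spaces.
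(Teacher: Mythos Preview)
Your proposal is correct and takes essentially the same approach as the paper, which gives no proof itself but simply states that one follows \cite[Proposition~4.4]{BaskinVasyWunschRadMink} or \cite[\S6]{HaberVasyPropagation} closely using the commutation structure of Lemma~\ref{LemmaRNdSModule}; you do precisely this, with considerably more detail on the ingredients.

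One small presentational point: your first paragraph's framing as a pure induction on $N$ (applying the base radial estimate to $A_j w$ with $w=A^{\alpha'}v$) does not close on its own, since the commutator term $C_{j\ell}A_\ell w$ for $1\le\ell\le N-1$ is of module order $N+1$ in $v$, not $\le N$, and so cannot be controlled by the inductive hypothesis alone. You correctly identify the resolution in your later paragraphs --- namely, that all $|\alpha|\le N$ are handled simultaneously in a single positive commutator argument, and the vanishing $\sigma_1(C_{j\ell})|_{L_1}=0$ is what allows the top-order commutator contributions to be absorbed into the main term --- and this is indeed how the cited references proceed.
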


In particular:

\begin{cor}
\label{CorRNdSResonantStateConormal}
  If $u$ is a resonant state of $\cP$, i.e.\ $\wh\cP(\sigma)u=0$, then $u$ is conormal to $r=r_1$ relative to $H^{1/2-\beta_1\Im\sigma-0}(Y)$, i.e.\ for any number of vector fields $V_1,\ldots,V_N$ on $X$ which are tangent to $r=r_1$, we have $V_1\cdots V_N u\in H^{1/2-\beta_1\Im\sigma-0}(Y)$.
\end{cor}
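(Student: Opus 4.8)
The plan is to deduce Corollary~\ref{CorRNdSResonantStateConormal} directly from the module regularity estimate, Lemma~\ref{LemmaRNdSModuleEstimate}, by a bootstrap argument. The statement we want is that $u \in \ker\wh\cP(\sigma)$ is conormal to $r=r_1$ relative to $H^{s}(Y)$ with $s = 1/2-\beta_1\Im\sigma-0$; concretely, that $A^\alpha u \in H^{s-0}(Y)$ for all multiindices $\alpha$, where $A=(A_0,\dots,A_N)$ is a vector of generators of the module $\cM$ as in Lemma~\ref{LemmaRNdSModule}. Since every smooth vector field on $X$ tangent to $r=r_1$ has principal symbol vanishing on $L_1=N^*\{r=r_1\}$, hence lies in $\cM$ (modulo $\Psi^0$), iterated regularity under $\cM$ is equivalent to the claimed conormality; the passage from the $A_j$ to an arbitrary tangent vector field is the routine fact that $\cM$ is finitely generated over $\Psi^0(X)$, already recorded in Lemma~\ref{LemmaRNdSModule}.

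First I would fix the microlocal setup. Since $u$ is a resonant state, it is already known — from the (weak) radial point estimate, \cite[Proposition~2.4]{VasyMicroKerrdS}, or equivalently the low-regularity half of Proposition~\ref{PropRNdSRadialRecall} applied to $\wh\cP(\sigma)$ — that $u \in H^{1/2-\beta_1\Im\sigma-0}(Y)$ microlocally near $L_1$; this is the $N=0$ case and provides the base of the induction and the a priori $\|v\|_{H^{s_0}}$ term. The source/sink structure at $L_{1,\pm}$ guarantees that $u$ has the requisite regularity on a punctured neighborhood of $L_1$ in $\Sigma\cap S^*X$ (propagation of singularities from the elliptic/complex-absorbing region, exactly as in the proof of Proposition~\ref{PropRNdSGlobalReg}), so the terms $\|B_2 v\|_{H^{s+N}}$ with $\WF'(B_2)$ in that punctured neighborhood are controlled at arbitrarily high order. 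Now I would run the induction on $|\alpha|$: assuming $A^\beta u \in H^{s}$ for all $|\beta|<k$, apply Lemma~\ref{LemmaRNdSModuleEstimate} with the order $s$ chosen just below the threshold $1/2-\beta_1\Im\sigma$, and with $B_1$ elliptic at $L_{1,-}$ (respectively $L_{1,+}$) — using $\wh\cP(\sigma)u=0$ so that the first term on the right-hand side of the estimate in Lemma~\ref{LemmaRNdSModuleEstimate} vanishes — to conclude $B_1 A^\alpha u \in H^s$ for $|\alpha|\le k$; combined with the elliptic and real-principal-type propagation control away from $L_1$, this upgrades to $A^\alpha u \in H^s$ microlocally everywhere near $\{r=r_1\}$. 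Iterating over $k$ gives conormality at every order.

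The main obstacle is not any single step but the threshold bookkeeping: one must verify that the order $s$ in Lemma~\ref{LemmaRNdSModuleEstimate} can be taken uniformly (for all $|\alpha|\le N$, for all $N$) strictly below $1/2-\beta_1\Im\sigma$ without the induction forcing a loss of derivatives at each stage — this is exactly why the module estimate is phrased with the \emph{same} $s$ on both sides rather than with a loss, and is the content of the remark that the weights in \cite[Proposition~4.4]{BaskinVasyWunschRadMink} (which is the b-analogue) can be dropped here. One also needs the commutation relation \eqref{EqRNdSModule} with $\sigma_1(C_{j\ell})|_{L_1}=0$ for the interior indices $1\le\ell\le N-1$, so that the commutator terms generated in the positive commutator argument are again of module type and can be absorbed; this is precisely what Lemma~\ref{LemmaRNdSModule} supplies, so the remaining work is to quote Lemma~\ref{LemmaRNdSModuleEstimate} and assemble the induction, which I would do rather than reproving the module estimate from scratch.
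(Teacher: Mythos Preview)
Your proposal is correct and follows the same approach as the paper: use propagation of singularities to obtain smoothness of $u$ away from $L_{1,\pm}$, then invoke Lemma~\ref{LemmaRNdSModuleEstimate} to conclude conormality at the radial set. The paper's proof is a two-line version of exactly this; note in particular that your induction on $|\alpha|$ is already built into Lemma~\ref{LemmaRNdSModuleEstimate} (the estimate there controls all $|\alpha|\le N$ simultaneously, with the $\|B_2 v\|_{H^{s+N}}$ term supplied by the smoothness away from $L_1$), so you can simply apply the lemma for each $N$ directly rather than bootstrapping.
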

\begin{proof}
  Indeed, by the propagation of singularities, $u$ is smooth away from $L_{1,\pm}$, and then Lemma~\ref{LemmaRNdSModuleEstimate} implies the stated conormality property.
\end{proof}

We now turn to the conormal regularity estimate in the spacetime, b-, setting. Let us define
\[
  \cM_\bop = \{ A\in\Psib^1(M) \colon \sigma_1(A)|_{\cL_1}=0 \}.
\]
Using the stationary ($\tau$-invariant) extensions of the vector field generators of the module $\cM$ defined in Lemma~\ref{LemmaRNdSModule} together with $\tau D_\tau\in\cM_\bop$, one finds that the module $\cM_\bop$ is generated over $\Psib^0(M)$ by $A_0=\id$, $A_1,\ldots,A_N\in\Diffb^1(M)$ and $A_{N+1}=\Lambda_{-1}\cP$, with $\Lambda_{-1}\in\Psib^{-1}(M)$ elliptic, satisfying
\[
  [\cP,A_j]=\sum_{\ell=0}^{N+1} C_{j\ell}A_\ell,\quad C_{j\ell}\in\Psib^1(M),
\]
with $\sigma_1(C_{j\ell})_{\cL_1}=0$ for $1\leq\ell\leq N$. The proof of \cite[Proposition~4.4]{BaskinVasyWunschRadMink} then carries over to the saddle point setting of Proposition~\ref{PropRNdSRadialRecall} and gives in the below-threshold case (which is the relevant one at the Cauchy horizon):

\begin{prop}
\label{PropBConormal}
  Suppose $\cP$ is as above, and let $\alpha\in\R$, $k\in\Z_{\geq 0}$.

  If $s<1/2+\beta_1\alpha$, and if $u\in\Hb^{-\infty,\alpha}(M)$ then $L_{1,\pm}$ (and thus a neighborhood of $L_{1,\pm}$) is disjoint from $\WFb^{s,\alpha}(\cM_\bop^j u)$ for all $0\leq j\leq k$ provided $L_{1,\pm}\cap\WFb^{s-1,\alpha}(\cM_\bop^j \cP u)=\emptyset$ for $0\leq j\leq k$, and provided a punctured neighborhood of $L_{1,\pm}$, with $L_{1,\pm}$ removed, in $\Sigma\cap\Sb^*_X M$ is disjoint from $\WFb^{s+k,\alpha}(u)$.
\end{prop}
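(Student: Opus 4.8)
\textbf{Proof plan for Proposition~\ref{PropBConormal}.}

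The plan is to reduce the conormal statement to an iterated application of the basic below-threshold radial point estimate (Proposition~\ref{PropRNdSRadialRecall}) applied to the module derivatives $A^\gamma u$, exactly following the template of \cite[Proposition~4.4]{BaskinVasyWunschRadMink} but adapted to the \emph{saddle point} dynamics of $L_{1,\pm}$ rather than to a source/sink. First I would record the module structure already assembled in the excerpt: the generators $A_0=\id,A_1,\dots,A_N\in\Diffb^1(M)$ of $\cM_\bop$ together with $A_{N+1}=\Lambda_{-1}\cP$, and the key commutator identity $[\cP,A_j]=\sum_{\ell=0}^{N+1}C_{j\ell}A_\ell$ with $\sigma_1(C_{j\ell})|_{\cL_1}=0$ for $1\le\ell\le N$. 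The induction is on $k$: the case $k=0$ is precisely Proposition~\ref{PropRNdSRadialRecall} (below-threshold part). For the inductive step, assume the conclusion for $k-1$; one wants to bound $\WFb^{s,\alpha}(A^\gamma u)$ near $L_{1,\pm}$ for $|\gamma|=k$, knowing it already for $|\gamma|\le k-1$ and knowing $\cM_\bop^k\cP u$ has no wavefront set of order $s-1$ there.

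The mechanism is a single positive-commutator argument run simultaneously on all the top-order module derivatives. The commutator identity gives $\cP(A^\gamma u)=A^\gamma\cP u+\sum_{|\gamma'|<|\gamma|}(\text{zeroth order coeff})\,B_{\gamma'}A^{\gamma'}u$, where the error terms $B_{\gamma'}$, built from the $C_{j\ell}$ with $1\le\ell\le N$, have principal symbol vanishing on $\cL_1$ — so they cost no derivatives relative to the module filtration, and the errors are controlled by the $k-1$ inductive hypothesis after losing one order (hence the $\WFb^{s+k,\alpha}$ a priori hypothesis on $u$ in the punctured neighborhood: each of the $k$ levels of induction may cost one order on the a priori control term $B_2v$ localized away from $L_{1,\pm}$). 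The commutant is chosen as in the proof of the scalar radial estimate — a symbol of the form $\wh\rho^{-2s}\tau^{-2\alpha}$ times a cutoff localizing to a small neighborhood of $L_{1,\pm}$ along the flow — but now tensored with the module: one forms $\sum_{|\gamma|\le k}\la A^\gamma u, \mathrm{Op}(\text{commutant})A^\gamma u\ra$ and computes its commutator with $\cP$. The saddle structure \eqref{EqRNdSQuadraticDef}–\eqref{EqRNdSRadialPointBeta}, i.e. the diagonalization of $\rham_G$ near $\pa L_1$, ensures that the principal contribution of the commutator has a definite sign (the below-threshold condition $s<1/2+\beta_1\alpha$ is exactly what makes the threshold term $\tfrac12-s+\beta_1\alpha$ in the sign computation positive), provided one puts in by hand — via the a priori hypothesis — regularity on the full punctured neighborhood in $\Sigma\cap\Sb^*_XM$; the module errors are absorbed because their symbols vanish on $\cL_1$, so on the elliptic set of the commutant they are lower order in the module sense.

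The main obstacle I expect is bookkeeping the module errors against the variable-/fixed-order Sobolev scale so that the induction actually closes: one must verify that the coefficients $C_{j\ell}$ with $\ell=0$ (the genuinely first-order, non-module part) and the terms $C_{j,N+1}A_{N+1}=C_{j,N+1}\Lambda_{-1}\cP$ do not spoil the argument — the former because $A_0=\id$ contributes a harmless first-order term handled by ordinary elliptic/propagation estimates away from $\Sigma$, the latter because it is manifestly controlled by $\cM_\bop^\bullet\cP u$, which is hypothesized regular. A secondary technical point, already flagged in the parenthetical of the excerpt, is that Proposition~\ref{PropRNdSRadialRecall} is a \emph{saddle} rather than source/sink estimate, so unlike \cite{BaskinVasyWunschRadMink} one cannot propagate the needed a priori control into $L_{1,\pm}$ from the $\tau>0$ side alone — the stable direction lies in $\Sb^*_XM$ — which is why the hypothesis demands control on a \emph{punctured neighborhood within} $\Sigma\cap\Sb^*_XM$; with that input the propagation-of-singularities and real-principal-type arguments supply the a priori module regularity on $\WF'(B_2)$, and the positive commutator estimate then upgrades it across $L_{1,\pm}$. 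Everything else is a routine transcription of \cite[\S6]{HaberVasyPropagation} / \cite[Proposition~4.4]{BaskinVasyWunschRadMink}, so I would simply cite those references for the details after setting up the module and the sign computation.
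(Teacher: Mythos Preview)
Your proposal is correct and matches the paper's approach: the paper does not give a proof but simply states that ``the proof of \cite[Proposition~4.4]{BaskinVasyWunschRadMink} then carries over to the saddle point setting of Proposition~\ref{PropRNdSRadialRecall},'' and your outline is precisely a fleshed-out version of that carry-over, including the correct identification of the saddle-point modification (a priori control on the punctured neighborhood within $\Sigma\cap\Sb^*_X M$ rather than from $\tau>0$). One small clarification: your opening sentence suggests an ``iterated application of the basic below-threshold radial point estimate,'' but as you then correctly describe, the actual mechanism is a \emph{single} joint positive-commutator argument on the vector $(A^\gamma u)_{|\gamma|\le k}$---applying the scalar estimate separately to each $A^\gamma u$ would not close because the commutator errors are of the same module order.
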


Thus, if $\cP u$ is conormal to $\cL_1$, i.e.\ remains in $\Hb^{s-1,\alpha}$ microlocally under iterative applications of elements of $\cM_\bop$ --- this in particular holds if $\cP u=0$ ---, then $u$ is conormal relative to $\Hb^{s,\alpha}$, provided $u$ lies in $\Hb^{\infty,\alpha}$ in a punctured neighborhood of $L_1$. Using Proposition~\ref{PropBConormal} at the radial set $L_1$ in the part of the proof of Proposition~\ref{PropRNdSPartialAsymp} where the regularity of $u'$ is established, we obtain:

\begin{thm}
\label{ThmRNdSPartialAsympConormal}
  Let $\alpha>0$ be as in Proposition~\ref{PropRNdSPartialAsymp}, and suppose $u$ is the forward solution of
  \[
    \Box_g u=f\in\CIc(\Omega^\circ), \quad r>r_1.
  \]
  Then $u$ has a partial asymptotic expansion $u=u_0\chi(\tau)+u'$, where $\chi\equiv 1$ near $\tau=0$, $\chi\equiv 0$ away from $\tau=0$, and with $u_0\in\C$, and
  \[
    V_1\cdots V_N u'\in\Hb^{1/2+\alpha\beta_1-0,\alpha}
  \]
  for all $N=0,1,\ldots$ and all vector fields $V_j\in\Vb(\Omega)$ which are tangent to the Cauchy horizon $r=r_1$; here, $\beta_1$ is given by \eqref{EqRNdSRadialPointBeta}.

  The same result holds true, without the constant term $u_0$, for the forward solution of the massive Klein--Gordon equation $(\Box_g-m^2)u=f$, $m>0$ small.
\end{thm}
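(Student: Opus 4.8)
\noindent\emph{Proof proposal.} The plan is to rerun the proof of Proposition~\ref{PropRNdSPartialAsymp} essentially verbatim, except that at the step where the propagation of singularities (Proposition~\ref{PropRNdSGlobalReg}) is used to improve the regularity of the remainder $u'$ near $r=r_1$, I would instead invoke the module-regularity refinement, Proposition~\ref{PropBConormal}, at the radial set $L_1$. Thus, as in that proof, one first uses Lemma~\ref{LemmaRNdSSolvability} to modify $f$ in $r<r_1$ so that $\cP u_*=f$ is globally solvable with $u_*\in\Hbfw^{\sfs,\wt\alpha}(\Omega)$ for suitable $\wt\alpha<0$ (and $\sfs$ arbitrarily large in compact subsets of $r>r_1$ by Theorem~\ref{ThmRNdSFredholm}); the contour-shifting argument using the high energy estimates of Proposition~\ref{PropRNdSSemiclassical2} then produces $u=u_0\chi(\tau)+u'$ with $u'\in\Hbfw^{\sfs,\alpha}$, and by Lemma~\ref{LemmaRNdSResonanceSupp} only the $\sigma=0$ term survives upon restriction to $r>r_1$. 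Working on a domain slightly enlarged in the $r>r_1$ direction and at $\tau=0$, with a cutoff $\chi$ identically $1$ on $\Omega$, one arranges $\cP u'=0$ on a neighborhood of $L_1$, using that $f$ is supported away from both $r=r_1$ and $\tau=0$.

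Next I would check the hypotheses of Proposition~\ref{PropBConormal} at $L_1$ with the weight $\alpha$, any order $s<1/2+\beta_1\alpha$, and arbitrary $k$. Since $\cP u'=0$ near $L_1$, the condition on $\WFb^{s-1,\alpha}(\cM_\bop^j\cP u')$ is vacuous. The remaining, a priori, requirement is that a punctured neighborhood of $L_1$ in $\Sigma\cap\Sb^*_X M$, with $L_1$ itself removed, avoid $\WFb^{s+k,\alpha}(u')$; this follows by propagating known regularity of $u'$ along the null-bicharacteristic flow of Proposition~\ref{PropRNdSFlow}. Indeed $u'$ is regular relative to $\Hb^{\,\cdot\,,\alpha}$ to all orders in compact subsets of $r>r_1$, it vanishes in $r<r_0$ and near $H_F$ (by supported character and uniqueness for the wave equation), and it is $\Hb^{\,\cdot+1,\alpha}$ at the elliptic set of $\cQ$; propagating these toward $L_1$ from $r>r_1$ (using the monotonicity of $r$ along bicharacteristics in $r_1<r<r_2$, part~\itref{ItRNdSFlowHyp} of Proposition~\ref{PropRNdSFlow}) and from $r<r_1$ (into $\cU$ or onto $H_F$) yields $u'\in\Hb^{\infty,\alpha}$ microlocally on that punctured neighborhood. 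Proposition~\ref{PropBConormal} then gives $L_{1,\pm}\cap\WFb^{s,\alpha}(\cM_\bop^j u')=\emptyset$ for $0\le j\le k$, so, $k$ being arbitrary, $u'$ is conormal to $\cL_1$ relative to $\Hb^{s,\alpha}$. Since the first-order part of $\cM_\bop$ is generated over $\Psib^0(M)$ precisely by the b-vector fields tangent to $\{r=r_1\}$, and $u'$ is smooth off $r=r_1$, this is exactly the assertion that $V_1\cdots V_N u'\in\Hb^{1/2+\alpha\beta_1-0,\alpha}$ for all $N$ and all $V_j\in\Vb(\Omega)$ tangent to the Cauchy horizon.

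For the massive Klein--Gordon equation $(\Box_g-m^2)u=f$ with $m>0$ small, I would run the same argument with $\cP$ replaced by $\Box_g-m^2-i\cQ$. The term $-m^2$ is of order zero, so it changes neither the principal symbol $G$, nor the Hamilton flow and the radial and trapped sets, nor the module $\cM_\bop$ and Proposition~\ref{PropBConormal}, nor the high energy estimates; the regularity and Fredholm theory of \S\S\ref{SubsecRNdSRegularity}--\ref{SubsecRNdSFredholm} applies unchanged. The only difference is at low frequency: for small $m>0$ the former $\sigma=0$ resonance is pushed off the real axis by mode stability of the black hole exterior, and constants are no longer resonant states (as $m^2\cdot 1\ne 0$), so $0$ is not a resonance of the forward problem in $\Omega_{23}$; the contour can then be shifted past $\sigma=0$, no $u_0$ term appears, and the conormal remainder estimate at $L_1$ is obtained as before.

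I expect the main obstacle to be the verification of the a priori punctured-neighborhood regularity hypothesis of Proposition~\ref{PropBConormal}: one must carefully combine the high regularity of $u'$ on $r>r_1$ coming from the exterior Fredholm theory with the complex-absorption and uniqueness inputs on the $r<r_1$ side, routing them through the saddle-point flow at $L_1$; the remainder is a routine translation between module regularity and regularity under tangential vector fields, together with the (purely symbolic) observation that the order-zero term $m^2$ is microlocally harmless.
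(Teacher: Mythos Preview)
Your proposal is correct and follows essentially the same route as the paper: the paper's proof is the one-line remark ``Using Proposition~\ref{PropBConormal} at the radial set $L_1$ in the part of the proof of Proposition~\ref{PropRNdSPartialAsymp} where the regularity of $u'$ is established,'' together with the observation (for Klein--Gordon) that the simple resonance at $0$ moves into the lower half plane. Your write-up fleshes out exactly this, including the verification of the punctured-neighborhood hypothesis of Proposition~\ref{PropBConormal}, which the paper leaves implicit.
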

\begin{proof}
  For the massive Klein--Gordon equation, the only change in the analysis is that the simple resonance at $0$ moves into the lower half plane, see e.g.\ the perturbation computation in \cite[Lemma~3.5]{HintzVasySemilinear}; this leads to the constant term $u_0$, which was caused by the resonance at $0$, being absent.
\end{proof}

This implies the estimate \eqref{EqRNdSConormalKilling} and thus yields Corollary~\ref{CorRNdSBoundedness} as well.

\subsection{Existence of high regularity solutions at the Cauchy horizon for near-extremal black holes}
\label{SubsecRNdSHighReg}

The amount of decay $\alpha$ (and thus the amount of regularity we obtain) in Theorem~\ref{ThmRNdSPartialAsympConormal} is directly linked to the size of the \emph{spectral gap}, i.e.\ the size of the resonance-free strip below the real axis, as explained in \S\ref{SubsecRNdSAsymp}. Due to the work of S\'a Barreto--Zworski \cite{SaBarretoZworskiResonances} in the spherically symmetric case and general results by Dyatlov \cite{DyatlovResonanceProjectors} at ($r$-)normally hyperbolic trapping (for every $r$), the size of the \emph{essential spectral gap} is given in terms of dynamical quantities associated to the trapping, see Proposition~\ref{PropRNdSSemiclassical2}; we recall that the essential spectral gap is the supremum of all $\alpha'$ such that there are only finitely many resonances above the line $\Im\sigma=-\alpha'$. Thus, the essential spectral gap only concerns the high energy regime, i.e.\ it does not give any information about low energy resonances. In this section, we compute the size of the essential spectral gap in some limiting cases; the possibly remaining finitely many resonances between $0$ and the resonances caused by the trapping will be studied separately in future work. We give some indications of the expected results in Remark~\ref{RmkRNdSHighReg}.

In order to calculate the relevant dynamical quantities at the trapped set, we compute the linearization of the flow in the $(r,\xi)$ variables at the trapped set $\Gamma$: we have
\[
  \ham_G(r-r_P)\equiv-2\mu(r_P)\xi
\]
modulo functions vanishing quadratically at $\Gamma$, and in the same sense
\[
  \ham_G\xi\equiv\mu^{-2}\mu'-2r^{-3}|\eta|^2\equiv \mu^{-2}\mu'-2r^{-1}\mu^{-1}=\mu^{-2}r^2\pa_r(r^{-2}\mu),
\]
which in view of $\pa_{rr}(r^{-2}\mu)|_{r=r_P}=-2r_P^{-5}(2r_P-3\bhm)$ (see also \eqref{EqRNdSFlowMuPrime}) gives
\[
  \ham_G\begin{pmatrix}r-r_P\\ \xi\end{pmatrix} = \begin{pmatrix}0&-2\mu(r_P)\\ 2\mu(r_P)^{-2}r_P^{-3}(2r_P-3\bhm)&0\end{pmatrix}\begin{pmatrix}r-r_P\\ \xi\end{pmatrix}
\]
Therefore, the expansion rate of the flow in the normal direction at $\Gamma$ is equal to
\begin{equation}
\label{EqRNdSHighRegNumin}
  \numin = \frac{2}{r_P}\Bigl(\frac{2-3r_P^{-1}\bhm}{\mu(r_P)}\Bigr)^{1/2}.
\end{equation}
To find the size of the essential spectral gap for the forward problem of $\Box_g$, we need to compute the size of the imaginary part of the subprincipal symbol of the semiclassical rescaling of $\wh{\Box_g}$ at the semiclassical trapped set. Put $h=|\sigma|^{-1}$, $z=h\sigma$, then
\[
  \cP_{h,z}=h^2\wh{\Box_g}(h^{-1}z)=\mu^{-1}z^2-r^{-2}hD_r r^2\mu hD_r-r^{-2}h^2\Delta_{\Sph^2}.
\]
With $z=\pm 1-ih\alpha$, $\alpha\in\R$, we thus obtain
\begin{equation}
\label{EqRNdSHighRegSubpr}
  \sigma_\semi\Bigl(\frac{1}{2ih}(\cP_{h,z}-\cP_{h,z}^*)\Bigr) = \mp 2\mu^{-1}\alpha.
\end{equation}
The essential spectral gap thus has size at least $\alpha$ provided $\numin/2>2\mu^{-1}\alpha$, so
\[
  \alpha < \gamma_0 := \frac{\mu(r_P)\numin}{4} = \frac{1}{2r_P}\bigl(\mu(r_P)(2-3r_P^{-1}\bhm)\bigr)^{1/2}.
\]
We compute the quantity on the right for near-extremal Reissner--Nordstr\"om--de Sitter black holes with very small cosmological constant; first, using the radius of the photon sphere for the Reissner--Nordstr\"om black hole with $\Lambda=0$,
\[
  r_P = \frac{1}{2}(3\bhm+\sqrt{9\bhm^2-8 Q^2}),
\]
and the radius of the Cauchy horizon
\[
  r_1 = \bhm - \sqrt{\bhm-Q^2},
\]
we obtain
\[
  \gamma_0(\bhm,Q) = \frac{2\sqrt{(3\bhm^2-2Q^2+\bhm\sqrt{9\bhm^2-8Q^2})\sqrt{9\bhm^2-8Q^2}}}{(3\bhm+\sqrt{9\bhm^2-8Q^2})^{5/2}}
\]
for the size of the essential spectral gap for resonances caused by the trapping in the case $\Lambda=0$. (For $Q=0$, one finds $\gamma_0=1/(2\cdot 3^{3/2}\bhm)$, which agrees with \cite[Equation~(0.3)]{DyatlovAsymptoticDistribution} for $\Lambda=0$.) In the extremal case $Q=\bhm$, we find $\gamma_0=1/(8\bhm\sqrt{2})$. Furthermore, we have
\[
  \beta_1(\bhm,Q):=\frac{2}{|\mu'(r_1)|} = \frac{(\bhm-\sqrt{\bhm^2-Q^2})^2}{\sqrt{\bhm^2-Q^2}}.
\]
Thus, $\beta_1(\bhm,\bhm(1-\eps))=\bhm/(2\eps)^{1/2}+\cO(1)$; therefore,
\[
  \gamma_0(\bhm,\bhm(1-\eps))\beta_1(\bhm,\bhm(1-\eps)) = \frac{1}{16\eps^{1/2}} + \cO(1),
\]
which blows up as $\eps\to 0+$; this corresponds to the fact the surface gravity of extremal black holes vanishes. Given $s\in\R$, we can thus choose $\eps>0$ small enough so that $1/2+\gamma_0\beta_1>s$, and then taking $\Lambda>0$ to be small, the same relation holds for the $\Lambda$-dependent quantities $\gamma_0$ and $\beta_1$. Since there are only finitely many resonances in any strip $\Im\sigma>-\alpha>-\gamma_0$, we conclude by Theorem~\ref{ThmRNdSPartialAsympConormal},  taking $\alpha<\gamma_0$ close to $\gamma_0$, that for forcing terms $f$ which are orthogonal to a finite-dimensional space of dual resonant states (corresponding to resonances in $\Im\sigma>-\alpha$), the solution $u$ has regularity $\Hb^{s,\alpha}$ at the Cauchy horizon. Put differently, for near-extremal Reissner--Nordstr\"om--de Sitter black holes with very small cosmological constant $\Lambda>0$, waves with initial data in a finite codimensional space (within the space of smooth functions) achieve any fixed order of regularity at the Cauchy horizon, in particular better than $\Hloc^1$.

\begin{rmk}
\label{RmkRNdSHighReg}
  Numerical investigations of linear scalar waves \cite{BradyChambersKrivanLagunaCosmConst,BradyMossMyersCosmicCensorship,BradyChambersLaarakeersPoissonSdSFalloff} and arguments using approximations of the scattering matrix \cite{ChoudhuryPadmanabhanQNMforSdS} suggest that there are indeed resonances roughly at $\sigma=-i\kappa_j$, $j=2,3$, where $\kappa_2$ and $\kappa_3$ are the surface gravities of the cosmological horizon, see \eqref{EqRNdSSurfGrav}; as $\Lambda\to 0+$, we have $\kappa_3\to 0+$, and for extremal black holes with $\Lambda=0$, we have $\kappa_2=0$. (On the static de Sitter spacetime, there is a resonance exactly at $-i\kappa_3$, as a rescaling shows: for $\Lambda_0=3$, one has $e^{-t_0}$ decay to constants away from the cosmological horizon, $t_0$ the static time coordinate, see e.g.\ \cite{VasyWaveOndS}; now static de Sitter space $\dS_\Lambda$ with cosmological constant $\Lambda>0$ can be mapped to $\dS_3$ via $t_0=\kappa_3 t$, $r_0=\kappa_3 r$, where $\kappa_3=\sqrt{\Lambda/3}$ is the surface gravity of the cosmological horizon, and $t_0,r_0$, resp.\ $t,r$, are static coordinates on $\dS_3$, resp.\ $\dS_\Lambda$. Under this map, the metric on $\dS_3$ is pulled back to a constant multiple of the metric on $\dS_\Lambda$. Thus, waves on $\dS_\Lambda$ decay to constants with the speed $e^{-\kappa_3 t}$, which corresponds to a resonance at $-i\kappa_3$.)
  
  Our analysis is consistent with the numerical results, \emph{assuming the existence of these resonances}: we expect linear waves in this case to be generically no smoother than $H^{1/2+\min(\kappa_2,\kappa_3)/\kappa_1}$ at the Cauchy horizon, which highlights the importance of the relative sizes of the surface gravities for understanding the regularity at the Cauchy horizon. For near-extremal black holes, where $\kappa_2<\kappa_3$, this gives $H^{1/2+\kappa_2/\kappa_1}$, thus the local energy measured by an observer crossing the Cauchy horizon is of the order $(r-r_1)^{\kappa_2/\kappa_1-1}$, which diverges in view of $\kappa_2<\kappa_1$; this agrees with \cite[Equation~9]{BradyMossMyersCosmicCensorship}. We point out however that the waves are still in $\Hloc^1$ if $2\kappa_2>\kappa_1$, which is satisfied for near-extremal black holes. This is analogous to Sbierski's criterion \cite[\S4.4]{SbierskiThesis} for ensuring the finite energy of waves at the Cauchy horizon of linear waves with fast decay along the event horizon.

  The rigorous study of resonances associated with the event and cosmological horizons will be subject of future work.
\end{rmk}

\subsection{Tensor-valued waves}
\label{SubsecRNdSBundles}

The analysis presented in the previous sections goes through with only minor modifications if we consider the wave equation on natural vector bundles.

For definiteness, we focus on the wave equation, more precisely the Hodge d'Alembertian, on differential $k$-forms, $\Box_k:=d\delta+\delta d$. In this case, mode stability and asymptotic expansions up to decaying remainder terms in the region $\Omega_{23}^\circ$, a neighborhood of the black hole exterior region, were proved in \cite{HintzVasyKdsFormResonances}. The previous arguments apply to $\Box_k$; the only difference is that the threshold regularity at the radial points at the horizons shifts. At the event horizon and the cosmological horizon, this is inconsequential, as we may work in spaces of arbitrary high regularity there; at the Cauchy horizon however, one has, fixing a time-independent \emph{positive definite inner product} on the fibers of the $k$-form bundle with respect to which one computes adjoints:
\[
  \sigma_1\Bigl(\frac{1}{2i}(\Box_k^*-\Box_k)\Bigr) = \pm\wh\rho^{-1}\beta_0\wh\beta
\]
at $L_{1,\pm}$, with $\beta_0=\wh\rho^{-1}\rham_G\wh\rho=|\mu'(r_1)|$, and $\wh\beta$ and endomorphism on the $k$-form bundle; and one can compute that the lowest eigenvalue of $\wh\beta$ (which is self-adjoint with respect to the chosen inner product) is equal to $-k$. But then the regularity one can propagate into $L_{1,\pm}$ for $u'\in\Hb^{-\infty,\alpha}$, $\alpha\in\R$, solving $\Box_k u'=f'$, $f'$ compactly supported and smooth, is $\Hb^{1/2+\alpha\beta_1-k-0,\alpha}$, as follows from \cite[Proposition~2.1 and Footnote~5]{HintzVasySemilinear}. Thus, in the partial asymptotic expansion in Theorem~\ref{ThmRNdSPartialAsympConormal} (which has a different leading order term now, coming from stationary $k$-form solutions of the wave equation), we can only establish conormal regularity of the remainder term $u'$ at the Cauchy horizon relative to the space $\Hb^{1/2+\alpha\beta_1-k-0,\alpha}$, which for small $\alpha>0$ gives Sobolev regularity $1/2-k+\eps$, for small $\eps>0$. Assuming that the leading order term is smooth at the Cauchy horizon (which is the case, for example, for 2-forms, see \cite[Theorem~4.3]{HintzVasyKdsFormResonances}), we therefore conclude that, as soon as we consider $k$-forms $u$ with $k\geq 1$, our methods do not yield uniform boundedness of $u$ up to the Cauchy horizon; however, we remark that the conormality does imply uniform bounds as $r\to r_1+$ of the form $(r-r_1)^{-k+\eps}$, $\eps>0$ small.

A finer analysis would likely yield more precise results, in particular boundedness for certain components of $u$; and, as in the scalar setting, a converse result, namely showing that such a blow-up does happen, is much more subtle. We do not pursue these issues in the present work.

\section{Kerr--de Sitter space}
\label{SecKdS}

We recall from \cite[\S6]{VasyMicroKerrdS} the form of the Kerr--de Sitter metric with parameters $\Lambda>0$ (cosmological constant), $\bhm>0$ (black hole mass) and $a$ (angular momentum),
\begin{equation}
\label{EqKdSMetric}
\begin{split}
  g &= -\rho^2\Bigl(\frac{dr^2}{\wt\mu}+\frac{d\theta^2}{\kappa}\Bigr) - \frac{\kappa\sin^2\theta}{(1+\gamma)^2\rho^2}(a\,dt-(r^2+a^2)\,d\phi)^2 \\
    &\qquad + \frac{\wt\mu}{(1+\gamma)^2\rho^2}(dt-a\sin^2\theta\,d\phi)^2,
\end{split}
\end{equation}
where
\begin{gather*}
  \wt\mu(r,a,\Lambda,\bhm) = (r^2+a^2)\Bigl(1-\frac{\Lambda r^2}{3}\Bigr) - 2\bhm r, \\
  \rho^2 = r^2+a^2\cos^2\theta, \quad \gamma = \frac{\Lambda a^2}{3}, \quad \kappa = 1+\gamma\cos^2\theta.
\end{gather*}
(Our $(t,\phi)$ are denoted $(\tilde t,\tilde\phi)$ in \cite{VasyMicroKerrdS}, while our $(t_*,\phi_*)$ are denoted $(t,\phi)$ there.) In order to guarantee the existence of a Cauchy horizon, we need to assume $a\neq 0$. Analogous to Definition~\ref{DefRNdSNonDegenerate}, we make a non-degeneracy assumption:

\begin{definition}
\label{DefKdSNonDegenerate}
  We say that the Kerr--de Sitter spacetime with parameters $\Lambda>0,\bhm>0,a\neq 0$ is \emph{non-degenerate} if $\wt\mu$ has $3$ simple positive roots $0<r_1<r_2<r_3$.
\end{definition}

One easily checks that
\[
  \wt\mu>0\tn{ in }(0,r_1)\cup(r_2,r_3),\quad \wt\mu<0\tn{ in }(r_1,r_2)\cup(r_3,\infty);
\]
and again, $r=r_1$ (in the analytic extension of the spacetime) is called the \emph{Cauchy horizon}, $r=r_2$ the \emph{event horizon} and $r=r_3$ the \emph{cosmological horizon}.

We consider a simple case in which non-degeneracy can be checked immediately:
\begin{lemma}
\label{LemmaKdSNonDegenerate}
  Suppose $9\Lambda\bhm^2<1$, and denote the three non-negative roots of $\wt\mu(r,0,\Lambda,\bhm)$ by $r_{1,0}=0<r_{2,0}<r_{3,0}$. Then for small $a\neq 0$, $\wt\mu$ has three positive roots $r_j(a)$, $j=1,2,3$, with $r_j(0)=r_{j,0}$, depending smoothly on $a^2$, and $r_1(a)=\frac{a^2}{2\bhm}+\cO(a^4)$.
\end{lemma}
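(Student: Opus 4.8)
The plan is to mimic exactly the proof of Lemma~\ref{LemmaRNdSNonDegenerate}, which is the direct analogue for the Reissner--Nordstr\"om--de Sitter family, replacing the role of the charge parameter $q=Q^2$ by the parameter $q=a^2$. The starting observation is that $\wt\mu(r,a,\Lambda,\bhm)$ is a polynomial in $r$ whose coefficients are polynomials in $a^2$ (indeed $\wt\mu = -\frac{\Lambda}{3}r^4 + r^2 - 2\bhm r + a^2(1-\frac{\Lambda}{3}r^2)$, so $\wt\mu$ depends on $a$ only through $a^2$); hence it is natural to regard $\wt\mu$ as a smooth function of $(r,q)$ with $q=a^2$. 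At $q=0$ we have $\wt\mu(r,0,\Lambda,\bhm) = r\bigl(r-2\bhm-\tfrac{\Lambda}{3}r^3\bigr)$, so $r_{1,0}=0$ is a simple root, and the bracketed cubic $r^{-1}\wt\mu(r,0,\cdot) = r-2\bhm-\tfrac{\Lambda}{3}r^3$ is precisely the function $\wt\Delta_r$ from the Reissner--Nordstr\"om--de Sitter discussion (up to the trivial identification $\lambda=\Lambda/3$). Therefore the condition $9\Lambda\bhm^2<1$, which is \eqref{EqRNdSSdSNonDegenerate}, guarantees as shown there that this cubic has two simple positive roots $r_{2,0}<r_{3,0}$, and all three roots $r_{1,0}=0<r_{2,0}<r_{3,0}$ of $\wt\mu(\cdot,0,\Lambda,\bhm)$ are simple.

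Next I would invoke the implicit function theorem at each of the three simple roots: since $\pa_r\wt\mu(r_{j,0},0,\Lambda,\bhm)\neq 0$ for $j=1,2,3$, and $\wt\mu$ depends smoothly on $q=a^2$, there exist, for small $q\geq 0$, smooth functions $\wt r_j(q)$ with $\wt r_j(0)=r_{j,0}$ solving $\wt\mu(\wt r_j(q),q,\Lambda,\bhm)=0$; setting $r_j(a):=\wt r_j(a^2)$ gives the desired roots depending smoothly on $a^2$, with $r_j(0)=r_{j,0}$. For small $a$ these remain distinct and positive — $r_2(a),r_3(a)$ stay near the strictly positive values $r_{2,0},r_{3,0}$, while $r_1(a)$ is small but, as the expansion below shows, strictly positive for $a\neq 0$ — so $\wt\mu$ has three simple positive roots $0<r_1(a)<r_2(a)<r_3(a)$ and the spacetime is non-degenerate. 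The leading behaviour of $r_1(a)$ follows by differentiating the identity $0=\wt\mu(\wt r_1(q),q,\Lambda,\bhm)$ in $q$ at $q=0$: since $\wt r_1(0)=0$ and $\wt\mu(r,q) = a^2 - 2\bhm r + O(r^2) + O(a^2 r^2)$ near $(r,q)=(0,0)$, one gets $0 = -2\bhm\,\wt r_1'(0) + 1$, so $\wt r_1'(0)=\frac{1}{2\bhm}$ and hence $\wt r_1(q)=\frac{q}{2\bhm}+\cO(q^2)$, i.e.\ $r_1(a)=\frac{a^2}{2\bhm}+\cO(a^4)$.

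There is essentially no hard step here — the lemma is a routine perturbation argument — but the one point that deserves a word of care is the reduction of the $q=0$ case to the already-established Schwarzschild--de Sitter non-degeneracy: one must note that factoring off the simple root at $r=0$ is legitimate (the cubic $r^{-1}\wt\mu(r,0,\cdot)$ is smooth, negative at $r=0$ and for large $r>0$, positive for large $r<0$, with a unique positive critical point of its derivative at $r_c=(\Lambda)^{-1/2}$... more precisely at $(3/\Lambda\cdot\tfrac13)^{-1/2}$ — the same $r_c=(3\lambda)^{-1/2}$ as before with $\lambda=\Lambda/3$), so that the condition $9\Lambda\bhm^2<1$ is exactly what makes the value at $r_c$ positive and thus forces two simple positive roots. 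I would simply cite the computation preceding Lemma~\ref{LemmaRNdSNonDegenerate} for this, and otherwise write the proof in one short paragraph parallel to the proof of that lemma.

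\begin{proof}
  Write $q=a^2$ and regard $\wt\mu=\wt\mu(r,q,\Lambda,\bhm)$ as a smooth function of $(r,q)$, which is legitimate since $\wt\mu$ depends on $a$ only through $a^2$. For $q=0$ we have $\wt\mu(r,0,\Lambda,\bhm)=r\bigl(r-2\bhm-\tfrac{\Lambda}{3}r^3\bigr)$, so $r_{1,0}=0$ is a simple root, and the cubic $r-2\bhm-\tfrac{\Lambda}{3}r^3$ coincides with the function $\wt\Delta_r$ from the discussion preceding Lemma~\ref{LemmaRNdSNonDegenerate} (with $\lambda=\Lambda/3$); hence, by that discussion, the hypothesis $9\Lambda\bhm^2<1$ (which is \eqref{EqRNdSSdSNonDegenerate}) implies that it has two simple positive roots $r_{2,0}<r_{3,0}$. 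Thus $\wt\mu(\cdot,0,\Lambda,\bhm)$ has the three simple non-negative roots $r_{1,0}=0<r_{2,0}<r_{3,0}$. Since $\pa_r\wt\mu(r_{j,0},0,\Lambda,\bhm)\neq 0$, the implicit function theorem provides, for small $q\geq 0$, smooth functions $\wt r_j(q)$ with $\wt r_j(0)=r_{j,0}$ and $\wt\mu(\wt r_j(q),q,\Lambda,\bhm)=0$; put $r_j(a)=\wt r_j(a^2)$, which is a smooth function of $a^2$ with $r_j(0)=r_{j,0}$. Differentiating $0=\wt\mu(\wt r_1(q),q,\Lambda,\bhm)$ in $q$ at $q=0$ and using $\wt r_1(0)=0$ together with $\pa_q\wt\mu(0,0,\Lambda,\bhm)=1$ and $\pa_r\wt\mu(0,0,\Lambda,\bhm)=-2\bhm$ gives $0=-2\bhm\,\wt r_1'(0)+1$, so $\wt r_1(q)=\frac{q}{2\bhm}+\cO(q^2)$, i.e.\ $r_1(a)=\frac{a^2}{2\bhm}+\cO(a^4)$. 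In particular $r_1(a)>0$ for small $a\neq 0$, while $r_2(a),r_3(a)$ remain near the positive numbers $r_{2,0},r_{3,0}$; hence for small $a\neq 0$ the roots satisfy $0<r_1(a)<r_2(a)<r_3(a)$ and are simple, so the spacetime is non-degenerate.
\end{proof}
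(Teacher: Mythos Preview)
Your proof is correct and follows essentially the same approach as the paper's: citing the Schwarzschild--de Sitter non-degeneracy discussion \eqref{EqRNdSSdSNonDegenerate} for the $a=0$ roots, applying the implicit function theorem in $q=a^2$, and computing $\wt r_1'(0)=1/(2\bhm)$ by differentiating the defining relation. The paper's version is simply more terse.
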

\begin{proof}
  We recall that the condition \eqref{EqRNdSSdSNonDegenerate} ensures the existence of the roots $r_{j,0}$ as stated. One then computes for $\wt r_1(A):=r_1(\sqrt{A})$ that $\wt r_1'(0)=1/(2\bhm)$, giving the first statement.
\end{proof}

In order to state unconditional results later on, we in fact \emph{from now on assume to be in the setting of this lemma, i.e.\ we consider slowly rotating Kerr--de Sitter black holes}; see Remark~\ref{RmkKdSResGeneral} for further details.

\subsection{Construction of the compactified spacetime}
\label{SubsecKdSMfd}

As in \S\ref{SubsecRNdSMfd}, we discuss the smooth extension of the metric $g$ across the horizons and construct the manifold on which the linear analysis will take place; all steps required for this construction are slightly more complicated algebraically but otherwise very similar to the ones in the Reissner--Nordstr\"om--de Sitter setting, so we shall be brief.

Thus, with
\[
  s_j=-\sgn\wt\mu'(r_j),\quad \tn{so }s_1=1,\ s_2=-1,\ s_3=1,
\]
we will take
\begin{equation}
\label{EqKdSTimeStar}
  t_*:=t - F_j(r),\quad \phi_*:=\phi - P_j(r)
\end{equation}
for $r$ near $r_j$, where
\[
  F_j'(r) = s_j\Bigl(\frac{(1+\gamma)(r^2+a^2)}{\wt\mu} + c_j\Bigr),\quad P_j'(r)=s_j\frac{(1+\gamma)a}{\wt\mu}.
\]
Using $aF_j'-(r^2+a^2)P_j'=a s_j c_j$ and $F_j'-a\sin^2\theta P_j' = s_j\Bigl(\frac{(1+\gamma)\rho^2}{\wt\mu}+c_j\Bigr)$, one computes
\begin{equation}
\label{EqKdSMfdMetricExt}
\begin{split}
  g &= -\frac{\kappa\sin^2\theta}{(1+\gamma)^2\rho^2}(a(s_j c_j\,dr+dt_*)-(r^2+a^2)\,d\phi_*)^2 \\
    &\qquad + \frac{\wt\mu}{(1+\gamma)^2\rho^2}((s_j c_j\,dr+dt_*)-a\sin^2\theta\,d\phi_*)^2 \\
    &\qquad + \frac{2s_j}{1+\gamma}((s_j c_j\,dr+dt_*)-a\sin^2\theta\,d\phi_*)\,dr - \frac{\rho^2\,d\theta^2}{\kappa};
\end{split}
\end{equation}
using e.g.\ the frame $v_1=\pa_r-s_j c_j\,\pa_{t_*}$, $v_2=a\sin^2\theta\,\pa_{t_*}+\pa_{\phi_*}$, $v_3=\pa_{t_*}$ and $\pa_4=\pa_\theta$, one finds the volume density to be
\[
  |dg|=(1+\gamma)^{-2}\rho^2\sin\theta\,dt_*\,dr\,d\phi_*\,d\theta;
\]
moreover, the form of the dual metric is
\begin{equation}
\label{EqKdSMfdDualMetricExt}
\begin{split}
  \rho^2 G&=-\wt\mu(\pa_r-s_j c_j\,\pa_{t_*})^2 + 2a s_j(1+\gamma)(\pa_r - s_j c_j\,\pa_{t_*})\,\pa_{\phi_*} \\
  &\qquad + 2s_j(1+\gamma)(r^2+a^2)(\pa_r-s_j c_j\,\pa_{t_*})\,\pa_{t_*} \\
  &\qquad - \frac{(1+\gamma)^2}{\kappa\sin^2\theta}(a\sin^2\theta\,\pa_{t_*}+\pa_{\phi_*})^2 - \kappa\,\pa_\theta^2.
\end{split}
\end{equation}
This is a non-degenerate Lorentzian metric apart from the usual singularity of the spherical coordinates $(\phi_*,\theta)$, which indeed is merely a coordinate singularity as shown by a change of coordinates \cite[\S6.2]{VasyMicroKerrdS}, see also Remark~\ref{RmkKdSFlowValidCoord} below.

As in the Reissner--Nordstr\"om--de Sitter case, one can start by choosing the functions $c_2$ and $c_3$ so that $F_2(r)=0$ for $r\geq r_2+\delta$ and $F_3(r)=0$ for $r\leq r_3-\delta$, so that $t_*$ in \eqref{EqKdSTimeStar} is well-defined in a neighborhood of $[r_2,r_3]$, and moreover one can choose $c_2$ and $c_3$ so that $dt_*$ is timelike in $[r_2-2\delta,r_3+2\delta]$: indeed, this is satisfied provided
\begin{equation}
\label{EqKdSMfdDtTimelike}
  \wt\mu c_j^2+2(1+\gamma)(r^2+a^2)c_j+a^2(1+\gamma)^2 < 0.
\end{equation}
We note that in $\wt\mu<0$, we can take $c_j$ to be large and negative, and then at $r=r_2-2\delta$, we obtain
\begin{equation}
\label{EqKdSMfdDtDrTimelike}
  \rho^2 G(-dr,dt_*) = \wt\mu c_2+2(1+\gamma)(r^2+a^2) > 0.
\end{equation}
Therefore, $-dr$ is future timelike for $r\in(r_1,r_2)$. Near $r_1$ then, more precisely in $r_1-2\delta<r<r_1+2\delta$, we can arrange for $-dt_*$ to be timelike again, and since $s_1=-s_2$ has the opposite sign, we find that $\rho^2 G(-dr,-dt_*)>0$, i.e.\ $-dt_*$ is future timelike there.

In order to cap off the problem in $r<r_1$, we again modify $\wt\mu$ to a smooth function $\wt\mu_*$. Since we can hide all the (possibly complicated) structure of the extension when $t_*\to\infty$ using complex absorption, we simply choose $\wt\mu_*$ such that
\begin{align*}
  \wt\mu_* & \equiv\mu\tn{ in }[r_1-2\delta,\infty), \\
  \wt\mu_* & \ \tn{has a single simple zero at }r_0\in(0,r_1).
\end{align*}
(See also the discussion following \eqref{EqRNdSMuStar}.) We can then extend the metric $g$ past $r_0$ by defining $t_*,\phi_*$ near $r=r_0$ as in \eqref{EqKdSTimeStar}, with $\wt\mu$ replaced by $\wt\mu_*$, and with $s_0=-\sgn\wt\mu_*'(r_0)=-1$. We can then arrange $-dt_*$ to be future timelike in $r_0-2\delta\leq r\leq r_1+2\delta$, and $dr$ is future timelike at $r=r_0-2\delta$ by a computation analogous to \eqref{EqKdSMfdDtDrTimelike}.

We can now define spacelike hypersurfaces $H_{I,0},H_F,H_{F,2},H_I,H_{F,3}$ exactly as in \eqref{EqRNdSMfdHyper}, bounding a domain with corners $\Omega^\circ$ inside
\[
  M^\circ = \R_{t_*}\times (r_0-4\delta,r_3+4\delta)_r \times \Sph^2,
\]
and we will analyze the wave equation (modified in $r<r_1$) on the compactified region
\[
  \Omega \subset M = [0,\infty)_\tau\times(r_0-4\delta,r_3+4\delta)_r\times\Sph^2,\quad \tau:=e^{-t_*};
\]
we further let $X=\pa M$, $Y=\Omega\cap\pa M$.

\subsection{Global behavior of the null-geodesic flow}
\label{SubsecKdSFlow}

Since it simplifies a number of computations below, we will study the null-geodesic flow of $\rho^2\Box_g$, i.e.\ the flow of $\ham_{\rho^2 G}$ within the characteristic set $\Sigma=G^{-1}(0)$, where $G$ denotes the dual metric function.

By pasting $-dt_*$ in $r\leq r_1+2\delta$, $-dr$ in $r_1+\delta\leq r\leq r_2-\delta$ and $dt_*$ in $r_2-2\delta\leq r\leq r_3+2\delta$ together using a non-negative partition of unity, we can construct a smooth, globally future timelike covector field $\varpi$ on $\Omega$ and use it to split the characteristic set into components $\Sigma_\pm$ as in \eqref{EqRNdSFlowCharComp}.

Since the global dynamics of the null-geodesic flow in a neighborhood $r_2-2\delta\leq r\leq r_3+2\delta$ of the exterior region are well-known, with saddle points of the flow (generalized radial sets) at $L_2,L_3$, where we define $L_j=\Nb^*(Y\cap\{r=r_j\})$, and a normally hyperbolically trapped set $\Gamma$.

As in parts of the discussion in \S\ref{SubsecRNdSFlow}, it is computationally convenient to work with $t_0:=t-F(r)$ instead of $t_*$ near $r=r_j$, where $F'(r)=s_j(1+\gamma)(r^2+a^2)\wt\mu_*^{-1}$ (i.e.\ effectively putting $c_j=0$). Let $\tau_0:=e^{-t_0}$ and write b-covectors as
\begin{equation}
\label{EqKdSFlowCoords}
  \sigma\,\frac{d\tau_0}{\tau_0} + \xi\,dr + \zeta\,d\phi_* + \eta\,d\theta;
\end{equation}
then the dual metric function reads
\begin{equation}
\label{EqKdSFlowCj0}
\begin{split}
  \rho^2 G&=-\wt\mu_*\xi^2 + 2a s_j(1+\gamma)\xi\zeta - 2s_j(1+\gamma)(r^2+a^2)\xi\sigma \\
    &\qquad - \frac{(1+\gamma)^2}{\kappa\sin^2\theta}(a\sin^2\theta\,\sigma-\zeta)^2-\kappa\eta^2
\end{split}
\end{equation}

\begin{rmk}
\label{RmkKdSFlowValidCoord}
  Valid coordinates near the poles $\theta=0,\pi$ are
  \[
    y=\sin\theta\cos\phi_*,\quad z=\sin\theta\sin\phi_*,
  \]
  and writing $\zeta\,d\phi_*+\eta\,d\theta=\lambda\,dy+\nu\,dz$, one finds $\sin^2\theta=y^2+z^2$ and $\zeta=-\lambda z+\nu y$; thus to see the smoothness of $-\rho^2 G$ near the poles, one merely needs to rewrite
  \[
    \wt p := \frac{(1+\gamma)^2\zeta^2}{\kappa\sin^2\theta} + \kappa\eta^2
  \]
  as
  \[
    \kappa\wt p=(1+\gamma)^2\Bigl(\frac{\zeta^2}{\sin^2\theta}+\eta^2\Bigr) - \gamma(2+\gamma(1+\cos^2\theta))\sin^2\theta\,\eta^2
  \]
  and notice that $\sin^2\theta\,\eta^2 = (1-y^2-z^2)(\lambda y+\nu z)^2$ is smooth, as is $\sin^{-2}\theta\,\zeta^2+\eta^2$ since this is simply the dual metric function on $\Sph^2$ in spherical coordinates.
\end{rmk}

We study the rescaled Hamilton flow near $L_j$ using the coordinates \eqref{EqKdSFlowCoords} and introducing $\wh\rho=|\xi|^{-1}$, $\wh\sigma=\wh\rho\sigma$, $\wh\zeta=\wh\rho\zeta$, $\wh\eta=\wh\rho\eta$ as the fiber variables similarly to \eqref{EqRNdSFlowRescaledCoord}: thus,
\[
  \rham_{\rho^2 G} = \wh\rho\ham_{\rho^2 G},
\]
and we find that at $L_j\cap\{\pm\xi>0\}$, where $\wh\sigma=\wh\zeta=\wh\eta=0$,
\begin{gather*}
  \wh\rho^{-1}\rham_{\rho^2 G}\wh\rho = \ham_{\rho^2 G}|\xi|^{-1}=\pm\xi^{-2}\pa_r(\rho^2 G) = \mp\wt\mu_*'(r_j), \\
  \tau_0^{-1}\rham_{\rho^2 G}\tau_0 = \wh\rho\pa_\sigma(\rho^2 G) = \mp 2s_j(1+\gamma)(r_j^2+a^2),
\end{gather*}
and thus the quantity controlling the threshold regularity at $L_j$ is
\begin{equation}
\label{EqKdSFlowThreshold}
  \beta_j = -\frac{\tau_0\rham_{\rho^2 G}\tau_0}{\wh\rho^{-1}\rham_{\rho^2 G}\wh\rho} = \frac{2(1+\gamma)(r_j^2+a^2)}{|\wt\mu_*'(r_j)|}.
\end{equation}
Furthermore, if we put
\begin{equation}
\label{EqKdSFlowCarter}
  p_C = \frac{(1+\gamma)^2}{\kappa\sin^2\theta}(a\sin^2\theta\,\sigma-\zeta)^2 + \kappa\eta^2,
\end{equation}
then $\rham_{\rho^2 G}p_C=0$, so the quadratic defining function $\rho_0:=\wh\rho^2(p_C+\sigma^2)$ of $L_j$ within the characteristic set over the boundary, $\Sigma\cap\Sb^*_X M=\Sigma\cap\pa\rcTb^*_X M$, satisfies
\[
  \rham_{\rho^2 G}p_C = \mp 2\wt\mu_*'(r_j)\rho_0\tn{ at }L_j\cap\{\pm\xi>0\};
\]
as in the Reissner--Nordstr\"om--de Sitter case, this implies that $L_j$ is a source or sink within $\pa\rcTb^*_X M$, with a stable or unstable manifold $\cL_j=\Nb^*\{r=r_j\}$ transversal to the boundary. For $\zeta\in\Sigma\cap\Tb^*_{\{r=r_j\}}M$ written as \eqref{EqKdSFlowCoords}, one can check that $\ham_{\rho^2 G}r=2\rho^2 G(\zeta,dr)=0$ if and only if $\eta=\zeta=\sigma=0$, i.e.\ if and only if $\zeta\in\cL_j$; in $\Sigma_\pm\cap\Tb^*_{\{r=r_j\}}M\setminus\cL_j$, the quantity $\ham_{\rho^2 G}r$ therefore has a sign (which is the same as in the discussion around \eqref{EqRNdSCrossingHorizon}), depending on the component of the characteristic set; thus, null-geodesics in a fixed component $\Sigma_\pm$ of the characteristic set can only cross $r=r_j$ in one direction. Furthermore, in the regions where $\wt\mu_*<0$, and thus $dr$ is timelike, we have $\ham_{\rho^2 G}r\neq 0$, see also \eqref{EqRNdSHamRHypRegion}.

Since we will place complex absorption immediately beyond $r=r_1$, i.e.\ in $r_0-\delta'<r<r_1-\delta'$ for $\delta'>0$ very small, it remains to check that at finite values of $t_*$ in this region, all null-geodesics escape either to $\tau=0$ or to $H_F$; but this follows from the timelike nature of $dt_*$ there, which gives that $\rham_{\rho^2 G}t_*$ is non-zero, in fact bounded away from zero.

To summarize, the global behavior of the null-geodesic flow in $r>r_1-2\delta$ is the same as that of the Reissner--Nordstr\"om--de Sitter solution; see Figure~\ref{FigRNdSFlow}. We point out that the existence of an ergoregion is irrelevant for our analysis: its manifestation is merely that null-geodesics tending to, say, the event horizon $r=r_2$ in the backward direction, may have a segment in $r>r_2$ before (possibly) crossing the event horizon into $r<r_2$; see also \cite[Figure~8]{VasyMicroKerrdS}.

\subsection{Results for scalar waves}
\label{SubsecKdSRes}

We use a complex absorbing operator $\cQ\in\Psib^2(M)$ as in \S\ref{SubsecRNdSRegularity}, with $\mp\sigma(\cQ)\geq 0$ on $\Sigma_\pm$, and which is elliptic in $t_*\geq t_{*,0}+1$, $r_0-\delta'<r<r_1-\delta'$, where $\delta'>0$ is chosen sufficiently small to ensure that the dynamics near the generalized radial set $L_1$ control the dynamics in $r_1-\delta'\leq r\leq r_1$: that is, null-geodesics near either tend to $L_1$ or enter the elliptic region of $\cQ$, i.e.\ $r<r_1-\delta'$, in finite time, unless they cross $t_*=t_{*,0}$, i.e.\ $H_F$, or $r=r_1$.

The analysis in \S\S\ref{SubsecRNdSRegularity}--\ref{SubsecRNdSConormal} now goes through \emph{mutatis mutandis}. (For completeness, we note that the threshold quantity $\beta_1$, see \eqref{EqKdSFlowThreshold}, for small $a$ is given by $\beta_1 = \frac{a^2}{\bhm} + \cO(a^4)$.) In fact, to prove conormal regularity, we can use the same module generators as those constructed in the proof of Lemma~\ref{LemmaRNdSModule}, and the b-version, see the discussion around Proposition~\ref{PropBConormal}, goes through without changes as well.

\begin{rmk}
\label{RmkKdSResCarter}
  There exists a second order `Carter operator' $\cP_C\in\Diffb^2(M)$, with principal symbol given by $p_C$ in \eqref{EqKdSFlowCarter}, that commutes with $\rho^2\Box_g$; concretely, in the coordinates used in \eqref{EqKdSFlowCj0} (which are valid near $r=r_1$),
  \begin{align*}
    \cP_C &= \frac{1}{\sin\theta}D_\theta\kappa\sin\theta D_\theta + \frac{(1+\gamma)^2}{\kappa\sin^2\theta}D_{\phi_*}^2 \\
      &\qquad + \frac{2a(1+\gamma)^2}{\kappa}D_{t_0}D_{\phi_*} + \frac{(1+\gamma)^2 a^2\sin^2\theta}{\kappa}D_{t_0}^2.
  \end{align*}
  Since $D_{t_0}$ and $D_{\phi_*}$ commute with $\rho^2\Box_g$, and since moreover the sum of the first two terms of $\cP_C$ is an elliptic operator on $\Sph^2$, we conclude, commuting $\cP_C$ through the equation $\rho^2\Box_g u=f\in\CIc(\Omega^\circ)$, $r>r_1-2\delta$, that $u$ is smooth in $t_0$ and the angular variables.

  Thus, we can deduce conormal regularity (apart from iterative regularity under application of $\wt\mu_* D_r$) for $u$ using such commutation arguments as well. Note however that the existence of such the `hidden symmetry' $\cP_C$ is closely linked to the complete integrability of the geodesic flow on Kerr--de Sitter space, while the microlocal argument proving conormality applies in much more general situations and different contexts, see e.g.\ \cite{HassellMelroseVasySymbolicOrderZero}.
\end{rmk}

We content ourselves with stating the analogues of Theorem~\ref{ThmRNdSPartialAsympConormal} and Corollary~\ref{CorRNdSBoundedness} in the Kerr--de Sitter setting:

\begin{thm}
\label{ThmKdSPartialAsympConormal}
  Suppose the angular momentum $a\neq 0$ is very small, such that there exists $\alpha>0$ with the property that the forward problem for the wave equation in the neighborhood $r_2-2\delta\leq r\leq r_3+2\delta$ of the domain of outer communications has no resonances in $\Im\sigma\geq -\alpha$ other than the simple resonance at $\sigma=0$. Let $u$ be the forward solution of
  \[
    \Box_g u=f\in\CIc(\Omega^\circ), \quad r>r_1.
  \]
  Then $u$ has a partial asymptotic expansion $u=u_0\chi(\tau)+u'$, with $u_0\in\C$ and $\chi\equiv 1$ near $\tau=0$, $\chi\equiv 0$ away from $\tau=0$, and
  \[
    V_1\cdots V_N u'\in\Hb^{1/2+\alpha\beta_1-0,\alpha}
  \]
  for all $N=0,1,\ldots$ and all vector fields $V_j\in\Vb(\Omega)$ which are tangent to the Cauchy horizon $r=r_1$; here, $\beta_1$ is given by \eqref{EqKdSFlowThreshold}. In particular, there exists a constant $C>0$ such that $|u'(\tau,x)|\leq C\tau^\alpha$, and $u$ is uniformly bounded in $r>r_1$.
\end{thm}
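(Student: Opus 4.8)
The plan is to reduce Theorem~\ref{ThmKdSPartialAsympConormal} to the Reissner--Nordstr\"om--de Sitter case, specifically to Theorem~\ref{ThmRNdSPartialAsympConormal} and its proof, by pointing out that all the ingredients that went into the latter have been reconstructed in \S\S\ref{SubsecKdSMfd}--\ref{SubsecKdSFlow} for slowly rotating Kerr--de Sitter. First I would invoke the construction of \S\ref{SubsecKdSMfd}: one has a compactified manifold $\Omega\subset M=[0,\infty)_\tau\times(r_0-4\delta,r_3+4\delta)_r\times\Sph^2$ with a non-degenerate Lorentzian b-metric $g$, the spacelike hypersurfaces $H_{I,0},H_F,H_{F,2},H_I,H_{F,3}$, and a complex absorbing operator $\cQ\in\Psib^2(M)$ with $\mp\sigma(\cQ)\geq 0$ on $\Sigma_\pm$, elliptic in $r_0-\delta'<r<r_1-\delta'$ for $t_*\geq t_{*,0}+1$; set $\cP=\Box_g-i\cQ$ (or $\rho^2\Box_g-i\cQ$, using the conformal factor $\rho^2$ as in \S\ref{SubsecKdSFlow} to simplify the flow computation, which is harmless since $\rho^2>0$ is stationary and bounded away from $0$ and $\infty$).

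Next I would assemble the global regularity analysis. By \S\ref{SubsecKdSFlow}, the null-bicharacteristic flow in $r>r_1-2\delta$ is exactly as in Figure~\ref{FigRNdSFlow}: there are generalized radial sets $L_{j,\pm}$ at $r=r_j$, $j=1,2,3$, with the same source/sink structure, the threshold quantity $\beta_j$ given by \eqref{EqKdSFlowThreshold}, and $r$-normally hyperbolic trapping $\Gamma$ in the exterior region; in the artificial region $r_0-\delta'<r<r_1-\delta'$ the operator is elliptic (by $\cQ$) and elsewhere in $r<r_1$ the dynamics escape to $\tau=0$ or to $H_F$ by the timelike nature of $dt_*$. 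Hence Proposition~\ref{PropRNdSFlow} holds verbatim in this setting, and therefore the proofs of Propositions~\ref{PropRNdSRadialRecall}, \ref{PropRNdSGlobalReg}, \ref{PropRNdSSemiclassical} and \ref{PropRNdSSemiclassical2}, Theorem~\ref{ThmRNdSFredholm}, Lemmas~\ref{LemmaRNdSResonanceSupp} and \ref{LemmaRNdSSolvability}, and Propositions~\ref{PropRNdSPartialAsymp}, \ref{PropBConormal} go through with no changes beyond algebraic ones. The semiclassical estimates at the trapping use Wunsch--Zworski and Dyatlov, which apply to Kerr--de Sitter; the radial point estimates are symbolic and robust. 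The only genuine input specific to Kerr--de Sitter is mode stability for the forward problem on the exterior region $r_2-2\delta\leq r\leq r_3+2\delta$, i.e.\ that the only resonance in $\Im\sigma\geq-\alpha$ is the simple one at $\sigma=0$ with constant resonant state; this is precisely the hypothesis of the theorem (known for slowly rotating Kerr--de Sitter by \cite{DyatlovQNM,HintzVasyKdsFormResonances,VasyMicroKerrdS}), so it is assumed rather than proved here.

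With these in hand, I would run the argument of \S\ref{SubsecRNdSConormal}: using Lemma~\ref{LemmaRNdSSolvability} (in its Kerr--de Sitter analogue) to modify $f$ in $r<r_1$ so that $\cP u_*=f$ is globally solvable with $u_*\in\Hbfw^{\sfs,\wt\alpha}(\Omega)$ for $\wt\alpha<0$ and $\sfs$ a forward order function; applying the contour-shifting argument with the high-energy estimates of Proposition~\ref{PropRNdSSemiclassical2} to extract the partial expansion $u_*=\sum\tau^{i\sigma_j}|\log\tau|^\kappa a_{j\kappa}\chi(\tau)+u'$; using the support properties of resonant states from Lemma~\ref{LemmaRNdSResonanceSupp} (all resonances with $\Im\sigma>0$ of $\cP$ supported in $r_0\leq r\leq r_1$, and the only pole of $R\circ\wh\cP(\sigma)^{-1}$ with $\Im\sigma\geq0$ at $\sigma=0$ of rank one with constant resonant state) to conclude that only the constant term $u_0$ survives restriction to $r>r_1$; and finally applying Proposition~\ref{PropBConormal} at $L_1$ to $\cP u'=f_1$ (with $f_1=\chi f+[\Box_g,\chi]u$ on the enlarged domain) to upgrade $u'$ to conormal regularity relative to $\Hb^{1/2+\alpha\beta_1-0,\alpha}$ under the module $\cM_\bop$ generated by $\tau D_\tau$ and the stationary spherical vector fields from Lemma~\ref{LemmaRNdSModule}. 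The pointwise bound $|u'(\tau,x)|\leq C\tau^\alpha$, and hence uniform boundedness in $r>r_1$, then follows from the conormality by Sobolev embedding exactly as in \eqref{EqRNdSConormalKilling} and Corollary~\ref{CorRNdSBoundedness}.

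The main obstacle is really bookkeeping rather than a single hard step: one must check that every estimate in \S\S\ref{SubsecRNdSRegularity}--\ref{SubsecRNdSConormal} depended on the Reissner--Nordstr\"om--de Sitter metric only through (i) the qualitative structure of the null-bicharacteristic flow of Proposition~\ref{PropRNdSFlow}, (ii) the values of the threshold quantities $\beta_j$, and (iii) the exterior mode stability statement --- all of which have Kerr--de Sitter counterparts established above or assumed. The one point requiring a little care is that the module argument for conormality (Lemma~\ref{LemmaRNdSModule} and Proposition~\ref{PropBConormal}) needs a set of generators of $\cM_\bop$ whose commutators with $\cP$ have the triangular form \eqref{EqRNdSModule}; as noted in \S\ref{SubsecKdSRes}, the same generators $a_1=\wt\mu_*\xi$ and the spherical linear functions work, because $L_1$ is still Lagrangian and $|\hat\eta|^2$-type terms still vanish quadratically at $L_1$, so the calculation of Lemma~\ref{LemmaRNdSModule} carries over. (Alternatively, Remark~\ref{RmkKdSResCarter} gives a softer route to conormality in the $t_0$ and angular directions via the Carter operator.) One also records, as in the parenthetical remark of \S\ref{SubsecKdSRes}, that $\beta_1=\frac{a^2}{\bhm}+\cO(a^4)$ for small $a$, so in particular $\beta_1<\max(\beta_2,\beta_3)$ and one may work with constant-order spaces; this is not needed for the statement but clarifies the regularity obtained.
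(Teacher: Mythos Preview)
Your proposal is correct and mirrors the paper's approach exactly: the paper does not give a separate proof of Theorem~\ref{ThmKdSPartialAsympConormal} but simply states that the analysis of \S\S\ref{SubsecRNdSRegularity}--\ref{SubsecRNdSConormal} goes through \emph{mutatis mutandis}, noting (as you do) that the same module generators from Lemma~\ref{LemmaRNdSModule} work for the conormal regularity argument and recording $\beta_1=\frac{a^2}{\bhm}+\cO(a^4)$. Your write-up is in fact more detailed than the paper's own discussion, spelling out explicitly which ingredients carry over and why.
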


Again, the same result holds, without the constant term $u_0$, for solutions of the massive Klein--Gordon equation $(\Box_g-m^2)u=f$, $m>0$ small.

\begin{rmk}
\label{RmkKdSResGeneral}
  Our arguments go through for general non-degenerate Kerr--de Sitter spacetimes, \emph{assuming} the `resolvent' family $\wh\Box_g(\sigma)^{-1}$ admits a meromorphic continuation to the complex plane with (polynomially lossy) high energy estimates in a strip below the real line, and the only resonance (quasi-normal mode) in $\Im\sigma\geq 0$ is a simple resonance at $0$ (`mode stability'). Apart from the mode stability, these conditions hold for a large range of spacetime parameters \cite{WunschZworskiNormHypResolvent,DyatlovQNM,VasyMicroKerrdS}, while the mode stability has only been proved for small $a$. (For the Kerr family of black holes, mode stability is known, see \cite{WhitingKerrModeStability,ShlapentokhRothmanModeStability}.)
  
  Without the mode stability assumption, we still obtain a resonance expansion for linear waves up to the Cauchy horizon, but boundedness does not follow due to the potential existence of resonances in $\Im\sigma>0$ or higher order resonances on the real line; if such resonances should indeed exist, then boundedness would in fact be false for generic forcing terms or initial data. If on the other hand one \emph{assumes} that the wave $u$ decays to a constant at some exponential rate $\alpha>0$ in the black hole exterior region, the conclusion of Theorem~\ref{ThmKdSPartialAsympConormal} still holds.
\end{rmk}

\appendix

\section{Variable order b-Sobolev spaces}
\label{SecVariable}

The analysis in \S\S\ref{SecRNdS} and \ref{SecKdS} relies on the propagation of singularities in b-Sobolev spaces of variable order; in fact, we only use microlocal elliptic regularity and real principal type propagation on such spaces. We recall some aspects of \cite[Appendix~A]{BaskinVasyWunschRadMink} needed in the sequel, and refer the reader to \cite{BaskinVasyWunschRadMink} for the proofs of elliptic regularity and real principal type propagation in this setting; since all arguments presented there are purely symbolic, they go through in the b-setting with purely notational changes. Moreover, we remark that adding (constant!) weights to the variable order b-spaces does not affect any of the arguments.

We use Sobolev orders which vary only in the base, not in the fiber. (Adding dependence on fiber variables would require purely notational changes.) In order to introduce the relevant notation, we consider the model case $\Rnhalfc$ of a manifold with boundary, and an order function $\sfs=\sfs(z)\in\CI((\Rnhalfc)_z)$, constant outside a compact set; recalling the symbol class
\[
  a(z,\zeta) \in S^0_{\rho,\delta}((\Rnhalfc)_z;\R^n_\zeta) \quad :\Longleftrightarrow\quad |\pa_z^\alpha\pa_\zeta^\beta a(z,\zeta)|\leq C_{\alpha\beta}\la\zeta\ra^{-\rho|\beta|+\delta|\alpha|}\quad \forall \alpha,\beta,
\]
we then define
\begin{equation}
\label{EqVariableBSymbols}
  S^\sfs_{\rho,\delta}=\{\la\zeta\ra^{\sfs(z)}a(z,\zeta)\colon a\in S^0_{\rho,\delta}\}.
\end{equation}
Now $S^\sfs_{\rho,\delta}\subset S^{s_0}_{\rho,\delta}$ for $s_0=\sup\sfs$, provided $\delta>0,\rho<1$, due to derivatives falling on $\la\zeta\ra^{\sfs(z)}$, producing logarithmic terms. Therefore, we can quantize symbols in $S^\sfs_{\rho,\delta}$; we denote the class of quantizations of such symbols by $\Psi^\sfs_{\bop,\rho,\delta}(\Rnhalfc)\subset\Psi^{s_0}_{\bop,\rho,\delta}(\Rnhalfc)$. We will only work with $\rho=1-\delta$, $\delta\in(0,1/2)$, in which case one can in particular transfer this space of operators to a manifold with boundary and obtain a b-pseudodifferential calculus; see \cite{HormanderFIO1} for the analogous case of manifolds without boundary. Thus, if $A\in\Psi_{\bop,1-\delta,\delta}^\sfs$ and $B\in\Psi_{\bop,1-\delta,\delta}^{\sfs'}$ for two order functions $\sfs,\sfs'$, then
\[
  A\circ B\in\Psi_{\bop,1-\delta,\delta}^{\sfs+\sfs'},\quad \sigma(A\circ B)=\sigma(A)\sigma(B)\in S_{1-\delta,\delta}^{\sfs+\sfs'},
\]
where $\sigma$ denotes the principal symbol in the respective classes of operators; the principal symbol of an element in $\Psi_{\bop,1-\delta,\delta}^\sfs$ is well-defined in $S_{1-\delta,\delta}^\sfs/S_{1-\delta,\delta}^{\sfs-(1-2\delta)}$. Furthermore, we have
\[
  i[A,B]\in\Psi_{\bop,1-\delta,\delta}^{\sfs+\sfs'-(1-2\delta)}, \quad \sigma(i[A,B])=H_{\sigma(A)}\sigma(B).
\]

For the purposes of the analysis in \S\ref{SubsecRNdSFredholm}, we need to describe the relation of variable order b-Sobolev spaces to semiclassical function spaces via the Mellin transform. We work locally in $\Rnhalfc=[0,\infty)_\tau\times\R^{n-1}_x$, and the variable order function is $\sfs=\sfs(x)\in\CI(\R^{n-1})$, with $\sfs$ constant outside a compact set. Fixing a real number $N<\inf\sfs(x)$ and an elliptic, dilation-invariant operator $A\in\Psi^\sfs_{\bop,1-\delta,\delta}(\Rnhalfc)$, $\delta\in(0,1/2)$, the norm on $\Hb^\sfs(\Rnhalfc)$ is given by
\begin{equation}
\label{EqVariableBNorm}
  \|u\|_{\Hb^\sfs(\Rnhalfc)}^2 = \|u\|_{\Hb^N}^2 + \|Au\|_{\Hb^0}^2,
\end{equation}
and all choices of $N$ and $A$ give equivalent norms. (This follows from elliptic regularity.) Since the $\Hb^N$-part of the norm is irrelevant in a certain sense (it is only there to take care of a possible kernel of $A$), we focus on the seminorm
\[
  |u|_{\Hb^\sfs} := \|Au\|_{\Hb^0};
\]
we concretely take $A$ to be the left quantization of $\la\zeta\ra^{s(x)}$, writing b-1-forms as
\[
  \zeta = \sigma\,\frac{d\tau}{\tau} + \xi\,dx \in \Tb^*\Rnhalfc.
\]
Denote the Mellin transform of $u$ in $\tau$ by $\wh u(\sigma,\cdot)$, and the Fourier transform of $\wh u$ in $x$ by $\wt u(\sigma,\xi)$; thus, $\wt u$ is the Fourier transform of $u$ in $(-t,x)$, where $t=-\log\tau$. Then by Plancherel,
\begin{align*}
  |u|_{\Hb^\sfs}^2 &= \iint \left|\iint \tau^{i\sigma}e^{ix\xi}\la(\sigma,\xi)\ra^{\sfs(x)}\wt u(\sigma,\xi)\,d\sigma\,d\xi\right|^2\,\frac{d\tau}{\tau}\,dx \\
    &= \iint \left|\int e^{ix\xi}\la(\sigma,\xi)\ra^{\sfs(x)}\wt u(\sigma,\xi)\,d\xi\right|^2\,d\sigma\,dx,
\end{align*}
where $\la(\sigma,\xi)\ra=(1+|\sigma|^2+|\xi|^2)^{1/2}$. Using $\la(\sigma,\xi)\ra=\la\sigma\ra\la\frac{\xi}{\la\sigma\ra}\ra$, we can rewrite this integral as
\begin{equation}
\label{EqVariableBtoSclAlmostThere}
  |u|_{\Hb^\sfs}^2 = \iint \la\sigma\ra^{2\sfs(x)} \bigl| \big\la\la\sigma\ra^{-1}D\big\ra^{\sfs(x)}\wh u(\sigma,x)\bigr|^2\,dx\,d\sigma.
\end{equation}
This suggests:
\begin{definition}
\label{DefVariableScl}
  For $\sfs(x),\sfw(x)\in\CI(\R^{n-1})$, constant outside a compact set, define the semiclassical Sobolev space $H_h^{\sfs,\sfw}(\R^{n-1})$, $h>0$, by the norm
  \[
    \|v\|_{H_h^{\sfs,\sfw}(\R^{n-1})}^2 = h^N\|v\|_{H_h^{-N}}^2 + \int |h^{\sfw(x)}\la hD\ra^{\sfs(x)}v(x)|^2\,dx,
  \]
  where $N>\max(-\inf\sfs,\sup\sfw)$ is a real number.
\end{definition}
The particular choice of the value of $N$ is irrelevant, see Remark~\ref{RmkVariableSclNorm}, where we also give a better, invariant, version of Definition~\ref{DefVariableScl}. Thus, $H_h^{\sfs,\sfw}(\R^{n-1})=H^\sfs(\R^{n-1})$ as a space, but the semiclassical space captures the behavior of the norm as $h\to 0+$. We remark that the space $H_h^{\sfs,\sfw}$ becomes weaker as one increases $\sfw$ or decreases $\sfs$.
\begin{rmk}
\label{RmkVariableSclConstOrders}
  If $\sfs\equiv s$ and $\sfw\equiv w$ are constants, we can use the equivalent norm $\|v\|_{H_h^{s,w}(\R^{n-1})}=h^w\|\la hD\ra^s v\|_{L^2}$.
\end{rmk}
Using \eqref{EqVariableBtoSclAlmostThere} and taking the $\Hb^{-N}$-term in \eqref{EqVariableBNorm} into account, we thus have an equivalence of norms
\begin{equation}
\label{EqVariableBToScl}
  \|u\|_{\Hb^\sfs(\R^n)} \sim \|\wh u(\sigma,x)\|_{L^2\bigl(\R_\sigma;H_{\la\sigma\ra^{-1}}^{\sfs,-\sfs}(\R^{n-1}_x)\bigr)}.
\end{equation}

The semiclassical analogues of the symbol spaces \eqref{EqVariableBSymbols}, which are adapted to working with the spaces $H_h^{\sfs,\sfw}$, are defined by
\begin{equation}
\label{EqVariableSclSymbols}
\begin{split}
  a(h;x,\xi) &\in S_{h,1-\delta,\delta}^{\sfs,\sfw} \\
  &:\Longleftrightarrow \quad |\pa_x^\alpha\pa_\xi^\beta a(h;x,\xi)|\leq C_{\alpha\beta} h^{\sfw(x)-\delta(|\alpha|+|\beta|)} \la\xi\ra^{\sfs(x)-|\beta| + \delta(|\alpha|+|\beta|)},
\end{split}
\end{equation}
with $C_{\alpha\beta}$ independent of $h$. In our application, differentiation in $x$ or $\xi$ will in fact at most produce a \emph{logarithmic} loss, i.e.\ will produce a factor of $\log h$ or $\log\la\xi\ra$. For us, the main example of an element in $S_{h,1-\delta,\delta}^{\sfs,\sfw}$ is the symbol $h^{\sfw(x)}\la\xi\ra^{\sfs(x)}$.

Quantizations of symbols in $S_{h,1-\delta,\delta}^{\sfs,\sfw}$ are denoted $\Psi_{h,1-\delta,\delta}^{\sfs,\sfw}$, and for $A\in\Psi_{h,1-\delta,\delta}^{\sfs,\sfw}$ and $B\in\Psi_{h,1-\delta,\delta}^{\sfs',\sfw'}$, we have
\[
  A\circ B \in \Psi_{h,1-\delta,\delta}^{\sfs+\sfs',\sfw+\sfw'}
\]
and
\[
  \frac{i}{h}[A,B] \in \Psi_{h,1-\delta,\delta}^{\sfs+\sfs'-(1-2\delta),\sfw+\sfw'+2\delta},
\]
with principal symbols given by the product, resp.\ the Poisson bracket, of the respective symbols. Here, the principal symbol of an element of $\Psi_{h,1-\delta,\delta}^{\sfs,\sfw}$ is well-defined in $S_{h,1-\delta,\delta}^{\sfs,\sfw}/S_{h,1-\delta,\delta}^{\sfs-1+2\delta,\sfw-1+2\delta}$.

\begin{rmk}
\label{RmkVariableSclNorm}
  Using elliptic regularity in the calculus $\Psi_{h,1-\delta,\delta}^{*,*}$, we see that given $u\in h^N H_h^{-N}$, $N\in\R$, we have $u\in H_h^{\sfs,\sfw}$ if and only if $Au\in L^2=H_h^{0,0}$, where $A\in\Psi_{h,1-\delta,\delta}^{\sfs,\sfw}$ is a fixed elliptic operator; i.e.\ we have an equivalence of norms
  \[
    \|u\|_{H_h^{\sfs,\sfw}} \sim \|u\|_{H_h^{-N,N}} + \|Au\|_{H_h^{0,0}}.
  \]
\end{rmk}

We next discuss microlocal regularity results for variable order operators; general references for such results in the constant order (semiclassical) setting are \cite{MelroseIML,WunschMicronotes,ZworskiSemiclassical} and \cite{HormanderAnalysisPDE3}. Working on a compact manifold $X$ without boundary now, $\dim X=n$, suppose we are given a semiclassical \psdo{}\ $P\in\Psih^m(X)$. Semiclassical elliptic regularity takes the following quantitative form on variable order spaces:
\begin{prop}
\label{PropVariableSclElliptic}
  If $B,G\in\Psih^0(X)$ are such that $\WFh(B)\subset\Ellh(P)$ (the semiclassical elliptic set of $P$), and $G$ is elliptic on $\WFh(B)$, then
  \[
    \|B v\|_{H_h^{\sfs,\sfw}} \leq C(\|GP v\|_{H_h^{\sfs-m,\sfw}} + \|v\|_{H_h^{-N,N}})
  \]
  for any fixed $N$.
\end{prop}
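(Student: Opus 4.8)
\textbf{Proof proposal for Proposition~\ref{PropVariableSclElliptic}.}

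The plan is to run the standard symbolic parametrix construction, but carried out inside the variable-order semiclassical calculus $\Psi_{h,1-\delta,\delta}^{*,*}$ recalled above, so that the only genuinely new input is bookkeeping of the variable orders $\sfs,\sfw$ and the $(1-\delta,\delta)$ symbol classes. First I would choose a symbol $b\in S_{h,1-\delta,\delta}^{\sfs,\sfw}$ with $\WFh(b)=\WFh(B)$ and with $b$ supported in a slightly larger set on which $G$ is still elliptic, and quantize it to $B'\in\Psi_{h,1-\delta,\delta}^{\sfs,\sfw}$; replacing $B$ by $B'$ is harmless since $B-B'$ has wavefront set contained in $\Ellh(G)$ as well (or one simply notes $\|Bv\|\lesssim\|B'v\|+\|v\|_{H_h^{-N,N}}$ once the estimate is known for $B'$ and a fixed $G$). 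Since $\WFh(B)\subset\Ellh(P)$, the semiclassical principal symbol $p$ of $P$ is elliptic on $\WFh(b)$, so $q:=b/p$ is a well-defined element of $S_{h,1-\delta,\delta}^{\sfs-m,\sfw}$ after cutting off away from the zero set of $p$; here I use that $\langle\xi\rangle^{-m}p$ is elliptic and that quotients of $(1-\delta,\delta)$-symbols by nonvanishing elliptic ones stay in the same class (the derivatives only ever produce the usual logarithmic or $\langle\xi\rangle^{\delta}$ losses, already absorbed in the class). Quantizing gives $Q\in\Psi_{h,1-\delta,\delta}^{\sfs-m,\sfw}$ with
\[
  QP = B + h\,R_1 + E_1,
\]
where by the composition formula $\sigma(QP)=qp=b$ modulo $S_{h,1-\delta,\delta}^{\sfs-(1-2\delta),\sfw-(1-2\delta)}$, so $R_1\in\Psi_{h,1-\delta,\delta}^{\sfs-1+2\delta,\sfw-1+2\delta}$ and $E_1$ is residual (smoothing, $O(h^\infty)$). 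The key structural point is that $R_1$ again has wavefront set inside $\Ellh(G)$, so one iterates: at stage $k$ one has $Q^{(k)}P = B + h^k R_k + E_k$ with $R_k\in\Psi_{h,1-\delta,\delta}^{\sfs-k(1-2\delta),\sfw-k(1-2\delta)}$, $\WFh(R_k)\subset\Ellh(G)$, and after an asymptotic (Borel) summation of the $Q^{(k)}$ one gets a genuine parametrix $Q\in\Psi_{h,1-\delta,\delta}^{\sfs-m,\sfw}$ with
\[
  QP = B + E,\qquad E\in h^\infty\Psi_h^{-\infty}.
\]

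With the parametrix in hand I would write $Bv = QPv - Ev$ and apply the mapping properties of the calculus: $Q\colon H_h^{\sfs-m,\sfw}\to H_h^{0,0}$ uniformly in $h$ (immediate from the definition of $\Psi_{h,1-\delta,\delta}^{\sfs-m,\sfw}$ and of the spaces in Definition~\ref{DefVariableScl}, via the equivalent-norm characterization in Remark~\ref{RmkVariableSclNorm}); more precisely I would insert $G$, writing $QPv = Q(GPv) + Q(1-G)Pv$, where $(1-G)P$ is applied to a function microsupported away from $\WFh(Q)$ if one is slightly careful with supports, so that $Q(1-G)P$ is residual and contributes an $O(h^\infty)\|v\|_{H_h^{-N,N}}$ term; this is where one uses that $G$ is elliptic on $\WFh(B)\supset\WFh(Q)$. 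The term $Ev$ is residual and bounded by $\|v\|_{H_h^{-N,N}}$ for the given $N$. Collecting everything yields
\[
  \|Bv\|_{H_h^{\sfs,\sfw}} \le C\bigl(\|GPv\|_{H_h^{\sfs-m,\sfw}} + \|v\|_{H_h^{-N,N}}\bigr),
\]
which is the claim. (For arbitrary fixed $N$ one notes the residual operators map $H_h^{-N,N}\to H_h^{\sfs,\sfw}$ with an $O(h^{M})$ norm for any $M$, so the value of $N$ is immaterial.)

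The main obstacle --- really the only place requiring care --- is verifying that the entire parametrix construction stays honestly inside the variable-order, $(1-\delta,\delta)$ symbol classes: one must check that dividing by $p$, composing, and asymptotically summing never lose more than the tolerated $\langle\xi\rangle^{\delta}$ and $h^{-\delta}$ factors per derivative, and in particular that the logarithmic losses coming from differentiating $\langle\xi\rangle^{\sfs(x)}$ and $h^{\sfw(x)}$ (noted after \eqref{EqVariableSclSymbols}) are absorbed. Since all of this is purely symbolic and formally identical to the constant-order semiclassical case, with the variable-order modifications already worked out in \cite[Appendix~A]{BaskinVasyWunschRadMink} in the closely related (non-semiclassical, b-) setting, I would not reproduce these computations but simply invoke that reference, noting that the passage from the b-setting there to the present semiclassical setting involves only the notational replacements $\la\zeta\ra\leftrightarrow\la\xi\ra$, $\tau\pa_\tau\leftrightarrow h^{-1}$ and is otherwise verbatim.
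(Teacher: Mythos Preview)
Your proposal is correct and is exactly the ``usual symbolic construction of a microlocal inverse of $P$ near $\WFh(B)$'' that the paper invokes in its one-line proof; you have simply written out the details of that standard parametrix construction in the variable-order semiclassical calculus. There is no difference in approach.
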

\begin{proof}
  This follows from the usual symbolic construction of a microlocal inverse of $P$ near $\WFh(B)$.
\end{proof}

The semiclassical real principal type propagation of singularities requires a Hamilton derivative condition on the orders $\sfs,\sfw$ of the function space: let $P\in\Psih^m(X)$ with real-valued semiclassical principal symbol $p=\sigma_\semi(P)\in S^m_\cl(T^*X)$, i.e.\ $p$ is a classical symbol, which we assume for simplicity to be $h$-independent. Let $\rham_p=\wh\rho^{m-1}\ham_p$ be the rescaled Hamilton vector field, with $\wh\rho\in\CI(T^*X)$ is homogeneous of degree $-1$ in the fibers of $T^*X$ away from the zero section; thus $\rham_p$ is homogeneous of degree $0$ modulo vector fields vanishing at fiber infinity, and can thus be viewed as a smooth vector field on the radially compactified cotangent bundle $\rcT^*X$. At fiber infinity $S^*X\subset\rcT^*X$, the $\rham_p$ flow is simply the rescaled Hamilton flow of the homogeneous principal part of $p$, while at finite points $T^*X\subset\rcT^*X$, $\rham_p$ is proportional to the semiclassical Hamilton vector field.
\begin{prop}
\label{PropVariableSclPropagation}
  Under these assumptions, let $\sfs,\sfw\in\CI(X)$ be order functions, and let $U\subset\rcT^*X$ be open; suppose $\rham_p\sfs\leq 0$ and $\rham_p\sfw\geq 0$ in $U$. Suppose $B_1,B_2,G\in\Psih^0(X)$ are such that $G$ is elliptic on $\WFh(B_1)$, and all backward null-bicharacteristics of $P$ from $\WFh(B_1)\cap\Sigma_\semi(P)$ enter $\Ellh(B_2)$ while remaining in $\Ellh(G)\cap U$. Then
  \[
    \|B_1 v\|_{H_h^{\sfs,\sfw}} \leq C(\|GP v\|_{H_h^{\sfs-m+1,\sfw-1}} + \|B_2 v\|_{H_h^{\sfs,\sfw}} + \|v\|_{H_h^{-N,N}})
  \]
  for any fixed $N$.
\end{prop}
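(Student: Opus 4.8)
The plan is to prove this by a positive commutator argument, carried out in the variable order semiclassical calculus $\Psi_{h,1-\delta,\delta}^{*,*}$. Since that calculus has all the standard properties --- composition with principal symbols multiplying, commutators landing in the class with the loss $1-2\delta$ absorbing the logarithmic terms produced by differentiating $\la\xi\ra^{\sfs(x)}$ and $h^{-\sfw(x)}$, and a sharp G\aa rding inequality --- as established in \cite[Appendix~A]{BaskinVasyWunschRadMink} by purely symbolic arguments, the proof is the usual one, and I would only need to exhibit the commutant and check the signs. First I would reduce to bounding $\|\Lambda v\|_{L^2}$ for a fixed elliptic $\Lambda\in\Psi_{h,1-\delta,\delta}^{\sfs,\sfw}$ microlocalized near $\WFh(B_1)$; the case of a general $B_1\in\Psih^0$ then follows by covering $\WFh(B_1)$ by finitely many such microlocal pieces.

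Second, I would construct a real commutant symbol $a\in S_{h,1-\delta,\delta}^{\sfs-(m-1)/2,\,-(\sfw-1/2)}$, supported in a small neighborhood $U'\subset\Ellh(G)\cap U$ of the backward $\rham_p$-flowout of $\WFh(B_1)\cap\Sigma_\semi(P)$, of the form $a=\la\xi\ra^{\sfs-(m-1)/2}h^{-(\sfw-1/2)}\phi$ with $\phi$ a flow-adapted cutoff elliptic on $\WFh(B_1)$, arranged so that $\rham_p(a^2)=-b_1^2+b_2^2+e_\sfs+e_\sfw+e$, where $b_1$ is elliptic on $\WFh(B_1)$, $b_2$ is supported in $\Ellh(B_2)$, $e$ is genuinely one order lower, and $e_\sfs$, $e_\sfw$ are the terms arising from $\rham_p$ acting on the weight $\la\xi\ra^{\sfs(x)}h^{-\sfw(x)}$, proportional to $(\rham_p\sfs)\log\la\xi\ra\,a^2$ and $(\rham_p\sfw)\log(1/h)\,a^2$, respectively. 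The one point specific to the variable order setting is that $e_\sfs\le 0$ and $e_\sfw\ge 0$ by the hypotheses $\rham_p\sfs\le0$, $\rham_p\sfw\ge0$ on $U$; as in the next step, both are then harmless, so no estimate is needed for them.

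Third, with $C:=\Op(a)^*\Op(a)$, I would compute $\langle\tfrac{i}{h}[P,C]v,v\rangle$, whose principal part is (a positive multiple of) $\langle\Op(\rham_p(a^2))v,v\rangle$. Modulo lower order terms --- including the skew-adjoint part of $P$, which is of lower order since $p$ is real --- applying sharp G\aa rding to $b_1^2$, $b_2^2$, $-e_\sfs$ and $e_\sfw$ yields a lower bound of the form $-\|B_1v\|_{H_h^{\sfs,\sfw}}^2+\|B_2v\|_{H_h^{\sfs,\sfw}}^2-C\|v\|^2_{H_h^{-N',N'}}$, the $e_\sfw$-contribution being a nonnegative term that only helps. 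On the other hand $\langle\tfrac{i}{h}[P,C]v,v\rangle=-\tfrac{2}{h}\Im\langle\Op(a)v,\Op(a)Pv\rangle$, which by Cauchy--Schwarz and Young's inequality is bounded by $\epsilon\|B_1v\|^2_{H_h^{\sfs,\sfw}}+C_\epsilon\|GPv\|^2_{H_h^{\sfs-m+1,\sfw-1}}+C\|v\|^2_{H_h^{-N',N'}}$. Combining, absorbing the $\epsilon$-term, and running the usual regularization (inserting $\Op(\la\epsilon\xi\ra^{-K})$ to make all pairings a priori finite and letting $\epsilon\downarrow0$) gives the stated estimate with error $\|v\|_{H_h^{-N',N'}}$ for some $N'$ fixed by the a priori regularity; a finite iteration, each step lowering $\sfs$ and raising $\sfw$ by $1-2\delta$, pushes this down to $(-N,N)$.

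I expect the only real obstacle to be the bookkeeping in $\rham_p(a^2)$: cleanly isolating $e_\sfs$, $e_\sfw$ and verifying their signs, while checking that every symbol that appears stays in $\Psi_{h,1-\delta,\delta}^{*,*}$ --- in particular that the $\log\la\xi\ra$ and $\log(1/h)$ losses remain within the $\delta$ budget, which is exactly what forces $\delta<1/2$. Since all of this is done in detail in \cite[Appendix~A]{BaskinVasyWunschRadMink} in a context differing only notationally, I would cite it rather than reproduce the computation.
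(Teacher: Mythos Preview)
Your approach is correct and essentially identical to the paper's: a positive commutator argument with a commutant carrying the variable-order weight $h^{-\sfw}\la\xi\ra^{\sfs-(m-1)/2}$, where the monotonicity hypotheses on $\sfs$ and $\sfw$ force the logarithmic weight-derivative terms to have the favorable sign relative to the main term. The paper is only more explicit on two points you defer to \cite{BaskinVasyWunschRadMink}: it works in flow-box coordinates $\rham_p=\pa_{q_1}$ and uses an exponential cutoff $\chi_0(q_1)=e^{-\digamma(q_1-t_0)}$ so that the term $(-\sfs+(m-1)/2)\,\wh\rho^{-1}\rham_p\wh\rho$ --- which is \emph{not} ``genuinely one order lower'' as your $e$ suggests, but of the same order as the main term --- can be absorbed by taking $\digamma$ large; and it treats the propagation at fiber infinity and at finite frequencies separately, the latter being where the $\sfw$-weight is actually felt while $\sfs$ becomes irrelevant.
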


For $\sfw=0$, this gives the usual estimate of $v$ in $H_h^\sfs$ in terms of $h^{-1}Pv$ in $H_h^{\sfs-m+1}$, losing $1$ derivative and $1$ power of $h$ relative to the elliptic setting.

\begin{proof}[Proof of Proposition~\ref{PropVariableSclPropagation}.]
  The proof is almost the same as that of \cite[Proposition~A.1]{BaskinVasyWunschRadMink}, so we shall be brief. Since the result states nothing about critical points of the Hamilton flow, we may assume $\rham_p\neq 0$ on $U$ (at $S^*X$, this means that $\rham_p$ is \emph{non-radial}). Let $\alpha\in\WFh(B_1)\cap\Sigma_\semi(P)$. Let us first prove the propagation at fiber infinity: introduce coordinates $q_1,q'$ on $S^*X$, $q'=(q_2,\ldots,q_{2n-1})$, centered at $\alpha$, such that $\rham_p=\pa_{q_1}$, and suppose $t_2<t_1<0<t_0$ and the neighborhood $U'$ of $0\in\R^{2n-2}_{q'}$ are such that $[t_2,t_0]_{q_1}\times\ol{U'}\subset U$; suppose we have a priori $H_h^{\sfs,\sfw}$-regularity in $[t_2,t_1]_{q_1}\times\ol{U'}$, i.e.\ $B_2$ is elliptic there. We use a commutant (omitting the necessary regularization in the weight $\wh\rho$ for brevity)
  \[
    a = h^{-\sfw}\wh\rho^{-\sfs+(m-1)/2}\chi(q_1)\phi(q'),
  \]
  where $\chi=\chi_0\chi_1$, $\chi_0(t)=e^{-\digamma(t-t_0)}$ for $t<t_0$, $\chi_0(t)=0$ for $t\geq t_0$, with $\digamma>0$ large, and $\chi_1(t)\equiv 1$ near $[t_0,\infty)$, $\chi_1(t)\equiv 0$ near $(-\infty,t_2]$; moreover $\phi\in\CIc(U')$, $\phi(0)=1$. We then compute
  \begin{align*}
    \rham_p a = h^{-\sfw}\wh\rho^{-\sfs+(m-1)/2}&\bigl(\chi_0'(q_1)\chi_1(q_1) + \chi_0(q_1)\chi_1'(q_1) \\
      &+ (-\sfs+(m-1)/2)\chi_0(q_1)\chi_1(q_1)\wh\rho^{-1}\rham_p\wh\rho \\
      &- (\rham_p\sfs)\log\wh\rho + (\rham_p\sfw)\log(h^{-1})\bigr).
  \end{align*}
  Now $\chi_0'\leq 0$, giving rise to the main `good' term, while the $\chi_1'$ term (which has the opposite sign) is supported where one has a priori regularity. The term on the second line can be absorbed into the first by making $\digamma$ large (since $\chi_0$ can then be dominated by a small multiple of $\chi_0'$), while the last two terms have the same sign as the main term by our assumptions on $\sfs$ and $\sfw$. A positive commutator computation, a standard regularization argument, and absorbing the contribution of the imaginary part of $P$ by making $\digamma$ larger if necessary, gives the desired result.

  For the propagation within $T^*X$, a similar argument applies; we use local coordinates $q_1,q'$ in $T^*X$ with $q'=(q_2,\ldots,q_{2n})$ now, centered at $\alpha$, so that $\rham_p=\pa_{q_1}$; and the differentiability order $\sfs$ becomes irrelevant now, as we are away from fiber infinity. Thus, we can use the commutant $a=h^{-\sfw}\chi(q_1)\phi(q')$, with $\chi$ exactly as above, and $\phi$ localizing near $0$; the positive commutator argument then proceeds as usual.
\end{proof}

Returning to \eqref{EqVariableBToScl}, we observe that for $u\in\Hb^\sfs$, we can apply this proposition to $\wh u(\sigma,x)\in H_{\la\sigma\ra^{-1}}^{\sfs,-\sfs}$ under the single condition $\rham_p\sfs\leq 0$, which is the same condition as for real principal type propagation for $u$ in b-Sobolev spaces (as it should).

Finally, we point out that completely analogous results hold for \emph{weighted} b-Sobolev spaces $\Hb^{\sfs,\alpha}$ and their semiclassical analogues: the only necessary modification is that now we have to restrict the Mellin-dual variable to $\tau$, called $\sigma$ here, to $\Im\sigma=-\alpha$, since the Mellin transform in $\tau$ induces an isometric isomorphism
\[
  \|u(\tau,x)\|_{\Hb^{0,\alpha}} = \|\wh u(\sigma,x)\|_{L^2(\{\Im\sigma=-\alpha\};L^2_x)}.
\]

\section{Supported and extendible function spaces on manifolds with corners}
\label{SecSuppExt}

We briefly recall supported and extendible distributions on manifolds with boundary, following \cite[Appendix~B]{HormanderAnalysisPDE3}. The model case is $\R^n=\R_{x_1}\times\R^{n-1}_{x'}$, and we consider Sobolev spaces with regularity $s\in\R$. For notational brevity, we omit the factor $\R^{n-1}_{x'}$. Thus, we let
\begin{align*}
  H^s([0,\infty))^\bullet &:= \{ u\in H^s(\R)\colon\supp u\subset[0,\infty) \}, \\
  H^s((0,\infty))^- &:= \{ u|_{(0,\infty)} \colon u\in H^s(\R) \},
\end{align*}
called $H^s$ space with supported $(\bullet)$, resp.\ extendible $(-)$, character at the boundary $\{x_1=0\}$. The Hilbert norm on the supported space is defined by restriction from $H^s$, while the Hilbert norm on the extendible space comes from the isomorphism
\[
  H^s((0,\infty))^- \cong H^s(\R) / H^s((-\infty,0])^\bullet;
\]
since the supported space on the right hand side is a closed subspace of $H^s(\R)$, we immediately get an isometric extension map
\[
  E\colon H^s((0,\infty))^- \to H^s(\R),
\]
which identifies $H^s((0,\infty))^-$ with the orthogonal complement of $H^s((-\infty,0])^\bullet$ in $H^s(\R)$; thus,
\[
  \|u\|_{H^s((0,\infty))^-} \equiv \inf_{\genfrac{}{}{0pt}{}{U\in H^s(\R)}{U|_{(0,\infty)}=u}} \|U\|_{H^s(\R)} = \|Eu\|_{H^s(\R)}.
\]
The dual spaces relative to $L^2$ are given by
\[
  \bigl(H^s([0,\infty))^\bullet\bigr)^* = H^{-s}((0,\infty))^-.
\]

We now discuss the case of codimension $2$ corners, which is all we need for our application; treating the case of higher codimension corners requires purely notational changes. We work locally on $\R^n_x$, $n\geq 2$, $x=(x_1,x_2,x')$. Consider the domain
\[
  [0,\infty)_{x_1}\times[0,\infty)_{x_2}\times\R^{n-2}_{x'}
\]
which is a submanifold of with corners of $\R^n$. Again, since the $x'$ variables will carry through our arguments below, we simplify notation by dropping them, i.e.\ by letting $n=2$. 

Let $s\in\R$. There are two natural ways to define a space $H^s((0,\infty)\times[0,\infty))^{-,\bullet}$ of distributions in $H^s$ with extendible character at $\{x_1=0\}$ and supported character at $\{x_2=0\}$, which give rise to two a priori different norms and dual spaces. Namely,
\begin{equation}
\label{EqSuppExtTwoChoices}
\begin{split}
  H^s((0,\infty)\times[0,\infty))^{-,\bullet}_1 &= H^s(\R\times[0,\infty))^\bullet / H^s((-\infty,0]\times[0,\infty))^{\bullet,\bullet}, \\
  H^s((0,\infty)\times[0,\infty))^{-,\bullet}_2 &= \{ u\in H^s((0,\infty)\times\R)^- \colon \supp u\subset(0,\infty)\times[0,\infty)\};
\end{split}
\end{equation}
we equip the first space with the quotient topology, and the second space with the subspace topology. The first space is the space of restrictions to $(0,\infty)\times[0,\infty)$ of distributions with support in $x_2\geq 0$, while the second space is the space of extendible distributions in the half space $(0,\infty)\times\R$ which have support in $(0,\infty)\times[0,\infty)$; see Figure~\ref{FigSuppExtDef}.

\begin{figure}[!ht]
  \centering
  \includegraphics{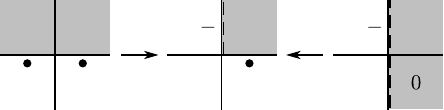}
  \caption{The two choices for defining $H^s((0,\infty)\times[0,\infty))^{-,\bullet}$ (middle). Left: choice $1$. Right: choice $2$. The supports of elements of the spaces that $H^s_1$, resp.\ $H^s_2$, are quotients, resp.\ subspaces of are shaded; the `$0$' indicates the vanishing condition in the definition of $H^s_2$.}
\label{FigSuppExtDef}
\end{figure}

As in the case of manifolds with boundary discussed above, both spaces come equipped with isometric (by the definition of their norms) extension operators
\[
  E_j\colon H^s((0,\infty)\times[0,\infty))^{-,\bullet}_j \to H^s(\R^2),\quad j=1,2,
\]
with $\ran E_1\subset H^s(\R\times[0,\infty))^\bullet\subset H^s(\R^2)$ and $\ran E_2$ contained in the space
\[
  H^s(\R^2\setminus(0,\infty)\times(-\infty,0))^{\bullet,\bullet}
\]
of distributions in $H^s(\R^2)$ with support in $\R^2\setminus(0,\infty)\times(-\infty,0)$, see Figure~\ref{FigSuppExtMaps}. We can thus also describe the second variant of $(H^s)^{-,\bullet}$ equivalently as the quotient
\begin{equation}
\label{EqSuppExtMap2}
  H^s((0,\infty)\times[0,\infty))^{-,\bullet}_2 = H^s(\R^2\setminus(0,\infty)\times(-\infty,0))^{\bullet,\bullet} / H^s((-\infty,0]\times\R)^\bullet.
\end{equation}

\begin{figure}[!ht]
  \centering
  \includegraphics{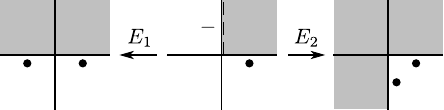}
  \caption{Ranges of the extension maps for the two possible definitions of $H^s((0,\infty)\times[0,\infty))^{-,\bullet}$. The supports of elements in the ranges of the extension maps are shaded.}
\label{FigSuppExtMaps}
\end{figure}

Furthermore, the dual spaces are isometric to
\begin{align*}
  \bigl(H^s((0,\infty)&\times[0,\infty))^{-,\bullet}_1\bigr)^* = H^{-s}([0,\infty)\times(0,\infty))^{\bullet,-}_2 \\
    & := \{ u\in H^s(\R\times(0,\infty))^-\colon \supp u\subset[0,\infty)\times(0,\infty)\}, \\
  \bigl(H^s((0,\infty)&\times[0,\infty))^{-,\bullet}_2\bigr)^* = H^{-s}([0,\infty)\times(0,\infty))^{\bullet,-}_1 \\
    & := H^{-s}([0,\infty)\times\R)^\bullet / H^{-s}([0,\infty)\times(-\infty,0])^{\bullet,\bullet},
\end{align*}
i.e.\ dualizing switches choices $1$ and $2$ for the definition of the mixed supported and extendible spaces.

We observe that the composition
\[
  H^s(\R\times[0,\infty))^\bullet\hookrightarrow H^s(\R^2)\to H^s((0,\infty)\times\R)^-
\]
induces a continuous inclusion
\begin{equation}
\label{EqSuppExtIncl}
  i_{12} \colon H^s((0,\infty)\times[0,\infty))^{-,\bullet}_1 \hookrightarrow H^s((0,\infty)\times[0,\infty))^{-,\bullet}_2.
\end{equation}

The main result of this section is:
\begin{prop}
\label{PropSuppExtSame}
  The inclusion map $i_{12}$ in \eqref{EqSuppExtIncl} is an isomorphism.
\end{prop}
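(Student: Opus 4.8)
The plan is to show that $i_{12}$ is onto and that it is an isometry (or at least bounded below), which together with continuity gives an isomorphism of Banach spaces. Since $i_{12}$ is manifestly a continuous injection by \eqref{EqSuppExtIncl}, the only thing at issue is surjectivity together with a two-sided norm bound; by the open mapping theorem, surjectivity alone suffices once we know both sides are Banach spaces, but in fact the cleanest route is to exhibit the norms as literally equal.

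First I would unwind the definitions in \eqref{EqSuppExtTwoChoices} to express both norms as infima of $H^s(\R^2)$-norms over extensions. An element $u$ of $H^s_2$ has, by \eqref{EqSuppExtMap2}, an extension operator $E_2 u$ landing in $H^s(\R^2\setminus(0,\infty)\times(-\infty,0))^{\bullet,\bullet}$; the key point is that the exceptional set on which support is allowed, namely $\R^2\setminus(0,\infty)\times(-\infty,0)$, is an $L$-shaped closed set, and we must further push the support into the smaller set $\R\times[0,\infty)$ (i.e.\ kill the part with $x_1<0$, $x_2<0$) in order to land in $\ran E_1$. The heart of the matter is therefore: given $v\in H^s(\R^2)$ supported in the $L$-shaped region $\{x_1\geq 0\}\cup\{x_2\geq 0\}$, can one modify $v$ only in $\{x_1<0,\ x_2<0\}$ (equivalently, add an element of $H^s((-\infty,0]\times(-\infty,0])^{\bullet,\bullet}$) so that the result is supported in $\{x_2\geq 0\}$, without increasing the $H^s$-norm, or at least controlling it? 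Phrased differently: the quotient map $H^s(\R\times[0,\infty))^\bullet \to H^s_1$ and the restriction-type map to $H^s_2$ must be compared, and surjectivity of $i_{12}$ amounts to the statement that every class in $H^s_2$ already has a representative supported in $\{x_2\geq 0\}$.

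The concrete mechanism I would use is the orthogonal decomposition underlying the definition of the norms, exactly as in the manifold-with-boundary case recalled before Proposition~\ref{PropSuppExtSame}: $H^s(\R^2) = H^s((-\infty,0]\times\R)^\bullet \oplus \bigl(H^s((-\infty,0]\times\R)^\bullet\bigr)^\perp$, and the extendible space $H^s((0,\infty)\times\R)^-$ is isometric to the second summand. Taking $w := E_2 u$, which lies in $H^s(\R^2\setminus(0,\infty)\times(-\infty,0))^{\bullet,\bullet}$ and is orthogonal to $H^s((-\infty,0]\times\R)^\bullet$, I would argue that $w$ is automatically supported in $\R\times[0,\infty)$: indeed, any element of $H^s(\R^2)$ that is both orthogonal to $H^s((-\infty,0]\times\R)^\bullet$ and supported in $\{x_1\geq 0\}\cup\{x_2\geq 0\}$ has its support in $\{x_2\geq 0\}$. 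This last claim is a statement purely about the half-space $\{x_1\leq 0\}$: restricting attention there, $w|_{x_1<0}$ is determined as an extendible distribution on $(-\infty,0)\times\R$ (via the orthogonality, it is the canonical extension of its restriction to $x_1>0$), and since $w$ restricted to $x_1>0$ is supported in $x_2\geq 0$ in that quarter-plane, its canonical extension across $x_1=0$ into $x_1<0$ is again supported in $x_2\geq 0$ — here one uses that the canonical ($H^s$-minimal norm) extension in the $x_1$ variable commutes with the support condition in the $x_2$ variable, which follows from doing the $x_1$-extension by a fixed bounded operator (a Fourier multiplier–type reflection/extension) that acts only in $x_1$ and hence preserves $\{x_2\geq 0\}$-support. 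Thus $w\in H^s(\R\times[0,\infty))^\bullet = \ran E_1$, giving a preimage of $u$ under $i_{12}$ of the same norm, so $i_{12}$ is a surjective isometry, hence an isomorphism.

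\textbf{Main obstacle.} The delicate point is the assertion that orthogonality to $H^s((-\infty,0]\times\R)^\bullet$ forces the correct support, i.e.\ that the $H^s$-optimal extension across one boundary face does not destroy the support condition at the other face. I expect this to require writing the optimal extension operator in the $x_1$-variable explicitly (as in Hörmander's treatment for a single boundary), noting it is built from a Fourier multiplier in $x_1$ alone — so it commutes with multiplication by functions of $x_2$ and with the $x_2$-support condition — and then combining this with the characterization of $\ran E_2$ as the orthocomplement. Everything else (the duality statements, the continuity of $i_{12}$) is formal and can be quoted from the half-space case.
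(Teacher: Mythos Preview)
There is a genuine gap in the argument, at precisely the point you flag as the ``main obstacle.'' Your key claim is that the $H^s$-optimal (orthogonal-projection) extension across $\{x_1=0\}$ preserves support in $\{x_2\geq 0\}$, and you justify this by saying it ``acts only in $x_1$.'' It does not. Write the $H^s(\R^2)$-norm in mixed variables by taking the Fourier transform in $x_2$: for each frequency $\xi_2$ the optimal extension in $x_1$ minimizes $\int_\R\bigl((1+\xi_2^2)^s|\hat w|^2+\ldots\bigr)\,dx_1$ (with weight depending on $\xi_2$), so the extension operator is a $\xi_2$-dependent operator in $x_1$; undoing the Fourier transform in $x_2$, it is pseudolocal but \emph{not} local in $x_2$. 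Concretely, for $s=1$ the optimal extension of $u$ into $\{x_1<0\}$ is obtained by solving $(1-\Delta)w=0$ there with trace $u(0+,\cdot)$; in the $\xi_2$-variable this is $\hat w(x_1,\xi_2)=\hat u(0+,\xi_2)\,e^{\sqrt{1+\xi_2^2}\,x_1}$ for $x_1<0$, i.e.\ a convolution of the trace with a kernel of full support in $x_2$. So even if $u(0+,\cdot)$ is supported in $\{x_2\geq 0\}$, the extension is not. Thus $E_2 u$ need not land in $H^s(\R\times[0,\infty))^\bullet$, and the isometry claim fails.

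What you have done is conflate two distinct extensions: the $H^s$-orthogonal one (which gives the isometric identification but is nonlocal in $x_2$), and a Seeley reflection-type extension (which does act in one variable only and hence preserves support in the other, but is merely bounded, not isometric). The paper's proof uses exactly the second kind, but in the \emph{other} direction: it first reduces to $s\geq 0$ by duality, then for integer $s$ restricts $u$ to the lower half-plane $\{x_2<0\}$ (where, by the L-shape condition, it is supported in $\{x_1\leq 0\}$), extends it back across $\{x_2=0\}$ by a reflection operator in $x_2$ only (which preserves the support in $\{x_1\leq 0\}$), and subtracts; this correction is supported in $\{x_1\leq 0\}$, so it does not change the restriction to $\{x_1>0\}$, while killing everything in $\{x_2<0\}$. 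Interpolation handles non-integer $s$. The resulting map $\Phi$ is only bounded, not an isometry, but that is all one needs: the two quotient norms are then equivalent, and the open mapping theorem finishes. Your strategy could be rescued by abandoning the isometry claim and using a reflection extension instead of the orthogonal one, but then you must also arrange the restriction step to make sense, which is why the paper passes to $s\geq 0$ first.
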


The important consequence is that we can now make the following
\begin{definition}
\label{DefSuppExt}
  For $s\in\R$, define $H^s((0,\infty)\times[0,\infty))^{-,\bullet} := H^s((0,\infty)\times[0,\infty))^{-,\bullet}_1$, with the latter space defined in \eqref{EqSuppExtTwoChoices}.
\end{definition}
The point is that we can now also identify the dual space with
\[
  H^{-s}([0,\infty)\times(0,\infty))^{\bullet,-},
\]
and we can work  with either definition in \eqref{EqSuppExtMap2}.

\begin{proof}[Proof of Proposition~\ref{PropSuppExtSame}.]
  Since $i_{12}$ is an isomorphism if and only if the dual map
  \[
    H^{-s}([0,\infty)\times(0,\infty))^{\bullet,-}_1 \hookrightarrow H^{-s}([0,\infty)\times(0,\infty))^{\bullet,-}_2
  \]
  is an isomorphism, it suffices to consider the case $s\geq 0$. In view of the characterizations \eqref{EqSuppExtTwoChoices} and \eqref{EqSuppExtMap2} of the two versions of $(H^s)^{-,\bullet}$ as quotients (equipped with the quotient norm!), it suffices to prove the existence of a bounded linear map
  \begin{equation}
  \label{EqSuppExtPhi}
    \Phi \colon H^s(\R^2\setminus(0,\infty)\times(-\infty,0))^{\bullet,\bullet} \to H^s(\R\times[0,\infty))^\bullet
  \end{equation}
  with $\Phi(u)|_{(0,\infty)\times\R}=u|_{(0,\infty)\times\R}$. The idea is to use the fact that for integer $k\geq 0$, $H^k$-spaces of extendible distributions are \emph{intrinsically} defined: thus, for $u\in H^k(\R^2\setminus(0,\infty)\times(-\infty,0))^{\bullet,\bullet}$, the restriction $u|_{\R\times(-\infty,0)}$ to the lower half plane is an element of $H^k(\R\times(-\infty,0))$ with support in $(-\infty,0]\times(-\infty,0)$; but then we can use an extension map
  \[
    \Psi_k\colon H^k(\R_{x_1}\times(-\infty,0)_{x_2}) \to H^k(\R^2_{x_1,x_2}),
  \]
  defined using reflections and rescalings (see \cite[\S4.4]{TaylorPDE}), which in addition preserves the property of being supported in $x_1\leq 0$. We can then define the map $\Phi$ on $H^s$ by
  \[
    \Phi(u) := u - \Psi_k(u|_{\R\times(-\infty,0)})
  \]
  for all integer $0\leq s\leq k$; by interpolation, the same map in fact works for all $s\in[0,k]$. Since $k$ can be chosen arbitrarily, this proves the existence of a map \eqref{EqSuppExtPhi} for any fixed real $s$.
\end{proof}

\subsection*{Acknowledgments}

We are very grateful to Jonathan Luk and Maciej Zworski for many helpful discussions. We would also like to thank Sung-Jin Oh for many helpful discussions and suggestions, for reading parts of the manuscript, and for pointing out a result in \cite{SbierskiThesis} which led to the discussion in Remark~\ref{RmkRNdSHighReg}; thanks also to Elmar Schrohe for very useful discussions leading to Appendix~\ref{SecSuppExt}. We are grateful for the hospitality of the Erwin Schr\"odinger Institute in Vienna, where part of this work was carried out. Finally, we are very grateful to the anonymous referee for many suggestions which improved the exposition and clarity of the paper.

We gratefully acknowledge support by A.V.'s National Science Foundation grants DMS-1068742 and DMS-1361432. P.H.\ is a Miller Fellow and thanks the Adolph C.\ and Mary Sprague Miller Institute for Basic Research in Science, University of California Berkeley, for support.


\begin{thebibliography}{10}

\bibitem{DafermosICM2014}
The mathematical analysis of black holes in general relativity.
\newblock In {\em Proceedings of the ICM}, 2014.

\bibitem{AnderssonBlueMaxwellKerr}
Lars Andersson and Pieter Blue.
\newblock {U}niform energy bound and asymptotics for the {M}axwell field on a
  slowly rotating {K}err black hole exterior.
\newblock {\em Journal of Hyperbolic Differential Equations}, 12(04):689--743,
  2015.

\bibitem{AretakisExtremalRN1}
Stefanos Aretakis.
\newblock Stability and {I}nstability of {E}xtreme {R}eissner-{N}ordstr\"om
  {B}lack {H}ole {S}pacetimes for {L}inear {S}calar {P}erturbations {I}.
\newblock {\em Communications in Mathematical Physics}, 307(1):17--63, 2011.

\bibitem{AretakisExtremalRN2}
Stefanos Aretakis.
\newblock Stability and {I}nstability of {E}xtreme {R}eissner-{N}ordstr\"om
  {B}lack {H}ole {S}pacetimes for {L}inear {S}calar {P}erturbations {II}.
\newblock {\em Ann. Henri Poincar\'e}, 12(8):1491--1538, 2011.

\bibitem{BachelotSchwarzschildScattering}
Alain Bachelot.
\newblock Gravitational scattering of electromagnetic field by {S}chwarzschild
  black-hole.
\newblock In {\em Annales de l'IHP Physique th{\'e}orique}, volume~54, pages
  261--320. Elsevier, 1991.

\bibitem{BaskinVasyWunschRadMink}
Dean Baskin, Andr{\'a}s Vasy, and Jared Wunsch.
\newblock Asymptotics of radiation fields in asymptotically {M}inkowski space.
\newblock {\em Amer. J. Math.}, 137(5):1293--1364, 2015.

\bibitem{BonyBurqRamondTrapping}
Jean-Fran\c{c}ois Bony, Nicolas Burq, and Thierry Ramond.
\newblock Minoration de la r\'esolvante dans le cas captif.
\newblock {\em Comptes Rendus Mathematique}, 348(23--24):1279--1282, 2010.

\bibitem{BonyHaefnerDecay}
Jean-Fran{\c{c}}ois Bony and Dietrich H{\"a}fner.
\newblock Decay and non-decay of the local energy for the wave equation on the
  de {S}itter--{S}chwarzschild metric.
\newblock {\em Communications in Mathematical Physics}, 282(3):697--719, 2008.

\bibitem{BradyChambersKrivanLagunaCosmConst}
Patrick~R. Brady, Chris~M. Chambers, William Krivan, and Pablo Laguna.
\newblock Telling tails in the presence of a cosmological constant.
\newblock {\em Physical Review D}, 55(12):7538, 1997.

\bibitem{BradyChambersLaarakeersPoissonSdSFalloff}
Patrick~R. Brady, Chris~M. Chambers, William~G. Laarakkers, and Eric Poisson.
\newblock Radiative falloff in {S}chwarzschild--de {S}itter spacetime.
\newblock {\em Physical Review D}, 60(6):064003, 1999.

\bibitem{BradyMossMyersCosmicCensorship}
Patrick~R. Brady, Ian~G. Moss, and Robert~C. Myers.
\newblock Cosmic censorship: as strong as ever.
\newblock {\em Physical Review Letters}, 80(16):3432, 1998.

\bibitem{CarterHamiltonJacobiEinstein}
Brandon Carter.
\newblock {H}amilton--{J}acobi and {S}chr{\"o}dinger separable solutions of
  {E}instein's equations.
\newblock {\em Communications in Mathematical Physics}, 10(4):280--310, 1968.

\bibitem{ChoudhuryPadmanabhanQNMforSdS}
T.~Roy Choudhury and T.~Padmanabhan.
\newblock Quasinormal modes in {S}chwarzschild--de {S}itter spacetime: a simple
  derivation of the level spacing of the frequencies.
\newblock {\em Physical Review D}, 69(6):064033, 2004.

\bibitem{ChristodoulouInstabililtyOfNakedSing}
Demetrios Christodoulou.
\newblock The instability of naked singularities in the gravitational collapse
  of a scalar field.
\newblock {\em Ann. of Math. (2)}, 149(1):183--217, 1999.

\bibitem{CostaGiraoNatarioSilvaCauchy3}
Jo{\~a}o~L. Costa, Pedro~M. Gir{\~a}o, Jos{\'e} Nat{\'a}rio, and Jorge~Drumond
  Silva.
\newblock On the global uniqueness for the {E}instein--{M}axwell--scalar field
  system with a cosmological constant. {P}art {III}. {M}ass inflation and
  extendibility of the solutions.
\newblock {\em Preprint, arXiv:1406.7261}, 2014.

\bibitem{CostaGiraoNatarioSilvaCauchy1}
Jo{\~a}o~L. Costa, Pedro~M. Gir{\~a}o, Jos{\'e} Nat{\'a}rio, and Jorge~Drumond
  Silva.
\newblock On the global uniqueness for the {E}instein-{M}axwell-scalar field
  system with a cosmological constant: {I}. {W}ell posedness and breakdown
  criterion.
\newblock {\em Classical Quantum Gravity}, 32(1):015017, 33, 2015.

\bibitem{CostaGiraoNatarioSilvaCauchy2}
Jo{\~a}o~L. Costa, Pedro~M. Gir{\~a}o, Jos{\'e} Nat{\'a}rio, and Jorge~Drumond
  Silva.
\newblock On the global uniqueness for the {E}instein-{M}axwell-scalar field
  system with a cosmological constant. {P}art 2. {S}tructure of the solutions
  and stability of the {C}auchy horizon.
\newblock {\em Comm. Math. Phys.}, 339(3):903--947, 2015.

\bibitem{DafermosEinsteinMaxwellScalarStability}
Mihalis Dafermos.
\newblock Stability and instability of the {C}auchy horizon for the spherically
  symmetric {E}instein--{M}axwell--scalar field equations.
\newblock {\em Ann. of Math. (2)}, 158(3):875--928, 2003.

\bibitem{DafermosInterior}
Mihalis Dafermos.
\newblock The interior of charged black holes and the problem of uniqueness in
  general relativity.
\newblock {\em Communications on pure and applied mathematics}, 58(4):445--504,
  2005.

\bibitem{DafermosBlackHoleNoSingularities}
Mihalis Dafermos.
\newblock Black holes without spacelike singularities.
\newblock {\em Comm. Math. Phys.}, 332(2):729--757, 2014.

\bibitem{DafermosRodnianskiPrice}
Mihalis Dafermos and Igor Rodnianski.
\newblock A proof of {P}rice's law for the collapse of a self-gravitating
  scalar field.
\newblock {\em Inventiones mathematicae}, 162(2):381--457, 2005.

\bibitem{DafermosRodnianskiSdS}
Mihalis Dafermos and Igor Rodnianski.
\newblock The wave equation on {S}chwarzschild--de {S}itter spacetimes.
\newblock {\em Preprint, arXiv:0709.2766}, 2007.

\bibitem{DafermosRodnianskiLectureNotes}
Mihalis Dafermos and Igor Rodnianski.
\newblock Lectures on black holes and linear waves.
\newblock {\em Evolution equations, Clay Mathematics Proceedings}, 17:97--205,
  2008.

\bibitem{DafermosRodnianskiShlapentokhRothmanDecay}
Mihalis Dafermos, Igor Rodnianski, and Yakov Shlapentokh-Rothman.
\newblock Decay for solutions of the wave equation on {K}err exterior
  spacetimes {III}: {T}he full subextremal case {$\vert a\vert <M$}.
\newblock {\em Ann. of Math. (2)}, 183(3):787--913, 2016.

\bibitem{DonningerSchlagSofferPrice}
Roland Donninger, Wilhelm Schlag, and Avy Soffer.
\newblock A proof of {P}rice's law on {S}chwarzschild black hole manifolds for
  all angular momenta.
\newblock {\em Advances in Mathematics}, 226(1):484--540, 2011.

\bibitem{DyatlovQNMExtended}
Semyon Dyatlov.
\newblock Exponential energy decay for {K}err--de {S}itter black holes beyond
  event horizons.
\newblock {\em Mathematical Research Letters}, 18(5):1023--1035, 2011.

\bibitem{DyatlovQNM}
Semyon Dyatlov.
\newblock Quasi-normal modes and exponential energy decay for the {K}err--de
  {S}itter black hole.
\newblock {\em Comm. Math. Phys.}, 306(1):119--163, 2011.

\bibitem{DyatlovAsymptoticDistribution}
Semyon Dyatlov.
\newblock Asymptotic distribution of quasi-normal modes for {K}err--de {S}itter
  black holes.
\newblock In {\em Annales Henri Poincar{\'e}}, volume~13, pages 1101--1166.
  Springer, 2012.

\bibitem{DyatlovResonanceProjectors}
Semyon Dyatlov.
\newblock Resonance projectors and asymptotics for {$r$}-normally hyperbolic
  trapped sets.
\newblock {\em J. Amer. Math. Soc.}, 28(2):311--381, 2015.

\bibitem{DyatlovSpectralGaps}
Semyon Dyatlov.
\newblock Spectral gaps for normally hyperbolic trapping.
\newblock {\em Ann. Inst. Fourier (Grenoble)}, 66(1):55--82, 2016.

\bibitem{FranzenKerrBoundedness}
Anne Franzen.
\newblock Boundedness of massless scalar waves on {K}err interior backgrounds.
\newblock {\em In preparation}.

\bibitem{FranzenRNBoundedness}
Anne~T. Franzen.
\newblock {B}oundedness of massless scalar waves on {R}eissner--{N}ordstr{\"o}m
  interior backgrounds.
\newblock {\em Communications in Mathematical Physics}, 343(2):601--650, 2016.

\bibitem{GajicExtremalRN}
Dejan Gajic.
\newblock {L}inear waves in the interior of extremal black holes {I}.
\newblock {\em Communications in Mathematical Physics}, pages 1--54, 2016.

\bibitem{GellRedmanHaberVasyFeynman}
Jesse Gell-Redman, Nick Haber, and Andr{\'a}s Vasy.
\newblock {T}he {F}eynman {P}ropagator on {P}erturbations of {M}inkowski
  {S}pace.
\newblock {\em Communications in Mathematical Physics}, 342(1):333--384, Feb
  2016.

\bibitem{HaberVasyPropagation}
Nick Haber and Andr{\'a}s Vasy.
\newblock Propagation of singularities around a {L}agrangian submanifold of
  radial points.
\newblock In {\em Microlocal Methods in Mathematical Physics and Global
  Analysis}, pages 113--116. Springer, 2013.

\bibitem{HassellMelroseVasySymbolicOrderZero}
Andrew Hassell, Richard~B. Melrose, and Andr{\'a}s Vasy.
\newblock Microlocal propagation near radial points and scattering for symbolic
  potentials of order zero.
\newblock {\em Anal. PDE}, 1(2):127--196, 2008.

\bibitem{HintzCauchyKerr}
Peter Hintz.
\newblock Boundedness and decay of scalar waves at the {C}auchy horizon of the
  {K}err spacetime.
\newblock {\em Commentarii Mathematici Helvetici, to appear}, 2017.

\bibitem{HintzVasyNormHyp}
Peter Hintz and Andr{\'a}s Vasy.
\newblock Non-trapping estimates near normally hyperbolic trapping.
\newblock {\em Math. Res. Lett.}, 21(6):1277--1304, 2014.

\bibitem{HintzVasyKdsFormResonances}
Peter Hintz and Andr{\'a}s Vasy.
\newblock Asymptotics for the wave equation on differential forms on {K}err-de
  {S}itter space.
\newblock {\em Preprint, arXiv:1502.03179}, 2015.

\bibitem{HintzVasySemilinear}
Peter Hintz and Andr{\'a}s Vasy.
\newblock Semilinear wave equations on asymptotically de {S}itter, {K}err--de
  {S}itter and {M}inkowski spacetimes.
\newblock {\em Anal. PDE}, 8(8):1807--1890, 2015.

\bibitem{HintzVasyQuasilinearKdS}
Peter Hintz and Andr{\'a}s Vasy.
\newblock {G}lobal {A}nalysis of {Q}uasilinear {W}ave {E}quations on
  {A}symptotically {K}err--de {S}itter {S}paces.
\newblock {\em International Mathematics Research Notices},
  2016(17):5355--5426, 2016.

\bibitem{HormanderFIO1}
Lars H{\"o}rmander.
\newblock Fourier integral operators. {I}.
\newblock {\em Acta mathematica}, 127(1):79--183, 1971.

\bibitem{HormanderAnalysisPDE3}
Lars H{\"o}rmander.
\newblock {\em The analysis of linear partial differential operators. {III}}.
\newblock Classics in Mathematics. Springer, Berlin, 2007.

\bibitem{HormanderAnalysisPDE4}
Lars H{\"o}rmander.
\newblock {\em The analysis of linear partial differential operators. {IV}}.
\newblock Classics in Mathematics. Springer-Verlag, Berlin, 2009.

\bibitem{KayWaldSchwarzschild}
Bernard~S. Kay and Robert~M. Wald.
\newblock Linear stability of {S}chwarzschild under perturbations which are
  non-vanishing on the bifurcation 2-sphere.
\newblock {\em Classical and Quantum Gravity}, 4(4):893, 1987.

\bibitem{LukOhReissnerNordstrom}
Jonathan Luk and Sung-Jin Oh.
\newblock Proof of linear instability of the {R}eissner--{N}ordstr{\"o}m
  {C}auchy horizon under scalar perturbations.
\newblock {\em Duke Math. J., to appear}, 2016.

\bibitem{LukSbierskiKerr}
Jonathan Luk and Jan Sbierski.
\newblock Instability results for the wave equation in the interior of {K}err
  black holes.
\newblock {\em J. Funct. Anal.}, 271(7):1948--1995, 2016.

\bibitem{MarzuolaMetcalfeTataruTohaneanuStrichartz}
Jeremy Marzuola, Jason Metcalfe, Daniel Tataru, and Mihai Tohaneanu.
\newblock Strichartz estimates on {S}chwarzschild black hole backgrounds.
\newblock {\em Communications in Mathematical Physics}, 293(1):37--83, 2010.

\bibitem{MelroseAPS}
Richard~B. Melrose.
\newblock {\em The {A}tiyah--{P}atodi--{S}inger index theorem}, volume~4 of
  {\em Research Notes in Mathematics}.
\newblock A K Peters, Ltd., Wellesley, MA, 1993.

\bibitem{MelroseEuclideanSpectralTheory}
Richard~B. Melrose.
\newblock Spectral and scattering theory for the {L}aplacian on asymptotically
  {E}uclidian spaces.
\newblock In {\em Spectral and scattering theory ({S}anda, 1992)}, volume 161
  of {\em Lecture Notes in Pure and Appl. Math.}, pages 85--130. Dekker, New
  York, 1994.

\bibitem{MelroseIML}
Richard~B. Melrose.
\newblock Introduction to microlocal analysis.
\newblock {\em Lecture notes from courses taught at MIT}, 2007.

\bibitem{MelroseSaBarretoVasySdS}
Richard~B. Melrose, Ant{\^o}nio S{\'a}~Barreto, and Andr{\'a}s Vasy.
\newblock Asymptotics of solutions of the wave equation on de
  {S}itter--{S}chwarzschild space.
\newblock {\em Communications in Partial Differential Equations},
  39(3):512--529, 2014.

\bibitem{MelroseSjostrandSingBVPI}
Richard~B. Melrose and Johannes Sj{\"o}strand.
\newblock Singularities of boundary value problems. {I}.
\newblock {\em Communications on Pure and Applied Mathematics}, 31(5):593--617,
  1978.

\bibitem{MelroseSjostrandSingBVPII}
Richard~B. Melrose and Johannes Sj{\"o}strand.
\newblock Singularities of boundary value problems. {II}.
\newblock {\em Communications on Pure and Applied Mathematics}, 35(2):129--168,
  1982.

\bibitem{NonnenmacherZworskiDecay}
St{\'e}phane Nonnenmacher and Maciej Zworski.
\newblock Decay of correlations for normally hyperbolic trapping.
\newblock {\em Inventiones mathematicae}, pages 1--94, 2013.

\bibitem{SaBarretoZworskiResonances}
Ant{\^o}nio S{\'a}~Barreto and Maciej Zworski.
\newblock Distribution of resonances for spherical black holes.
\newblock {\em Mathematical Research Letters}, 4:103--122, 1997.

\bibitem{SbierskiThesis}
Jan Sbierski.
\newblock {\em On the initial value problem in general relativity and wave
  propagation in black-hole spacetimes}.
\newblock PhD thesis, University of Cambridge, 2014.

\bibitem{ShlapentokhRothmanModeStability}
Yakov Shlapentokh-Rothman.
\newblock Quantitative mode stability for the wave equation on the {K}err
  spacetime.
\newblock {\em Ann. Henri Poincar\'e}, 16(1):289--345, 2015.

\bibitem{SimpsonPenroseBlueShift}
Michael Simpson and Roger Penrose.
\newblock {I}nternal instability in a {R}eissner--{N}ordstr{\"o}m black hole.
\newblock {\em International Journal of Theoretical Physics}, 7(3):183--197,
  1973.

\bibitem{SterbenzTataruMaxwellSchwarzschild}
Jacob Sterbenz and Daniel Tataru.
\newblock {L}ocal energy decay for {M}axwell fields part {I}: {S}pherically
  symmetric black-hole backgrounds.
\newblock {\em International Mathematics Research Notices}, 2015(11), 2015.

\bibitem{TataruDecayAsympFlat}
Daniel Tataru.
\newblock Local decay of waves on asymptotically flat stationary space-times.
\newblock {\em American Journal of Mathematics}, 135(2):361--401, 2013.

\bibitem{TaylorGrazing}
Michael~E. Taylor.
\newblock Grazing rays and reflection of singularities of solutions to wave
  equations.
\newblock {\em Communications on Pure and Applied Mathematics}, 29(1):1--38,
  1976.

\bibitem{TaylorPDE}
Michael~E. Taylor.
\newblock {\em Partial Differential Equations {I-III}}.
\newblock Springer-Verlag, 1996.

\bibitem{TohaneanuKerrStrichartz}
Mihai Tohaneanu.
\newblock Strichartz estimates on {K}err black hole backgrounds.
\newblock {\em Trans. Amer. Math. Soc.}, 364(2):689--702, 2012.

\bibitem{UnterbergerVariable}
Andr{\'e} Unterberger.
\newblock R{\'e}solution d'{\'e}quations aux d{\'e}riv{\'e}es partielles dans
  des espaces de distributions d'ordre de r{\'e}gularit{\'e} variable.
\newblock In {\em Annales de l'institut Fourier}, volume~21, pages 85--128,
  1971.

\bibitem{VasyMinicourse}
Andr{\'a}s Vasy.
\newblock A minicourse on microlocal analysis for wave propagation.
\newblock In {\em Asymptotic Analysis in General Relativity}, London
  Mathematical Society Lecture Note Series. Cambridge University Press, to
  appear.

\bibitem{VasyWaveOndS}
Andr{\'a}s Vasy.
\newblock The wave equation on asymptotically de {S}itter-like spaces.
\newblock {\em Advances in Mathematics}, 223(1):49--97, 2010.

\bibitem{VasyMicroKerrdS}
Andr{\'a}s Vasy.
\newblock Microlocal analysis of asymptotically hyperbolic and {K}err--de
  {S}itter spaces (with an appendix by {S}emyon {D}yatlov).
\newblock {\em Invent. Math.}, 194(2):381--513, 2013.

\bibitem{WhitingKerrModeStability}
Bernard~F. Whiting.
\newblock Mode stability of the {K}err black hole.
\newblock {\em Journal of Mathematical Physics}, 30(6):1301--1305, 1989.

\bibitem{WunschMicronotes}
Jared Wunsch.
\newblock Microlocal analysis and evolution equations: lecture notes from 2008
  {CMI}/{ETH} summer school.
\newblock In {\em Evolution equations}, volume~17 of {\em Clay Math. Proc.},
  pages 1--72. Amer. Math. Soc., Providence, RI, 2013.

\bibitem{WunschZworskiNormHypResolvent}
Jared Wunsch and Maciej Zworski.
\newblock Resolvent estimates for normally hyperbolic trapped sets.
\newblock In {\em Annales Henri Poincar{\'e}}, volume~12, pages 1349--1385.
  Springer, 2011.

\bibitem{ZworskiSemiclassical}
Maciej Zworski.
\newblock {\em Semiclassical Analysis}.
\newblock Graduate studies in mathematics. American Mathematical Society, 2012.

\bibitem{ZworskiRevisitVasy}
Maciej Zworski.
\newblock Resonances for asymptotically hyperbolic manifolds: {V}asy's method
  revisited.
\newblock {\em J. Spectr. Theory}, (6):1087--1114, 2016.

\end{thebibliography}

\end{document}